\newcommand{\inner}[2]{\left\langle #1, #2 \right\rangle}
\newcommand{\bbS}{\mathbb{S}}
\renewcommand{\d}[1]{\ensuremath{\operatorname{d}\!{#1}}}
\newcommand{\marina}{\maroon{\text{MARINA}}}
\newcommand{\gd}{\maroon{\text{GD}}}
\newcommand{\sgd}{\maroon{\text{SGD}}}
\newcommand{\cgd}{\maroon{\text{CGD}}}
\newcommand{\detmarina}{\maroon{\text{det-MARINA}}}
\newcommand{\detdasha}{\maroon{\text{det-DASHA}}}
\newcommand{\dasha}{\maroon{\text{DASHA}}}
\newcommand{\diana}{\maroon{\text{DIANA}}}
\newcommand{\dcgd}{\maroon{\text{DCGD}}}
\newcommand{\detcgdone}{\maroon{\text{det-CGD1}}}
\newcommand{\detcgdtwo}{\maroon{\text{det-CGD2}}}
\newcommand{\detcgd}{\maroon{\text{det-CGD}}}
\newcommand{\newtonstar}{\maroon{\text{NS}}}
\newcommand{\newton}{\maroon{\text{NM}}}
\newcommand{\sgdstar}{\maroon{$\sgd_{\star}$}}
\newcommand{\svrg}{\maroon{\text{SVRG}}}
\newcommand{\sag}{\maroon{\text{SAG}}}
\newcommand{\sdca}{\maroon{\text{SDCA}}}
\newcommand{\saga}{\maroon{\text{SAGA}}}
\newcommand{\miso}{\maroon{\text{MISO}}}
\newcommand{\katu}{\maroon{\text{Katyusha}}}
\newcommand{\lsvrg}{\maroon{\text{L-SVRG}}}
\newcommand{\lkatu}{\maroon{\text{L-Katyusha}}}
\newcommand{\storm}{\maroon{\text{STORM}}}
\definecolor{maroon}{cmyk}{0,0.87,0.68,0.32}
\definecolor{azure}{rgb}{0.2, 0.5, 1.0}
\definecolor{dkgreen}{rgb}{0.0, 0.40, 0.10}
\definecolor{airforceblue}{rgb}{0.36, 0.54, 0.66}
\newcommand{\maroon}[1]{{\color{maroon} #1}}
\definecolor{midnightblue}{HTML}{0059b3}
\definecolor{darkmidnightblue}{HTML}{154c84}
\definecolor{noonblue}{HTML}{e5eef7}
\definecolor{chromered}{HTML}{f14233}
\definecolor{darkgreen}{HTML}{0e6029}
\newcommand{\norm}[1]{\left\| #1 \right\|}
\newcommand{\cO}{\mathcal{O}}
\newcommand{\cS}{\mathcal{S}}
\newcommand{\del}[1]{}
\newcommand{\R}{\mathbb{R}} 
\newcommand{\eqdef}{:=}
\newcommand{\Exp}[1]{{\mathbb E}\left[#1\right]}
\newcommand{\ExpCond}[2]{{\rm E}\left[\left.#1\right\vert#2\right]}
\newcommand{\ExpSub}[2]{{\rm E}_#1\left[#2\right]}
\newcommand{\ExpS}[1]{{\mathbb E}_{\mathcal{S}}\left[#1\right]}
\DeclareMathOperator{\diag}{diag}       
\newtheorem{assumption}{Assumption}
\newtheorem{lemma}{Lemma}
\newtheorem{theorem}{Theorem}
\newtheorem{proposition}{Proposition}
\newtheorem{corollary}{Corollary}
\newtheorem{definition}{Definition}
\newtheorem{fact}{Fact}
\newtheorem{remark}{Remark}
\theoremstyle{definition}
\newcommand{\brr}[1]{\left( #1 \right)}   
\newcommand{\brs}[1]{\left[ #1 \right]}  
\newcommand{\brc}[1]{\left\{ #1 \right\}} 
\def\1{\bm{1}}
\def\mA{{\bm{A}}}
\def\mB{{\bm{B}}}
\def\mD{{\bm{D}}}
\def\mF{{\bm{F}}}
\def\mI{{\bm{I}}}
\def\mL{{\bm{L}}}
\def\mM{{\bm{M}}}
\def\mN{{\bm{N}}}
\def\mO{{\bm{O}}}
\def\mQ{{\bm{Q}}}
\def\mS{{\bm{S}}}
\def\mT{{\bm{T}}}
\def\mW{{\bm{W}}}
\def\mX{{\bm{X}}}
\DeclareMathAlphabet{\mathsfit}{\encodingdefault}{\sfdefault}{m}{sl}
\SetMathAlphabet{\mathsfit}{bold}{\encodingdefault}{\sfdefault}{bx}{n}
\newcommand{\E}{\mathbb{E}}
\title{Variance Reduced Distributed Nonconvex Optimization Using Matrix Stepsizes}
\author{Hanmin Li, Avetik Karagulyan \& Peter Richt\'{a}rik \\
The AI Initiative, King Abdullah University of Science and Technology, Saudi Arabia \\
\texttt{\{hanmin.li, avetik.karagulyan, peter.richtarik\}@kaust.edu.sa} \\
}
\begin{document}

\maketitle

\begin{abstract}
   Matrix-stepsized gradient descent algorithms have been shown to have superior performance in non-convex optimization problems compared to their scalar counterparts. 
   The {\detcgd} algorithm, as introduced by \citet{li2023det}, leverages matrix stepsizes to perform compressed gradient descent for non-convex objectives and matrix-smooth problems in a federated manner. 
   The authors establish the algorithm's convergence to a neighborhood of a weighted stationarity point under a convex condition for the symmetric and positive-definite matrix stepsize. 
   In this paper, we propose two variance-reduced versions of the {\detcgd} algorithm, incorporating {\marina} and {\dasha} methods. 
   Notably, we establish theoretically and empirically, that {\detmarina} and {\detdasha} outperform {\marina}, {\dasha} and the distributed {{\detcgd}} algorithms in terms of iteration and communication complexities. 
\end{abstract}

\addtocontents{toc}{\protect\setcounter{tocdepth}{0}}

\section{Introduction}

We focus on optimizing the finite sum non-convex objective
\begin{equation}\label{eq:objective}
\min_{x\in \R^d} \left\{ f(x) \eqdef \frac{1}{n}\sum_{i=1}^{n} f_i(x) \right\}.
\end{equation}

In this context, each function $f_i : \mathbb{R}^d \rightarrow \mathbb{R}$ is differentiable and bounded from below. 
This type of objective function finds extensive application in various practical machine learning algorithms, which increase not only in terms of the data size but also in the model size and overall complexity as well.
As a result, most neural network architectures result in highly non-convex empirical losses, which need to be minimized. 
In addition, it becomes computationally infeasible to train these models on one device, often excessively large, and one needs to redistribute them amongst different devices/clients.
This redistribution results in a high communication overhead, which often becomes the bottleneck in this framework.

In other words, we have the following setting. 
The data is partitioned into $n$ {clients}, where the $i$-th client has access to the component function $f_i$ and its derivatives. 
The clients are connected to each other through a central device, called {the server}. 
In this work, we are going to study iterative gradient descent-based algorithms that operate as follows. The clients compute the local gradients in parallel. Then they  compress these gradients to reduce the communication cost and send them to the server in parallel. 
The server then aggregates these vectors and broadcasts the iterate update back to the clients. This meta-algorithm is called federated learning. 
We refer the readers to \citet{Konecny2016federated,mcmahan2017communication,kairouz2021advances} for a more thorough introduction to federated learning.

\subsection{Contributions} 
In this paper, we introduce two novel federated learning algorithms named {\detmarina} and {\detdasha}. 
These algorithms extend a recent method called {\detcgd} \citep{li2023det}, which aims to solve problem \eqref{eq:objective} using matrix stepsized gradient descent. 
Under the matrix smoothness assumption proposed by \citet{safaryan2021smoothness}, the authors demonstrate that the matrix stepsized version of the Distributed Compressed Gradient 
Descent \citep{khirirat2018distributed} algorithm enhances communication complexity compared to its scalar counterpart. 
However, in their analysis, \citet{li2023det} show stationarity only within a certain neighborhood due to stochastic compressors.
Our algorithm addresses this issue by incorporating previously known variance reduction schemes, namely,  
{\marina} \citep{gorbunov2021marina} and {\dasha} \citep{tyurin2024dasha}.
We establish theoretically and empirically, that both algorithms outperform their scalar alternatives, as well as the distributed {\detcgd} algorithms.
In addition, we describe specific matrix stepsize choices, for which our algorithms beat {\marina}, {\dasha} and distributed {\detcgd} both in theory and in practice.

\section{Background and motivation}\label{sec:background}

For a given $\varepsilon > 0$, finding an approximately global optimum, that is $x_{\varepsilon}$ such that $f(x_{\varepsilon}) - \min_x f(x) < \varepsilon$, is known to be NP-hard  \citep{jain2017non, danilova2022recent}. 
However, gradient descent based methods are still useful in this case.
When these methods are applied to non-convex objectives, they treat the function $f$ as locally convex and aim to converge to a local minimum. 
Despite this simplification, such methods have gained popularity in practice due to their superior performance compared to other approaches for non-convex optimization, such as convex relaxation-based methods \citep{tibshirani1996regression,cai2010singular}.        

\subsection{Stochastic gradient descent} 

Arguably, one of the most prominent meta-methods for tackling non-convex optimization problems is stochastic gradient descent ({\sgd}). The formulation of {\sgd} is presented as the following iterative algorithm:
$x^{k+1} = x^k - \gamma g^k$.
Here, $g^k \in \mathbb{R}^d$ serves as a stochastic estimator of the gradient $\nabla f(x^k)$. 
{\sgd} essentially mimics the classical gradient descent algorithm, and recovers it when $g^k = \nabla f(x^k)$. 
In this scenario, the method approximates the objective function $f$ using a linear function and takes a step of size $\gamma$ in the direction that maximally reduces this approximation. 
When the stepsize is sufficiently small, and the function $f$ is suitably smooth, it can be demonstrated that the function value decreases, as discussed in \citep{bubeck2015convex,gower2019sgd}.

However, computing the full gradient can often be computationally expensive. 
In such cases, stochastic approximations of the gradient come into play. 
Stochastic estimators of the gradient can be employed for various purposes, leading to the development of different methods. 
These include stochastic batch gradient descent \citep{nemirovski2009robust,johnson2013accelerating,defazio2014saga}, randomized coordinate descent \citep{nesterov2012efficiency,wright2015coordinate}, and compressed gradient descent 
\citep{alistarh2017qsgd,khirirat2018distributed,mishchenko2019distributed}.
The latter, compressed gradient descent, holds particular relevance to this paper, and we will delve into a more detailed discussion of it in subsequent sections.

\subsection{Second order methods} 

The stochastic gradient descent is considered as a first-order method as it uses only the first order derivative information. 
Although being immensely popular, the first order methods are not always optimal. 
Not surprisingly, using higher order derivatives in deciding update direction can yield to faster algorithms. 
A simple instance of such algorithms is the Newton Star algorithm \citep{islamov2021distributed}:
\begin{equation} 
  x^{k+1} = x^k - \brr{\nabla^2 f(x^{\star})}^{-1} \nabla f(x^k), \tag{\newtonstar}
\end{equation} 
where $x^{\star}$ is the minimum point of the objective function. 
The authors establish that under specific conditions, the algorithm's convergence to the unique solution $x^{\star}$ in the convex scenario occurs at a local quadratic rate. 
Nonetheless, its practicality is limited since we do not have prior knowledge of the Hessian matrix at the optimal point. 
Despite being proposed recently, the Newton-Star algorithm gives a deeper insight on the generic Newton method \citep{gragg1974optimal, miel1980majorizing, yamamoto1987convergence}:
\begin{equation}\label{eq:newton}
  x^{k+1} = x^k - \gamma \brr{\nabla^2 f(x^k)}^{-1} \nabla f(x^k).  \tag{\newton}
\end{equation} 
Here, the unknown Hessian of the Newton-Star algorithm, is estimated progressively along the iterations. 
The latter causes elevated computational costs, as the inverting a large square matrix is expensive. 
As an alternative, quasi-Newton methods replace the inverse of the Hessian at the iterate with a computationally cheaper estimate \citep{broyden1965class, dennis1977quasi, al2007overview, al2014broyden}. 

\subsection{Fixed matrix stepsizes} 

The {\detcgd} algorithm falls into this framework of the second order methods as well.  Proposed by \citet{li2023det}\footnote{In the original paper, the algorithm is referred to as {\detcgd}, as there is a variant of the same algorithm named {\detcgdtwo}. Since we are going to use only the first one and our framework is applicable to both, we will remove the number in the end for the sake of brevity.}, the algorithm suggests using a uniform ``upper bound'' on the inverse Hessian matrix. 
Assuming matrix smoothness of the objective \citep{safaryan2021smoothness}, they replace the scalar stepsize with a positive definite matrix $\mD$. 
The algorithm is given as follows:
\begin{equation}\label{eq:detCGD1}
   x^{k+1} = x^k - \mD \mS^k \nabla f(x^k). 
   \tag{{\detcgd}}
\end{equation}

\paragraph{Matrix $\mD$.} Here, $\mD$ plays the role of the stepsize. 
   Essentially, it uniformly lower bounds the inverse Hessian.
   The standard {\sgd} is a particular case of this method, as the scalar stepsize $\gamma$ can be seen as a matrix 
   $\gamma \mI_d$, where $\mI_d$ is the $d$-dimensional identity matrix. 
   An advantage of using a matrix stepsize is more evident if we take the perspective of the second order methods. 
   Indeed, the scalar stepsize $\gamma \mI_d$ uniformly estimates the largest eigenvalue of the Hessian matrix, while $\mD$ can capture the Hessian more accurately. 
   The authors show both theoretical and empirical improvement that comes with matrix stepsizes. 

\paragraph{Matrix  $\mS^k$.}  
   We assume that $\mS^k$ is a positive semi-definite, stochastic sketch matrix. 
   Furthermore, it is  unbiased: $\E[\mS^k] = \mI_d$.
   We notice that  \ref{eq:detCGD1} can be seen as a matrix stepsize instance of {\sgd}, with $g^k = \mS^k \nabla f(x^k)$. 
   The sketch matrix can be seen as a linear compressing operator, hence the name of the algorithm: Compressed Gradient Descent ({\cgd}) \citep{alistarh2017qsgd,khirirat2018distributed}.
   A commonly used example of such a compressor is the Rand-$\tau$ compressor. 
   This compressor randomly selects $\tau$ entries from its input and scales them using a scalar multiplier to ensure an unbiased estimation. 
   By adopting this approach, instead of using all $d$ coordinates of the gradient, only a subset of size $\tau$ is communicated. 
   Formally, Rand-$\tau$ is defined as follows:
   \begin{equation}
      \mS = \frac{d}{\tau} \sum_{j = 1}^{\tau} e_{i_j}e_{i_j}^{\top}.
   \end{equation}
   Here, $e_{i_j}$ denotes the $i_j$-th standard basis vector in $\mathbb{R}^d$. 
   For a more comprehensive understanding of compression techniques, we refer to \citet{safaryan2022uncertainty}.

\subsection{The neighborhood of the distributed det-CGD1}
The distributed version of {{\detcgd}} follows the standard federated learning paradigm \citep{mcmahan2017communication}. 
The pseudocode of the method, as well as the convergence result of \citet{li2023det}, can be found in \Cref{sec:dist-detcgd}. 
Informally, their convergence result can be written as
\begin{equation*}
    \min_{k=1,\ldots,K} \E \left[\norm{\nabla f(x^k)}_{\mD}^2 \right] \leq \cO\brr{\frac{(1 + \alpha)^K}{K}} + \cO\brr{\alpha},
\end{equation*}
where $\alpha > 0$ is a constant that can be controlled. 
The crucial insight from this result is that the error bound does not diminish as the number of iterations increases. 
In fact, by controlling $\alpha$ and considering a large $K$, it is impossible to make the second term smaller than $\varepsilon$. This implies that the algorithm converges to a certain neighborhood surrounding the (local) optimum. 
This phenomenon is a common occurrence in {\sgd} and is primarily attributable to the {variance} associated with the stochastic gradient estimator. In the case of {\detcgd} the stochasticity comes from the sketch $\mS^k$.

\subsection{Variance reduction}

To eliminate this neighborhood, various techniques for reducing variance are employed. One of the simplest techniques applicable to {\cgd} is gradient shifting.
By replacing $\mS^k \nabla f(x^k)$ with $\mS^k (\nabla f(x^k) - \nabla f(x^{\star})) + \nabla f(x^{\star}),$ the neighborhood effect is removed from the general {\cgd}. 
This algorithm is an instance of a more commonly known method called {\sgdstar} \citep{gower2020variance}. 
However, since the exact optimum $x^{\star}$ is typically unknown, this technique encounters similar challenges as the Newton-Star algorithm mentioned earlier. 
Fortunately, akin to quasi-Newton methods, one can employ methods that iteratively learn the optimal shift \citep{shulgin2022shifted}. 
A line of research focuses on variance reduction for {\cgd} based algorithms on this insight. 

To eliminate the neighborhood in the distributed version of {\cgd}, denoted as {\detcgdone}, we apply a technique called {\marina} \citep{gorbunov2021marina}. 
{\marina} cleverly combines the general shifting \citep{shulgin2022shifted} technique with loopless variance reduction techniques \citep{qian2021svrg}. 
This approach introduces an alternative gradient estimator specifically designed for the federated learning setting. 
Thanks to its structure, it allows to establish an upper bound on the stationarity error that diminishes significantly with a large number of iterations. 
In this paper, we construct the analog of the this algorithm called {\detmarina}, using matrix stepsizes and sketch gradient compressors. 
For this new method, we prove a convergence guarantee similar to the results of \citet{li2023det} without a neighborhood term. 

Furthermore, we also propose {\detdasha}, which is the extension of {\dasha} in the matrix stepsize setting. 
The latter was proposed by \citet{tyurin2024dasha} and it combines {\marina} with momentum variance reduction techniques \citep{cutkosky2019momentum}. {\dasha} offers better practicality compared to {\marina}, as it always sends compressed gradients and does not need to synchronize among all the nodes. 

\subsection{Organization of the paper}

The rest of the paper is organized as follows. \Cref{sec:framework} discusses the general mathematical framework.
\Cref{sec:marina} and \Cref{sec:dasha} present the {\detmarina} and {\detdasha} algorithms, respectively.
We show the superior theoretical performance of our algorithms compared to the relevant existing algorithms, that is {\marina}, {\dasha} and {\detcgd} in \Cref{sec:complexities}.
The experimental results validating our theoretical findings are presented in \Cref{main:experiments}, with additional details and setups available in the Appendix. 
We conclude the paper by outlining several directions of future work in \Cref{sec:conclusion}.

\section{Mathematical framework}\label{sec:framework}

In this section we present the assumptions that we further require in the analysis.
\begin{assumption} {\rm (Lower Bound)}
   \label{assmp:1}
   There exists $f^{\star} \in \R$ such that, $f(x) \geq f^{\star}$ for all $x \in \R^d$.
\end{assumption}
This is a standard assumption in optimization, as otherwise the problem of minimizing the objective would not be correct mathematically. 
We then need a matrix version of Lipschitz continuity for the gradient.
\begin{definition}{\rm ($\mL$-Lipschitz Gradient)}
   \label{assmp:3}
   Assume that $f: \R^d \rightarrow \R$ is a continuously differentiable function and matrix $\mL \in \bbS^d_{++}$. We say the gradient of $f$ is $\mL$-Lipschitz if for all $ x, y \in \R^d$
   \begin{equation}
      \label{eq:assmp:3}
      \norm{\nabla f(x) - \nabla f(y)}_{\mL^{-1}} \leq \norm{x - y}_{\mL}.
   \end{equation}
\end{definition}
In the following, we will assume that \eqref{eq:assmp:3} is satisfied for component functions 
$f_i$. 
\begin{assumption}
    \label{assmp:4}
    Each function $f_i$ is $\mL_i$-gradient Lipschitz, while $f$ is $\mL$-gradient Lipschitz.
\end{assumption}
In fact, the second half of the assumption is a consequence of the first one. 
Below, we formalize this claim.
\begin{proposition}
    \label{ppst:4}
    If $f_i$ is $\mL_i$-gradient Lipschitz for every $i = 1,\ldots,n$, then function $f$ has $\mL$-Lipschitz gradient with $\mL \in \bbS^d_{++}$ satisfying
    \begin{equation*}
        \label{eq:ppst:4}
        \frac{1}{n}\sum_{i=1}^{n}\lambda_{\max}\left(\mL^{-1}\right)\cdot\lambda_{\max}\left(\mL_i\right)\cdot\lambda_{\max}\left(\mL_i\mL^{-1}\right) = 1.
    \end{equation*}  
\end{proposition}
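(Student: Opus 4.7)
The plan is to derive the matrix-smoothness of $f$ directly from the matrix-smoothness of each $f_i$, using Jensen's inequality applied to the squared $\mL^{-1}$-norm together with two elementary norm-comparison estimates. Starting from $\nabla f(x) - \nabla f(y) = \frac{1}{n}\sum_{i=1}^n (\nabla f_i(x) - \nabla f_i(y))$, I would first apply Jensen's inequality to the convex quadratic $v \mapsto v^{\top}\mL^{-1}v$ to obtain
$$\norm{\nabla f(x) - \nabla f(y)}_{\mL^{-1}}^2 \leq \frac{1}{n}\sum_{i=1}^n \norm{\nabla f_i(x) - \nabla f_i(y)}_{\mL^{-1}}^2.$$
In view of the normalization identity in the statement, the target then reduces to bounding each summand by $C_i\norm{x-y}_{\mL}^2$ where $C_i \eqdef \lambda_{\max}(\mL^{-1})\lambda_{\max}(\mL_i)\lambda_{\max}(\mL_i\mL^{-1})$.

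The per-summand bound splits into two elementary norm changes. \textbf{Step 1:} For any $v \in \R^d$, combining $v^{\top}\mL^{-1}v \leq \lambda_{\max}(\mL^{-1})\norm{v}^2$ with the change of variable $u = \mL_i^{-1/2}v$ applied to $\norm{v}^2 = u^{\top}\mL_i u \leq \lambda_{\max}(\mL_i)\norm{u}^2 = \lambda_{\max}(\mL_i)\norm{v}_{\mL_i^{-1}}^2$ yields $\norm{v}_{\mL^{-1}}^2 \leq \lambda_{\max}(\mL^{-1})\lambda_{\max}(\mL_i)\norm{v}_{\mL_i^{-1}}^2$. Plugging in $v = \nabla f_i(x) - \nabla f_i(y)$ and using the $\mL_i$-Lipschitz assumption on $f_i$ from Definition~\ref{assmp:3} bounds this by $\lambda_{\max}(\mL^{-1})\lambda_{\max}(\mL_i)\norm{x-y}_{\mL_i}^2$. \textbf{Step 2:} Substituting $w = \mL^{1/2}(x-y)$ rewrites $(x-y)^{\top}\mL_i(x-y) = w^{\top}\mL^{-1/2}\mL_i\mL^{-1/2}w \leq \lambda_{\max}(\mL^{-1/2}\mL_i\mL^{-1/2})\norm{x-y}_{\mL}^2$; since the spectrum of $\mL^{-1/2}\mL_i\mL^{-1/2}$ coincides with that of $\mL_i\mL^{-1}$ by cyclic invariance of eigenvalues, this gives $\norm{x-y}_{\mL_i}^2 \leq \lambda_{\max}(\mL_i\mL^{-1})\norm{x-y}_{\mL}^2$. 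Composing the two steps produces the desired per-summand bound, and summing with the normalization identity $\tfrac{1}{n}\sum_i C_i = 1$ finishes the proof.

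The main obstacle is conceptual rather than computational: one must resist the temptation to apply the triangle inequality to $\norm{\cdot}_{\mL^{-1}}$ directly, which would yield a sum of square roots and a constant that cannot be matched to the multiplicative eigenvalue product in the stated identity. Applying Jensen to the \emph{squared} norm is what makes the three $\lambda_{\max}$ factors combine cleanly and align with the identity at equality; this is also why the constraint in the proposition takes the form of an equality to $1$ rather than an inequality with square roots.
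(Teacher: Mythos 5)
Your proof is correct and follows essentially the same route as the paper's: Jensen's inequality applied to the squared $\mL^{-1}$-norm of the average, followed by two eigenvalue-based norm changes and the per-component Lipschitz assumption. The only cosmetic difference is that you obtain the factor $\lambda_{\max}(\mL^{-1})\lambda_{\max}(\mL_i)$ by passing through the Euclidean norm, whereas the paper first derives the tighter constant $\lambda_{\max}\bigl(\mL_i^{1/2}\mL^{-1}\mL_i^{1/2}\bigr)$ via a reweighting trick and only then loosens it to the same product using the submultiplicativity of $\lambda_{\max}$.
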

\begin{remark}
    In the scalar case, where $\mL = L\mI_d$, $\mL_i = L_i\mI_d$, the relation becomes 
        $L^2 = \frac{1}{n}\sum_{i=1}^{n} L_i^2.$
    This corresponds to the statement in Assumption 1.2 in \citep{gorbunov2021marina}.
\end{remark}
Nevertheless, the matrix $\mL$ found according to \Cref{ppst:4} is only an estimate.  
In principle, there might exist a better $\mL_f \preceq \mL$ such that $f$ has 
$\mL_f$-Lipschitz gradient. 

More generally, this condition can be interpreted as follows. 
The gradient of $f$ naturally belongs to the dual space of $\R^d$, 
as it is defined as a linear functional on $\R^d$. 
In the scalar case,  $\ell_2$-norm is self-dual, thus \eqref{eq:assmp:3} reduces to the standard Lipschitz continuity of the gradient. However, with the matrix smoothness assumption, we are using the $\mL$-norm for the iterates,  which naturally induces the $\mL^{-1}$-matrix norm for the gradients in the dual space. 
This insight, which is originally presented by \citet{nemirovskij1983problem}, plays a key role in our analysis.

See \Cref{app:smoothness} for a more thorough discussion on the properties of \Cref{assmp:4}, as well as its connection to matrix smoothness \citep{safaryan2021smoothness}.

\section{\texorpdfstring{MARINA-based variance reduction}{MARINA-based variance reduction}}
\label{sec:marina}
In this section, we present our algorithm {\detmarina} with its convergence result.
We construct a sequence of vectors $g^k$ which are stochastic estimators of $\nabla f(x^k)$. 
At each iteration, the server samples a Bernoulli random variable (coin flip) $c_k$ and broadcasts it in parallel to the clients, along with the current gradient estimate $g^k$.
Each client, then, does a {\detcgd}-type update with the stepsize $\mD$ and a gradient estimate $g^k$. 
The next gradient estimate $g^{k+1}$ is then computed. With a low probability, that is when $c_k = 1$, we take the $g^{k+1}$ to be the full gradient $\nabla f(x^{k+1})$. Otherwise, we update it using the compressed gradient differences at each client. 
See Algorithm~\ref{alg:detCGD-MARINA} for the pseudocode of \detmarina.

\begin{algorithm}
   \caption{\detmarina}
   \label{alg:detCGD-MARINA}
   \begin{algorithmic}[1]
   \STATE {\bf Input:} starting point $x^0$, stepsize matrix $\mD$, probability $p \in (0, 1]$, number of iterations $K$
   \STATE Initialize $g^0 = \nabla f(x^0)$
   \FOR {$k=0,1,\ldots,K-1$}
      \STATE Sample $c_k \sim \text{Be}(p)$
      \STATE Broadcast $g^k$ to all workers
      \FOR {$i=1,2,\ldots$ in parallel} 
         \STATE $x^{k+1} = x^k - \mD \cdot g^k$
         \IF {$c_k = 1$} 
         \STATE $g_i^{k+1} = \nabla f_i(x^{k+1})$
         \ELSE 
         \STATE $g_i^{k+1} = g^k + \mS_i^k\left(\nabla f_i(x^{k+1}) - \nabla f_i(x^k)\right)$
         \ENDIF
      \ENDFOR
      \STATE $g^{k+1} = \frac{1}{n}\sum_{i=1}^{n} g_i^{k+1}$
   \ENDFOR
   \STATE {\bf Return:} $\tilde{x}^K$ chosen uniformly at random from $\{x^k\}_{k=0}^{K-1}$
   \end{algorithmic}
\end{algorithm}

\subsection{Convergence guarantees}
In the following theorem, we formulate one of  the main results of this paper, which guarantees the convergence of \Cref{alg:detCGD-MARINA} under the above-mentioned assumptions.
\begin{theorem}
   \label{thm:1:detCGDVR1}
   Assume that Assumptions \ref{assmp:1} and \ref{assmp:4} hold, and the following condition on stepsize matrix $\mD \in \bbS^d_{++}$ holds, 
   \begin{equation}
      \label{eq:stepsize-cond-thm1}
      \mD^{-1} \succeq \left(\frac{(1-p)\cdot R(\mD, \cS)}{np} + 1\right)\mL,
   \end{equation}
   where 
   $
      R(\mD, \cS) \eqdef \frac{1}{n}\sum_{i=1}^{n} \lambda_{\max}\left(\mL_i\right)\lambda_{\max}\left(\mL^{-\frac{1}{2}}\mL_i\mL^{-\frac{1}{2}}\right)$  $\times\lambda_{\max}\left(\Exp{\mS_i^k\mD\mS_i^k} - \mD\right).$
   Then, after $K$ iterations of {\detmarina}, we have 
   \begin{equation}
      \label{eq:thm1_result}
      \Exp{\norm{\nabla f(\tilde{x}^K)}^2_{\frac{\mD}{\det(\mD)^{1/d}}}} \leq \frac{2\left(f(x^0) - f^{\star}\right)}{\det(\mD)^{1/d}\cdot K}.
   \end{equation}
   Here, $\tilde{x}^K$ is chosen uniformly randomly  from the first $K$ iterates of the algorithm.
\end{theorem}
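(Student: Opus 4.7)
The plan is to combine a standard descent lemma arising from matrix smoothness with a Lyapunov argument tailored to the biased-coin variance-reduction construction of \detmarina. First I would use \Cref{assmp:4} on $f$ to obtain the inequality $f(x^{k+1}) \le f(x^k) + \inp{\nabla f(x^k)}{x^{k+1}-x^k} + \tfrac12 \norm{x^{k+1}-x^k}_{\mL}^2$, substitute the update $x^{k+1}-x^k = -\mD g^k$, and take conditional expectation. Since $\mathbb{E}[\mS_i^k] = \mI_d$, the estimator $g^k$ is conditionally unbiased, and the linear term becomes $-\norm{\nabla f(x^k)}_{\mD}^2$ while the quadratic term gives $\tfrac12\, \mathbb{E}\bigl[\inp{\mD g^k}{\mL \mD g^k}\bigr] = \tfrac12 \norm{\nabla f(x^k)}_{\mD\mL\mD}^2 + \tfrac12 \mathbb{E}\bigl[\norm{g^k - \nabla f(x^k)}_{\mD\mL\mD}^2\bigr]$ after separating mean from variance.

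Next I would derive a recursive bound for $V^k \eqdef \mathbb{E}\bigl[\norm{g^k - \nabla f(x^k)}^2_{\mQ}\bigr]$ for a suitable weighting matrix $\mQ$ (naturally $\mQ = \mD\mL\mD$ or $\mL^{-1}$, to match the descent term). Conditioning on the coin flip $c_k$: with probability $p$ the estimator is refreshed so its variance vanishes, and with probability $1-p$ we have $g^{k+1} - \nabla f(x^{k+1}) = \bigl(g^k - \nabla f(x^k)\bigr) + \tfrac1n\sum_i (\mS_i^k - \mI_d)\bigl(\nabla f_i(x^{k+1}) - \nabla f_i(x^k)\bigr)$. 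Using independence of the sketches across clients and their unbiasedness, the cross term vanishes, leaving $V^{k+1} \le (1-p)\,V^k + \tfrac{1-p}{n^2}\sum_i \mathbb{E}\bigl[\norm{(\mS_i^k-\mI_d)(\nabla f_i(x^{k+1})-\nabla f_i(x^k))}^2_{\mQ}\bigr]$. The per-client sketch-variance term is $v_i^\top (\mathbb{E}[\mS_i^k \mQ \mS_i^k] - \mQ)\, v_i$ with $v_i = \nabla f_i(x^{k+1}) - \nabla f_i(x^k)$; I would bound this by $\lambda_{\max}(\mathbb{E}[\mS_i^k\mQ\mS_i^k]-\mQ)\,\norm{v_i}^2$ and then convert $\norm{v_i}^2$ to $\norm{x^{k+1}-x^k}^2_{\mL}$ via the $\mL_i$-Lipschitz gradient of $f_i$ together with the relation between $\mL_i$ and $\mL$ supplied by \Cref{ppst:4}; this is precisely where the quantity $R(\mD,\cS)$ appears.

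With these two ingredients I would build a Lyapunov sequence $\Phi^k \eqdef f(x^k) - f^{\star} + \beta\, V^k$ and choose $\beta$ so that the recursion collapses into $\mathbb{E}[\Phi^{k+1}] \le \mathbb{E}[\Phi^k] - \tfrac{1}{2\,\det(\mD)^{1/d}}\,\mathbb{E}\bigl[\norm{\nabla f(x^k)}^2_{\mD/\det(\mD)^{1/d}}\bigr]$; the algebra that produces a nonnegative coefficient on $\norm{\nabla f(x^k)}^2_{\mD}$ will be exactly the stepsize condition \eqref{eq:stepsize-cond-thm1} (the $1$ inside the parenthesis absorbs the $\mD \mL \mD \preceq \mD$ term from the descent lemma, while the $(1-p) R(\mD,\cS)/(np)$ factor absorbs the variance coefficient carried by $\beta$). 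Telescoping from $k=0$ to $K-1$, using $V^0 = 0$ since $g^0 = \nabla f(x^0)$, and dividing by $K$ yields the bound on $\frac1K\sum_k \mathbb{E}[\norm{\nabla f(x^k)}^2_{\mD/\det(\mD)^{1/d}}]$, which equals $\mathbb{E}[\norm{\nabla f(\tilde x^K)}^2_{\mD/\det(\mD)^{1/d}}]$ by the uniform random choice of $\tilde x^K$.

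The main obstacle will be choosing $\beta$ and managing the matrix-norm equivalences so that the variance term generated by the descent step and the variance term generated by the sketch both fit under a single clean condition on $\mD$. Concretely, passing from $\mD\mL\mD$ and sketch-variance matrices to scalar multiples of $\mL$ (and eventually $\mD^{-1}$) requires repeated use of inequalities of the form $\mA \preceq \lambda_{\max}(\mL^{-1/2}\mA\mL^{-1/2})\,\mL$; the factor $\lambda_{\max}(\mL^{-1/2}\mL_i\mL^{-1/2})$ that appears in $R(\mD,\cS)$ is the trace of this bookkeeping, and verifying that the chosen $\beta$ is compatible with \eqref{eq:stepsize-cond-thm1} is the delicate step of the argument.
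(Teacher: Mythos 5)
Your overall route---a descent inequality from matrix smoothness, a conditional-expectation recursion for $\Exp{\norm{g^k-\nabla f(x^k)}^2}$ driven by the coin flip and the independence/unbiasedness of the sketches, a Lyapunov function $f(x^k)-f^\star+\beta\,\Exp{\norm{g^k-\nabla f(x^k)}^2}$, and telescoping with $V^0=0$---is exactly the paper's. The variance recursion, the per-client bound $v_i^\top\left(\Exp{\mS_i^k\mQ\mS_i^k}-\mQ\right)v_i$, and the eigenvalue bookkeeping that produces $R(\mD,\cS)$ all match the paper's argument.

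The one genuine flaw is in your first step. You assert that because $\Exp{\mS_i^k}=\mI_d$ the estimator $g^k$ is conditionally unbiased, so that the linear term becomes $-\norm{\nabla f(x^k)}^2_{\mD}$ and the quadratic term splits into mean plus variance in the $\mD\mL\mD$-norm. This is false for a \texttt{MARINA}-type estimator: the sketches make the \emph{increment} $\mS_i^k\left(\nabla f_i(x^{k+1})-\nabla f_i(x^k)\right)$ conditionally unbiased for the gradient difference, but $g^k$ itself satisfies only $\Exp{g^{k}-\nabla f(x^{k})}=(1-p)\Exp{g^{k-1}-\nabla f(x^{k-1})}$, i.e.\ it is unbiased only unconditionally. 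Conditioned on the history that determines $x^k$ (and $x^k$ is a deterministic function of $g^{k-1}$), $g^k$ carries the accumulated drift $(1-p)\left(g^{k-1}-\nabla f(x^{k-1})\right)$, which is correlated with $\nabla f(x^k)$; hence $\Exp{\inner{\nabla f(x^k)}{\mD\left(g^k-\nabla f(x^k)\right)}}\neq 0$ in general and your cross term does not vanish. The correct move---and what the paper's \Cref{lemma:1} does---is a purely algebraic completion of the square inside the smoothness inequality, valid pointwise with no unbiasedness at all, giving
\begin{equation*}
f(x^{k+1})\;\le\; f(x^k)-\tfrac12\norm{\nabla f(x^k)}^2_{\mD}+\tfrac12\norm{g^k-\nabla f(x^k)}^2_{\mD}-\tfrac12\norm{x^{k+1}-x^k}^2_{\mD^{-1}-\mL}.
\end{equation*}
Note that the error term is then weighted by $\mD$ rather than $\mD\mL\mD$, and the leftover $-\tfrac12\norm{x^{k+1}-x^k}^2_{\mD^{-1}-\mL}$ is precisely what absorbs the $\tfrac{(1-p)R(\mD,\cS)}{np}\norm{x^{k+1}-x^k}^2_{\mL}$ contribution coming out of the variance recursion, which is how the stepsize condition \eqref{eq:stepsize-cond-thm1} arises. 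With this replacement (and the choice $\beta=1/(2p)$, which makes the coefficients of $\norm{g^k-\nabla f(x^k)}^2_{\mD}$ telescope), your plan becomes the paper's proof.
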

The criterion $\norm{\cdot}^2_{\mD / \det(\mD)^{1/d}}$ is the same as that used in \citet{li2023det}, known as determinant normalization.
The weight matrix of the matrix norm has determinant $1$ after normalization, which makes it comparable to the standard Euclidean norm.
\begin{remark}
   We notice that the right-hand side of the algorithm vanishes with the number of iterations, thus solving the neighborhood issue of the distributed {\detcgd}. 
   Therefore, {\detmarina} is indeed the variance reduced version of {\detcgd} in the distributed setting and has better convergence guarantees.
\end{remark}
\begin{remark}
   \Cref{thm:1:detCGDVR1} implies the following iteration complexity for the algorithm. In order to get an $\varepsilon^2$ stationarity error\footnote{We say a (possibly random) vector $x \in \R^d$ is an $\varepsilon$-stationary point of a possibly non-convex function $f:\R^d \mapsto \R$, if $\Exp{\norm{\nabla f(x)}^2} \leq \varepsilon^2$. The expectation is over the randomness of the algorithm}, the algorithm requires $K$ iterations, with
   \begin{equation*}
      K \geq \frac{2(f(x^0) - f^{\star})}{\det(\mD)^{1/d}\cdot \varepsilon^2}.
   \end{equation*}
\end{remark}

\begin{remark}
   \label{remark:GD}
   In the case where no compression is applied, that is we have $\mS_i^k = \mI_d$, condition \eqref{eq:stepsize-cond-thm1} reduces to $\mD \preceq \mL^{-1}$.        
   The latter is due to $\Exp{\mS^k_i\mD\mS^k_i} = \mD$, which results in $R(\mD, \cS) = 0$. This is expected, since in the deterministic case {\detmarina} reduces to {\gd} with matrix stepsize.
\end{remark}
The convergence condition and rate of matrix stepsize {\gd} can be found in \citep{li2023det}. 
Below we do a sanity check to verify that the convergence condition for scalar {\marina} can be obtained.
\begin{remark} \label{rem:scalar-marina}
   Let us consider the scalar case. 
   That is $\mL_i = L_i\mI_d, \mL = L\mI_d, \mD = \gamma\mI_d$
      and $\omega = \lambda_{\max}\left(\Exp{\left(\mS_i^k\right)^{\top}\mS_i^k}\right) - 1.$
   Then, the condition \eqref{eq:stepsize-cond-thm1}  reduces to 
   \begin{equation*}
      \gamma \leq \brs{L\left(1 + \sqrt{\frac{(1-p)\omega}{pn}}\right)}^{-1}.
   \end{equation*}
\end{remark}
The latter coincides with the stepsize condition of the convergence result of scalar {\marina}.

\subsection{Optimizing the matrix stepsize}
Now let us look at the right-hand side of \eqref{eq:thm1_result}. 
We notice that it decreases in terms of the determinant of the stepsize matrix. 
Therefore, one needs to solve the following optimization problem to find the optimal stepsize:
\begin{eqnarray*}
    \text{minimize } && \log\det(\mD^{-1}) \\
    \text{subject to } && \mD \text{  satisfying  } \eqref{eq:stepsize-cond-thm1}. \\
\end{eqnarray*}
The solution of this constrained minimization problem on $\bbS^d_{++}$ is not explicit. 
In theory, one may show that the constraint \eqref{eq:stepsize-cond-thm1} is convex and attempt to solve the problem numerically. 
However, as stressed by \citet{li2023det}, the similar stepsize condition for {\detcgd} is not easily computed using solvers like {CVXPY} \citep{diamond2016cvxpy}. 
Instead, we may relax the problem to certain linear subspaces of $\bbS^d_{++}$. 
In particular, we fix a matrix $\mW \in \bbS^d_{++}$, and define $\mD \eqdef \gamma\mW$. 
Then, the condition on the matrix $\mD$ becomes a condition for the scalar $\gamma$, which is given in the following corollary.
\begin{corollary}
   \label{ppst:optimal-D-var}
   Let $\mW \in \bbS^d_{++}$, defining $\mD \eqdef \gamma\cdot\mW$, where $\gamma \in \R_{+}$. then the condition in \eqref{eq:stepsize-cond-thm1} reduces to the following condition on $\gamma$
   \begin{equation}
      \label{eq:opt-cond-var}
      \gamma \leq \frac{2\lambda_{\mW}}{1 + \sqrt{1 + 4\alpha\beta\cdot\Lambda_{\mW, \cS}\lambda_{\mW}}},
   \end{equation}
   where $ \Lambda_{\mW, \cS} = \lambda_{\max}\left(\Exp{\mS_i^k\mW\mS_i^k} - \mW\right)$, 
   $ \lambda_{\mW} = \lambda_{\max}^{-1}\big(\mW^\frac12\mL\mW^\frac{1}{2}\big)$, $\alpha = \frac{1 - p}{np}$ and $\beta = \frac{1}{n}\sum_{i=1}^{n}\lambda_{\max}
   \left(\mL_i\right)\cdot\lambda_{\max}\left(\mL^{-1}\mL_i\right)$.
\end{corollary}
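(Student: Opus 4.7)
The plan is a direct substitution-and-scalar-reduction argument. Writing $\mD = \gamma\mW$ with $\gamma>0$ and $\mW \in \bbS^d_{++}$, the first step is to simplify $R(\mD,\cS)$. Since $\E[\mS_i^k\mD\mS_i^k]-\mD = \gamma\bigl(\E[\mS_i^k\mW\mS_i^k]-\mW\bigr)$, linearity of $\lambda_{\max}$ under positive scaling gives $\lambda_{\max}\bigl(\E[\mS_i^k\mD\mS_i^k]-\mD\bigr)=\gamma\,\Lambda_{\mW,\cS}$ (noting that the quantity is independent of $i$ because the sketches share a common distribution $\cS$). Using the similarity of $\mL^{-1}\mL_i$ and $\mL^{-1/2}\mL_i\mL^{-1/2}$, which share the same spectrum, one identifies $R(\gamma\mW,\cS)=\gamma\beta\,\Lambda_{\mW,\cS}$, with $\beta$ as defined in the corollary.

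Substituting this into \eqref{eq:stepsize-cond-thm1} and using $\mD^{-1}=\gamma^{-1}\mW^{-1}$, the condition becomes
\begin{equation*}
\mW^{-1} \succeq \bigl(\alpha\beta\Lambda_{\mW,\cS}\gamma^2 + \gamma\bigr)\mL,
\end{equation*}
where $\alpha=(1-p)/(np)$. Next, I would conjugate by $\mW^{1/2}$: since conjugation by a positive definite matrix preserves the Loewner order, the inequality is equivalent to
\begin{equation*}
\mI_d \succeq \bigl(\alpha\beta\Lambda_{\mW,\cS}\gamma^2 + \gamma\bigr)\mW^{1/2}\mL\mW^{1/2},
\end{equation*}
which is now purely a scalar condition on the largest eigenvalue of $\mW^{1/2}\mL\mW^{1/2}$, namely
\begin{equation*}
\alpha\beta\Lambda_{\mW,\cS}\gamma^2 + \gamma \leq \lambda_{\mW}.
\end{equation*}

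The final step is to solve this quadratic inequality in $\gamma$ for $\gamma>0$. The positive root of $\alpha\beta\Lambda_{\mW,\cS}\gamma^2+\gamma-\lambda_{\mW}=0$ is, by the quadratic formula and a standard rationalization of the numerator, exactly
\begin{equation*}
\gamma^{\star} = \frac{2\lambda_{\mW}}{1+\sqrt{1+4\alpha\beta\Lambda_{\mW,\cS}\lambda_{\mW}}},
\end{equation*}
which yields \eqref{eq:opt-cond-var}. None of the steps is genuinely hard; the main care is in the reduction $\mI_d\succeq c\cdot\mW^{1/2}\mL\mW^{1/2}\Leftrightarrow c\cdot\lambda_{\max}(\mW^{1/2}\mL\mW^{1/2})\leq 1$ and in checking that the similarity argument for $\beta$ is valid, so that the coefficient in the quadratic matches the stated $\beta$. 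Once that is in place, the scalar algebra produces \eqref{eq:opt-cond-var} verbatim.
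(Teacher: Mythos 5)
Your proposal is correct and follows essentially the same route as the paper: substitute $\mD=\gamma\mW$ into \eqref{eq:stepsize-cond-thm1}, conjugate so the condition becomes $\bigl(\alpha\beta\Lambda_{\mW,\cS}\gamma^2+\gamma\bigr)\lambda_{\max}\bigl(\mW^{1/2}\mL\mW^{1/2}\bigr)\leq 1$, and solve the resulting quadratic in $\gamma$ with the rationalized root. The only cosmetic difference is that you explicitly invoke the similarity of $\mL^{-1}\mL_i$ and $\mL^{-1/2}\mL_i\mL^{-1/2}$ to match $R(\mD,\cS)$ with $\beta$, a step the paper performs silently.
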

This means that for every fixed $\mW$, we can find the optimal scaling coefficient $\gamma$. 
In section \Cref{sec:complexities}, we will use this corollary to prove that a suboptimal matrix step size, determined in this efficient way, is already better than the optimal scalar step size.

\textbf{Extension to  {\detcgdtwo}. }
A variant of {\detcgd}, called {\detcgdtwo}, was also proposed by \citet{li2023det}. 
This algorithm, has the same structure as {\detcgd} with the sketch and stepsize interchanged. 
It was shown, that this algorithm has explicit stepsize condition in the single node setting.  
In \Cref{app:detcgd2}, we propose the variance reduced extension of the distributed {\detcgdtwo} following the {\marina} scheme.

\section{\texorpdfstring{DASHA-based variance reduction}{DASHA-based variance reduction}}
\label{sec:dasha}
In this section, we present our second algorithm based on {\dasha}. 
The latter utilizes a different type of variance reduction based on momentum (MVR).
Compared to {\marina}, {dasha} makes simpler optimization steps and does not require periodic synchronization with all the nodes. 
Notice that one may further simplify the notations here used in the algorithm. 
However, we keep it this way as it is consistent with \citep{tyurin2024dasha}.

\begin{algorithm}
  \caption{\detdasha}
  \label{alg:detCGD-DASHA}
  \begin{algorithmic}[1]
  \STATE {\bf Input:} starting point $x^0 \in \R^d$, stepsize matrix $\mD \in \bbS^d_{++}$, momentum $a \in (0, 1]$, number of iterations $K$
  \STATE Initialize $g_i^0, h_i^0 \in \R^d$ on the nodes and $g^0 = \frac{1}{n}\sum_{i=1}^{n} g_i^0$ on the server
  \FOR {$k=0,1,\ldots,K-1$}
      \STATE $x^{k+1} = x^k - \mD\cdot g^k$
      \STATE Broadcast $x^{k+1}$ to all nodes
      \FOR {$i=1,2,\ldots n$ in parallel} 
          \STATE $h_i^{k+1} = \nabla f_i(x^{k+1})$
          \STATE $m_i^{k+1} = \mS_i^k\left(h_i^{k+1} - h_i^k - a\left(g_i^k - h_i^k\right)\right)$
          \STATE $g_i^{k+1} = g_i^k + m_i^{k+1}$
          \STATE Send $m_i^{k+1}$ to the server.
      \ENDFOR
      \STATE $g^{k + 1} = g^k + \frac{1}{n}\sum_{i=1}^{n}m_i^{k+1}$
  \ENDFOR
  \STATE {\bf Return:} $\tilde{x}^K$ chosen uniformly at random from $\{x^k\}_{k=0}^{K-1}$
  \end{algorithmic}
\end{algorithm}

\subsection{Theoretical guarantees}
\begin{theorem}
   \label{dasha:thm:main}
   Suppose that Assumptions \ref{assmp:1} and \ref{assmp:4} hold. Let us initialize $g_i^0 = h_i^0 = \nabla f_i(x^0)$ for all $i \in [n]$ in \Cref{alg:detCGD-DASHA}, and define 
   \begin{align*}
     \label{dasha:eq:def-two-sign}
     \Lambda_{\mD, \cS} = \lambda_{\max}\left(\Exp{\mS_i^k\mD\mS_i^k} - \mD\right), \quad \omega_{\mD} = \lambda_{\max}\left(\mD^{-1}\right)\cdot\Lambda_{\mD, \cS}.
   \end{align*}
   If $a = \frac{1}{2\omega_{\mD} + 1}$, and  the following condition on stepsize $\mD \in \bbS^d_{++}$ is satisfied
   \begin{equation*}
     \label{dasha:eq:cond-D-main-thm}
     \mD^{-1} \succeq \mL - \frac{4\lambda_{\max}\left(\mD\right)\omega_{\mD}\left(4\omega_{\mD} + 1\right)}{n^2}\sum_{i=1}^{n}\lambda_{\max}\left(\mL_i\right)\mL_i,
   \end{equation*}
   then the following inequality holds for the iterates of \Cref{alg:detCGD-DASHA}
   \begin{align*}
     \Exp{\norm{\nabla f(\tilde{x}^K)}^2_{\mD/\left(\det(\mD)\right)^{1/d}}} \leq \frac{2(f(x^0) - f^{\star})}{\det(\mD)^{1/d}\cdot K}.
   \end{align*}
   Here  $\tilde{x}^K$ is chosen uniformly randomly from the first K iterates of the algorithm.
\end{theorem}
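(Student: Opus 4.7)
The plan is to mirror the standard DASHA convergence proof of \citet{tyurin2023dasha}, but carry it out in the matrix-weighted norms adapted to the $\mD$-stepsize / $\mL$-smoothness geometry, just as \citet{li2023det} did for \texttt{det-CGD}. The first step is to apply the matrix descent lemma following from \Cref{assmp:4}: writing $x^{k+1}=x^k-\mD g^k$, one gets
\begin{equation*}
f(x^{k+1}) \le f(x^k) - \inp{\nabla f(x^k)}{\mD g^k} + \tfrac{1}{2}\norm{\mD g^k}_{\mL}^2.
\end{equation*}
Adding and subtracting $\nabla f(x^k)$ inside $g^k$ and completing the square in the $\mD$-inner product produces a per-step inequality of the shape
\begin{equation*}
f(x^{k+1}) \le f(x^k) - \tfrac{1}{2}\norm{\nabla f(x^k)}_{\mD}^2 + \tfrac{1}{2}\norm{g^k-\nabla f(x^k)}_{\mD}^2 - \tfrac{1}{2}\norm{g^k}_{\mD-\mD\mL\mD}^2,
\end{equation*}
where the last term is non-positive as long as $\mD^{-1}\succeq \mL$; since the theorem's stepsize condition weakens this by a negative-semidefinite correction, one keeps track of the correction term explicitly.

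The heart of the proof is controlling the error $G^k := \E\bigl[\norm{g^k-\nabla f(x^k)}_{\mD}^2\bigr]$ and the auxiliary quantity $H^k := \frac{1}{n^2}\sum_{i=1}^n \E\bigl[\norm{g_i^k - h_i^k}_{\mD}^2\bigr]$, by deriving one-step recursions using the MVR structure of \Cref{alg:detCGD-DASHA}. The key identity is that, conditionally on everything up to step $k$,
\begin{equation*}
\E[g^{k+1}-\nabla f(x^{k+1})] = (1-a)\bigl(g^k-\nabla f(x^k)\bigr) + \bigl(\text{bias from } \nabla f\text{ differences}\bigr),
\end{equation*}
while the conditional variance of $g^{k+1}$ is controlled by the compression constant $\Lambda_{\mD,\cS}$ acting on the vectors $h_i^{k+1}-h_i^k-a(g_i^k-h_i^k)$. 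Expanding the square, using $\lambda_{\max}(\mD^{-1})\Lambda_{\mD,\cS}=\omega_{\mD}$, and invoking the per-component smoothness $\norm{\nabla f_i(x^{k+1})-\nabla f_i(x^k)}^2 \le \lambda_{\max}(\mL_i)\norm{x^{k+1}-x^k}^2_{\mL_i}$ gives a recursion roughly of the form
\begin{equation*}
G^{k+1} \le (1-a) G^k + C_1 a^2 H^k + C_2 \lambda_{\max}(\mD)\omega_{\mD}\cdot\tfrac{1}{n^2}\sum_{i}\lambda_{\max}(\mL_i)\norm{g^k}_{\mL_i}^2,
\end{equation*}
and similarly for $H^{k+1}$, with the choice $a=1/(2\omega_{\mD}+1)$ tuned so that the $(1-a)$ contraction of $G^k$ exactly absorbs the freshly generated noise.

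The final step is to construct the Lyapunov function
\begin{equation*}
\Psi^k \eqdef f(x^k)-f^\star + \alpha\,G^k + \beta\,H^k
\end{equation*}
and pick $\alpha,\beta > 0$ so that combining the descent inequality with the two recursions yields
\begin{equation*}
\E[\Psi^{k+1}] \le \E[\Psi^k] - \tfrac{1}{2}\E\bigl[\norm{\nabla f(x^k)}_{\mD}^2\bigr],
\end{equation*}
provided the $\norm{g^k}_{\mL_i}^2$ residuals are absorbed by $\tfrac{1}{2}\norm{g^k}_{\mD-\mD\mL\mD}^2$. This absorption is precisely what dictates the stepsize condition in the statement: the subtracted matrix $\frac{4\lambda_{\max}(\mD)\omega_{\mD}(4\omega_{\mD}+1)}{n^2}\sum_i\lambda_{\max}(\mL_i)\mL_i$ accounts for the cumulative cost of noise entering through MVR plus compression. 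Telescoping the Lyapunov inequality over $k=0,\ldots,K-1$ with $G^0=H^0=0$ (guaranteed by the initialization $g_i^0=h_i^0=\nabla f_i(x^0)$), dividing by $K$, and rewriting $\norm{\cdot}_{\mD}^2 = \det(\mD)^{1/d}\norm{\cdot}_{\mD/\det(\mD)^{1/d}}^2$ with a final factor to match the normalization, delivers the claimed bound for $\tilde{x}^K$ sampled uniformly.

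The main obstacle I anticipate is tuning the constants $\alpha,\beta$ and the momentum $a$ so that the two noise recursions and the descent inequality form a clean contraction in the matrix-norm setting; in particular, turning Cauchy--Schwarz bounds into matrix inequalities of the form $\mX \preceq c\mY$ (rather than scalar ones) is where the switch from the original \texttt{DASHA} proof becomes non-trivial, and where the inequalities $\Lambda_{\mD,\cS}\le \lambda_{\max}(\mD)\omega_{\mD}$ and $\lambda_{\max}(\mL_i)\mL_i \succeq \text{(per-client smoothness matrix)}$ must be used carefully to land exactly on the given stepsize condition.
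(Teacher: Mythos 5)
Your proposal follows essentially the same route as the paper's proof: the matrix descent lemma giving $-\tfrac12\norm{\nabla f(x^k)}_{\mD}^2+\tfrac12\norm{g^k-\nabla f(x^k)}_{\mD}^2$ minus a $\mD^{-1}-\mL$ penalty, one-step recursions for the server-level and per-client estimator errors driven by $\Lambda_{\mD,\cS}$ and $\omega_{\mD}$, a Lyapunov function $f(x^k)-f^\star$ plus weighted error terms with the contraction tuned by $a=1/(2\omega_{\mD}+1)$, absorption of the residual $\sum_i\lambda_{\max}(\mL_i)\norm{x^{k+1}-x^k}^2_{\mL_i}$ terms into the descent penalty to produce exactly the stated stepsize condition, and telescoping with $\Phi_0=f(x^0)-f^\star$ followed by determinant normalization. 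The only cosmetic difference is that you track $\E\norm{g^k-\nabla f(x^k)}^2_{\mD}$ directly while the paper tracks $\E\norm{g^k-h^k}^2_{\mD}$ and notes $h^k=\nabla f(x^k)$ identically; these coincide here, so the arguments are the same.
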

\begin{remark}
   The term $\Lambda_{\mD, \cS}$ can be viewed as the matrix version of $\gamma \cdot \omega$, where $\omega$ is associated with the sketch, and $\gamma$ is the scalar stepsize. On the other hand, the $\omega_{\mD}$ is the extension of $\omega$ in matrix norm. Similar to \Cref{rem:scalar-marina}, plugging in scalar arguments in the algorithm, we recover the result from \cite{tyurin2024dasha}.
\end{remark}
Following the same scheme as in \Cref{sec:marina}, we choose $\mD = \gamma_{\mW} \cdot \mW$, where $\mW \in \bbS^d_{++}$. 
Thus, for a fixed $\mW$, we relax the problem of finding the optimal stepsize to the problem of finding the optimal scaling factor $\gamma_{\mW} > 0$.
\begin{corollary}
   \label{dasha:col:scaling}
   For a fixed $\mW \in \bbS^d_{++}$, the optimal scaling factor $\gamma_{\mW} \in \R_{+}$ is given by 
   \begin{equation*}
     \gamma_{\mW} = \frac{2\lambda_{\mW}}{1 + \sqrt{1 + 16C_{\mW}\lambda_{\min}\left(\mL\right)\cdot\lambda_{\mW}}},
   \end{equation*}
   where $C_{\mW} := {\lambda_{\max}\left(\mW\right)\cdot\omega_{\mW}\left(4\omega_{\mW} + 1\right)}/n$,
   and $\lambda_{\mW} := \lambda_{\max}^{-1}\left(\mL^\frac{1}{2}\mW\mL^\frac{1}{2}\right)$.
\end{corollary}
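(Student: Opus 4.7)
The plan is to substitute $\mD = \gamma_{\mW}\mW$ into the stepsize condition of Theorem~\ref{dasha:thm:main}, turn it into a scalar quadratic inequality in $\gamma_{\mW}$, and solve by the quadratic formula, mirroring the derivation of Corollary~\ref{ppst:optimal-D-var}. The key homogeneity observation is that $\Lambda_{\mD,\cS} = \gamma_{\mW}\Lambda_{\mW,\cS}$ while $\lambda_{\max}(\mD^{-1}) = \gamma_{\mW}^{-1}\lambda_{\max}(\mW^{-1})$, so the product $\omega_{\mD} = \omega_{\mW}$ is $\gamma_{\mW}$-invariant; combined with $\lambda_{\max}(\mD) = \gamma_{\mW}\lambda_{\max}(\mW)$, the condition becomes explicit in $\gamma_{\mW}$, with a $\gamma_{\mW}^{-1}$ on the left and a term linear in $\gamma_{\mW}$ on the right.

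To turn this matrix condition into a scalar one, I would pre- and post-multiply by $\mW^{1/2}$ and take $\lambda_{\max}$ of both sides, using $\lambda_{\max}(\mW^{1/2}\mL\mW^{1/2}) = 1/\lambda_{\mW}$ (the spectra of $\mW^{1/2}\mL\mW^{1/2}$ and $\mL^{1/2}\mW\mL^{1/2}$ coincide by similarity). The main obstacle is then to bound
\begin{equation*}
  M \;\eqdef\; \lambda_{\max}\!\Bigl(\mW^{1/2}\Bigl(\sum_{i=1}^n \lambda_{\max}(\mL_i)\mL_i\Bigr)\mW^{1/2}\Bigr)
\end{equation*}
so that a $\lambda_{\min}(\mL)$ factor emerges as in the statement. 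The naive bound $\lambda_{\max}(\mW^{1/2}\mL_i\mW^{1/2}) \leq \lambda_{\max}(\mW)\lambda_{\max}(\mL_i)$ does not suffice. Instead, the key move is to route through the congruence $\mW^{1/2}\mL_i\mW^{1/2} = (\mW^{1/2}\mL^{1/2})(\mL^{-1/2}\mL_i\mL^{-1/2})(\mL^{1/2}\mW^{1/2})$ together with $\mL^{-1/2}\mL_i\mL^{-1/2} \preceq \lambda_{\max}(\mL_i\mL^{-1})\mI$, which yields $\lambda_{\max}(\mW^{1/2}\mL_i\mW^{1/2}) \leq \lambda_{\max}(\mL_i\mL^{-1})/\lambda_{\mW}$. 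Summing against $\lambda_{\max}(\mL_i)$ and invoking Proposition~\ref{ppst:4} in the equivalent form $\sum_i \lambda_{\max}(\mL_i)\lambda_{\max}(\mL_i\mL^{-1}) = n\lambda_{\min}(\mL)$ then collapses this to $M \leq n\lambda_{\min}(\mL)/\lambda_{\mW}$.

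Plugging this bound into the scalar inequality and collecting the constants using the definition of $C_{\mW}$ produces the quadratic
\begin{equation*}
  4C_{\mW}\lambda_{\min}(\mL)\,\gamma_{\mW}^{2} + \gamma_{\mW} - \lambda_{\mW} \;\leq\; 0.
\end{equation*}
The largest admissible $\gamma_{\mW}$ is the positive root of this quadratic, namely $(-1 + \sqrt{1 + 16C_{\mW}\lambda_{\min}(\mL)\lambda_{\mW}})/(8C_{\mW}\lambda_{\min}(\mL))$, which after rationalising the numerator (multiplying top and bottom by $1 + \sqrt{1 + 16C_{\mW}\lambda_{\min}(\mL)\lambda_{\mW}}$) matches the closed-form expression stated in the corollary.
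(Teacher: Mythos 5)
Your proposal is correct and follows the same overall strategy as the paper: substitute $\mD = \gamma_{\mW}\mW$, observe that $\omega_{\mD}=\omega_{\mW}$ is scale-invariant, reduce the matrix condition of Theorem~\ref{dasha:thm:main} to the scalar quadratic $4C_{\mW}\lambda_{\min}(\mL)\gamma_{\mW}^2+\gamma_{\mW}-\lambda_{\mW}\le 0$, and solve. The one place where you diverge is the treatment of the term $\frac{1}{n}\sum_i\lambda_{\max}(\mL_i)\mL_i$. The paper congruences the condition by $\mL^{-1/2}$ and invokes Proposition~\ref{dasha:lemma:global-smooth}, under which $\frac{1}{n}\sum_i\lambda_{\max}(\mL_i)\mL_i=\lambda_{\min}(\mL)\cdot\mL$ holds as an exact identity, so this term collapses to $\lambda_{\min}(\mL)\mI_d$ and one then takes $\lambda_{\min}$ of both sides. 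You instead congruence by $\mW^{1/2}$, take $\lambda_{\max}$, and bound the sum via $\mW^{1/2}\mL_i\mW^{1/2}\preceq\lambda_{\max}(\mL_i\mL^{-1})\,\mW^{1/2}\mL\mW^{1/2}\preceq\lambda_{\max}(\mL_i\mL^{-1})\lambda_{\mW}^{-1}\mI$ together with the scalar normalization of Proposition~\ref{ppst:4}, arriving at $M\le n\lambda_{\min}(\mL)/\lambda_{\mW}$ and hence the same quadratic. Both routes are valid; yours trades the exact identity for an inequality but has the mild advantage of resting on the normalization of $\mL$ stated in the main assumptions section (Proposition~\ref{ppst:4}) rather than on the \texttt{DASHA}-specific normalization of Proposition~\ref{dasha:lemma:global-smooth}, which is a different (and not equivalent) way of fixing $\mL$. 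The final closed form and the direction of implication (quadratic $\Rightarrow$ matrix stepsize condition) are both handled correctly.
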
 
We observe that the  structure of the optimal scaling factor for obtained above  is similar to the one obtained in \Cref{ppst:optimal-D-var}.

\paragraph{The availability of $\mL$:}
For both {\detmarina} and {\detdasha}, in order to determine the matrix stepsize, the knowledge of $\mL$ is needed, if $\mL$ is known, better complexities are guaranteed.
When $\mL$ is unknown, a closed-form solution can be obtained for generalized linear models.
In more general cases, $\mL_i$ can be treated as hyperparameters and estimated using first-order information via a gradient-based method \citep{wang2022theoretically}.
One can think of this as some type of preprocessing step, after which the matrices are learnt.

\section{Complexities of the algorithms}\label{sec:complexities}

\subsection{det-MARINA}

The following corollary formulates the iteration complexity for {\detmarina} for $\mW = \mL^{-1}$.
\begin{corollary}
   \label{col:iteration-comp-L-inv}
   If we take $\mW = \mL^{-1}$, then the condition \eqref{eq:opt-cond-var} on $\gamma$ is given by
   \begin{equation}
      \label{eq:max-gamma-L-inv}
      \gamma \leq {2}\brr{1 + \sqrt{1 + 4\alpha\beta\cdot\Lambda_{\mL^{-1},\cS}}}^{-1}.
   \end{equation}
   In order to satisfy $\varepsilon$-stationarity, that is $\Exp{\norm{\nabla f(\tilde{x}^K)}^2_{\frac{\mD }{ \det(\mD)^{1/d}}}} \leq \varepsilon^2$, we require
   \begin{equation*}
      K \geq \cO\left(\frac{\Delta_0\cdot\det(\mL)^\frac{1}{d}}{\varepsilon^2} \cdot {\left(1 + \sqrt{1 + 4\alpha\beta\cdot\Lambda_{\mL^{-1},\cS}}\right)}\right),
   \end{equation*}
   where $\Delta_0 \eqdef f(x^0) - f(x^{\star})$.  
   Moreover, this iteration complexity is always better than the one of {\marina}.
\end{corollary}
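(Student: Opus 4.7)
The proof has three pieces, which I would handle in sequence. The first two are direct substitutions into earlier results; the third is the only step requiring some work.

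For the stepsize bound \eqref{eq:max-gamma-L-inv}, I would apply \Cref{ppst:optimal-D-var} with $\mW = \mL^{-1}$. The key computation is $\mW^{1/2}\mL\mW^{1/2} = \mL^{-1/2}\mL\mL^{-1/2} = \mI_d$, so $\lambda_{\mW} = \lambda_{\max}^{-1}(\mI_d) = 1$. Plugging this value into \eqref{eq:opt-cond-var} immediately yields \eqref{eq:max-gamma-L-inv}.

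For the iteration complexity, I would invoke the remark following \Cref{thm:1:detCGDVR1}, which gives $K \geq 2\Delta_0/(\det(\mD)^{1/d}\varepsilon^2)$. For $\mD = \gamma\mL^{-1}$, multiplicativity of the determinant gives $\det(\mD)^{1/d} = \gamma/\det(\mL)^{1/d}$. Substituting the maximal feasible $\gamma$ from \eqref{eq:max-gamma-L-inv} produces the stated bound, with the absolute constant absorbed into $\cO(\cdot)$.

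The comparison with \marina{} is the most substantive step. I would cast both complexities as $\Delta_0 \cdot \Phi / \varepsilon^2$ and compare the pre-factors. The \marina{} pre-factor, specialized to the scalar smoothness induced by matrix smoothness, is $L_{\marina}(1 + \sqrt{(1-p)\omega/(np)})$ with $L_{\marina} = \sqrt{\tfrac{1}{n}\sum_i \lambda_{\max}(\mL_i)^2}$, while the \detmarina{} pre-factor is $\det(\mL)^{1/d}\bigl(1 + \sqrt{1 + 4\alpha\beta\Lambda_{\mL^{-1},\cS}}\bigr)$. For the smoothness factor, AM-GM on the eigenvalues of $\mL$ gives $\det(\mL)^{1/d} \leq \tr(\mL)/d \leq \lambda_{\max}(\mL)$, and under matrix smoothness $\lambda_{\max}(\mL) \leq L_{\marina}$ follows from \Cref{ppst:4} and the scalar reduction stated in the remark that follows it. For the sketch-variance factor, I would use the domination $\mS\mL^{-1}\mS \preceq \lambda_{\max}(\mL^{-1})\,\mS^{\top}\mS$ (valid because $\mS$ is positive semi-definite) to bound $\Lambda_{\mL^{-1},\cS}$ by $\lambda_{\max}(\mL^{-1})\cdot\omega$ up to a lower-order term, and then absorb the resulting $\lambda_{\max}(\mL^{-1})$ into $\beta$ via its definition in \Cref{ppst:optimal-D-var}.

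The main obstacle is this last chain of inequalities, because the two variance parameters live in different norms (Euclidean for $\omega$, $\mL^{-1}$-weighted for $\Lambda_{\mL^{-1},\cS}$), and matching the constants cleanly requires careful bookkeeping of eigenvalue ratios between $\mL$ and the $\mL_i$'s. Once both factor comparisons are in place, the product inequality $\Phi_{\detmarina} \leq \Phi_{\marina}$ follows and the proof is complete.
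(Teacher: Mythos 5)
Your first two steps match the paper exactly: with $\mW=\mL^{-1}$ one gets $\lambda_{\mW}=\lambda_{\max}^{-1}(\mI_d)=1$ in \eqref{eq:opt-cond-var}, and $\det(\mD)^{1/d}=\gamma/\det(\mL)^{1/d}$ converts the generic bound $K\ge 2\Delta_0/(\det(\mD)^{1/d}\varepsilon^2)$ into the stated complexity. The comparison with {\marina}, however, has two genuine problems. The first is the smoothness factor: you assert $\lambda_{\max}(\mL)\le L_{\marina}$ with $L_{\marina}=\sqrt{\tfrac1n\sum_i\lambda_{\max}(\mL_i)^2}$, citing \Cref{ppst:4}, but the normalization in \Cref{ppst:4} forces the \emph{reverse} inequality. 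Indeed, $\lambda_{\max}(\mL_i\mL^{-1})\ge\lambda_{\max}(\mL_i)\lambda_{\min}(\mL^{-1})=\lambda_{\max}(\mL_i)/\lambda_{\max}(\mL)$ and $\lambda_{\max}(\mL^{-1})=1/\lambda_{\min}(\mL)$, so the identity $\tfrac1n\sum_i\lambda_{\max}(\mL^{-1})\lambda_{\max}(\mL_i)\lambda_{\max}(\mL_i\mL^{-1})=1$ gives $\tfrac1n\sum_i\lambda_{\max}(\mL_i)^2\le\lambda_{\min}(\mL)\lambda_{\max}(\mL)\le\lambda_{\max}(\mL)^2$, i.e.\ $L_{\marina}\le\lambda_{\max}(\mL)$. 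The paper avoids this issue entirely by taking the {\marina} complexity \eqref{eq:comp-marina} with $L=\lambda_{\max}(\mL)$ (the scalar Lipschitz constant supplied by \Cref{ppst:mat-scl}), so the only fact needed on this side is $\det(\mL)^{1/d}\le\lambda_{\max}(\mL)$.

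The second problem is the variance factor, and it is fatal to the word ``always'' in the claim. The domination $\mS\mL^{-1}\mS\preceq\lambda_{\max}(\mL^{-1})\,\mS^{\top}\mS$ only yields $\Exp{\mS\mL^{-1}\mS}-\mL^{-1}\preceq\lambda_{\max}(\mL^{-1})\left(\Exp{\mS^{\top}\mS}-\mI_d\right)+\left(\lambda_{\max}(\mL^{-1})\mI_d-\mL^{-1}\right)$, hence $\Lambda_{\mL^{-1},\cS}\le\lambda_{\max}(\mL^{-1})(\omega+1)$ rather than $\lambda_{\max}(\mL^{-1})\,\omega$. The additive $+1$ you dismiss as a lower-order term survives into the comparison of $1+\sqrt{\alpha\beta\Lambda_{\mL^{-1},\cS}}$ with $1+\sqrt{\alpha\omega}$ and can flip its sign when $\omega$ is small, so the strict dominance is not established. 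The paper obtains the slack-free bound by a different mechanism: it shows that $\mX\mapsto\lambda_{\max}\left(\Exp{\mS\mX\mS}-\mX\right)$ is monotone on $\bbS^d_{++}$ (via convexity of $\mS\mapsto\mS\mX\mS$) and applies this to $\mL^{-1}/\lambda_{\max}(\mL^{-1})\preceq\mI_d$, giving $\Lambda_{\mL^{-1},\cS}/\lambda_{\max}(\mL^{-1})\le\omega$ exactly; combined with the \Cref{ppst:4} identity this is precisely $\beta\Lambda_{\mL^{-1},\cS}\le\omega$, and with $\sqrt{1+4t}\le1+2\sqrt{t}$ the comparison closes. You would need to replace your semidefinite domination with this monotonicity argument (or an equivalent exact bound) for the last assertion of the corollary to hold.
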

The proof can be found in the Appendix.
In fact, we can show that in cases where we fix $\mW = \mI_d$ and $\mW = \diag^{-1}\left(\mL\right)$, the same conclusion also holds, relevant details can be found in \Cref{sec:cmp-step}. 
This essentially means that {\detmarina} always has a ``larger" stepsize compared to {\marina}, even if the stepsize is suboptimal for the sake of efficiency, which leads to a better iteration complexity. 
In addition, because we are using the same compressor for those two algorithms, the communication complexity of {\detmarina} is also provably better than that of {\marina}.

In order to compute the communication complexity, we borrow the concept of expected 
density from \citet{gorbunov2021marina}.
\begin{definition}
   For a given sketch matrix $\mS \in \bbS^d_{+}$, the expected density is defined as
   \begin{equation*}
      \label{eq:def-exp-density}
      \zeta_{\mS} = \sup_{x \in \R^d}\Exp{\norm{\mS x}_0},
   \end{equation*}
   where $\norm{x}_0$ denotes the number of non-zero components of $x \in \R^d$.
\end{definition}
In particular, we have $\zeta_{\texttt{Rand-$\tau$}} = \tau$. 
Below, we state the communication complexity of {\detmarina} with $\mW = \mL^{-1}$ and the Rand-$\tau$ compressor.

\begin{corollary}
   \label{col:communication-comp}
   Assume that we are using sketch $\mS \sim \cS$ with expected density $\zeta_{\cS}$. 
   Suppose also we are running {\detmarina} with probability $p$ and we use the optimal stepsize matrix with respect to $\mW = \mL^{-1}$. 
   Then the overall communication complexity of the algorithm is given by $\cO\big((Kp+1)d + (1-p)K\zeta_{\cS}\big)$. 
   Specifically, if we pick $p = \zeta_{\cS}/d$,
   then the communication complexity is given by 
   \begin{equation*}
      \cO\left(d + \frac{\Delta_0\det(\mL)^\frac{1}{d}}{\varepsilon^2}\left(\zeta_{\cS} + \sqrt{\frac{\beta}{n}\Lambda_{\mL^{-1}, \cS}\zeta_{\cS}(d - \zeta_{\cS})}\right)\right).
   \end{equation*}
\end{corollary}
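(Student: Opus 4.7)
\textbf{Proof proposal for Corollary \ref{col:communication-comp}.}
The plan is to split the argument into two stages: first establish the generic expected communication bound $\cO((Kp+1)d + (1-p)K\zeta_{\cS})$ by bookkeeping the per-iteration cost of Algorithm \ref{alg:detCGD-MARINA}, and then specialize to $p = \zeta_{\cS}/d$ and plug in the iteration complexity of Corollary \ref{col:iteration-comp-L-inv}.

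\textbf{Stage 1 (per-iteration accounting).} At iteration $k$ of \detmarina, each worker either transmits the full (uncompressed) local gradient $\nabla f_i(x^{k+1})$ when $c_k = 1$, contributing $d$ coordinates, or transmits the sketched difference $\mS_i^k(\nabla f_i(x^{k+1}) - \nabla f_i(x^k))$ when $c_k = 0$. Using the definition of expected density, the latter has at most $\zeta_{\cS}$ nonzero coordinates in expectation, uniformly in the input vector. Conditioning on $c_k$ and using $\Prob(c_k = 1) = p$, the expected per-iteration communication is $pd + (1-p)\zeta_{\cS}$. Summing over $K$ iterations and adding the cost $d$ incurred by the initialization $g^0 = \nabla f(x^0)$ gives the claimed bound $\cO((Kp+1)d + (1-p)K\zeta_{\cS})$.

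\textbf{Stage 2 (choice of $p$).} With $p = \zeta_{\cS}/d$ one has $Kpd = K\zeta_{\cS}$ and $(1-p)K\zeta_{\cS} \leq K\zeta_{\cS}$, so the communication bound reduces to $\cO(d + K\zeta_{\cS})$. It remains to substitute the iteration complexity from Corollary \ref{col:iteration-comp-L-inv}, namely
\[
K = \cO\!\left(\frac{\Delta_0 \det(\mL)^{1/d}}{\varepsilon^2}\left(1 + \sqrt{1 + 4\alpha\beta\,\Lambda_{\mL^{-1},\cS}}\right)\right),
\]
where $\alpha = (1-p)/(np) = (d-\zeta_{\cS})/(n\zeta_{\cS})$ after the substitution. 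The final step is purely algebraic: distributing $\zeta_{\cS}$ inside the bracket and using $\zeta_{\cS}\sqrt{1+4\alpha\beta\Lambda_{\mL^{-1},\cS}} \leq \zeta_{\cS} + \sqrt{4\alpha\beta\Lambda_{\mL^{-1},\cS}\,\zeta_{\cS}^2} = \zeta_{\cS} + 2\sqrt{\beta\Lambda_{\mL^{-1},\cS}\zeta_{\cS}(d-\zeta_{\cS})/n}$ yields exactly the advertised expression.

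\textbf{Expected difficulty.} The argument is essentially bookkeeping; the only subtle point is ensuring that the bound on the expected number of nonzeros in $\mS_i^k (\nabla f_i(x^{k+1}) - \nabla f_i(x^k))$ indeed follows from the definition of $\zeta_{\cS}$ uniformly over the argument, and that conditional independence of $c_k$ and the sketches is properly invoked so that expectations multiply cleanly. After that, the simplification of the square-root term under $p = \zeta_{\cS}/d$ is routine and matches the stated complexity verbatim.
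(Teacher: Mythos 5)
Your proposal is correct and follows essentially the same route as the paper: the per-iteration expected cost $pd+(1-p)\zeta_{\cS}$ plus the initial $d$ gives the generic bound, and the specialization to $p=\zeta_{\cS}/d$ combines the iteration complexity of Corollary~\ref{col:iteration-comp-L-inv} with the inequality $\sqrt{1+4t}\le 1+2\sqrt{t}$ to absorb $\zeta_{\cS}$ into the square root, exactly as in the paper's argument. The algebra $\alpha\zeta_{\cS}^2=\zeta_{\cS}(d-\zeta_{\cS})/n$ checks out, so nothing is missing.
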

Notice that in case where no compression is applied, the communication complexity reduces to $\cO( \nicefrac{d\Delta_0\cdot\det(\mL)^\frac{1}{d}}{\varepsilon^2})$. The latter coincides with the rate of matrix stepsize GD (see \citep{li2023det}).
Therefore, the dependence on $\varepsilon$ is not possible to improve further since {\gd} is optimal among first order methods \citep{carmon2020lower}.

{\begin{figure*}[h]
   \centering
     \subfigure{
       \begin{minipage}[t]{0.96\textwidth}
         \includegraphics[width=0.32\textwidth]{./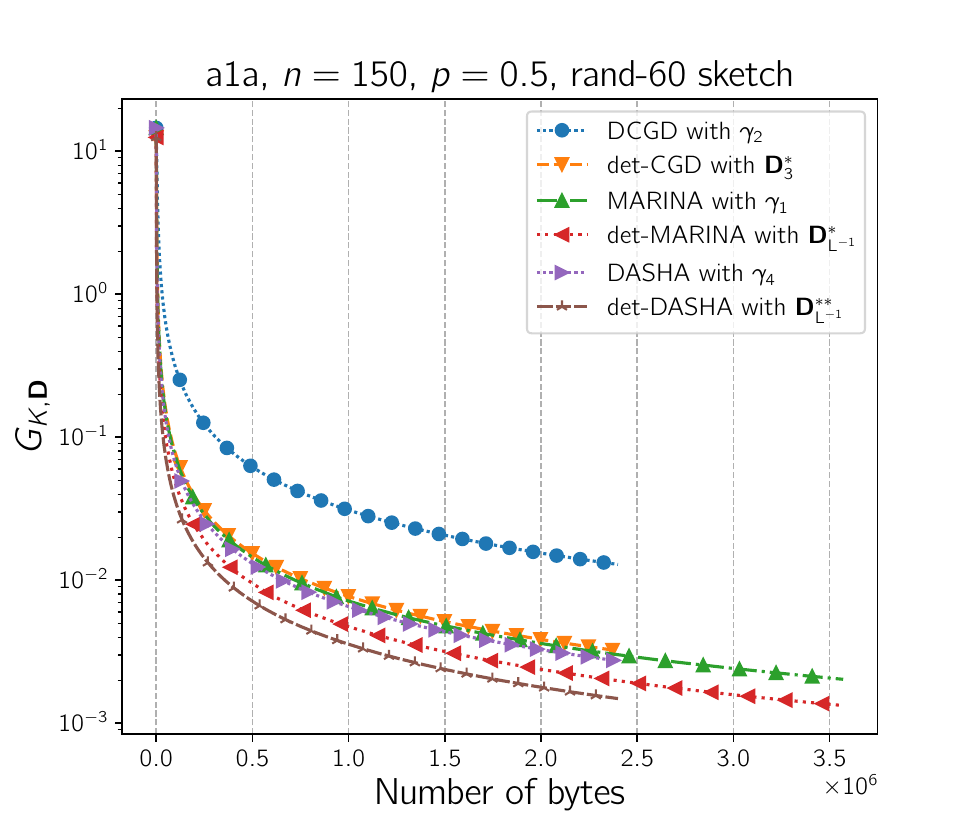}
         \includegraphics[width=0.32\textwidth]{./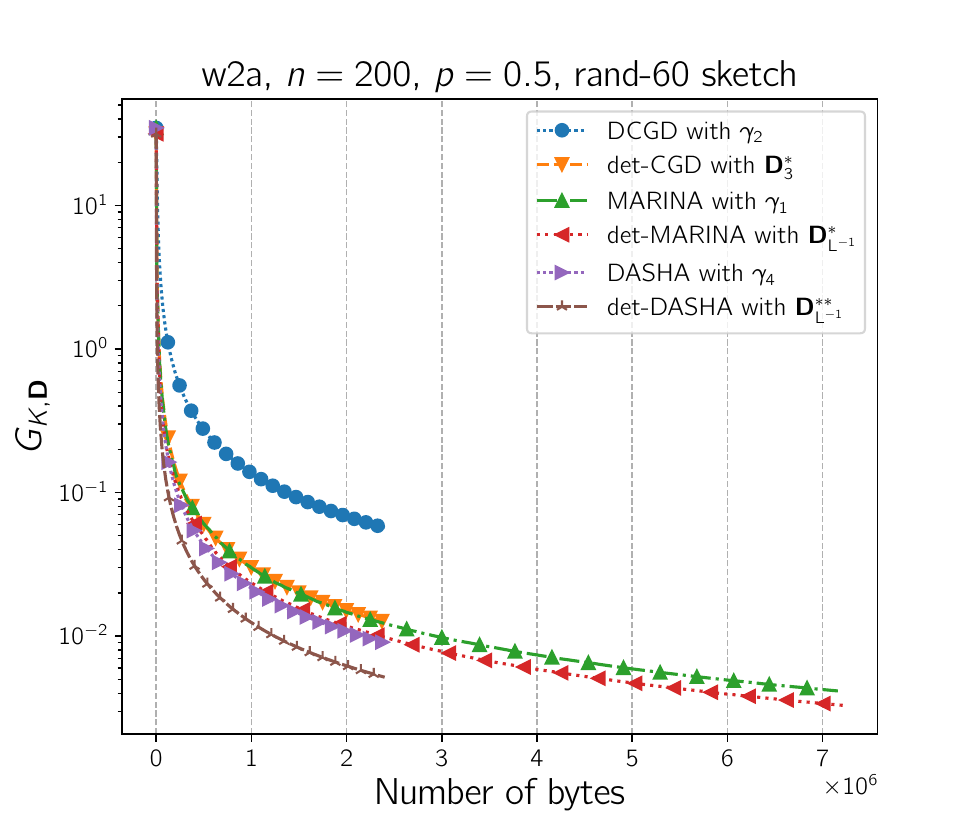}
         \includegraphics[width=0.322\textwidth]{./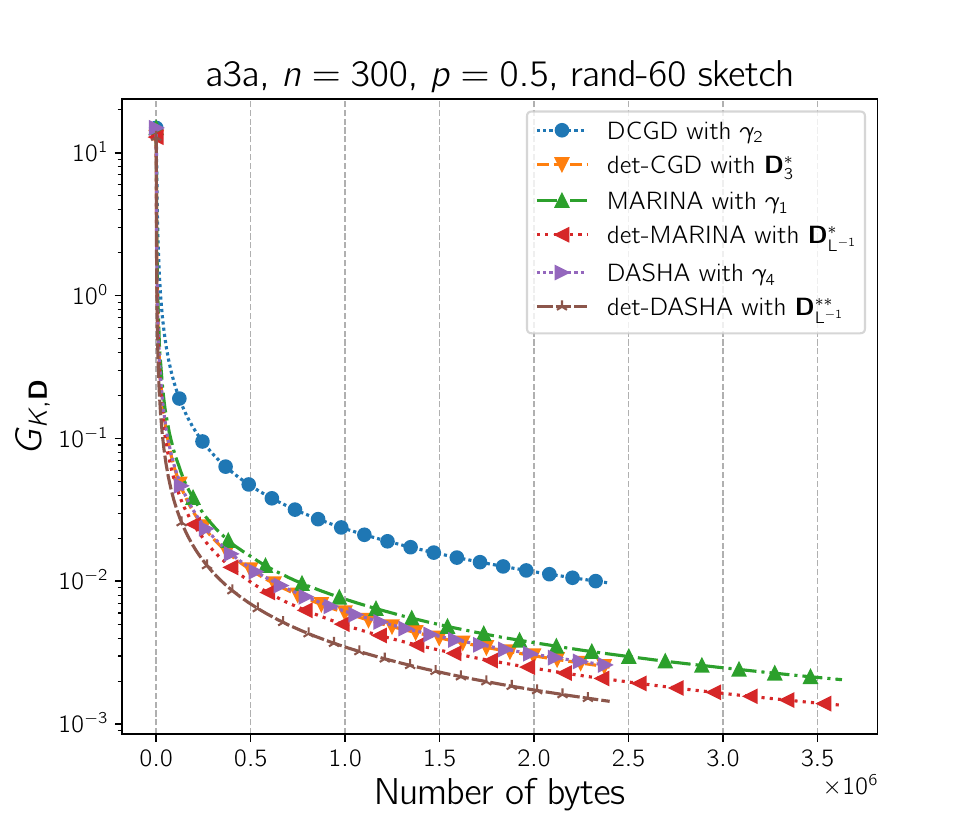}
       \end{minipage}
     }
     \caption{Comparison of {\dcgd} with optimal scalar stepsize, {\detcgd} with matrix stepsize $\mD^*_3$, {\marina} with optimal scalar stepsize, {\dasha} with optimal scalar stepsize, {\detmarina} with optimal stepsize $\mD^{*}_{\mL^{-1}}$ and {\detdasha} with optimal stepsize $\mD^{**}_{\mL^{-1}}$. Throughout the experiment, we are using Rand-$\tau$ sketch with $\tau = 60$, and each algorithm is run for a fixed number of iterations $K=10000$. 
     The $G_{K, \mD}$ in the y-axis is defined in \eqref{eq:def-G--KD}, which is the average squared matrix norm of the gradients.
     }
     \label{fig:experiment-7-dasha}
\end{figure*}}

\subsection{Det-DASHA} 
The difference of compression mechanisms, does not allow to have a direct comparison of the complexities of these algorithms.  
In particular, {\detmarina} compresses the gradient difference with some probability $p$, while {\detdasha} compresses the gradient difference with momentum in each iteration.
\begin{corollary}
  \label{dasha:col:complexity}
  If we pick $\mD = \gamma_{\mL^{-1}} \cdot \mL^{-1}$, then in order to reach an $\varepsilon^2$ stationary point, {\detdasha} needs $K$ iterations with
  \begin{equation*}
   K \geq \frac{f(x^0) - f^{\star}}{\det(\mL)^{-\frac{1}{d}}\varepsilon^2}\left(1 + \sqrt{1 + 16C_{\mL^{-1}}\lambda_{\min}\left(\mL\right)}\right).
  \end{equation*}
\end{corollary}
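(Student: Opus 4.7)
The plan is to combine the convergence guarantee of \Cref{dasha:thm:main} with the explicit scaling factor from \Cref{dasha:col:scaling} and then solve for $K$. This is essentially a substitution argument, so I expect it to be clean.

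First, I would specialize \Cref{dasha:col:scaling} to $\mW = \mL^{-1}$. The quantity $\lambda_{\mW}$ simplifies dramatically: since $\mL^{1/2}\mL^{-1}\mL^{1/2} = \mI_d$, we get $\lambda_{\mL^{-1}} = \lambda_{\max}^{-1}(\mI_d) = 1$. Plugging this into the formula for $\gamma_{\mW}$ yields
\begin{equation*}
\gamma_{\mL^{-1}} = \frac{2}{1 + \sqrt{1 + 16C_{\mL^{-1}}\lambda_{\min}\left(\mL\right)}}.
\end{equation*}
At this point I would also observe that this value of $\gamma$ is admissible, i.e.\ the stepsize $\mD = \gamma_{\mL^{-1}}\mL^{-1}$ satisfies the condition on $\mD$ in \Cref{dasha:thm:main}; this is exactly what \Cref{dasha:col:scaling} is designed to guarantee.

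Next, I would compute $\det(\mD)^{1/d}$. Since $\mD = \gamma_{\mL^{-1}}\mL^{-1}$ is a scalar multiple of $\mL^{-1}$, we have $\det(\mD) = \gamma_{\mL^{-1}}^{d}\det(\mL)^{-1}$, so
\begin{equation*}
\det(\mD)^{1/d} = \frac{\gamma_{\mL^{-1}}}{\det(\mL)^{1/d}}.
\end{equation*}
Substituting this into the guarantee of \Cref{dasha:thm:main} gives
\begin{equation*}
\Exp{\norm{\nabla f(\tilde{x}^K)}^2_{\mD/\det(\mD)^{1/d}}} \leq \frac{2\left(f(x^0) - f^{\star}\right)\det(\mL)^{1/d}}{\gamma_{\mL^{-1}}\cdot K}.
\end{equation*}

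Finally, to reach $\varepsilon^2$ stationarity I would require the right-hand side to be at most $\varepsilon^2$ and solve for $K$, which yields
\begin{equation*}
K \geq \frac{2\left(f(x^0) - f^{\star}\right)\det(\mL)^{1/d}}{\gamma_{\mL^{-1}}\varepsilon^2} = \det(\mL)^{1/d}\cdot\frac{f(x^0) - f^{\star}}{\varepsilon^2}\left(1 + \sqrt{1 + 16 C_{\mL^{-1}}\lambda_{\min}(\mL)}\right),
\end{equation*}
where the last equality uses the explicit form of $\gamma_{\mL^{-1}}$. This matches the claimed bound. There is no real obstacle here; the only thing to be careful about is the simplification $\lambda_{\mL^{-1}} = 1$, which makes the whole algebra collapse, and the cancellation of the factor of $2$ between the numerator of the rate and the numerator of $\gamma_{\mL^{-1}}$.
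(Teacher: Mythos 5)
Your proposal is correct and follows essentially the same route as the paper: specialize \Cref{dasha:col:scaling} to $\mW=\mL^{-1}$ (where $\lambda_{\mL^{-1}}=1$), substitute $\det(\mD)^{1/d}=\gamma_{\mL^{-1}}/\det(\mL)^{1/d}$ into the rate of \Cref{dasha:thm:main}, and solve for $K$, with the factor of $2$ cancelling against the numerator of $\gamma_{\mL^{-1}}$. The paper's own proof is just a terser version of this same substitution.
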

The following corollary compares the complexities of {\dasha} and {\detdasha}.
For the sake of brevity, we defer the complexities and other details to the proof of this corollary.
\begin{corollary}
   \label{col:dasha:1}
   Suppose that the conditions in \Cref{dasha:thm:main} hold, then compared to {\dasha}, {\detdasha} with $\mW = \mL^{-1}$ always has a {\bf \textit{better}} iteration complexity, therefore, communication complexity as well.
\end{corollary}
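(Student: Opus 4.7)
The plan is to compute the iteration complexities of \detdasha and \dasha explicitly and then compare them factor by factor. First, I would take \Cref{dasha:col:complexity} as the starting point: for $\mD = \gamma_{\mL^{-1}} \mL^{-1}$ we have $\det(\mD)^{1/d} = \gamma_{\mL^{-1}}/\det(\mL)^{1/d}$, and the key algebraic simplification is $\lambda_{\max}(\mL^{-1})\cdot\lambda_{\min}(\mL) = 1$, which collapses $16\, C_{\mL^{-1}}\lambda_{\min}(\mL)$ to $16\, \omega_{\mL^{-1}}(4\omega_{\mL^{-1}}+1)/n$, yielding
\begin{equation*}
    K_{\detdasha} \gtrsim \det(\mL)^{1/d}\cdot \frac{\Delta_0}{\varepsilon^2}\cdot \left(1 + \sqrt{1 + \tfrac{16\, \omega_{\mL^{-1}}(4\omega_{\mL^{-1}} + 1)}{n}}\right).
\end{equation*}

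Next I would recover the iteration complexity of \dasha by specializing \Cref{dasha:thm:main} and \Cref{dasha:col:scaling} to the scalar case $\mL = L\mI_d$, $\mL_i = L_i\mI_d$, $\mD = \gamma\mI_d$, $\mW = \mI_d$. Straightforward substitution gives $\lambda_{\mW} = 1/L$, $\omega_{\mW} = \omega$ (the usual scalar compressor-variance parameter), $C_{\mW} = \omega(4\omega+1)/n$, and $\det(\mD)^{1/d} = \gamma$, producing
\begin{equation*}
    K_{\dasha} \gtrsim L\cdot \frac{\Delta_0}{\varepsilon^2}\cdot \left(1 + \sqrt{1 + \tfrac{16\, \omega(4\omega + 1)}{n}}\right).
\end{equation*}

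With both bounds in hand I would use the ``same momentum parameter'' hypothesis to align them. Because \Cref{dasha:thm:main} prescribes $a = 1/(2\omega_{\mD}+1)$, fixing $a$ to be equal in the two algorithms forces $\omega_{\mL^{-1}} = \omega$, so the square-root factors in $K_{\detdasha}$ and $K_{\dasha}$ coincide and the comparison reduces to the prefactors $\det(\mL)^{1/d}$ versus $L = \lambda_{\max}(\mL)$. The AM--GM inequality applied to the eigenvalues of $\mL$ gives
\begin{equation*}
    \det(\mL)^{1/d} = \Big(\prod_{i=1}^d \lambda_i(\mL)\Big)^{1/d} \leq \frac{1}{d}\sum_{i=1}^d \lambda_i(\mL) \leq \lambda_{\max}(\mL) = L,
\end{equation*}
so $K_{\detdasha} \leq K_{\dasha}$. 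Since both algorithms transmit exactly one compressed gradient per iteration drawn from the same sketch distribution $\cS$, the per-round communication cost is identical, so the inequality transfers immediately from iterations to total communication.

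The main obstacle, though largely notational, is the identification $\omega_{\mL^{-1}} = \omega$ in the third paragraph: these quantities are defined through $\mS$ acting on the different matrices $\mL^{-1}$ and $\mI_d$, and I need to state clearly that the role of the ``same momentum'' assumption is precisely to absorb this discrepancy through the $a = 1/(2\omega_{\mD}+1)$ rule. Once this identification is accepted, the only mathematical content left is the clean AM--GM bound $\det(\mL)^{1/d} \leq \lambda_{\max}(\mL)$, which delivers both conclusions of the corollary at once.
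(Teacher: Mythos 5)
Your proposal is correct and follows essentially the same route as the paper: both reduce $16C_{\mL^{-1}}\lambda_{\min}(\mL)$ to $16\,\omega_{\mL^{-1}}(4\omega_{\mL^{-1}}+1)/n$ via $\lambda_{\max}(\mL^{-1})\lambda_{\min}(\mL)=1$, use the same-momentum hypothesis to identify $\omega_{\mL^{-1}}$ with $\omega$, bound $\det(\mL)^{1/d}\leq\lambda_{\max}(\mL)=L$, and observe that the per-iteration communication is identical. The only cosmetic difference is that you re-derive the \dasha{} benchmark by specializing the matrix result (which, through \Cref{dasha:lemma:global-smooth}, puts $L$ where the published rate has $\widehat{L}=\sqrt{\tfrac{1}{n}\sum_i L_i^2}\geq L$), so you are comparing against a benchmark no larger than the true \dasha{} complexity and the conclusion follows a fortiori, exactly as in the paper's use of $L\leq\widehat{L}$.
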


The following corollary suggests that the communication complexity of {\detdasha} is better than that of {\detmarina}, 
\begin{corollary}
   \label{col:dasha:2}
   The iteration complexity of {\detmarina} with $p = \nicefrac{1}{(\omega_{\mL^{-1}} + 1)}$ and {\detdasha} with momentum $\nicefrac{1}{(2\omega_{\mL^{-1}} + 1)}$ is the same, therefore the communication complexity of {\detdasha} is {\bf \textit{better than}} the communication complexity of {\detmarina}.
\end{corollary}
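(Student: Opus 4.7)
The plan is to substitute the specified parameter values into Corollaries~\ref{col:iteration-comp-L-inv} and~\ref{dasha:col:complexity}, show that the two iteration complexity bounds collapse to the same asymptotic order in $\omega_{\mL^{-1}}$ and $n$, and then deduce the communication comparison from the per-iteration message sizes. For {\detmarina} with $p = 1/(\omega_{\mL^{-1}}+1)$ I would compute $\alpha = (1-p)/(np) = \omega_{\mL^{-1}}/n$. Specializing the definition $\omega_{\mD} = \lambda_{\max}(\mD^{-1})\cdot\Lambda_{\mD,\cS}$ to $\mD = \mL^{-1}$ gives $\Lambda_{\mL^{-1},\cS} = \omega_{\mL^{-1}}/\lambda_{\max}(\mL)$, so the quantity inside the square root in Corollary~\ref{col:iteration-comp-L-inv} becomes $4\alpha\beta\Lambda_{\mL^{-1},\cS} = 4\omega_{\mL^{-1}}^2\beta/(n\,\lambda_{\max}(\mL))$.

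For {\detdasha} with $a = 1/(2\omega_{\mL^{-1}}+1)$, substituting into Corollary~\ref{dasha:col:complexity} and using $\lambda_{\max}(\mL^{-1})\cdot\lambda_{\min}(\mL) = 1$ gives $16\,C_{\mL^{-1}}\,\lambda_{\min}(\mL) = 16\omega_{\mL^{-1}}(4\omega_{\mL^{-1}}+1)/n$. To match the two expressions, I would invoke Proposition~\ref{ppst:4} together with the spectral identity $\lambda_{\max}(\mL^{-1}\mL_i) = \lambda_{\max}(\mL_i\mL^{-1})$ (since $AB$ and $BA$ share nonzero eigenvalues for square matrices). This yields $\beta = \lambda_{\min}(\mL)$, hence $\beta/\lambda_{\max}(\mL) = \lambda_{\min}(\mL)/\lambda_{\max}(\mL) = O(1)$. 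Both iteration complexities then reduce to $O\!\left(\Delta_0\det(\mL)^{1/d}\cdot\varepsilon^{-2}\cdot(1+\omega_{\mL^{-1}}/\sqrt{n})\right)$, establishing the first claim of the corollary.

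For the second claim, Corollary~\ref{col:communication-comp} gives a {\detmarina} total communication cost of $O\!\left(K(pd + (1-p)\zeta_{\cS}) + d\right)$, while inspection of Algorithm~\ref{alg:detCGD-DASHA} (the line transmitting $m_i^{k+1}$) shows that {\detdasha} sends a compressed message of expected density $\zeta_{\cS}$ at every iteration, giving total $O(K\zeta_{\cS} + d)$. Since the iteration counts agree up to constants and $pd > 0$, {\detdasha} strictly dominates in communication complexity. The main obstacle I anticipate is verifying that $\beta/\lambda_{\max}(\mL)$ does not hide a dependence on $\omega_{\mL^{-1}}$ or $n$; this is settled by Proposition~\ref{ppst:4} as above, after which the remainder is direct substitution and bookkeeping of absolute constants.
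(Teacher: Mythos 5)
Your proposal is correct and follows essentially the same route as the paper: substitute the stated $p$ and momentum into Corollaries~\ref{col:iteration-comp-L-inv} and~\ref{dasha:col:complexity}, use Proposition~\ref{ppst:4} (which gives $\beta=\lambda_{\min}(\mL)$, so $\beta/\lambda_{\max}(\mL)\leq 1$) to reduce both bounds to $\cO\bigl(\Delta_0\det(\mL)^{1/d}\varepsilon^{-2}(1+\omega_{\mL^{-1}}/\sqrt{n})\bigr)$, and then compare per-iteration payloads ($\zeta_{\cS}$ every round for \detdasha{} versus $pd+(1-p)\zeta_{\cS}$ for \detmarina). If anything, your derivation is more explicit than the paper's, which states the simplified \detmarina{} bound with a $\omega_{\mL^{-1}}/n$ term (apparently a typo for $\omega_{\mL^{-1}}/\sqrt{n}$) and quotes the scalar \dasha{} rate where the \detdasha{} rate is meant; your bookkeeping avoids both slips.
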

This is expected since the same relation occurs between {\marina} and {\dasha} as it is described by \citet[Table 1]{tyurin2024dasha}. We refer the readers to \Cref{sec:dasha:corollaries}.

\section{Experiments}\label{main:experiments}
This section contains several plots which confirm our theoretical improvements on the existing methods.
\Cref{fig:experiment-7-dasha} shows that the performance in terms of communication complexity of {\detdasha} and {\detmarina} is better than their scalar counterpart {\dasha} and {\marina} respectively. 
This validates the efficiency of using a matrix stepsize over a scalar stepsize. 
Further, we notice that {\detdasha} and {\detmarina} have better communication complexity in this case, compared to {\detcgd}. 
This demonstrates the effectiveness of applying variance reduction. 
Finally, as expected, {\detdasha} has better communication complexity than {\detmarina}.
We refer the readers to the appendix for more technical details of the experiments.

\section{Future work}\label{sec:conclusion}

   i) In this paper, we have only considered (linear) sketches as the compression operator. However, there exists a variety of compressors which are useful in practice that do not fall into this category. Extending {\detcgd} and {\detmarina} for general unbiased compressors is a promising future work direction. 
   ii) Additionally, given recent successes with adaptive stepsizes (e.g., \citep{loizou2021stochastic,orvieto2022dynamics,schaipp2023stochastic}), designing an adaptive matrix stepsize tailored to our case could be viable.
   iii) Finally, recent advances suggest that server step sizes play a key role in accelerating federated learning algorithms \citep{jhunjhunwala2023fedexp,li2024power,li2024convergence}. Designing a matrix version of the server step size could also be interesting.


\bibliography{iclr2025_conference}
\bibliographystyle{iclr2025_conference}

\newpage
\appendix

\addtocontents{toc}{\protect\setcounter{tocdepth}{3}}

\tableofcontents 

\section{Additional details}\label{sec:add-details}

\subsection{Notations} \label{subsec:notation}

The standard Euclidean norm on $\R^d$ is defined as $\norm{\cdot}$. We use  $\bbS^d_{++}$ (resp. $\bbS^d_{+}$) to denote the positive definite (resp. semi-definite) cone of dimension $d$. $\bbS^d$ is used to denote all symmetric matrices of dimension $d$. We use the notation $\mI_d$ to denote the identity matrix of size $d \times d$, and $\mO_d$ to denote the zero matrix of size $d \times d$. Given $\mQ \in \bbS^d_{++}$ and $x \in \R^d$, $$\norm{x}_{\mQ} \eqdef \sqrt{x^{\top}\mQ x} = \sqrt{\inner{x}{\mQ x}},$$ where $\inner{\cdot}{\cdot}$ is the standard Euclidean inner product on $\R^d$. For a matrix $\mA \in \bbS^d$, we use $\lambda_{\max}\left(\mA\right)$ (resp. $\lambda_{\min}\left(\mA\right)$) to denote the largest (resp. smallest) eigenvalue of the matrix $\mA$. For a function $f:\R^d \mapsto \R$, its gradient and its Hessian at a point $x \in \R^d$ are respectively denoted as $\nabla f(x)$ and $\nabla^2 f(x)$. For the sketch matrices $\mS^k_i$ used in the algorithm, we use the superscript $k$ to denote the iteration and subscript $i$ to denote the client, the matrix $\mS^k_i$ is thus sampled for client $i$ in the $k$-th iteration from the same distribution $\cS$. For any matrix $\mA \in \bbS^d$, we use the notation $\diag\left(\mA\right) \in \bbS^d$ to denote the diagonal of matrix $\mA$.

\subsection{Additional prior work}
\label{subsec:additional-prior}
Numerous effective convex optimization techniques have been adapted for application in non-convex scenarios. Here's a selection of these techniques, although it's not an exhaustive list: adaptivity \citep{dvinskikh2019adaptive, zhang2020adaptive}, variance reduction \citep{jreddi2016proximal, li2021page}, and acceleration \citep{guminov2019accelerated}. Of particular relevance to our work is the paper by \citet{khaled2022better}, which introduces a unified approach for analyzing stochastic gradient descent for non-convex objectives. A comprehensive overview of non-convex optimization can be found in \citep{jain2017non, danilova2022recent}.

An illustrative example of a matrix stepsized method is Newton's method, which has been a long-standing favorite in the optimization community \citep{gragg1974optimal, miel1980majorizing, yamamoto1987convergence}. However, the computational complexity involved in computing the stepsize as the inverse of the Hessian of the current iteration is substantial. Instead, quasi-Newton methods employ a readily computable estimator to replace the inverse Hessian \citep{broyden1965class, dennis1977quasi, al2007overview, al2014broyden}. 
An important direction of research that is relevant to our work, studies distributed second order methods. Here is a non-exhaustive list of papers in this area: \citep{wang2018giant,crane2019dingo,zhang2020achieving,islamov2021distributed,alimisis2021communication,safaryan2022fednl}.

The Distributed Compressed Gradient Descent (\dcgd) algorithm, initially proposed by \citet{khirirat2018distributed}, has seen improvements in various aspects, as documented in works such as \citep{li2020acceleration, horvath2022natural}. 
Its variance reduced version with gradients shifts was studied by \citet{shulgin2022shifted} {in the (strongly) convex setting}.
Additionally, there exists a substantial body of literature on other federated learning algorithms employing unbiased compressors \citep{alistarh2017qsgd, mishchenko2019distributed, gorbunov2021marina, pmlr-v162-mishchenko22b, maranjyan2022gradskip, horvath2023stochastic}. 

Variance reduction techniques have gained significant attention in the context of stochastic batch gradient descent that is prevalent in machine learning. 
Numerous algorithms have been developed in this regard, including well-known ones like {\svrg} \citep{johnson2013accelerating}, {\sag} \citep{schmidt2017minimizing}, {\sdca}\citep{richtarik2014iteration}, {\saga} \citep{defazio2014saga}, {\miso} \citep{mairal2015incremental}, and {\katu} \citep{allen2017katyusha}. 
An overview of more advanced methods can be found in \citep{gower2020variance}. 
Notably, {\svrg} and {\katu} have been extended with loopless variants, namely {\lsvrg} and {\lkatu} \citep{kovalev2020don,qian2021svrg}. These loopless versions streamline the algorithms by eliminating the outer loop and introducing a biased coin-flip mechanism at each step. 
This simplification eases both the algorithms' structure  and their analyses, while preserving their worst-case complexity bounds. 
{\lsvrg}, in particular, offers the advantage of setting the exit probability from the outer loop independently of the condition number, thus, enhancing both robustness and practical efficiency.

This technique of coin flipping allows to obtain variance reduction for the {\cgd} algorithm. 
A relevant example is the {\diana} algorithm proposed by \citet{mishchenko2019distributed}. Its convergence was proved both in the convex and non-convex cases.
Later, {\marina} \citep{gorbunov2021marina} obtained the optimal convergence rate, improving in communication complexity compared to all previous first order methods. 
Finally, there is a line of work developing variance reduction in the federated setting using other methods and techniques \citep{chraibi2019distributed,hanzely2020federated,dinh2020federated,peng2022byzantine}.

Another method to obtain variance reduction is based on momentum. 
It was initially studied by \citet{cutkosky2019momentum}, where they propose the {\storm} algorithm, which is a stochastic gradient descent algorithm with a momentum term for non-convex objectives. 
They obtain stationarity guarantees using adaptive stepsizes with optimal convergence rates. 
However, they require the variance of the stochastic gradient to be bounded by a constant, which is impractical. 
Using momentum for variance reduction has since been widely studied \citep{liu2020optimal,khanduri2020distributed,tran2022hybrid,li2022local}.

\section{Basic facts}\label{subsec:fact}

In this section, we present some basic facts along with their proofs that will be used later in the analysis. 
\begin{fact}
   \label{fact:1}
   For two matrices $\mA, \mB \in \bbS_{+}^d$, denote the $i$-th largest eigenvalues of $\mA, \mB$ as $\lambda_i(\mA), \lambda_i(\mB)$, if $\mA \succeq \mB$, then the following holds
   \begin{equation}
      \label{eq:fact:1}
      \lambda_i(\mA) \geq \lambda_i(\mB).
   \end{equation}
\end{fact}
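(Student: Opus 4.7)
The plan is to invoke the Courant--Fischer min-max characterization of eigenvalues, which is the standard tool for comparing eigenvalues of two symmetric matrices under a Loewner ordering assumption. Concretely, for any symmetric matrix $\mA \in \bbS^d$, the $i$-th largest eigenvalue admits the variational representation
\begin{equation*}
    \lambda_i(\mA) \;=\; \max_{\substack{V \subseteq \R^d \\ \dim V = i}} \;\min_{\substack{x \in V \\ \|x\| = 1}} \; x^\top \mA x.
\end{equation*}
This is a well-known identity and I would cite it rather than reprove it.

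Next I would translate the hypothesis $\mA \succeq \mB$ into the pointwise inequality $x^\top \mA x \geq x^\top \mB x$ for every $x \in \R^d$, which is the very definition of the Loewner order on $\bbS^d$. Applying this inequality inside the inner minimum of the min-max formula gives, for every $i$-dimensional subspace $V \subseteq \R^d$,
\begin{equation*}
    \min_{\substack{x \in V \\ \|x\|=1}} x^\top \mA x \;\geq\; \min_{\substack{x \in V \\ \|x\|=1}} x^\top \mB x.
\end{equation*}
Taking the maximum over all such $V$ on both sides preserves the inequality, yielding $\lambda_i(\mA) \geq \lambda_i(\mB)$, which is exactly \eqref{eq:fact:1}.

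I do not anticipate any real obstacle here: the result is immediate once the Courant--Fischer theorem is in hand, and the proof is essentially a two-line application of monotonicity of $\min$ and $\max$ under pointwise domination. The only thing to be careful about is stating the variational formula for the correct ordering of eigenvalues (largest-to-smallest versus smallest-to-largest); since the statement uses the $i$-th largest convention, I would use the max-min form above rather than the min-max form that yields the $i$-th smallest. No additional assumptions about positive semi-definiteness of $\mA$ or $\mB$ are actually needed for the proof — the result holds for arbitrary symmetric matrices — but the stated hypothesis $\mA, \mB \in \bbS^d_+$ is consistent with the usage later in the paper.
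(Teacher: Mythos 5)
Your proposal is correct and follows essentially the same route as the paper: both invoke the Courant--Fischer max--min characterization and push the pointwise inequality $x^\top \mA x \geq x^\top \mB x$ through the inner minimum and outer maximum (the paper phrases this by fixing the optimal subspace for $\mB$, which is the same monotonicity argument). Your remark that positive semi-definiteness is not actually needed is accurate but does not change the substance.
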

\begin{proof}
   According to the Courant-Fischer theorem, we write 
   \begin{eqnarray*}
      \lambda_i(\mB) &=& \max_{S: \dim S = i} \min_{x \in S\backslash\{0\}} \frac{x^{\top}\mB x}{x^{\top}x}. \\
   \end{eqnarray*}
   Let $S^i_{\max}$ be a subspace of dimension $i$ where the maximum is attained, we then have 
   \begin{eqnarray*}
      \lambda_i(\mB) &=& \min_{x\in S^i_{\max} \backslash \{0\}} \frac{x^{\top}\mB x}{x^{\top}x} \\
      &\leq& \min_{x\in S^i_{\max} \backslash \{0\}} \frac{x^{\top}\mA x}{x^{\top}x} \leq \max_{S: \dim S = i} \min_{x \in S\backslash \{0\}} \frac{x^{\top}\mA x}{x^{\top}x} = \lambda_i(\mA).
   \end{eqnarray*} 
\end{proof}

The following is a generalization of the bias-variance decomposition for the matrix norm.
\begin{fact}{\rm (Variance Decomposition)}
   \label{lemma:2:var-decomp}
   Given a matrix $\mM \in \bbS^d_{++}$,  any vector $c \in \R^d$, and a random vector $x \in \R^d$ such that $\Exp{\norm{x}}\leq+\infty$, the following bound holds
   \begin{equation}
      \label{eq:lemma:2}
      \Exp{\norm{x - \Exp{x}}^2_{\mM}} = \Exp{\norm{x - c}^2_{\mM}} - \norm{\Exp{x} - c}^2_{\mM}.
   \end{equation}
\end{fact}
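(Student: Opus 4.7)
The plan is to prove this by a standard add-and-subtract trick, leveraging the fact that the $\mM$-norm comes from the inner product $\inner{u}{v}_{\mM} := u^\top \mM v$, which is bilinear and symmetric since $\mM \in \bbS^d_{++}$.

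First I would write the decomposition
\[
x - c = \bigl(x - \Exp{x}\bigr) + \bigl(\Exp{x} - c\bigr),
\]
and expand the $\mM$-weighted squared norm using bilinearity:
\[
\norm{x - c}^2_{\mM} = \norm{x - \Exp{x}}^2_{\mM} + 2\inner{x - \Exp{x}}{\Exp{x} - c}_{\mM} + \norm{\Exp{x} - c}^2_{\mM}.
\]

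Next I would take expectations of both sides. The cross term vanishes because $\Exp{x} - c$ is a deterministic vector and $\Exp{x - \Exp{x}} = 0$, so by linearity of expectation and bilinearity of $\inner{\cdot}{\cdot}_{\mM}$,
\[
\Exp{\inner{x - \Exp{x}}{\Exp{x} - c}_{\mM}} = \inner{\Exp{x - \Exp{x}}}{\Exp{x} - c}_{\mM} = 0.
\]
The last term $\norm{\Exp{x} - c}^2_{\mM}$ is deterministic and passes through the expectation unchanged. Rearranging yields exactly the claimed identity.

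There is no real obstacle here; the only mild point of care is the integrability condition $\Exp{\norm{x}} < \infty$, which ensures $\Exp{x}$ is well-defined and the exchange of expectation with the inner product is justified. Since $\mM$ is positive definite (hence bounded), the $\mM$-weighted quantities inherit the requisite integrability from the Euclidean norm, so no additional moment hypothesis beyond what is stated is needed.
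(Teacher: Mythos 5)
Your proof is correct and is essentially the same argument as the paper's: both reduce to expanding the $\mM$-weighted quadratic form and observing that the cross term $2\Exp{x}^{\top}\mM c$ (equivalently, the term $\inner{x-\Exp{x}}{\Exp{x}-c}_{\mM}$) cancels under expectation. The paper expands both sides directly rather than using the add-and-subtract phrasing, but the computation is identical.
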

\begin{proof}
   We have
   \begin{align*}
      \Exp{\norm{x-c}^2_{\mM}} &- \norm{\Exp{x} - c}^2_{\mM} \\
      &= \Exp{x^{\top}\mM x} - 2\Exp{x}^{\top}\mM c + c^{\top}\mM c  - \Exp{x}^{\top}\mM\Exp{x} + 2\Exp{x}^{\top}\mM c - c^{\top}\mM c \\
      &= \Exp{x^{\top}\mM x} - \Exp{x}^{\top}\mM\Exp{x} \\
      &= \Exp{x^{\top}\mM x} - 2\cdot \Exp{x}^{\top}\mM\Exp{x} + \Exp{x}^{\top}\mM\Exp{x} \\
      &= \Exp{\norm{x - \Exp{x}}^2_{\mM}}.
   \end{align*}
   This completes the proof.
\end{proof}
\begin{fact}
   \label{fact:2}
   The map $\left(\mA, \mB, \mX\right) \mapsto \mA - \mX\mB^{-1}\mX$ is jointly concave on $\bbS^d_{+} \times \bbS^d_{++} \times \bbS^d$. It is also monotone increasing in variables $\mA$ and $\mB$.
\end{fact}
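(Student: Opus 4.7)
My plan is to reduce the claim to the classical joint Loewner convexity of the matrix fractional function $(\mB, \mX) \mapsto \mX \mB^{-1} \mX$, and then handle monotonicity by direct manipulation of the Loewner order. Since $\mA$ enters only linearly, joint concavity of $(\mA, \mB, \mX) \mapsto \mA - \mX \mB^{-1} \mX$ on $\bbS^d_+ \times \bbS^d_{++} \times \bbS^d$ follows as soon as the matrix fractional function is shown to be jointly Loewner-convex on $\bbS^d_{++} \times \bbS^d$.

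First I would dispatch the monotonicity. In $\mA$ the map is affine with identity Jacobian, so $\mA_1 \succeq \mA_2$ immediately yields $\mA_1 - \mX \mB^{-1} \mX \succeq \mA_2 - \mX \mB^{-1} \mX$. For monotonicity in $\mB$, I would combine two standard facts: (i) inversion reverses the Loewner order on $\bbS^d_{++}$, so $\mB_1 \succeq \mB_2 \succ 0$ implies $\mB_1^{-1} \preceq \mB_2^{-1}$; and (ii) for any symmetric $\mX$, the congruence map $\mC \mapsto \mX \mC \mX = \mX^{\top} \mC \mX$ preserves the Loewner order on $\bbS^d_+$. Chaining these gives $\mX \mB_1^{-1} \mX \preceq \mX \mB_2^{-1} \mX$, and negation yields $\mA - \mX \mB_1^{-1} \mX \succeq \mA - \mX \mB_2^{-1} \mX$, i.e., increasingness in $\mB$.

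For joint concavity, the cleanest route I would take is via Schur complements. For $\mB \succ 0$, it is classical that $\mY \succeq \mX \mB^{-1} \mX$ if and only if the $2d \times 2d$ block matrix
\begin{equation*}
\begin{pmatrix} \mB & \mX \\ \mX & \mY \end{pmatrix} \succeq 0.
\end{equation*}
Hence the Loewner epigraph $\{(\mB, \mX, \mY) \in \bbS^d_{++} \times \bbS^d \times \bbS^d : \mY \succeq \mX \mB^{-1} \mX\}$ coincides with the preimage of $\bbS^{2d}_+$ under a linear map and is therefore convex. This gives joint Loewner convexity of $(\mB, \mX) \mapsto \mX \mB^{-1} \mX$, and adding the linear piece $\mA$ together with a sign flip yields joint Loewner concavity of $(\mA, \mB, \mX) \mapsto \mA - \mX \mB^{-1} \mX$ on the prescribed domain.

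The only real subtlety, and the step I expect to be the main obstacle to write precisely, is justifying that a convex Loewner epigraph implies Loewner convexity of the underlying matrix-valued function; this is standard but should be spelled out, since the Loewner order is only a partial order. An alternative (essentially equivalent) route, in case the epigraph argument feels opaque, is to use the variational identity $\mX \mB^{-1} \mX = \sup_{\mZ} \bigl(\mX \mZ + \mZ^{\top} \mX - \mZ^{\top} \mB \mZ\bigr)$, valid in the Loewner order for $\mB \succ 0$ and attained at $\mZ = \mB^{-1}\mX$; this writes $\mX \mB^{-1} \mX$ as a Loewner supremum of functions that are jointly linear in $(\mB, \mX)$, from which joint Loewner convexity is immediate. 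Either route avoids any nontrivial calculation.
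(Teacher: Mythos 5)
Your argument is correct. Note first that the paper does not prove this fact at all: it simply refers the reader to Corollary 1.5.3 of Bhatia's \emph{Positive Definite Matrices}, so you are supplying a proof where the paper outsources one, and the Schur-complement route you take is essentially the standard argument underlying that reference. All three pieces check out: monotonicity in $\mA$ is trivial, monotonicity in $\mB$ follows from order-reversal of inversion plus order-preservation of the congruence $\mC \mapsto \mX\mC\mX$, and joint concavity follows from convexity of the set $\{(\mB,\mX,\mY): \mB \succ 0,\ \mY \succeq \mX\mB^{-1}\mX\}$, which is the preimage of $\bbS^{2d}_{+}$ under a linear map intersected with the convex set $\{\mB \succ 0\}$. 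The step you flag as the main subtlety is in fact immediate and involves no difficulty with the partial order: given $(\mB_1,\mX_1)$ and $(\mB_2,\mX_2)$, the points $(\mB_i,\mX_i,\mX_i\mB_i^{-1}\mX_i)$ lie in this set, so its convexity applied to their convex combination yields exactly the Loewner-convexity inequality for the matrix fractional function, and adding the affine term $\mA$ and negating gives the claim. Your alternative via the variational identity $\mX\mB^{-1}\mX = \sup_{\mZ}\bigl(\mX\mZ + \mZ^{\top}\mX - \mZ^{\top}\mB\mZ\bigr)$ is also valid (the completion-of-squares identity $\mX\mB^{-1}\mX - \mX\mZ - \mZ^{\top}\mX + \mZ^{\top}\mB\mZ = (\mB^{-1}\mX-\mZ)^{\top}\mB(\mB^{-1}\mX-\mZ) \succeq 0$ confirms it), exhibiting the function as a pointwise Loewner supremum of jointly affine maps.
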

We refer the reader to Corollary 1.5.3 of \citet{bhatia2009positive} for the details and the proof.
The following is a result of \Cref{fact:1} and \Cref{fact:2}.
\begin{fact}
   \label{fact:3}
   Suppose $\mL_i \in \bbS^d_{++}$, for $i = 1,\ldots,n$. 
   Then, for every matrix $\mX \in \bbS^d_{++}$, we define the following mapping
   \begin{equation*}
      f(\mX, \mL_1, \hdots, \mL_n) = \frac{1}{n}\sum_{i=1}^{n}\lambda_{\max}(\mL_i)\cdot\lambda_{\max}\left(\mL_i\mX^{-1}\right)\cdot\lambda_{\max}\left(\mX^{-1}\right).
   \end{equation*}
   Then the above mapping is monotone decreasing in $\mX$.
\end{fact}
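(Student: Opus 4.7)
The plan is to reduce the claim to two elementary ingredients: the operator-monotonicity of the matrix inverse on $\bbS^d_{++}$, and \Cref{fact:1} itself (monotonicity of each eigenvalue under the L\"owner order). Since $\lambda_{\max}(\mL_i) \geq 0$ is fixed with respect to $\mX$, it suffices to prove that for each $i$ the scalar function $\mX \mapsto \lambda_{\max}\left(\mL_i \mX^{-1}\right)\cdot\lambda_{\max}\left(\mX^{-1}\right)$ is monotone decreasing on $\bbS^d_{++}$; then multiplying by the non-negative scalar $\lambda_{\max}(\mL_i)$ and averaging over $i$ preserves monotonicity.

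The first step is to fix $\mO_d \prec \mX_1 \preceq \mX_2$ and invoke the standard fact that $\mX \mapsto \mX^{-1}$ reverses the L\"owner order, so $\mO_d \prec \mX_2^{-1} \preceq \mX_1^{-1}$. Applying \Cref{fact:1} then yields
\begin{equation*}
    \lambda_{\max}\left(\mX_1^{-1}\right) \;\geq\; \lambda_{\max}\left(\mX_2^{-1}\right) \;\geq\; 0.
\end{equation*}

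The one subtlety is the factor $\lambda_{\max}\left(\mL_i \mX^{-1}\right)$, since $\mL_i\mX^{-1}$ need not be symmetric and \Cref{fact:1} does not apply to it directly. I would get around this by noting that $\mL_i\mX^{-1}$ is similar to $\mL_i^{1/2}\mX^{-1}\mL_i^{1/2}$ via conjugation by $\mL_i^{1/2}$, hence shares its spectrum, and the conjugated matrix is symmetric positive definite. This gives the identity
\begin{equation*}
    \lambda_{\max}\left(\mL_i\mX^{-1}\right) \;=\; \lambda_{\max}\left(\mL_i^{1/2}\mX^{-1}\mL_i^{1/2}\right).
\end{equation*}
Conjugation by $\mL_i^{1/2}$ preserves the L\"owner order, so $\mX_2^{-1} \preceq \mX_1^{-1}$ implies $\mL_i^{1/2}\mX_2^{-1}\mL_i^{1/2} \preceq \mL_i^{1/2}\mX_1^{-1}\mL_i^{1/2}$, and a second application of \Cref{fact:1} gives $\lambda_{\max}\left(\mL_i\mX_1^{-1}\right) \geq \lambda_{\max}\left(\mL_i\mX_2^{-1}\right) \geq 0$.

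Finally, both factors in the product are non-negative and decrease when $\mX$ grows in the L\"owner order, so their product is decreasing as well. Summing the non-negative quantities $\lambda_{\max}(\mL_i)\cdot\lambda_{\max}(\mL_i\mX^{-1})\cdot\lambda_{\max}(\mX^{-1})$ over $i$ and dividing by $n$ then yields the claimed monotonicity of $f$ in $\mX$. The only non-routine step in this plan is handling the asymmetric product $\mL_i\mX^{-1}$, which is why I isolate the similarity trick above; everything else is a direct composition of \Cref{fact:1} with the inversion-reversal of the L\"owner order.
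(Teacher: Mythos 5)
Your proposal is correct and follows essentially the same route as the paper's own proof: both rely on the anti-monotonicity of matrix inversion in the L\"owner order, the eigenvalue monotonicity of \Cref{fact:1}, and the conjugation identity $\lambda_{\max}(\mL_i\mX^{-1}) = \lambda_{\max}\bigl(\mL_i^{1/2}\mX^{-1}\mL_i^{1/2}\bigr)$ to handle the asymmetric product. No substantive difference to report.
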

\begin{proof}
   First we notice that from \Cref{fact:2} the mapping $\mX \mapsto \mX^{-1}$ is monotone decreasing. 
   The latter means that if we have any $\mX_1, \mX_2 \in \bbS^d_{++}$ such that $\mX_1 \succeq \mX_2$, we have 
   \begin{eqnarray*}
      \mX_1^{-1} \preceq \mX_2^{-1}.
   \end{eqnarray*}
   Then it immediately follows, due to \Cref{fact:1}, that 
   \begin{eqnarray*}
      0 < \lambda_{\max}(\mX_1^{-1}) \leq \lambda_{\max}(\mX_2^{-1}).
   \end{eqnarray*}
   We also notice that the relation $\lambda_{\max}\left(\mL_i\mX^{-1}\right) = \lambda_{\max}\left(\mL_i^\frac{1}{2}\mX^{-1}\mL_i^\frac{1}{2}\right) = \lambda_{\max}\left(\mX^{-1}\mL_i\right)$, and that the mapping $\mX \mapsto \mL_i^\frac{1}{2}\mX^{-1}\mL_i^\frac{1}{2}$ is also monotone decreasing for every $i \in [n]$, so we have 
   \begin{equation*}
      0 < \lambda_{\max}\left(\mL_i\mX_1^{-1}\right) \leq \lambda_{\max}\left(\mL_i\mX_2^{-1}\right).
   \end{equation*}
   Since we have the coefficient $\lambda_{\max}\left(\mL_i\right) > 0$, it follows that, 
   \begin{eqnarray*}
      f(\mX_1, \mL_1, \hdots, \mL_n) \leq f(\mX_2, \mL_1, \hdots, \mL_n).
   \end{eqnarray*}
   This means that $f(\mX)$ is monotone decreasing in $\mX$.
\end{proof}

\begin{fact}
   \label{fact:4}
   For any two matrices $\mA, \mB \in \bbS^d_{++}$, the following relation regarding their largest eigenvalue holds
   \begin{equation}
      \label{eq:fact:4}
      \lambda_{\max}\left(\mA\mB\right) \leq \lambda_{\max}\left(\mA\right)\cdot\lambda_{\max}\left(\mB\right).
   \end{equation}
\end{fact}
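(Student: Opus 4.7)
The plan is to first reduce the problem to one about eigenvalues of a symmetric matrix. Although $\mA\mB$ need not be symmetric, it is similar to the symmetric positive semi-definite matrix $\mA^{1/2}\mB\mA^{1/2}$ via the conjugation $\mA^{1/2}(\mA\mB)\mA^{-1/2}=\mA^{1/2}\mB\mA^{1/2}$. Similar matrices have identical spectra, so
\begin{equation*}
    \lambda_{\max}(\mA\mB)=\lambda_{\max}\bigl(\mA^{1/2}\mB\mA^{1/2}\bigr),
\end{equation*}
and now the right-hand side is the largest eigenvalue of a matrix in $\bbS^d_{++}$, which behaves well with the Loewner order.

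Next I would bound $\mA^{1/2}\mB\mA^{1/2}$ in the Loewner order in two steps. First, from $\mB\preceq\lambda_{\max}(\mB)\mI_d$ and the fact that conjugation by $\mA^{1/2}$ preserves $\preceq$, I get
\begin{equation*}
    \mA^{1/2}\mB\mA^{1/2}\;\preceq\;\lambda_{\max}(\mB)\cdot\mA^{1/2}\mI_d\mA^{1/2}\;=\;\lambda_{\max}(\mB)\cdot\mA.
\end{equation*}
Then, using $\mA\preceq\lambda_{\max}(\mA)\mI_d$, chaining gives
\begin{equation*}
    \mA^{1/2}\mB\mA^{1/2}\;\preceq\;\lambda_{\max}(\mA)\cdot\lambda_{\max}(\mB)\cdot\mI_d.
\end{equation*}

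Finally, I would apply \Cref{fact:1} (monotonicity of eigenvalues in the Loewner order) to this inequality to conclude
\begin{equation*}
    \lambda_{\max}\bigl(\mA^{1/2}\mB\mA^{1/2}\bigr)\;\leq\;\lambda_{\max}(\mA)\cdot\lambda_{\max}(\mB),
\end{equation*}
which, combined with the similarity identity above, yields the claim. There is essentially no obstacle here; the only subtle point is remembering that $\mA\mB$ itself is not symmetric, so one must first pass to the similar symmetric matrix $\mA^{1/2}\mB\mA^{1/2}$ before invoking Loewner-order arguments. Alternatively, a Rayleigh-quotient argument with the substitution $y=\mA^{1/2}x$ gives the same bound in one line, but the Loewner-order route is cleanest because it reuses \Cref{fact:1} directly.
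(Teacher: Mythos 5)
Your argument is correct, but it takes a genuinely different route from the paper. The paper's proof works directly on $\mA\mB$ via the Courant--Fischer theorem, writing $\lambda_{\max}(\mA\mB)=\max_{x\neq 0}\frac{x^{\top}\mA\mB x}{x^{\top}x}$ and then bounding this by the product of the two Rayleigh-quotient maxima of $\mA$ and $\mB$. That route is delicate: $\mA\mB$ is not symmetric in general, so its largest eigenvalue is not a priori given by that Rayleigh quotient, and the step bounding $x^{\top}\mA\mB x$ by $\left(\max_x \frac{x^{\top}\mA x}{x^{\top}x}\right)\left(\max_x \frac{x^{\top}\mB x}{x^{\top}x}\right)x^{\top}x$ involves a bilinear form in $x$ and $\mB x$ rather than a quadratic form, so it is not an immediate consequence of the spectral bounds on $\mA$ and $\mB$ separately. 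Your route --- pass to the similar symmetric matrix $\mA^{1/2}\mB\mA^{1/2}$, squeeze it in the Loewner order, and invoke \Cref{fact:1} --- sidesteps both issues and reuses machinery already established in the paper; it is the more robust of the two arguments. One small slip to fix in the writeup: the conjugation you display is wrong, since $\mA^{1/2}(\mA\mB)\mA^{-1/2}=\mA^{3/2}\mB\mA^{-1/2}$, which is not $\mA^{1/2}\mB\mA^{1/2}$. The similarity you want is $\mA^{-1/2}(\mA\mB)\mA^{1/2}=\mA^{1/2}\mB\mA^{1/2}$; with that correction the equality of spectra, and hence the rest of the proof, goes through exactly as you describe.
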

\begin{proof}
   Using the Courant-Fischer theorem, we can write 
   \begin{eqnarray*}
      \lambda_{\max}\left(\mA\mB\right) &=& \min_{S: \dim S = d}\max_{x\in S\backslash\{0\}}\frac{x^{\top}\mA\mB x}{x^{\top}x} \\
      &=& \max_{x \in \R^d\backslash\{0\}} \frac{x^{\top}\mA\mB x}{x^{\top}x} \\
      &\leq& \max_{x \in \R^d\backslash\{0\}} \frac{x^{\top}\mA x}{x^{\top}x} \cdot \max_{x \in \R^d\backslash\{0\}} \frac{x^{\top}\mB x}{x^{\top}x} \\
      &=& \lambda_{\max}\left(\mA\right)\cdot\lambda_{\max}\left(\mB\right).
   \end{eqnarray*}
\end{proof}

\begin{fact}
   \label{fact:5}
   Given matrix $\mQ \in \bbS^d_{++}$ and its matrix norm $\norm{\cdot}_{\mQ}$, its associated dual norm is $\norm{\cdot}_{\mQ^{-1}}$.
\end{fact}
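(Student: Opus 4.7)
The plan is to verify the claim directly from the definition of the dual norm, namely $\norm{y}_\ast = \sup_{\norm{x}_{\mQ} \leq 1} \inner{x}{y}$, and show this supremum equals $\norm{y}_{\mQ^{-1}}$. Since $\mQ \in \bbS^d_{++}$, the symmetric positive-definite square root $\mQ^{1/2}$ exists and is invertible with $(\mQ^{1/2})^{-1} = \mQ^{-1/2}$, so a change of variables is available.

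First I would introduce $z \eqdef \mQ^{1/2} x$, so that $x = \mQ^{-1/2} z$ and $\norm{x}_{\mQ}^2 = x^\top \mQ x = z^\top z = \norm{z}^2$. Under this substitution the constraint $\norm{x}_{\mQ} \leq 1$ becomes $\norm{z} \leq 1$, and the inner product transforms as $\inner{x}{y} = \inner{\mQ^{-1/2} z}{y} = \inner{z}{\mQ^{-1/2} y}$ by symmetry of $\mQ^{-1/2}$. Hence the dual norm reduces to
\begin{equation*}
\norm{y}_\ast \;=\; \sup_{\norm{z}\leq 1}\,\inner{z}{\mQ^{-1/2} y}.
\end{equation*}

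Next I would invoke the Cauchy--Schwarz inequality on the right-hand side: the supremum of $\inner{z}{w}$ over the Euclidean unit ball equals $\norm{w}$, with the maximizer $z = w/\norm{w}$ (for $w \neq 0$; the case $w=0$ is trivial). Applying this with $w = \mQ^{-1/2} y$ yields
\begin{equation*}
\norm{y}_\ast \;=\; \norm{\mQ^{-1/2} y} \;=\; \sqrt{y^\top \mQ^{-1} y} \;=\; \norm{y}_{\mQ^{-1}},
\end{equation*}
which is exactly the claim. There is no real obstacle here: the proof is essentially a one-line computation once the change of variables $z = \mQ^{1/2} x$ is made, and everything else follows from Cauchy--Schwarz and the symmetry of $\mQ^{\pm 1/2}$.
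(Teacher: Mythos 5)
Your proof is correct, and it takes a genuinely different route from the paper's. The paper treats the dual norm as a constrained optimization problem $\sup\{z^{\top}x : \norm{x}_{\mQ}^2 = 1\}$ and solves it via Lagrange multipliers, obtaining the stationary point $x = \mQ^{-1}z/\norm{z}_{\mQ^{-1}}$ and reading off the value $\norm{z}_{\mQ^{-1}}$. You instead make the change of variables $z = \mQ^{1/2}x$, which turns the $\mQ$-unit ball into the Euclidean unit ball, and then conclude by Cauchy--Schwarz. Your argument is arguably tighter: the Lagrangian computation only identifies a first-order stationary point and implicitly relies on the reader accepting that this critical point is the maximizer (and on the equivalence of the constraints $\norm{x}_{\mQ} \leq 1$ and $\norm{x}_{\mQ}^2 = 1$, which holds for $z \neq 0$ because a nonzero linear functional attains its supremum on the boundary), whereas Cauchy--Schwarz gives the supremum with a matching upper bound and explicit maximizer in one step. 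What the paper's approach buys is that it exhibits the dual-optimal $x$ directly in terms of $\mQ^{-1}z$ without passing through the square root $\mQ^{1/2}$; what yours buys is brevity and the elimination of any appeal to first-order optimality conditions. One small point worth making explicit in your write-up: the maximizer $z = w/\norm{w}$ requires $w = \mQ^{-1/2}y \neq 0$, which you do note, and the case $y = 0$ is indeed trivial since $\mQ^{-1/2}$ is invertible.
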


\begin{proof}
   Let us first recall the definition of the dual norm $\norm{\cdot}_*$. 
   For any vector $z\in \R^d$, it is defined as 
   \begin{equation*}
      \norm{z}_* := \sup\{z^{\top}x: \norm{x}_{\mQ} \leq 1\}.
   \end{equation*}
   Solving this optimization problem is equivalent to solving $\sup\{z^{\top}x: \norm{x}^2_{\mQ} = 1\}$. 
   The Lagrange function is given as
   \begin{equation*}
      f(x, \lambda) = z^{\top}x - \lambda\left(\norm{x}^2_{\mQ} - 1\right) = z^{\top}x - \lambda\left(x^{\top}\mQ x - 1\right).
   \end{equation*}
   Computing the derivatives we deduce that
   \begin{equation*}
      \frac{\partial f(x, \lambda)}{\partial x} = z - 2\lambda\cdot\mQ x = 0, \qquad \frac{\partial f(x, \lambda)}{\partial \lambda} = \norm{x}^2_{\mQ} - 1 = 0.
   \end{equation*}
   This leads to
   \begin{equation*}
      \lambda = \frac{\norm{z}_{\mQ^{-1}}}{2}, \qquad x = \frac{\mQ^{-1}z}{\norm{z}_{\mQ^{-1}}}.
   \end{equation*}
   As a result, we have 
   \begin{eqnarray*}
      \sup\{z^{\top}x: \norm{x}_{\mQ} \leq 1\} &=& \sup\{z^{\top}x: \norm{x}^2_{\mQ} = 1\} \\
      &=& z^{\top}z = \frac{z^{\top}\mQ^{-1}z}{\norm{z}_{\mQ^{-1}}} = \norm{z}_{\mQ^{-1}}.
   \end{eqnarray*}
\end{proof}

\section{Properties of matrix smoothness}\label{app:smoothness}

\subsection{The matrix Lipschitz-continuous gradient}
In this section we describe some properties of matrix smoothness, matrix gradient Lipschitzness and their relations.  The following proposition describes a sufficient condition for the matrix Lipschitz-continuity of the gradient.
\begin{proposition}
   \label{ppst:bounded:Hessian}
   Given twice continuously differentiable function $f: \R^d \mapsto \R$ with bounded Hessian,
   \begin{equation}
      \label{eq:ppst:bounded:Hessian}
      \nabla^2 f(x) \preceq \mL, 
   \end{equation}
   where $\mL \in \bbS^d_{++}$ and the generalized inequality holds for any $x \in \R^d$. Then $f$ satisfies \eqref{eq:assmp:3} with the matrix $\mL$.
\end{proposition}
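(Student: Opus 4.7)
The plan is to reduce the gradient-difference identity to a pointwise bound via the fundamental theorem of calculus, and then exploit the duality between the $\mL$-norm and $\mL^{-1}$-norm (Fact~\ref{fact:5}). First I would fix arbitrary $x, y \in \R^d$, set $u \eqdef x - y$ and $H(t) \eqdef \nabla^2 f(y + tu)$, and write
\begin{equation*}
    \nabla f(x) - \nabla f(y) \;=\; \int_0^1 H(t)\, u \, dt.
\end{equation*}

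Next, using Fact~\ref{fact:5}, I would express the $\mL^{-1}$-norm of the gradient difference as a supremum over $\mL$-unit test vectors:
\begin{equation*}
    \norm{\nabla f(x) - \nabla f(y)}_{\mL^{-1}} = \sup_{\norm{w}_{\mL} \le 1} \inner{w}{\nabla f(x) - \nabla f(y)} = \sup_{\norm{w}_{\mL} \le 1} \int_0^1 w^\top H(t) u \, dt.
\end{equation*}
For each fixed $t$, I would then estimate the bilinear form $w^\top H(t) u$. Assuming the intended reading of \eqref{eq:ppst:bounded:Hessian} is that $H(t) \in \bbS^d_+$ (so that $\inner{\cdot}{H(t)\cdot}$ is a genuine semi-inner product), Cauchy--Schwarz gives
\begin{equation*}
    w^\top H(t) u \;\le\; \sqrt{w^\top H(t) w} \cdot \sqrt{u^\top H(t) u}.
\end{equation*}
The assumption $H(t) \preceq \mL$ then immediately yields $w^\top H(t) w \le \norm{w}_{\mL}^2$ and $u^\top H(t) u \le \norm{u}_{\mL}^2$, so $w^\top H(t) u \le \norm{w}_{\mL}\norm{u}_{\mL}$ pointwise in $t$. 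Integrating and taking the supremum recovers $\norm{\nabla f(x) - \nabla f(y)}_{\mL^{-1}} \le \norm{u}_{\mL} = \norm{x-y}_{\mL}$, which is \eqref{eq:assmp:3}.

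The main obstacle is the Cauchy--Schwarz step, which requires $H(t)$ to be PSD; the stated hypothesis only gives an upper bound. If one cannot appeal to convexity, the fallback is to work with $\norm{H(t)u}_{\mL^{-1}}^2 = u^\top H(t)\mL^{-1}H(t)u$ and use Jensen's inequality (convexity of $v \mapsto \norm{v}_{\mL^{-1}}^2$) to pull the norm inside the integral, namely $\norm{\int_0^1 H(t) u\, dt}_{\mL^{-1}}^2 \le \int_0^1 \norm{H(t)u}_{\mL^{-1}}^2\, dt$. Reducing to $\tilde H(t) \eqdef \mL^{-1/2} H(t) \mL^{-1/2}$, the bound $\tilde H(t)^2 \preceq \mI$ needed to conclude $\norm{H(t)u}_{\mL^{-1}} \le \norm{u}_{\mL}$ holds if and only if the eigenvalues of $\tilde H(t)$ lie in $[-1,1]$, i.e.\ $-\mL \preceq H(t) \preceq \mL$. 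Thus in the non-convex setting I would either strengthen the hypothesis to a two-sided spectral bound or invoke convexity/positive-semidefiniteness of the Hessian; either way, steps 1--2 remain unchanged and only the pointwise estimate in step~3 is adapted.
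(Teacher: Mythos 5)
Your argument follows essentially the same route as the paper's proof: both start from $\nabla f(x)-\nabla f(y)=\int_0^1 \nabla^2 f\bigl(y+t(x-y)\bigr)(x-y)\,dt$ and reduce the claim to a quadratic-form bound of the type $H\mL^{-1}H\preceq \mL$ for the Hessian. The paper works with the averaged Hessian $\mF=\int_0^1\nabla^2 f(\theta x+(1-\theta)y)\,d\theta$ and asserts the chain $\mF\mL^{-1}\mF\preceq\mL \iff \bigl(\mL^{-1/2}\mF\mL^{-1/2}\bigr)^2\preceq \mI_d \iff \mL^{-1/2}\mF\mL^{-1/2}\preceq\mI_d \iff \mF\preceq\mL$; you pull the squared norm inside the integral by Jensen and analyze $\mL^{-1/2}H(t)\mL^{-1/2}$ pointwise in $t$. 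That difference is immaterial. What matters is that you have correctly identified the obstruction that the paper's proof silently steps over: for a symmetric $\tilde{\mF}$, the condition $\tilde{\mF}^2\preceq\mI_d$ is equivalent to the spectrum of $\tilde{\mF}$ lying in $[-1,1]$, \emph{not} to $\tilde{\mF}\preceq\mI_d$. The one-sided hypothesis $\nabla^2 f\preceq\mL$ is genuinely insufficient, and the proposition as literally stated is false: for $f(x)=-\tfrac{C}{2}\norm{x}^2$ one has $\nabla^2 f=-C\mI_d\preceq\mL$ for every $C>0$, yet $\norm{\nabla f(x)-\nabla f(y)}_{\mL^{-1}}=C\norm{x-y}_{\mL^{-1}}$, which exceeds $\norm{x-y}_{\mL}$ along an eigenvector of $\mL$ with eigenvalue $\lambda<C$.

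Your two proposed repairs are exactly the right ones. Under the two-sided bound $-\mL\preceq\nabla^2 f(x)\preceq\mL$ your Jensen/spectral argument closes the proof, and under $\nabla^2 f(x)\succeq 0$ your Cauchy--Schwarz argument with the semi-inner product $\inner{w}{H(t)u}$ works (this hypothesis also rescues the paper's chain, since $0\preceq\mL^{-1/2}\mF\mL^{-1/2}\preceq\mI_d$ does imply $\bigl(\mL^{-1/2}\mF\mL^{-1/2}\bigr)^2\preceq\mI_d$). For what it is worth, the paper's downstream use of the proposition survives: in the experiments the objective is a convex logistic loss plus the regularizer $\lambda\sum_t x_t^2/(1+x_t^2)$, whose Hessian is bounded below by $-\tfrac{\lambda}{2}\mI_d\succeq-2\lambda\mI_d\succeq-\mL$, so the two-sided bound holds there. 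But the proposition itself needs the stronger hypothesis, and your write-up is the more careful of the two.
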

The below proposition is a variant of \Cref{ppst:4} and it characterizes the smoothness matrix of the objective function $f$, given the smoothness matrices of the component functions $f_i$.
\begin{proposition}
   \label{dasha:lemma:global-smooth}
   Assume that $f_i$ has $\mL_i$-Lipschitz continuous gradient for every $i \in [n]$, then function $f$ has $\mL$-Lipschitz gradient with $\mL \in \bbS^d_{++}$ satisfying 
   \begin{equation}
   \label{dasha:eq:global-smooth}
      \mL \cdot \lambda_{\min}\left(\mL\right) = \frac{1}{n}\sum_{i=1}^{n} \lambda_{\max}\left(\mL_i\right)\cdot\mL_i.
  \end{equation}
\end{proposition}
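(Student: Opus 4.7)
The plan is to extend the argument used for \Cref{ppst:4}, but this time retaining the matrix-level structure of the client smoothness matrices in the right-hand side rather than collapsing everything to scalar eigenvalue bounds. Three ingredients are involved: convexity of the squared dual norm, a change of inner product from $\mL^{-1}$ to $\mL_i^{-1}$, and \Cref{fact:4} on the spectral radius of a product.

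First, from $\nabla f = \frac{1}{n}\sum_i \nabla f_i$ and convexity of $z \mapsto \norm{z}_{\mL^{-1}}^2$, Jensen's inequality gives
\begin{equation*}
\norm{\nabla f(x) - \nabla f(y)}_{\mL^{-1}}^2 \leq \frac{1}{n}\sum_{i=1}^{n} \norm{\nabla f_i(x) - \nabla f_i(y)}_{\mL^{-1}}^2.
\end{equation*}
To apply the per-client hypothesis \eqref{eq:assmp:3}, I would change the norm using the PSD bound $\mL^{-1} \preceq \lambda_{\max}(\mL_i \mL^{-1}) \cdot \mL_i^{-1}$, which follows because $\mL_i^{1/2} \mL^{-1} \mL_i^{1/2} \preceq \lambda_{\max}(\mL_i^{1/2} \mL^{-1} \mL_i^{1/2}) \cdot \mI_d$ and the matrices $\mL_i^{1/2} \mL^{-1} \mL_i^{1/2}$ and $\mL_i \mL^{-1}$ share spectra by cyclicity. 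This yields
\begin{equation*}
\norm{\nabla f_i(x) - \nabla f_i(y)}_{\mL^{-1}}^2 \leq \lambda_{\max}(\mL_i \mL^{-1}) \cdot \norm{\nabla f_i(x) - \nabla f_i(y)}_{\mL_i^{-1}}^2 \leq \lambda_{\max}(\mL_i \mL^{-1}) \cdot \norm{x - y}_{\mL_i}^2,
\end{equation*}
where the last step uses \Cref{assmp:3} for $f_i$.

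Next I would bound the scalar factor via \Cref{fact:4}: $\lambda_{\max}(\mL_i \mL^{-1}) \leq \lambda_{\max}(\mL_i) \cdot \lambda_{\max}(\mL^{-1}) = \lambda_{\max}(\mL_i)/\lambda_{\min}(\mL)$. Crucially, unlike the scalar bookkeeping used in \Cref{ppst:4}, I would \emph{not} further bound $\norm{x - y}_{\mL_i}^2$ by a scalar multiple of $\norm{x - y}_{\mL}^2$; instead, summing over $i$ and pulling out the common factor $1/\lambda_{\min}(\mL)$ gives the matrix-level expression
\begin{equation*}
\norm{\nabla f(x) - \nabla f(y)}_{\mL^{-1}}^2 \leq \frac{1}{\lambda_{\min}(\mL)} (x - y)^{\top} \brs{\frac{1}{n} \sum_{i=1}^{n} \lambda_{\max}(\mL_i) \cdot \mL_i} (x - y).
\end{equation*}
Substituting the defining identity \eqref{dasha:eq:global-smooth} replaces the bracketed matrix with $\lambda_{\min}(\mL) \cdot \mL$, causing the $\lambda_{\min}(\mL)$ factors to cancel and leaving exactly $\norm{x - y}_{\mL}^2$, which is the desired gradient Lipschitz inequality.

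The one delicate point is pinning down the correct constant in the norm-change step, i.e.\ identifying the smallest $c$ with $\mL^{-1} \preceq c \mL_i^{-1}$ as $c = \lambda_{\max}(\mL_i^{1/2} \mL^{-1} \mL_i^{1/2}) = \lambda_{\max}(\mL_i \mL^{-1})$ via cyclicity of the spectrum, rather than the looser $\lambda_{\max}(\mL^{-1}) / \lambda_{\min}(\mL_i)$ that a blind Rayleigh-quotient argument would produce. Keeping this tight is what allows the subsequent application of \Cref{fact:4} to combine cleanly with the defining equation for $\mL$; all remaining manipulations are purely mechanical.
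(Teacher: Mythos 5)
Your proposal is correct and follows essentially the same route as the paper's proof: Jensen's inequality on the averaged gradient differences, a re-weighting of the $\mL^{-1}$-norm into the $\mL_i^{-1}$-norm with constant $\lambda_{\max}(\mL_i)\lambda_{\max}(\mL^{-1})$, the per-client Lipschitz hypothesis, and finally the defining identity \eqref{dasha:eq:global-smooth} to collapse the weighted sum into $\norm{x-y}_{\mL}^2$. The only cosmetic difference is that you first record the tight constant $\lambda_{\max}(\mL_i\mL^{-1})$ before loosening it via \Cref{fact:4}; since you immediately relax to $\lambda_{\max}(\mL_i)\lambda_{\max}(\mL^{-1})$ anyway, the "delicate point" you flag is immaterial and the paper simply writes the relaxed bound directly.
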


\subsubsection{Quadratics}
Given a matrix $\mA \in \bbS^d_{++}$ and a vector $b \in \R^d$, consider the function $f(x) = \frac{1}{2}x^{\top}\mA x + b^{\top}x + c$. 
Then its gradient is computed as $\nabla f(x) = \mA x + b$ and $\nabla^2 f(x) = \mA$. 
Inserting gradients formula into  \eqref{eq:assmp:3} we deduce
\begin{equation*}
   \sqrt{(x - y)^{\top}\mA\mL^{-1}\mA (x-y)} \leq \sqrt{(x-y)^{\top}\mL (x-y)},
\end{equation*}
for any $x, y\in \R^d$. This reduces to
\begin{equation}
   \label{eq:cond-for-quad}
   \mA\mL^{-1}\mA \preceq \mL.
\end{equation}
Since $\mA \in \bbS^d_{++}$, we can also rewrite \eqref{eq:cond-for-quad} as 
\begin{equation*}
   \mA^{\frac{1}{2}}\mL^{-1}\mA^{\frac{1}{2}} \preceq \mA^{-\frac{1}{2}}\mL\mA^{-\frac{1}{2}},
\end{equation*}
which is equivalent to 
\begin{equation}
   \label{eq:equiv-write}
   \mA \preceq \mL.
\end{equation}
Therefore, the ``best'' $\mL \in \bbS^d_{++}$ that satisfies \eqref{eq:assmp:3} is $\mL = \mA = \nabla^2 f(x)$, for every $x \in \R^d$. 
Now, let us  look at a more general setting. 
Consider $f$ given as follows,
\begin{equation*}
   f(x) = \sum_{i=1}^{s}\phi_i(\mM_i x),
\end{equation*}
where $\mM_i \in \R^{q_i \times d}$. Here $f:\R^d \mapsto \R$ is the sum of functions $\phi_i:\R^{q_i} \mapsto \R$. We assume that each function $\phi_i$ has matrix $\mL_i$ Lipschitz gradient. We have the following lemma regarding the matrix gradient Lipschitzness of $f$.
\begin{proposition}
   \label{ppst:ext-quad}
   Assume that functions $f$ and $\{\phi_i\}_{i=1}^s$ are described above. 
   Then function $f$ has $\mL$-Lipschitz gradient, if the following condition is satisfied:
   \begin{equation}
      \label{eq:ppst:ext-quad-cond}
      \sum_{i=1}^{s}\lambda_{\max}\left(\mL_i^\frac{1}{2}\mM_i\mL^{-1}\mM_i^{\top}\mL_i^\frac{1}{2}\right) = 1.
   \end{equation}
\end{proposition}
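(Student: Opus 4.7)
The plan is to prove the proposition by a triangle-inequality decomposition of the gradient difference in the $\mL^{-1}$ norm, with a single scalar $\alpha_i$ simultaneously governing two operator inequalities that match the normalization in the hypothesis. First, by the chain rule, $\nabla f(x) = \sum_{i=1}^{s}\mM_i^{\top}\nabla\phi_i(\mM_i x)$. Writing $u_i := \nabla\phi_i(\mM_i x) - \nabla\phi_i(\mM_i y)$, the triangle inequality for $\norm{\cdot}_{\mL^{-1}}$ gives
\begin{equation*}
    \norm{\nabla f(x)-\nabla f(y)}_{\mL^{-1}} \leq \sum_{i=1}^s \norm{\mM_i^{\top} u_i}_{\mL^{-1}}.
\end{equation*}

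The key step is to introduce the scalar $\alpha_i := \lambda_{\max}(\mL_i^{1/2}\mM_i \mL^{-1}\mM_i^{\top}\mL_i^{1/2})$, so that \eqref{eq:ppst:ext-quad-cond} reads $\sum_i \alpha_i = 1$, and to observe that this single quantity yields two matching operator inequalities: $\mM_i \mL^{-1}\mM_i^{\top} \preceq \alpha_i \mL_i^{-1}$ and $\mM_i^{\top}\mL_i \mM_i \preceq \alpha_i \mL$. The first follows directly by conjugating $\mL_i^{1/2}\mM_i\mL^{-1}\mM_i^{\top}\mL_i^{1/2} \preceq \alpha_i \mI$ by $\mL_i^{-1/2}$. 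For the second, I would use the standard identity that the non-zero eigenvalues of $\mA\mB$ and $\mB\mA$ coincide, applied with $\mA := \mL^{-1/2}\mM_i^{\top}\mL_i^{1/2}$ and $\mB := \mL_i^{1/2}\mM_i\mL^{-1/2}$, to obtain $\lambda_{\max}(\mL^{-1/2}\mM_i^{\top}\mL_i\mM_i\mL^{-1/2}) = \alpha_i$; conjugating $\mL^{-1/2}\mM_i^{\top}\mL_i\mM_i\mL^{-1/2} \preceq \alpha_i \mI$ by $\mL^{1/2}$ yields the claim.

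I would then chain three estimates. The first operator inequality gives $\norm{\mM_i^{\top} u_i}_{\mL^{-1}}^2 \leq \alpha_i \norm{u_i}_{\mL_i^{-1}}^2$; the matrix Lipschitzness of $\nabla \phi_i$ gives $\norm{u_i}_{\mL_i^{-1}} \leq \norm{\mM_i(x-y)}_{\mL_i}$; and the second operator inequality gives $\norm{\mM_i(x-y)}_{\mL_i}^2 \leq \alpha_i \norm{x-y}_{\mL}^2$. Combining these yields $\norm{\mM_i^{\top} u_i}_{\mL^{-1}} \leq \alpha_i \norm{x-y}_{\mL}$, and summing over $i$ together with $\sum_i \alpha_i = 1$ delivers the desired bound $\norm{\nabla f(x)-\nabla f(y)}_{\mL^{-1}} \leq \norm{x-y}_{\mL}$.

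The main obstacle is not computational but rather the identification of the right quantity: one must recognize that the scalar appearing in \eqref{eq:ppst:ext-quad-cond} controls both the forward compression of $\mM_i^{\top}\mL_i\mM_i$ by $\mL$ and the backward expansion of $\mM_i\mL^{-1}\mM_i^{\top}$ by $\mL_i^{-1}$, and that these two roles are linked through $\lambda_{\max}(\mA\mB)=\lambda_{\max}(\mB\mA)$. Once this dual role of $\alpha_i$ is spotted, the two factors of $\sqrt{\alpha_i}$ coming from the two bounds multiply to exactly $\alpha_i$, which is precisely the quantity that is normalized to sum to $1$ in the hypothesis; any looser bookkeeping (for example, pulling out $\lambda_{\max}(\mM_i^{\top}\mL_i\mM_i)$ and $\lambda_{\max}(\mL^{-1})$ separately, or inserting Cauchy--Schwarz across the index $i$) would fail to produce the sharp normalization.
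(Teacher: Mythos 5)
Your proof is correct and follows essentially the same route as the paper's: a triangle-inequality decomposition in the $\mL^{-1}$ norm, the two weighted bounds each contributing a factor $\sqrt{\alpha_i}$, and the identity $\lambda_{\max}(\mA\mB)=\lambda_{\max}(\mB\mA)$ linking $\lambda_{\max}\left(\mL_i^{1/2}\mM_i\mL^{-1}\mM_i^{\top}\mL_i^{1/2}\right)$ to $\lambda_{\max}\left(\mL^{-1/2}\mM_i^{\top}\mL_i\mM_i\mL^{-1/2}\right)$. The only difference is presentational: you phrase the two estimates as operator inequalities $\mM_i\mL^{-1}\mM_i^{\top}\preceq\alpha_i\mL_i^{-1}$ and $\mM_i^{\top}\mL_i\mM_i\preceq\alpha_i\mL$, whereas the paper performs the same reweighting inside the quadratic forms directly.
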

Note that \Cref{ppst:ext-quad} is a generalization of the previous case of quadratics, if we pick $s=1$, $\mM_i = \mA^\frac{1}{2}$ and $\phi_1(x) = x^{\top}\mI_d x$, the condition becomes $\mL = \mA$, which is exactly the solution given by \eqref{eq:equiv-write}. 
Thus we recover the result for quadratics. The linear term $bx + c$ is ignored in this case. In \Cref{ppst:ext-quad}, we only intend to give a way of finding a matrix $\mL \in \bbS^d_{++}$, so that $f$ has $\mL$-Lipschitz gradient. This does not mean, however, the $\mL$ here is optimal. 
The proof is deferred to \Cref{app:proof-prop}.

\subsection{Comparison of the different smoothness conditions}

Let us recall the definition of matrix smoothness. 

\begin{definition}{\rm ($\mL$-smoothness)}
   Assume that $f: \R^d \rightarrow \R$ is a continuously differentiable function and matrix $\mL \in \bbS^d_{++}$. We say that $f$ is $\mL$-smooth if for all $ x, y \in \R^d$
   \begin{equation}
      \label{eq:assmp:3-sup}
      f(y) \leq f(x) + \inner{\nabla f(x)}{x - y} + \frac{1}{2}\norm{x - y}_{\mL}^2.
   \end{equation}
\end{definition}

We provide a proposition here which describes an equivalent form of stating $\mL$-matrix smoothness of a function $f$. This proposition is used to illustrate the relation between matrix smoothness and matrix Lipschitz gradient.

\begin{proposition}
   \label{ppst:1}
   Let function $f: \R^d \rightarrow \R$ be continuously differentiable. Then the following statements are equivalent.
   \begin{itemize}
      \item[{\rm (i)}] {\rm $f$ is $\mL$-matrix smooth.}
      \item[{\rm (ii)}] {\rm $\inner{\nabla f(x) - \nabla f(y)}{x - y} \leq \norm{x-y}^2_{\mL}$ for all $x, y \in \R^d$.}
   \end{itemize}
\end{proposition}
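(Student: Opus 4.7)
The plan is to follow the standard proof of the equivalence between smoothness and the cocoercivity-like gradient inequality, adapted here to the matrix norm $\norm{\cdot}_{\mL}$. Both implications are short and the presence of $\mL$ does not create any essential difficulty, because $\norm{\cdot}_{\mL}^2$ is a homogeneous quadratic form in its argument, which is all that the scalar argument uses.

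For the direction $(\text{i}) \Rightarrow (\text{ii})$, I would apply the $\mL$-smoothness inequality twice, once at the pair $(x,y)$ and once at the pair $(y,x)$:
\begin{align*}
f(y) &\leq f(x) + \inner{\nabla f(x)}{y-x} + \tfrac{1}{2}\norm{x-y}_{\mL}^2, \\
f(x) &\leq f(y) + \inner{\nabla f(y)}{x-y} + \tfrac{1}{2}\norm{x-y}_{\mL}^2.
\end{align*}
Adding the two inequalities, the function values cancel, the two quadratic terms combine to $\norm{x-y}_{\mL}^2$, and rearranging the inner products yields exactly $\inner{\nabla f(x) - \nabla f(y)}{x-y} \leq \norm{x-y}_{\mL}^2$.

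For the direction $(\text{ii}) \Rightarrow (\text{i})$, I would use the fundamental theorem of calculus applied to $t \mapsto f(x + t(y-x))$, which gives
\begin{equation*}
f(y) - f(x) - \inner{\nabla f(x)}{y-x} = \int_0^1 \inner{\nabla f(x + t(y-x)) - \nabla f(x)}{y-x}\,\d{t}.
\end{equation*}
To bound the integrand, I would apply hypothesis (ii) at the pair $(x + t(y-x),\, x)$; the ``direction'' vector becomes $t(y-x)$, so by homogeneity one factor of $t$ can be pulled out on the left and the right-hand side becomes $t^2 \norm{y-x}_{\mL}^2$. Dividing by $t > 0$ gives $\inner{\nabla f(x + t(y-x)) - \nabla f(x)}{y-x} \leq t \norm{y-x}_{\mL}^2$, and integrating over $t \in [0,1]$ produces $\tfrac{1}{2}\norm{y-x}_{\mL}^2$, which is the desired matrix smoothness bound.

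The calculation is essentially routine. The only point I would double-check is the homogeneity step in the second direction, namely that $\norm{t(y-x)}_{\mL}^2 = t^2 \norm{y-x}_{\mL}^2$ — this is immediate since $\norm{\cdot}_{\mL}^2$ is a quadratic form — and the minor signs in the first direction, where one must keep track of whether the chosen display of $\mL$-smoothness uses $\inner{\nabla f(x)}{y-x}$ or $\inner{\nabla f(x)}{x-y}$; both versions of the argument work identically after swapping signs. No serious obstacle is expected.
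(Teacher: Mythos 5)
Your proposal is correct and follows essentially the same route as the paper: for (i)$\Rightarrow$(ii) you add the two smoothness inequalities at $(x,y)$ and $(y,x)$, and for (ii)$\Rightarrow$(i) you integrate $\inner{\nabla f(x+t(y-x))-\nabla f(x)}{y-x}$ over $t\in[0,1]$ and use homogeneity of the quadratic form to extract the factor $t$, exactly as in the paper's proof. Your remark about the sign convention is well taken — the paper's displayed definition \eqref{eq:assmp:3-sup} writes $\inner{\nabla f(x)}{x-y}$ where the standard (and internally used) form is $\inner{\nabla f(x)}{y-x}$ — but as you note this does not affect the argument.
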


The two propositions, \Cref{ppst:3} and \Cref{ppst:converse}, formulated below illustrate the relation between matrix smoothness of $f$ and matrix gradient Lipschitzness of $f$.

\begin{proposition}
   \label{ppst:3}
   Assume $f: \R^d \mapsto \R$ is a continuously differentiable function, and its gradient is $\mL$-Lipschitz continuous with $\mL \in \bbS_{++}^d$. Then function $f$ is $\mL$-matrix smooth.
\end{proposition}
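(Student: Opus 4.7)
The plan is to adapt the classical descent-lemma argument to the matrix-norm setting, using the duality between $\norm{\cdot}_{\mL}$ and $\norm{\cdot}_{\mL^{-1}}$ established in \Cref{fact:5}. First, I would apply the fundamental theorem of calculus along the segment from $x$ to $y$:
\begin{equation*}
    f(y) - f(x) - \inner{\nabla f(x)}{y - x} = \int_{0}^{1} \inner{\nabla f(x + t(y-x)) - \nabla f(x)}{\,y - x\,} \, dt.
\end{equation*}

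Next, I would bound the integrand by invoking the Cauchy--Schwarz-type inequality for dual norms, which follows directly from \Cref{fact:5}: for any $u, v \in \R^d$,
\begin{equation*}
    \inner{u}{v} \leq \norm{u}_{\mL^{-1}} \cdot \norm{v}_{\mL}.
\end{equation*}
Applying this with $u = \nabla f(x + t(y-x)) - \nabla f(x)$ and $v = y - x$, and then using the hypothesis that $f$ has $\mL$-Lipschitz gradient (\Cref{assmp:3}) along with the linearity of the $\mL$-norm in its argument via the intermediate point $x + t(y-x)$, I obtain
\begin{equation*}
    \norm{\nabla f(x + t(y-x)) - \nabla f(x)}_{\mL^{-1}} \leq \norm{t(y - x)}_{\mL} = t \norm{y - x}_{\mL}.
\end{equation*}

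Finally, I would combine the two bounds inside the integral to obtain
\begin{equation*}
    f(y) - f(x) - \inner{\nabla f(x)}{y - x} \leq \norm{y - x}_{\mL}^2 \int_{0}^{1} t \, dt = \tfrac{1}{2}\norm{y - x}_{\mL}^2,
\end{equation*}
which is precisely the defining inequality of $\mL$-matrix smoothness (modulo the sign convention in the linear term, where $\inner{\nabla f(x)}{y-x} = -\inner{\nabla f(x)}{x-y}$). I do not anticipate any real obstacle here: the only conceptual step is recognizing that the correct pairing for the gradient difference is the $\mL^{-1}$-norm, which is exactly what \Cref{fact:5} provides; the rest is a mechanical replay of the scalar proof with the dual-norm inequality substituted for Cauchy--Schwarz in the Euclidean norm.
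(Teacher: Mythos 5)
Your proof is correct and uses essentially the same ingredients as the paper's: the dual-norm Cauchy--Schwarz inequality from \Cref{fact:5} combined with the $\mL$-Lipschitz gradient hypothesis. The only difference is organizational — the paper first establishes $\inner{\nabla f(x)-\nabla f(y)}{x-y}\leq\norm{x-y}_{\mL}^2$ and then invokes the equivalence in \Cref{ppst:1} (whose (ii)$\to$(i) direction is exactly your integral computation), whereas you carry out the integral argument directly in one pass.
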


\begin{proposition}
   \label{ppst:converse}
   Assume $f: \R^d \rightarrow \R$ is a continuously differentiable function. Assume also that $f$ is convex and $\mL$-matrix smooth. Then $\nabla f$ is $\mL$-Lipschitz continuous.
\end{proposition}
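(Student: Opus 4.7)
The plan is to adapt the classical ``convex $+$ smooth $\Rightarrow$ co-coercive gradient'' argument to the matrix-norm setting, leveraging \Cref{fact:5} which says that $\|\cdot\|_{\mL^{-1}}$ is the dual norm of $\|\cdot\|_{\mL}$. First, for a fixed $y \in \R^d$, I would introduce the auxiliary function $\phi_y(x) \eqdef f(x) - \inner{\nabla f(y)}{x}$. Since $f$ is convex and $\mL$-matrix smooth, $\phi_y$ inherits both properties, and moreover $\nabla \phi_y(y) = 0$, so convexity implies $y$ is a global minimizer of $\phi_y$.

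Next I would exploit the $\mL$-smoothness inequality
\begin{equation*}
\phi_y(u) \leq \phi_y(x) + \inner{\nabla \phi_y(x)}{u-x} + \tfrac{1}{2}\norm{u-x}^2_{\mL}
\end{equation*}
and minimize the right-hand side over $u$. A straightforward calculation shows the minimizer is $u^* = x - \mL^{-1}\nabla \phi_y(x)$, and plugging it back yields the matrix-norm descent inequality
\begin{equation*}
\phi_y(u^*) \leq \phi_y(x) - \tfrac{1}{2}\norm{\nabla \phi_y(x)}^2_{\mL^{-1}}.
\end{equation*}
Combined with $\phi_y(y) \leq \phi_y(u^*)$, and unfolding the definition of $\phi_y$, this gives
\begin{equation*}
\tfrac{1}{2}\norm{\nabla f(x) - \nabla f(y)}^2_{\mL^{-1}} \leq f(x) - f(y) - \inner{\nabla f(y)}{x-y}.
\end{equation*}

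Then I would swap the roles of $x$ and $y$ to obtain the symmetric inequality, add the two, and conclude the matrix co-coercivity bound
\begin{equation*}
\norm{\nabla f(x) - \nabla f(y)}^2_{\mL^{-1}} \leq \inner{\nabla f(x) - \nabla f(y)}{x - y}.
\end{equation*}
Finally, applying the generalized Cauchy--Schwarz inequality for the dual norm pair $(\|\cdot\|_{\mL},\|\cdot\|_{\mL^{-1}})$ justified by \Cref{fact:5}, the right-hand side is bounded by $\|\nabla f(x) - \nabla f(y)\|_{\mL^{-1}}\cdot\|x-y\|_{\mL}$; dividing through yields the desired matrix Lipschitz inequality \eqref{eq:assmp:3}.

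The main obstacle I anticipate is not conceptual but rather notational: one has to be careful with the matrix norm identities when computing the explicit minimizer $u^*$ and verifying that the cross-term cancellation produces exactly $-\tfrac{1}{2}\|\nabla \phi_y(x)\|^2_{\mL^{-1}}$. Everything else is a direct transcription of the scalar proof, with the self-duality of the $\ell_2$ norm replaced by the $(\mL,\mL^{-1})$ duality.
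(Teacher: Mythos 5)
Your proof is correct and follows essentially the same route as the paper: both derive the tilted inequality $\tfrac{1}{2}\norm{\nabla f(x)-\nabla f(y)}^2_{\mL^{-1}} \leq f(x)-f(y)-\inner{\nabla f(y)}{x-y}$ (the paper by maximizing $\inner{\nabla f(y)-\nabla f(x)}{z}-\tfrac12\norm{z}^2_{\mL}$ over $z$, you by the equivalent device of minimizing the quadratic upper bound of the auxiliary function $\phi_y$), then symmetrize to obtain matrix co-coercivity. The only difference is the closing step --- you bound $\inner{\nabla f(x)-\nabla f(y)}{x-y}$ via the dual-norm Cauchy--Schwarz inequality of \Cref{fact:5} and divide, whereas the paper instead invokes the monotonicity inequality $\inner{\nabla f(x)-\nabla f(y)}{x-y}\leq\norm{x-y}^2_{\mL}$ from \Cref{ppst:1}; both are valid (just note the trivial case $\nabla f(x)=\nabla f(y)$ before dividing).
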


The next proposition shows that standard Lipschitzness of the gradient of a function is an immediate consequence of matrix Lipschitzness.
\begin{proposition}
   \label{ppst:mat-scl}
   Assume that the gradient of $f$ is $\mL$-Lipschitz continuous. Then $\nabla f$ is also $L$-Lipschitz with $L = \lambda_{\max}\left(\mL\right)$.
\end{proposition}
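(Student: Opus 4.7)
The plan is to unpack the definition of $\mL$-Lipschitz gradient from \Cref{assmp:3} and sandwich the weighted norms against the standard Euclidean norm via eigenvalue bounds. By hypothesis, for all $x, y \in \R^d$ we have
\begin{equation*}
    \norm{\nabla f(x) - \nabla f(y)}_{\mL^{-1}} \leq \norm{x-y}_{\mL},
\end{equation*}
and the goal is to replace both weighted norms with the standard one at the cost of a factor of $\lambda_{\max}(\mL)$.

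The two ingredients are elementary Rayleigh-quotient estimates. First, for any $v \in \R^d$, $\norm{v}_{\mL}^2 = v^\top \mL v \leq \lambda_{\max}(\mL) \norm{v}^2$, which yields $\norm{x-y}_{\mL} \leq \sqrt{\lambda_{\max}(\mL)} \norm{x-y}$. Second, since $\mL \in \bbS^d_{++}$, we have $\lambda_{\max}(\mL^{-1}) = 1/\lambda_{\min}(\mL)$ and $\lambda_{\min}(\mL^{-1}) = 1/\lambda_{\max}(\mL)$; in particular $\norm{v}_{\mL^{-1}}^2 \geq \norm{v}^2/\lambda_{\max}(\mL)$, so $\norm{\nabla f(x) - \nabla f(y)} \leq \sqrt{\lambda_{\max}(\mL)} \cdot \norm{\nabla f(x) - \nabla f(y)}_{\mL^{-1}}$.

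Chaining the three inequalities gives $\norm{\nabla f(x) - \nabla f(y)} \leq \sqrt{\lambda_{\max}(\mL)} \cdot \norm{x-y}_{\mL} \leq \lambda_{\max}(\mL) \norm{x-y}$, which is the desired scalar Lipschitz estimate with constant $L = \lambda_{\max}(\mL)$. There is no real obstacle here; the result is a direct consequence of comparing the weighted norms $\norm{\cdot}_{\mL}$ and $\norm{\cdot}_{\mL^{-1}}$ with the Euclidean norm, so the proof is essentially a two-line computation once the definition is expanded.
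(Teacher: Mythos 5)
Your proof is correct. It is worth noting that you reach the conclusion by a more elementary route than the paper. The paper's proof introduces the scalar-valued matrix function $g(\mX) = (x-y)^{\top}\mX(x-y) - (\nabla f(x)-\nabla f(y))^{\top}\mX^{-1}(\nabla f(x)-\nabla f(y))$, invokes the operator monotonicity of $\mX \mapsto -\mX^{-1}$ on $\bbS^d_{++}$ (via \Cref{fact:2}, which rests on a nontrivial result of Bhatia) to show $g$ is monotone increasing, and then compares $g(\lambda_{\max}(\mL)\mI_d) \geq g(\mL) \geq 0$. You instead bound the two quadratic forms separately with Rayleigh-quotient estimates, using only $\norm{v}^2_{\mL} \leq \lambda_{\max}(\mL)\norm{v}^2$ and $\lambda_{\min}(\mL^{-1}) = 1/\lambda_{\max}(\mL)$, and chain the resulting inequalities. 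The underlying idea is the same in both cases — replace $\mL$ by $\lambda_{\max}(\mL)\mI_d$ on the right and $\mL^{-1}$ by $\lambda_{\max}(\mL)^{-1}\mI_d$ on the left — but your version dispenses with the operator-monotonicity machinery entirely, which for this particular statement is overkill; the paper's heavier framing pays off only in contexts (such as \Cref{fact:3}) where one genuinely needs monotonicity in a matrix argument rather than a comparison against a scalar multiple of the identity.
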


\subsection{Proofs of the propositions regarding smoothness} \label{app:proof-prop}

\subsubsection{\texorpdfstring{Proof of \Cref{ppst:4}}{Proof of Proposition~\ref{ppst:4}}}
   We start with the definition of $\mL$-Lipschitz gradient of function $f$, and pick 
   two arbitrary points $x, y \in \R^d$,
   \begin{eqnarray*}
      \norm{\nabla f(x) - \nabla f(y)}^2_{\mL^{-1}} &=& \norm{\frac{1}{n}\sum_{i=1}^{n}
      \left(\nabla f_i(x) - \nabla f_i(y)\right)}^2_{\mL^{-1}}. 
   \end{eqnarray*}
   Applying the convexity of $\norm{\cdot}^2_{\mL^{-1}}$, we have 
   \begin{eqnarray*}
      \norm{\nabla f(x) - \nabla f(y)}^2_{\mL^{-1}} &\leq& \frac{1}{n}\sum_{i=1}^{n}\norm{\nabla f_i(x) - \nabla f_i(y)}^2_{\mL^{-1}}. \\
   \end{eqnarray*}
   For each term within the summation, we use the definition of matrix norms and replace the matrix $\mL^{-1}$ with $\mL_i^{-1/2}\mL_i^{1/2}\mL^{-1}\mL_i^{1/2}\mL_i^{-1/2}$, for every $i = 1,\ldots,n$:
   \begin{eqnarray*}
      \norm{\nabla f(x) - \nabla f(y)}^2_{\mL^{-1}}&=&\frac{1}{n}\sum_{i=1}^{n}\left(\mL_i^{-\frac{1}{2}}(\nabla f_i(x) - \nabla f_i(y))\right)^{\top}\mL_i^\frac{1}{2}\mL^{-1}\mL_i^\frac{1}{2}\left(\mL_i^{-\frac{1}{2}}(\nabla f_i(x) - \nabla f_i(y))\right) \\
      &\leq& \frac{1}{n}\sum_{i=1}^{n}\lambda_{\max}\left(\mL_i^\frac{1}{2}\mL^{-1}\mL_i^\frac{1}{2}\right)\norm{\mL_i^{-\frac{1}{2}}(\nabla f_i(x) - \nabla f_i(y))}^2 \\
      &=& \frac{1}{n}\sum_{i=1}^{n}\lambda_{\max}\left(\mL_i^\frac{1}{2}\mL^{-1}\mL_i^\frac{1}{2}\right)\norm{\nabla f_i(x) - \nabla f_i(y)}^2_{\mL_i^{-1}}.
   \end{eqnarray*}
   Using the assumption that  the gradient of each function $f_i$ is $\mL_i$-Lipschitz, we obtain,
   \begin{eqnarray*}
      \norm{\nabla f(x) - \nabla f(y)}^2_{\mL^{-1}}&{\leq}& \frac{1}{n}\sum_{i=1}^{n}\lambda_{\max}\left(\mL_i^\frac{1}{2}\mL^{-1}\mL_i^\frac{1}{2}\right)\norm{x-y}^2_{\mL_i}.
   \end{eqnarray*}
   Replacing $\mL_i^{-1}$ with $\mL^{-1/2}\mL^{1/2}\mL_i^{-1}\mL^{1/2}\mL^{-1/2}$
   \begin{eqnarray*}
      \norm{\nabla f(x) - \nabla f(y)}^2_{\mL^{-1}}&=& \frac{1}{n}\sum_{i=1}^{n}\lambda_{\max}\left(\mL_i^\frac{1}{2}\mL^{-1}\mL_i^\frac{1}{2}\right)\cdot\left[(\mL^{\frac{1}{2}}(x-y))^{\top}\mL^{-\frac{1}{2}}\mL_i\mL^{-\frac{1}{2}}(\mL^{\frac{1}{2}}(x-y))\right] \\
      &\leq&\frac{1}{n}\sum_{i=1}^{n}\lambda_{\max}\left(\mL_i^\frac{1}{2}\mL^{-1}\mL_i^\frac{1}{2}\right)\cdot\lambda_{\max}\left(\mL^{-\frac{1}{2}}\mL_i\mL^{-\frac{1}{2}}\right)\norm{\mL^\frac{1}{2}(x-y)}^2.
   \end{eqnarray*}
   Using \Cref{fact:4}, we are deduce the following bound, 
   \begin{eqnarray*}
      \norm{\nabla f(x) - \nabla f(y)}^2_{\mL^{-1}} &\leq& \left(\frac{1}{n}\sum_{i=1}^{n}\lambda_{\max}\left(\mL^{-1}\right)\cdot\lambda_{\max}\left(\mL_i\right)\cdot\lambda_{\max}\left(\mL_i\mL^{-1}\right)\right)\cdot\norm{x-y}^2_{\mL}\\
      &=& \norm{x - y}^2_{\mL}.
   \end{eqnarray*}

\subsubsection{\texorpdfstring{Proof of \Cref{ppst:bounded:Hessian}}{Proof of Proposition~\ref{ppst:bounded:Hessian}}}

   We start with picking any two vector $x, y \in \R^d$. We have 
   \begin{align*}
      & \norm{\nabla f(x) - \nabla f(y)}^2_{\mL^{-1}} \\
      &\quad = \norm{\int_{0}^{1}\nabla^2 f(\theta x + (1 - \theta)y)(x - y) \d \theta}^2_{\mL^{-1}} \\
      &\quad = (x-y)^{\top}\left(\int_{0}^{1}\nabla^2 f(\theta x + (1-\theta)y)\d \theta\right)^{\top}\mL^{-1}\left(\int_{0}^{1}\nabla^2f(\theta x + (1-\theta)y)\d \theta\right)(x-y).
   \end{align*}
   Denote $\mF \eqdef \int_{0}^{1}\nabla^2 f(\theta x + (1-\theta)y)\d \theta$, notice that $\mF$ is a symmetric matrix. 
   Then, the previous identity becomes
   \begin{align*}
      \norm{\nabla f(x) - \nabla f(y)}^2_{\mL^{-1}} 
      &= (x-y)^{\top}\mF^{\top}\mL^{-1}\mF(x-y).
   \end{align*}
   From the definition of $\mF$ and the bounded Hessian assumption, we have $\mF \preceq \mL$. Let us prove that $\mF\mL^{-1}\mF \preceq \mL$: 
   \begin{eqnarray*}
      \mF\mL^{-1}\mF \preceq \mL 
      &\iff& \mL^{-\frac{1}{2}}\mF\mL\mF\mL^{-\frac{1}{2}} \preceq \mI_d \\
      &\iff& \mL^{-\frac{1}{2}}\mF\mL^{-\frac{1}{2}}\cdot\mL^{-\frac{1}{2}}\mF\mL^{-\frac{1}{2}} \preceq \mI_d \\
      &\iff& \mL^{-\frac{1}{2}}\mF\mL^{-\frac{1}{2}} \preceq \mI_d \\
      &\iff& \mF \preceq \mL.
   \end{eqnarray*}
   This means that
   \begin{eqnarray*}
      \norm{\nabla f(x) - \nabla f(y)}^2_{\mL^{-1}} 
      &\leq& (x-y)^{\top}\mL(x - y) = \norm{x-y}^2_{\mL},
   \end{eqnarray*}
   which completes the proof.

\subsubsection{\texorpdfstring{Proof of \Cref{dasha:lemma:global-smooth}}{Proof of Proposition~\ref{dasha:lemma:global-smooth}}}

   Suppose $\mL$ is a symmetric positive definite matrix satisfying \eqref{dasha:eq:global-smooth}. Let us now show that the function $\nabla f$ is $\mL$-Lipschitz continuous.
   We start with picking any two points $x, y \in \R^d$, and notice that 
   \begin{align*}
      \norm{\nabla f(x) - \nabla f(y)}^2_{\mL^{-1}} = \norm{\frac{1}{n}\sum_{i=1}^{n}\left(\nabla f_i(x) - \nabla f_i(y)\right)}^2_{\mL^{-1}}.
   \end{align*}
   Applying Jensen's inequality, we obtain 
   \begin{align*}
      \norm{\nabla f(x) - \nabla f(y)}^2_{\mL^{-1}} \leq \frac{1}{n}\sum_{i=1}^{n}\norm{\nabla f_i(x) - \nabla f_i(y)}^2_{\mL^{-1}}.
   \end{align*}
   We then re-weight the norm appears in the summation individually, 
   \begin{align*}
      \norm{\nabla f(x) - \nabla f(y)}^2_{\mL^{-1}} &\leq \frac{1}{n}\sum_{i=1}^{n}\left(\nabla f_i(x) - \nabla f_i(y)\right)^{\top}\mL_i^{-\frac{1}{2}}\mL_i^\frac{1}{2}\mL^{-1}\mL_i^\frac{1}{2}\mL_i^{-\frac{1}{2}}\left(\nabla f_i(x) - \nabla f_i(y)\right) \\
      &\leq \frac{1}{n}\sum_{i=1}^{n}\lambda_{\max}\left(\mL_i\right)\cdot\lambda_{\max}\left(\mL^{-1}\right)\cdot\norm{\nabla f_i(x) - \nabla f_i(y)}^2_{\mL_i^{-1}}.
   \end{align*}
   Utilizing the assumption that each $f_i$ has $\mL_i$ Lipschitz gradient, we obtain 
   \begin{align*}
      \norm{\nabla f(x) - \nabla f(y)}^2_{\mL^{-1}} &\leq \frac{1}{n}\sum_{i=1}^{n}\lambda_{\max}\left(\mL_i\right)\cdot\lambda_{\max}\left(\mL^{-1}\right)\cdot\norm{x - y}^2_{\mL_i} \\
      &= \norm{x - y}^2_{\lambda_{\max}\left(\mL^{-1}\right)\cdot\frac{1}{n}\sum_{i=1}^{n}\lambda_{\max}\left(\mL_i\right)\cdot\mL_i} \overset{\eqref{dasha:eq:global-smooth}}{=} \norm{x - y}^2_{\mL}.
   \end{align*}

\subsubsection{\texorpdfstring{Proof of \Cref{ppst:ext-quad}}{Proof of Proposition~\ref{ppst:ext-quad}}}

   For any $x$ and $y$ from $\R^d$, we have 
   \begin{align*}
      & \norm{\nabla f(x) - \nabla f(y)}_{\mL^{-1}} \\ 
      &\quad = \norm{\sum_{i=1}^{s}\mM_i^{\top}\nabla\phi_i(\mM_i x) - \sum_{i=1}^{s}\mM_i^{\top}\nabla\phi_i(\mM_i y)}_{\mL^{-1}} \\
      &\quad = s\cdot\norm{\frac{1}{s}\sum_{i=1}^{s}\mM_i^{\top}\left(\nabla\phi_i(\mM_i x) - \nabla\phi_i\left(\mM_i y\right)\right)}_{\mL^{-1}}.
   \end{align*}
   Applying the convexity of the norm $\norm{\cdot}_{\mL^{-1}}$, 
   \begin{eqnarray*}
      \norm{\nabla f(x) - \nabla f(y)}_{\mL^{-1}}
      &\leq& s\cdot\frac{1}{s}\sum_{i=1}^{s}\norm{\mM_i^{\top}\left(\nabla\phi_i(\mM_i x)-\nabla\phi_i(\mM_i y)\right)}_{\mL^{-1}}.
   \end{eqnarray*}
   Expanding the norm and applying the replacement trick for above $\mL$ and $\mM_i$, we obtain 
   \begin{align*}
      & \norm{\nabla f(x) - \nabla f(y)}_{\mL^{-1}} \\ 
      & =\sum_{i=1}^{s}\sqrt{\left(\nabla\phi_i(\mM_i x)-\nabla\phi_i(\mM_i y)\right)^{\top}\mM_i\mL^{-1}\mM_i^{\top}\left(\nabla\phi_i(\mM_i x)-\nabla\phi_i(\mM_i y)\right)} \\
      & = \sum_{i=1}^{s}\sqrt{\mB_i^{\top}\mL_i^{\frac{1}{2}}\mM_i\mL^{-1}\mM_i^{\top}\mL_i^\frac{1}{2}\mB_i} \\
      & \leq \sum_{i=1}^{s}\sqrt{\lambda_{\max}\left(\mL_i^\frac{1}{2}\mM_i\mL^{-1}\mM_i^{\top}\mL_i^\frac{1}{2}\right)}\cdot \norm{\nabla\phi_i(\mM_i x)-\nabla\phi_i(\mM_i y)}_{\mL_i^{-1}}, 
   \end{align*}
   where $\mB_i \eqdef \mL_i^{-\frac{1}{2}}\left(\nabla\phi_i(\mM_i x)-\nabla\phi_i(\mM_i y)\right)$.
    Due to the assumption that the gradient of $\phi_i$ is $\mL_i$-Lipschitz, we have
   \begin{align*}
      & \norm{\nabla f(x) - \nabla f(y)}_{\mL^{-1}} \\ 
      &\quad \leq \sum_{i=1}^{s}\sqrt{\lambda_{\max}\left(\mL_i^\frac{1}{2}\mM_i\mL^{-1}\mM_i^{\top}\mL_i^\frac{1}{2}\right)}\cdot \norm{\mM_i (x - y)}_{\mL_i} \\
      &\quad = \sum_{i=1}^{s}\sqrt{\lambda_{\max}\left(\mL_i^\frac{1}{2}\mM_i\mL^{-1}\mM_i^{\top}\mL_i^\frac{1}{2}\right)}\cdot \sqrt{\left[\mL^\frac{1}{2}\left(x-y\right)\right]^{\top}\mL^{-\frac{1}{2}}\mM_i^{\top}\mL_i\mM_i\mL^{-\frac{1}{2}}\left[\mL^\frac{1}{2}(x - y)\right]} \\
      &\quad \leq \sum_{i=1}^{s}\sqrt{\lambda_{\max}\left(\mL_i^\frac{1}{2}\mM_i\mL^{-1}\mM_i^{\top}\mL_i^\frac{1}{2}\right) \cdot \lambda_{\max}\left(\mL^{-\frac{1}{2}}\mM_i^{\top}\mL_i\mM_i\mL^{-\frac{1}{2}}\right)} \cdot \norm{x-y}_{\mL} \\
      &\quad \overset{}{\leq} \sum_{i=1}^{s} \lambda_{\max}\left(\mL_i^\frac{1}{2}\mM_i\mL^{-1}\mM_i^{\top}\mL_i^\frac{1}{2}\right) \cdot \norm{x - y}_{\mL},
   \end{align*}
   where the last inequality is due to the fact that,
   \begin{equation*}
      \lambda_{\max} \left(\mL_i^\frac{1}{2}\mM_i\mL^{-1}\mM_i^{\top}\mL_i^\frac{1}{2}\right) = \lambda_{\max}\left(\mL^{-\frac{1}{2}}\mM_i^{\top}\mL_i\mM_i\mL^{-\frac{1}{2}}\right).
   \end{equation*}
   Recalling the condition of the proposition:
   \begin{equation*}
      \sum_{i=1}^{s} \lambda_{\max}\left(\mL_i^\frac{1}{2}\mM_i\mL^{-1}\mM_i^{\top}\mL_i^\frac{1}{2}\right) = 1,
   \end{equation*}
   we deduce
   \begin{equation*}
      \norm{\nabla f(x) - \nabla f(y)}_{\mL^{-1}} \leq \norm{x - y}_{\mL}.
   \end{equation*}

\subsubsection{\texorpdfstring{Proof of \Cref{ppst:1}}{Proof of Proposition~\ref{ppst:1}}}

   (i) $\rightarrow$ (ii). If $f$ is $\mL$-matrix smooth, then for all $x, y \in \R^d$, we have 
   \begin{equation*}
      f(x) \leq f(y) + \inner{\nabla f(y)}{x - y} + \frac{1}{2}\norm{x-y}^2_{\mL},
   \end{equation*}
   and 
   \begin{equation*}
      f(y) \leq f(x) + \inner{\nabla f(x)}{y - x} + \frac{1}{2}\norm{x-y}^2_{\mL}.
   \end{equation*}
   Summing up these two inequalities we get 
   \begin{equation*}
      \inner{\nabla f(x) - \nabla f(y)}{x - y} \leq \norm{x-y}^2_{\mL}.
   \end{equation*}

   (ii) $\rightarrow$ (i). Choose any $x, y \in \R^d$, and define $z = x + t(y - x)$, then we have, 
   \begin{eqnarray*}
      f(y) &=& f(x) + \int_{0}^{1}\inner{\nabla f(x + t(y-x))}{y - x} \d{t} \\
      &=& f(x) + \int_{0}^{1}\inner{\nabla f(z)}{y - x} \d{t} \\
      &=& f(x) + \inner{\nabla f(x)}{y - x} + \int_{0}^{1}\inner{\nabla f(z) - \nabla f(x)}{y-x} \d{t} \\
      &=& f(x) + \inner{\nabla f(x)}{y - x} + \int_{0}^{1}\inner{\nabla f(z) - \nabla f(x)}{z-x}\cdot\frac{1}{t} \d{t}.
   \end{eqnarray*}
   Using the assumption that for any $x, z \in \R^d$, we have 
   \begin{equation*}
      \inner{\nabla f(z)-\nabla f(x)}{z - x} \leq \norm{z - x}^2_{\mL}.
   \end{equation*}
   Plug this back into the previous identity, we obtain 
   \begin{eqnarray*}
      f(y) &\leq& f(x) + \inner{\nabla f(x)}{y - x} + \int_{0}^{1}\norm{z-x}^2_{\mL}\cdot\frac{1}{t} \d{t} \\
      &=& f(x) + \inner{\nabla f(x)}{y - x} + \int_{0}^{1}\norm{y-x}^2_{\mL}\cdot t \d{t} \\
      &=& f(x) + \inner{\nabla f(x)}{y - x} + \frac{1}{2}\norm{y - x}_{\mL}^2.
   \end{eqnarray*}

\subsubsection{\texorpdfstring{Proof of \Cref{ppst:3}}{Proof of Proposition~\ref{ppst:3}}}

   We start with picking any two points $x, y \in \R^d$, using the generalized Cauchy-Schwarz inequality for dual norm, we have
   \begin{eqnarray*}
      \inner{\nabla f(x) - \nabla f(y)}{x - y} &\leq& \norm{\nabla f(x) - \nabla f(y)}_{\mL^{-1}} \cdot \norm{x - y}_{\mL} \\
      &\overset{\eqref{eq:assmp:3}}{\leq}& \norm{x-y}_{\mL} \cdot \norm{x - y}_{\mL} \\
      &=& \norm{x-y}^2_{\mL}
   \end{eqnarray*}
   According to \Cref{ppst:1}, this indicates that function $f$ is $\mL$-matrix smooth.

\subsubsection{\texorpdfstring{Proof of \Cref{ppst:converse}}{Proof of Proposition~\ref{ppst:converse}}}

   Using \Cref{ppst:1}, we know that for any $x, y \in \R^d$, we have 
   \begin{equation}
      \label{eq:res-1}
      \inner{\nabla f(x) - \nabla f(y)}{x - y} \leq \norm{x - y}^2_{\mL}.
   \end{equation}
   Now we pick any three points $x, y, z \in \R^d$. Using the $\mL$-smoothness of $f$, we have 
   \begin{equation}
      \label{eq:smooth-ind}
      f(x + z) \geq f(x) + \inner{\nabla f(x)}{z} + \frac{1}{2}\norm{z}^2_{\mL}.
   \end{equation}
   Using the convexity of $f$ we have 
   \begin{equation}
      \label{eq:convex-ind}
      \inner{\nabla f(y)}{x + z - y} \leq f(x + z) - f(y).
   \end{equation}
   Combining \eqref{eq:smooth-ind} and \eqref{eq:convex-ind}, we obtain 
   \begin{equation*}
      \inner{\nabla f(y)}{x + z - y} \leq f(x) - f(y) + \inner{\nabla f(x)}{z} + \frac{1}{2}\norm{z}^2_{\mL}.
   \end{equation*}
   Rearranging terms we get 
   \begin{equation*}
      \inner{\nabla f(y) - \nabla f(x)}{z} - \frac{1}{2}\norm{z}^2_{\mL} \leq f(x) - f(y) - \inner{\nabla f(y)}{x - y}.
   \end{equation*}
   The inequality holds for any $z$ for fixed $x$ and $y$, and the left hand side is maximized (w.r.t. $z$) when $z = \mL^{-1}\left(\nabla f(y) - \nabla f(x)\right)$. Plugging it in, we get 
   \begin{equation}
      \label{eq:tilted-res}
      \frac{1}{2}\norm{\nabla f(x)- \nabla f(y)}^2_{\mL^{-1}} \leq f(x) - f(y) - \inner{\nabla f(y)}{x - y}.
   \end{equation}
   By symmetry we can also obtain
   \begin{equation*}
      \frac{1}{2}\norm{\nabla f(y)- \nabla f(x)}^2_{\mL^{-1}} \leq f(y) - f(x) - \inner{\nabla f(x)}{y - x}.
   \end{equation*}
   Adding \eqref{eq:tilted-res} and its counterpart together, we get 
   \begin{equation}
      \label{eq:res-res}
      \norm{\nabla f(x) - \nabla f(y)}^2_{\mL^{-1}} \leq \inner{\nabla f(x) - \nabla f(y)}{x - y}.
   \end{equation}
   Combing \eqref{eq:res-res} and \eqref{eq:res-1}, it follows 
   \begin{equation*}
      \norm{\nabla f(x) - \nabla f(y)}^2_{\mL^{-1}} \leq \norm{x - y}^2_{\mL}.
   \end{equation*}
   Note that $\mL$ and $\mL^{-1}$ are both positive definite matrices, so it is equivalent to 
   \begin{equation*}
      \norm{\nabla f(x) - \nabla f(y)}_{\mL^{-1}} \leq \norm{x - y}_{\mL}.
   \end{equation*}
   This completes the proof.

\subsubsection{\texorpdfstring{Proof of \Cref{ppst:mat-scl}}{Proof of Proposition~\ref{ppst:mat-scl}}}
Let us start with picking any two points $x, y \in \R^d$. With the matrix $\mL$-Lipschitzness of the gradient of function $f$, we have 
\begin{align*}
   \norm{\nabla f(x) - \nabla f(y)}^2_{\mL^{-1}} \leq \norm{x - y}^2_{\mL}.
\end{align*}
This implies 
\begin{equation*}
   (x - y)^{\top}\mL(x - y) - \left(\nabla f(x) - \nabla f(y)\right)^{\top}\mL^{-1}\left(\nabla f(x) - \nabla f(y)\right) \geq 0.
\end{equation*}
Define function $f(\mX) \eqdef a^{\top}\mX a - b^{\top}\mX^{-1} b$ for $\mX \in \bbS^d_{++}$, where $a, b \in \R^d$ are fixed vectors. Then $f$ is monotone increasing in $\mX$. This can be shown in the following way, picking two matrices $\mX_1, \mX_2 \in \bbS^d_{++}$, where $\mX_1 \succeq \mX_2$. It is easy to see that $-\mX_1^{-1} \succeq -\mX_2^{-1}$, since from \Cref{fact:2} the map $\mX \mapsto -\mX^{-1}$ is monotone increasing for $\mX \in \bbS^d_{++}$. Thus, 
\begin{align*}
   f(\mX_1) - f(\mX_2) &= (x - y)^{\top}\left(\mX_1 - \mX_2\right)(x - y) \\
   &\qquad + \left(\nabla f(x) - \nabla f(y)\right)^{\top}\left(-\mX_1^{-1} - (-\mX_2^{-1})\right)\left(\nabla f(x) - \nabla f(y)\right) \geq 0.
\end{align*}
As a result, $f(\lambda_{\max}\left(\mL\right)\cdot\mI_d) \geq f(\mL) \geq 0$, due to the fact that $\lambda_{\max}\left(\mL\right)\cdot\mI_d \succeq \mL$. It remains to notice that 
\begin{align*}
   f(\lambda_{\max}\left(\mL\right)\cdot\mI_d) &= \lambda_{\max}\left(\mL\right)\norm{x-y}^2 - \frac{1}{\lambda_{\max}\left(\mL\right)}\norm{\nabla f(x) - \nabla f(y)}^2 \geq 0,
\end{align*}
which yields 
\begin{align*}
   \norm{\nabla f(x) - \nabla f(y)}^2 \leq \lambda^2_{\max}\left(\mL\right)\norm{x - y}^2.
\end{align*}
Since we are working with $\mL \in \bbS^d_{++}$, the above inequality implies 
\begin{align*}
    \norm{\nabla f(x) - \nabla f(y)} \leq \lambda_{\max}\left(\mL\right)\norm{x - y}.
\end{align*}

\section{\texorpdfstring{Analysis of {\detmarina}}{Analysis of Det-MARINA}}
\subsection{Technical lemmas}

We first state some technical lemmas.
\begin{lemma}[Descent lemma]
   \label{lemma:1}
   Assume that function $f$ is $\mL$ smooth, and $x^{k+1} = x^k - \mD \cdot g^k$, where $\mD \in \bbS_{++}^d$. Then we will have
   \begin{equation*}
      f(x^{k+1}) \leq f(x^k) - \frac{1}{2}\norm{\nabla f(x^k)}_{\mD}^2 + \frac{1}{2}\norm{g^k - \nabla f(x^k)}_{\mD}^2 - \frac{1}{2}\norm{x^{k+1} - x^k}_{\mD^{-1} - \mL}.
   \end{equation*}
\end{lemma}

The following lemma is obtained for any sketch matrix $\mS \in \bbS_{+}^d$ and any two positive definite matrices $\mD$ and $\mL$.
\begin{lemma}[Property of sketch matrix]
   \label{lemma:3}
   For any sketch matrix $\mS \in \bbS^d_{+}$, a vector $t \in \R^d$, and matrices $\mD,\mL \in \bbS^d_{++}$, we have
   \begin{equation}
   \label{dasha:eq:tech-lemma-sketch-comp}
      \Exp{\norm{\mS t - t}_{\mD}^2} \leq \lambda_{\max}\left(\mL^\frac{1}{2}\left(\Exp{\mS\mD\mS} - \mD\right)\mL^\frac{1}{2}\right) \cdot \norm{t}^2_{\mL^{-1}}.
   \end{equation}    
\end{lemma}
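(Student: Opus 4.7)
The plan is to reduce the expectation on the left-hand side to a quadratic form $t^{\top}\mA t$ with $\mA \eqdef \Exp{\mS\mD\mS} - \mD$, and then bound this quadratic form using a reweighting via $\mL^{1/2}$ together with the Rayleigh quotient characterization of the largest eigenvalue.

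First I would expand the matrix norm directly:
\begin{equation*}
\Exp{\norm{\mS t - t}_{\mD}^2} = \Exp{t^{\top}\mS\mD\mS t} - 2\,\Exp{t^{\top}\mS\mD t} + t^{\top}\mD t.
\end{equation*}
Pulling the deterministic vector $t$ out of the expectation and using the unbiasedness assumption $\Exp{\mS} = \mI_d$ stated in \Cref{sec:background}, the cross term becomes $2\,t^{\top}\mD t$, so the three terms collapse to
\begin{equation*}
\Exp{\norm{\mS t - t}_{\mD}^2} = t^{\top}\left(\Exp{\mS\mD\mS} - \mD\right)t.
\end{equation*}
Note that this identity also shows, as a by-product, that $\Exp{\mS\mD\mS} - \mD \in \bbS^d_{+}$, since the left-hand side is nonnegative for every $t$; this will justify applying Rayleigh quotient bounds later. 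Symmetry of $\Exp{\mS\mD\mS}$ follows because $\mS \in \bbS^d_{+}$ and $\mD \in \bbS^d_{++}$, so $\mS\mD\mS$ is symmetric for every realization.

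Next I would introduce the change of variables $u \eqdef \mL^{1/2} t$ (equivalently $t = \mL^{-1/2} u$), which is legal because $\mL \in \bbS^d_{++}$. Substituting this into the quadratic form gives
\begin{equation*}
t^{\top}\left(\Exp{\mS\mD\mS} - \mD\right)t = u^{\top}\mL^{-1/2}\left(\Exp{\mS\mD\mS} - \mD\right)\mL^{-1/2} u.
\end{equation*}
Hmm, this produces $\mL^{-1/2}$ on both sides, which is the wrong sandwich. Let me instead use $u \eqdef \mL^{1/2}t$ differently: write $t^\top\mA t$ with $\mA \succeq 0$, and use the standard inequality $v^{\top}\mA v \leq \lambda_{\max}(\mA)\,\|v\|^2$ after a similarity transform. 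Concretely, set $v \eqdef \mL^{-1/2}t$; then $t = \mL^{1/2}v$ and
\begin{equation*}
t^{\top}\mA t = v^{\top}\mL^{1/2}\mA\mL^{1/2} v \leq \lambda_{\max}\!\left(\mL^{1/2}\mA\mL^{1/2}\right)\|v\|^2 = \lambda_{\max}\!\left(\mL^{1/2}\mA\mL^{1/2}\right) \cdot t^{\top}\mL^{-1}t.
\end{equation*}
Substituting $\mA = \Exp{\mS\mD\mS} - \mD$ and recognizing $t^{\top}\mL^{-1}t = \norm{t}_{\mL^{-1}}^2$ yields exactly \eqref{dasha:eq:tech-lemma-sketch-comp}.

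I do not anticipate a genuine obstacle in this proof; it is essentially bookkeeping. The only subtlety is the initial simplification via the unbiasedness of $\mS$ (which must be invoked explicitly, as it is the defining property of the sketch distribution $\cS$), together with verifying that $\Exp{\mS\mD\mS} - \mD$ is symmetric positive semidefinite so that the Rayleigh quotient bound on $\mL^{1/2}(\cdot)\mL^{1/2}$ applies. Both points are handled by the computation above.
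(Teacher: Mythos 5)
Your proof is correct and follows essentially the same route as the paper's: both reduce the left-hand side to the quadratic form $t^{\top}\left(\Exp{\mS\mD\mS}-\mD\right)t$ and then bound it by $\lambda_{\max}\left(\mL^{1/2}\left(\Exp{\mS\mD\mS}-\mD\right)\mL^{1/2}\right)\norm{t}^2_{\mL^{-1}}$ via the substitution $t=\mL^{1/2}v$ and the Rayleigh quotient. The only cosmetic difference is that you invoke unbiasedness of $\mS$ up front to collapse the expanded square, whereas the paper keeps $(\mS-\mI_d)\mD(\mS-\mI_d)$ intact and uses unbiasedness only in the final identification.
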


\begin{lemma}
   \label{dasha:tech-lemma:2}
   Assume that \Cref{assmp:3} holds and $h_i^0 = \nabla f_i(x^0)$, then for $h_i^{k+1}$ from \Cref{alg:detCGD-DASHA}, we have for any $\mD \in \bbS^d_{++}$
   \begin{align*}
      \norm{h^{k+1} - \nabla f(x^{k+1})}^2_{\mD} &= \norm{h_i^{k+1} - \nabla f_i(x^{k+1})}^2_{\mD} = 0,
   \end{align*}
   and 
   \begin{align*}
      \norm{h_i^{k+1} - h_i^{k}}^2_{\mL_i^{-1}} &\leq \norm{x^{k+1} - x^k}^2_{\mL_i}.
   \end{align*}
\end{lemma}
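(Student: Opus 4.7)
The proof plan is entirely routine and follows directly from the algorithm's construction together with the matrix-Lipschitz gradient assumption. The key observation is that Line 7 of \Cref{alg:detCGD-DASHA} sets $h_i^{k+1} = \nabla f_i(x^{k+1})$ at every iteration and for every client $i$, so the sequence $\{h_i^k\}$ is simply a relabeling of $\{\nabla f_i(x^k)\}$.

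First I would establish the identity $h_i^{k} = \nabla f_i(x^{k})$ for all $k \geq 0$ and all $i \in [n]$ by a trivial induction: the base case $k=0$ is exactly the initialization hypothesis $h_i^0 = \nabla f_i(x^0)$, and the inductive step $k \mapsto k+1$ is immediate from Line 7 of the algorithm. From this, both equalities in the first displayed claim follow directly. Indeed, $\|h_i^{k+1} - \nabla f_i(x^{k+1})\|_{\mD}^2 = 0$ is immediate, and since
\begin{equation*}
h^{k+1} = \frac{1}{n}\sum_{i=1}^n h_i^{k+1} = \frac{1}{n}\sum_{i=1}^n \nabla f_i(x^{k+1}) = \nabla f(x^{k+1}),
\end{equation*}
the averaged quantity also vanishes in the $\mD$-norm.

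For the second claim, using the identity $h_i^{k+1} - h_i^k = \nabla f_i(x^{k+1}) - \nabla f_i(x^k)$, I would directly invoke \Cref{assmp:4} (the $\mL_i$-Lipschitz gradient property applied to $f_i$, as defined in \Cref{assmp:3}) with points $x = x^{k+1}$ and $y = x^k$, which yields exactly
\begin{equation*}
\norm{h_i^{k+1} - h_i^{k}}_{\mL_i^{-1}} = \norm{\nabla f_i(x^{k+1}) - \nabla f_i(x^k)}_{\mL_i^{-1}} \leq \norm{x^{k+1} - x^k}_{\mL_i},
\end{equation*}
and squaring both sides gives the claimed inequality.

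There is no real obstacle in this proof; the lemma is essentially a bookkeeping statement that records two trivial consequences of the algorithmic definition of $h_i^k$ together with the assumed matrix smoothness. I would expect the writeup to be three or four lines at most.
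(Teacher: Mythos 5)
Your proposal is correct and matches the paper's intent exactly: the paper itself only remarks that the lemma ``follows directly from'' the algorithm's definition of $h_i^{k+1}$ (Line 7 of \Cref{alg:detCGD-DASHA}) and the $\mL_i$-Lipschitz gradient assumption, which is precisely the two-step argument you spell out. No gaps; your write-up is simply a more explicit version of the same routine observation.
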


The following lemmas describe the recurrence applied to terms in the Lyapunov function.

\begin{lemma}
   \label{dasha:tech-lemma-recur-1}
   Suppose $h^{k+1}$ and $g^{k+1}$ are from \Cref{alg:detCGD-DASHA}, then the following recurrence relation holds, 
   \begin{align}
      \label{dasha:eq:tech-lemma-recur-1}
      &\Exp{\norm{g^{k+1} - h^{k+1}}^2_{\mD}} \notag \\
      &\quad \leq \frac{2\Lambda_{\mD, \cS}\cdot\lambda_{\max}\left(\mD^{-1}\right)\cdot\lambda_{\max}\left(\mD\right)}{n^2}\sum_{i=1}^{n}\lambda_{\max}\left(\mL_i\right)\Exp{\norm{h_i^{k+1} - h_i^k}^2_{\mL_i^{-1}}} \notag \\
      &\qquad + \frac{2a^2\Lambda_{\mD, \cS}\cdot\lambda_{\max}\left(\mD^{-1}\right)}{n^2}\sum_{i=1}^{n}\Exp{\norm{g_i^k - h_i^k}^2_{\mD}} + (1 - a)^2\Exp{\norm{g^k - h^k}^2_{\mD}},
   \end{align}
   where $\Lambda_{\mD, \cS} = \lambda_{\max}\left(\Exp{\mS_i^k\mD\mS_i^k} - \mD\right)$ for $\mD \in \bbS^d_{++}$ and $\mS_i^k \sim \cS$.
\end{lemma}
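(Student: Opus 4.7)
The plan is to expand $g^{k+1} - h^{k+1}$ using the updates in \Cref{alg:detCGD-DASHA} and then apply a bias--variance split conditional on the natural filtration $\mathcal{F}^k$ generated by all randomness through iteration $k$. Since $x^{k+1}$ depends only on $x^k$ and $g^k$, the vector $h^{k+1}$ is $\mathcal{F}^k$-measurable. Writing $u_i^k := (h_i^{k+1} - h_i^k) - a(g_i^k - h_i^k)$, the update reads $g^{k+1} = g^k + \tfrac{1}{n}\sum_{i=1}^n \mS_i^k u_i^k$. Using $\Exp{\mS_i^k} = \mI_d$ together with the identity $\tfrac{1}{n}\sum_i (h_i^{k+1} - h_i^k) = h^{k+1} - h^k$, I would compute
\begin{equation*}
\Exp{g^{k+1} - h^{k+1}\mid\mathcal{F}^k} = (1-a)(g^k - h^k).
\end{equation*}

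Applying \Cref{lemma:2:var-decomp} with $c=0$ and invoking the independence of $\{\mS_i^k\}_{i=1}^n$ to eliminate cross terms yields
\begin{equation*}
\Exp{\norm{g^{k+1} - h^{k+1}}^2_{\mD}\mid\mathcal{F}^k} = (1-a)^2\norm{g^k - h^k}^2_{\mD} + \frac{1}{n^2}\sum_{i=1}^n \Exp{\norm{(\mS_i^k - \mI_d)u_i^k}^2_{\mD}\mid\mathcal{F}^k}.
\end{equation*}
The first summand already matches the final term on the right-hand side of the claim after taking the outer expectation.

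For each per-client variance, expanding $\Exp{(\mS_i^k - \mI_d)\mD(\mS_i^k - \mI_d)} = \Exp{\mS_i^k\mD\mS_i^k} - \mD$ by unbiasedness gives $\Exp{\norm{(\mS_i^k - \mI_d)u_i^k}^2_{\mD}\mid\mathcal{F}^k} \leq \Lambda_{\mD,\cS}\norm{u_i^k}^2$. I would then convert the Euclidean norm to the $\mD$-norm via $\norm{u_i^k}^2 \leq \lambda_{\max}(\mD^{-1})\norm{u_i^k}^2_{\mD}$ \emph{before} splitting, and apply $\norm{a-b}^2_{\mD} \leq 2\norm{a}^2_{\mD} + 2\norm{b}^2_{\mD}$ to separate $u_i^k$ into its two components. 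The $g_i^k - h_i^k$ part then appears directly in the $\mD$-norm as required. For the $h_i^{k+1} - h_i^k$ part, one further conversion $\norm{v}^2_{\mD} \leq \lambda_{\max}(\mD)\norm{v}^2 \leq \lambda_{\max}(\mD)\lambda_{\max}(\mL_i)\norm{v}^2_{\mL_i^{-1}}$ is what generates the condition-number prefactor $\lambda_{\max}(\mD)\lambda_{\max}(\mD^{-1})$ attached to that sum. Taking the unconditional expectation via the tower property produces the claimed inequality.

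The main obstacle is the order of norm conversions. Splitting $u_i^k$ before converting the Euclidean norm to the $\mD$-norm produces a tighter but structurally different bound without the condition-number factor, which does not plug cleanly into the Lyapunov recurrence built up in the subsequent analysis. The chosen routing keeps both summands in the precise forms needed to telescope with the remaining terms of the Lyapunov function, so the apparently suboptimal bookkeeping is in fact dictated by downstream requirements; the rest of the proof is a routine application of independence and triangle-type inequalities.
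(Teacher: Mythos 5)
Your proposal is correct and follows essentially the same route as the paper's proof: the same decomposition of $g^{k+1}-h^{k+1}$ via $u_i^k$, the same bias--variance split with independence killing the cross terms, the same Young-type splitting of $\norm{u_i^k}^2_{\mD}$, and the same norm conversions yielding the $\lambda_{\max}(\mD)\lambda_{\max}(\mD^{-1})\lambda_{\max}(\mL_i)$ prefactor. The only cosmetic difference is that you bound the per-client variance directly through the quadratic form $\Lambda_{\mD,\cS}\norm{u_i^k}^2 \leq \Lambda_{\mD,\cS}\lambda_{\max}(\mD^{-1})\norm{u_i^k}^2_{\mD}$, whereas the paper invokes \Cref{lemma:3} with $\mL=\mD^{-1}$ followed by \Cref{fact:4}; both land on exactly the same constant, and your remark that splitting before converting would give a tighter bound (but not the stated one) is accurate.
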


\begin{lemma}
   \label{dasha:tech-lemma-recur-2}
   Suppose $h_i^{k+1}$ and $g_i^{k+1}$ for $i \in [n]$ are from \Cref{alg:detCGD-DASHA}, then the following recurrence holds,
   \begin{align*}
      & \Exp{\norm{g_i^{k+1} - h_i^{k+1}}^2_{\mD}} \notag \\ 
      &\quad \leq \left(2a^2\lambda_{\max}\left(\mD^{-1}\right)\cdot\Lambda_{\mD, \cS} + (1-a)^2\right)\cdot\Exp{\norm{g_i^k - h_i^k}^2_{\mD}} \notag \\
      &\qquad + 2\lambda_{\max}\left(\mD^{-1}\right)\cdot\lambda_{\max}\left(\mD\right)\cdot\Lambda_{\mD, \cS}\cdot\lambda_{\max}\left(\mL_i\right)\cdot\Exp{\norm{h_i^{k+1} - h_i^k}^2_{\mL_i^{-1}}}.
   \end{align*}
\end{lemma}

\subsection{\texorpdfstring{Proof of \Cref{thm:1:detCGDVR1}}{Proof of Theorem~\ref{thm:1:detCGDVR1}}}
According to \Cref{lemma:1}, we have 
\begin{align}
   \label{eq:prfthm1-eq1}
   \Exp{f(x^{k+1})} &\leq \Exp{f(x^k)} - \Exp{\frac{1}{2}\norm{\nabla f(x^k)}_{\mD}^2} + \Exp{\frac{1}{2}\norm{g^k - \nabla f(x^k)}_{\mD}^2} \notag \\
   & \quad - \Exp{\frac{1}{2}\norm{x^{k+1} - x^k}^2_{\mD^{-1} - \mL}}.
\end{align}
We then use the definition of $g^{k+1}$ to derive an upper bound for $\Exp{\norm{g^{k+1} - \nabla f(x^{k+1})}^2_{\mD}}$. Notice that,
\begin{equation*}
   g^{k+1} = \begin{cases}
      \nabla f(x^{k+1}) & \text{ with probability } p, \\
      g^k + \frac{1}{n}\sum_{i=1}^{n}\mS_i^k\left(\nabla f_i(x^{k+1}) - \nabla f_i(x^k)\right) & \text{ with probability } 1-p.
   \end{cases}
\end{equation*}
As a result, from the tower property,
\begin{align*}
   &\Exp{\norm{g^{k+1} - \nabla f(x^{k+1})}^2_{\mD} \mid x^{k+1}, x^k} \\ 
   &\quad =
   \Exp{\Exp{\norm{g^{k+1} - \nabla f(x^{k+1})}^2_{\mD} \mid x^{k+1}, x^k, c_k}} \\ 
   &\quad = p\cdot\norm{\nabla f(x^{k+1}) - \nabla f(x^{k+1})}^2_{\mD} \\
   & \qquad + (1-p)\cdot\Exp{\norm{g^k + \frac{1}{n}\sum_{i=1}^{n}\mS_i^k(\nabla f_i(x^{k+1}) - \nabla f_i(x^k)) - \nabla f(x^{k+1})}^2_{\mD} \mid x^{k+1}, x^k} \\
   & \quad =  (1-p)\cdot\Exp{\norm{g^k + \frac{1}{n}\sum_{i=1}^{n}\mS_i^k(\nabla f_i(x^{k+1}) - \nabla f_i(x^k)) - \nabla f(x^{k+1})}^2_{\mD}\mid x^{k+1}, x^k}. 
\end{align*}
Using \Cref{lemma:2:var-decomp}, we have
\begin{align*}
   &\Exp{\norm{g^{k+1} - \nabla f(x^{k+1})}^2_{\mD} \mid x^{k+1}, x^k} \\ 
   & = (1-p)\cdot\Exp{\norm{\frac{1}{n}\sum_{i=1}^{n}\mS_i^k(\nabla f_i(x^{k+1}) - \nabla f_i(x^k)) - \left(\nabla f(x^{k+1}) - \nabla f(x^{k})\right)}^2_{\mD}\mid x^{k+1}, x^k} \\ 
   & \qquad  + (1-p)\cdot\norm{g^k - \nabla f(x^k)}^2_{\mD} \\
   &= (1-p)\cdot\Exp{\norm{\frac{1}{n}\sum_{i=1}^{n}\left(\mS_i^k(\nabla f_i(x^{k+1}) - \nabla f_i(x^k)) - (\nabla f_i(x^{k+1}) - \nabla f_i(x^k))\right)}^2_{\mD}\mid x^{k+1}, x^k} \\ 
   & \qquad  + (1-p)\cdot\norm{g^k - \nabla f(x^k)}^2_{\mD}.
\end{align*}
Notice that the sketch matrix is unbiased, thus we have 
\begin{equation*}
   \Exp{\mS^k_i\left(\nabla f_i(x^{k+1}) - \nabla f_i(x^k)\right) \mid x^{k+1}, x^k} = \nabla f_i(x^{k+1}) - \nabla f_i(x^k),
\end{equation*}
and any two random vectors in the set $\{\mS_i^k(\nabla f_i(x^{k+1}) - \nabla f_i(x^k))\}_{i=1}^n$ are independent from each other, if $x^{k+1}$ and $x^k$ are fixed. 
Therefore,  we have 
\begin{align}
   \label{eq:prthm1-eq4}
   &\Exp{\norm{g^{k+1} - \nabla f(x^{k+1})}^2_{\mD} \mid x^{k+1}, x^k} \notag \\ 
   &\quad =  \frac{1-p}{n^2}\sum_{i=1}^{n}\Exp{\norm{\mS_i^k(\nabla f_i(x^{k+1}) - \nabla f_i(x^k)) - (\nabla f_i(x^{k+1}) - \nabla f_i(x^k))}^2_{\mD}\mid x^{k+1}, x^k} \notag \\
   & \qquad + (1-p)\cdot\norm{g^k - \nabla f(x^k)}_{\mD}^2.
\end{align}
\Cref{lemma:3} yields
\begin{align}
   \label{eq:prfthm1-eq2}
   & \Exp{\norm{\mS_i^k(\nabla f_i(x^{k+1}) - \nabla f_i(x^k)) - (\nabla f_i(x^{k+1}) - \nabla f_i(x^k))}^2_{\mD} \mid x^{k+1}, x^k} \notag \\ 
   & \qquad \leq \lambda_{\max}\left(\mL_i^\frac{1}{2}\left(\Exp{\mS_i^k\mD\mS_i^k} - \mD\right)\mL_i^\frac{1}{2}\right)\norm{\nabla f_i(x^{k+1}) - \nabla f_i(x^k)}^2_{\mL_i^{-1}}.
\end{align}
\Cref{assmp:4} implies
\begin{align}
   \label{eq:prthm1-eq3}
   & \Exp{\norm{\mS_i^k(\nabla f_i(x^{k+1}) - \nabla f_i(x^k)) - (\nabla f_i(x^{k+1}) - \nabla f_i(x^k))}^2_{\mD} \mid x^{k+1}, x^k} \notag \\ 
   & \quad \leq \lambda_{\max}\left(\mL_i^\frac{1}{2}\left(\Exp{\mS_i^k\mD\mS_i^k} - \mD\right)\mL_i^\frac{1}{2}\right)\norm{x^{k+1} - x^k}^2_{\mL_i}.
\end{align}
Plugging \eqref{eq:prthm1-eq3} into \eqref{eq:prthm1-eq4}, we deduce 
\begin{align*}
   &\Exp{\norm{g^{k+1} - \nabla f(x^{k+1})}^2_{\mD} \mid x^{k+1}, x^k} \notag \\ 
   & \leq \frac{1-p}{n^2}\sum_{i=1}^{n}\lambda_{\max}\left(\mL_i^\frac{1}{2}\left(\Exp{\mS_i^k\mD\mS_i^k} - \mD\right)\mL_i^\frac{1}{2}\right)\norm{x^{k+1} - x^k}^2_{\mL_i} + (1-p)\cdot\norm{g^k - \nabla f(x^k)}^2_{\mD}.
\end{align*}
Replacing $\mL_i^{-1}$ with $\mL^{-1/2}\mL^{1/2}\mL_i^{-1}\mL^{1/2}\mL^{-1/2}$, we denote that 
\begin{equation*}
   \lambda_i \eqdef \lambda_{\max}\left(\mL_i^\frac{1}{2}\left(\Exp{\mS_i^k\mD\mS_i^k} - \mD\right)\mL_i^\frac{1}{2}\right),
\end{equation*}
and rewrite the $\mL_i$-norm in the first term of RHS by the $\mL$-norm:
\begin{align*}
   &\Exp{\norm{g^{k+1} - \nabla f(x^{k+1})} ^2_{\mD} \mid x^{k+1}, x^k}\\
   & = \frac{1-p}{n^2}\sum_{i=1}^{n}\lambda_i\cdot\left(\mL^\frac{1}{2}(x^{k+1} - x^k)\right)^{\top}\mL^{-\frac{1}{2}}\mL_i\mL^{-\frac{1}{2}}\left(\mL^\frac{1}{2}(x^{k+1} - x^k)\right) \\
   & \quad + (1-p)\norm{g^k - \nabla f(x^k)}^2_{\mD} \\
   &\leq \frac{1-p}{n^2}\sum_{i=1}^{n}\lambda_i\cdot\lambda_{\max}\left(\mL^{-\frac{1}{2}}\mL_i\mL^{-\frac{1}{2}}\right)\norm{x^{k+1}-x^k}^2_{\mL} + (1-p)\cdot\norm{g^k - \nabla f(x^k)}^2_{\mD}.
\end{align*}
We further use \Cref{fact:4} to upper bound $\lambda_{\max}\left(\mL_i^\frac{1}{2}\left(\Exp{\mS_i^k\mD\mS_i^k} - \mD\right)\mL_i^\frac{1}{2}\right)$ by the product of $\lambda_{\max}\left(\mL_i\right)$ and $\lambda_{\max}\left(\Exp{\mS_i^k\mD\mS_i^k} - \mD\right)$. 
This allows us to simplify the expression since $\lambda_{\max}\left(\Exp{\mS_i^k\mD\mS_i^k} - \mD\right)$ is independent of the index $i$. 
Notice that we have already defined 
\begin{equation*}
   R(\mD, \cS) = \frac{1}{n}\sum_{i=1}^{n}\lambda_{\max}\left(\Exp{\mS_i^k\mD\mS_i^k} - \mD\right)\cdot\lambda_{\max}\left(\mL_i\right)\cdot\lambda_{\max}\left(\mL^{-\frac{1}{2}}\mL_i\mL^{-\frac{1}{2}}\right).
\end{equation*} 
Taking expectation, using tower property and using the definition above, we deduce 
\begin{align}
   \label{eq:second-bound}
   &\Exp{\norm{g^{k+1} - \nabla f(x^{k+1})}^2_{\mD}} \notag \\
   &\quad \leq \frac{(1-p)\cdot R(\mD, \cS)}{n}\Exp{\norm{x^{k+1} - x^k}_{\mL}^2} + (1-p)\Exp{\norm{g^k - \nabla f(x^k)}_{\mD}^2}.
\end{align}
We construct the following Lyapunov function $\Phi_k$,
\begin{equation}
   \label{def:Lyapunov}
   \Phi_k = f(x^k) - f^{\star} + \frac{1}{2p}\norm{g^k - \nabla f(x^k)}_{\mD}^2.
\end{equation}
Using \eqref{eq:prfthm1-eq1} and \eqref{eq:second-bound}, we are able to get 
\begin{align*}
   \Exp{\Phi_{k+1}} &\leq \frac{1}{2p}\left[\frac{(1-p)\cdot R(\mD, \cS)}{n}\Exp{\norm{x^{k+1} - x^k}^2_{\mL}} + (1-p)\cdot\Exp{\norm{g^k - \nabla f(x^k)}^2_{\mD}}\right] \\
   &\qquad + \Exp{f(x^k) - f^{\star}} - \frac{1}{2}\Exp{\norm{\nabla f(x^k)}^2_{\mD}} + \frac{1}{2}\Exp{\norm{g^k - \nabla f(x^k)}_{\mD}^2}  \\
   &\qquad - \frac{1}{2}\Exp{\norm{x^{k+1}- x^k}^2_{\mD^{-1} - \mL}} \\
   &\quad = \Exp{\Phi_k} - \frac{1}{2}\Exp{\norm{\nabla f(x^k)}_{\mD}^2} \\
   &\qquad + \left(\frac{(1-p)\cdot R(\mD, \cS)}{2np}\Exp{\norm{x^{k+1} - x^k}^2_{\mL}} -  \frac{1}{2}\Exp{\norm{x^{k+1} - x^k}_{\mD^{-1} - \mL}^2}\right) \\
   &\quad = \Exp{\Phi_k} - \frac{1}{2}\Exp{\norm{\nabla f(x^k)}_{\mD}^2} \\
   &\qquad + \frac{1}{2}\left(\frac{(1-p)\cdot R(\mD, \cS)}{np}\Exp{\norm{x^{k+1} - x^k}^2_{\mL}} -  \Exp{\norm{x^{k+1} - x^k}_{\mD^{-1} - \mL}^2}\right).
\end{align*}
We can rewrite the last term as 
\begin{equation}\label{eq:lastterm}
   \Exp{(x^{k+1}-x^k)^{\top}\left[\frac{(1-p)\cdot R(\mD, \cS)}{np}\mL + \mL - \mD^{-1}\right](x^{k+1} - x^k)}.
\end{equation}
We require the matrix in between to be negative semi-definite, which is
\begin{equation*}
   \mD^{-1} \succeq \left(\frac{(1-p)\cdot R(\mD, \cS)}{np} + 1\right)\mL.
\end{equation*}
This leads to the result that the expression \eqref{eq:lastterm}  is always non-positive. 
After dropping the last term, the relation between $\Exp{\Phi_{k+1}}$ and $\Exp{\Phi_k}$ becomes 
\begin{equation*}
   \Exp{\Phi_{k+1}} \leq \Exp{\Phi_k} - \frac{1}{2}\Exp{\norm{\nabla f(x^k)}^2_{\mD}}.
\end{equation*}
Unrolling this recurrence, we get 
\begin{equation}
   \label{eq:simp-1}
   \frac{1}{K}\sum_{k=0}^{K-1}\Exp{\norm{\nabla f(x^k)}^2_{\mD}} \leq \frac{2\left(\Exp{\Phi_0 } - \Exp{\Phi_K}\right)}{K}.
\end{equation}
The left hand side can viewed as $\Exp{\norm{\nabla f(\tilde{x}^K)}^2_{\mD}}$, 
where $\tilde{x}^K$ is drawn uniformly at random from $\{x_k\}_{k=0}^{K-1}$.
From $\Phi_K > 0$, we obtain
\begin{eqnarray*}
   \frac{2\left(\Exp{\Phi_0 } - \Exp{\Phi_K}\right)}{K} &\leq& \frac{2\Phi_0}{K}\\
   &=& \frac{2\left(f(x^0) - f^{\star} + \frac{1}{2p}\norm{g^0 - \nabla f(x^0)}^2_{\mD}\right)}{K}\\
   &=& \frac{2\left(f(x^0) - f^{\star}\right)}{K}.
\end{eqnarray*}
Plugging in the simplified result into \eqref{eq:simp-1}, and performing determinant normalization, we get 
\begin{equation}
   \label{eq:final}
   \Exp{\norm{\nabla f(\tilde{x}^K)}^2_{\frac{\mD}{\det(\mD)^{1/d}}}} \leq \frac{2\left(f(x^0) - f^{\star}\right)}{\det(\mD)^{1/d}K}.
\end{equation}

\begin{remark}
   We can achieve a slightly more refined stepsize condition than \eqref{eq:stepsize-cond-thm1} for {\detmarina}, which is given as follows 
   \begin{equation}
      \label{eq:cond-remark-refined}
      \mD \succeq \left(\frac{(1 - p)\cdot \tilde{R}(\mD, \cS)}{np} + 1\right)\mL,
   \end{equation}
   where
   \begin{equation*}
      \tilde{R}(\mD, \cS) \eqdef \frac{1}{n}\sum_{i=1}^{n}\lambda_{\max}\left(\mL_i^\frac{1}{2}\left(\Exp{\mS_i^k\mD\mS_i^k} - \mD\right)\mL_i^\frac{1}{2}\right)\cdot\lambda_{\max}\left(\mL^{-\frac{1}{2}}\mL_i\mL^{-\frac{1}{2}}\right).
   \end{equation*}
   This is obtained if we do not use \Cref{fact:4} to upper bound $\lambda_{\max}\left(\mL_i^\frac{1}{2}\left(\Exp{\mS_i^k\mD\mS_i^k} - \mD\right)\mL_i^\frac{1}{2}\right)$ by the product of $\lambda_{\max}\left(\mL_i\right)$ and $\lambda_{\max}\left(\Exp{\mS_i^k\mD\mS_i^k} - \mD\right)$.
   However, \eqref{eq:cond-remark-refined} results in a condition that is much harder to solve even if we assume $\mD = \gamma\cdot\mW$. 
   So instead of using the more refined condition \eqref{eq:cond-remark-refined}, we turn to \eqref{eq:stepsize-cond-thm1}. 
   Notice that both of the two conditions \eqref{eq:cond-remark-refined} and \eqref{eq:stepsize-cond-thm1} reduce to the stepsize condition for {\marina} in the scalar setting.
\end{remark}

\subsection{Comparison of different stepsizes}
\label{sec:cmp-step}
In \Cref{col:iteration-comp-L-inv}, we focus on the special stepsize where we fix $\mW = \mL^{-1}$, and show that in this case {\detmarina} always beats {\marina} in terms of both iteration and communication complexities. 
However, other choices for $\mW$ are also possible. 
Specifically, we consider the cases where 
$\mW = \diag^{-1}\left(\mL\right)$ and $\mW = \mI_d$.

\subsubsection{The diagonal case}

We consider $\mW = \diag^{-1}\left(\mL\right)$. 
The following corollary describes the optimal stepsize and the iteration complexity.
\begin{corollary}
   \label{col:3}
   If we take $\mW = \diag^{-1}\left(\mL\right)$ in \Cref{ppst:optimal-D-var}, then the optimal stepsize satisfies
   \begin{equation}
      \label{eq:opt-diagD}
      \mD^*_{\diag^{-1}(\mL)} = \frac{2}{1 + \sqrt{1 + 4\alpha\beta\cdot\Lambda_{\diag^{-1}\left(\mL\right), \cS}}} \cdot \diag^{-1}\left(\mL\right).
   \end{equation} 
   This stepsize results in a better iteration complexity of {\detmarina} compared to scalar {\marina}.
\end{corollary}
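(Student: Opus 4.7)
The plan is to mirror the proof of Corollary~\ref{col:iteration-comp-L-inv}, which itself was a direct specialization of Corollary~\ref{ppst:optimal-D-var}. Concretely, I will instantiate $\mW = \diag^{-1}(\mL)$ in Corollary~\ref{ppst:optimal-D-var}, simplify the resulting scalar condition on $\gamma$, and then compare the associated iteration complexity against scalar \marina.

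For the first half, I start with the bound $\gamma \le 2\lambda_{\mW}/\bigl(1 + \sqrt{1 + 4\alpha\beta\,\Lambda_{\mW,\cS}\,\lambda_{\mW}}\bigr)$ supplied by Corollary~\ref{ppst:optimal-D-var}. With $\mW = \diag^{-1}(\mL)$, the matrix $\mW^{1/2}\mL\mW^{1/2} = \diag^{-1/2}(\mL)\,\mL\,\diag^{-1/2}(\mL)$ has all diagonal entries equal to one, so $\lambda_{\mW} \le 1$, and $\Lambda_{\mW,\cS}$ is precisely $\Lambda_{\diag^{-1}(\mL),\cS}$ as it appears in the statement. Taking the largest admissible $\gamma$ and defining $\mD^*_{\diag^{-1}(\mL)} \eqdef \gamma\cdot\diag^{-1}(\mL)$ yields \eqref{eq:opt-diagD}; the factor $\lambda_{\mW} \le 1$ is harmlessly absorbed so that the displayed matrix is an admissible stepsize satisfying \eqref{eq:stepsize-cond-thm1}.

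For the comparison with \marina, recall that the iteration complexity of \detmarina{} scales as $1/\det(\mD)^{1/d}$ times the factor $1 + \sqrt{1+4\alpha\beta\,\Lambda_{\mW,\cS}}$, whereas that of \marina{} scales as $L\bigl(1 + \sqrt{(1-p)\omega/(np)}\bigr)$. By AM-GM applied to the diagonal of $\mL$ together with $\mL_{ii}\le\lambda_{\max}(\mL) = L$, we obtain $\det(\diag^{-1}(\mL))^{1/d} = \bigl(\prod_i \mL_{ii}\bigr)^{-1/d} \ge 1/L$, which is the matrix-stepsize analogue of the AM-GM improvement exploited in the $\mW = \mL^{-1}$ case. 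It then suffices to bound $4\alpha\beta\,\Lambda_{\diag^{-1}(\mL),\cS}\le 4(1-p)\omega/(np)$ in the relevant scalar regime, which follows from \Cref{fact:4} (giving $\lambda_{\max}(\mL_i)\lambda_{\max}(\mL^{-1}\mL_i) \le L$ after the usual simplifications) together with the unbiasedness of $\mS_i^k$ applied to $\diag^{-1}(\mL)$ to control the sketch-variance term.

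The main obstacle is the last step of the comparison: unlike the $\mW=\mL^{-1}$ case, there is no clean algebraic cancellation inside $\Lambda_{\diag^{-1}(\mL),\cS}$, so one must carefully estimate $\Exp{\mS_i^k \diag^{-1}(\mL) \mS_i^k} - \diag^{-1}(\mL)$ in operator norm and combine this with \Cref{fact:4} to get the needed $\omega$-type bound. Once this technical inequality is in place, multiplicative composition of the two inequalities $\det(\mD^*_{\diag^{-1}(\mL)})^{1/d}\ge 1/L$ and the scalar-scaling comparison yields a strictly better iteration, and hence communication, complexity than scalar \marina, completing the proof.
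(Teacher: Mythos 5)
Your skeleton matches the paper's: specialize Corollary~\ref{ppst:optimal-D-var} to $\mW=\diag^{-1}(\mL)$, then compare the determinant factor and the square-root factor separately against scalar \marina. But two steps do not go through as written. First, the constant $\lambda_{\mW}=\lambda_{\max}^{-1}\bigl(\diag^{-\frac{1}{2}}(\mL)\,\mL\,\diag^{-\frac{1}{2}}(\mL)\bigr)$: you correctly observe that the unit diagonal forces $\lambda_{\mW}\le 1$, but ``absorbing'' this factor to arrive at \eqref{eq:opt-diagD} goes the wrong way. The map $t\mapsto 2t/(1+\sqrt{1+4\alpha\beta\Lambda t})$ is increasing, so replacing $\lambda_{\mW}$ by $1$ \emph{enlarges} the admissible bound on $\gamma$; if $\lambda_{\mW}<1$ the displayed stepsize would violate \eqref{eq:stepsize-cond-thm1}. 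The paper's proof asserts the equality $\lambda_{\diag^{-1}(\mL)}=1$ at this point, which is what the stated formula actually requires --- an inequality that you then discard is not enough.

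Second, and more importantly, the comparison with \marina{} hinges on $\beta\,\Lambda_{\diag^{-1}(\mL),\cS}\le\omega$, which you flag as the ``main obstacle'' but whose proposed resolution does not work. Bounding $\lambda_{\max}(\mL_i)\lambda_{\max}(\mL^{-1}\mL_i)$ via Fact~\ref{fact:4} only yields $\beta\le L$-type estimates, and $L\cdot\Lambda_{\diag^{-1}(\mL),\cS}\le\omega$ is false already for Rand-$1$, where $\Lambda_{\diag^{-1}(\mL),\cS}=(d-1)/\min_j\mL_{jj}$ while $\omega=d-1$. The paper's argument instead uses the normalization identity of Proposition~\ref{ppst:4} to identify $\beta=1/\lambda_{\max}(\mL^{-1})=\lambda_{\min}(\mL)$ \emph{exactly}, and then invokes the monotonicity of $\mX\mapsto\lambda_{\max}\bigl(\Exp{\mS_i^k\mX\mS_i^k}-\mX\bigr)$ together with $\lambda_{\min}(\mL)\,\diag^{-1}(\mL)\preceq\mI_d$ (equivalently $\lambda_{\min}(\mL)\le\min_j\mL_{jj}$) to conclude $\beta\Lambda_{\diag^{-1}(\mL),\cS}\le\lambda_{\max}\bigl(\Exp{\mS_i^k\mS_i^k}-\mI_d\bigr)=\omega$. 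Without this identification of $\beta$ with $\lambda_{\min}(\mL)$ and the operator-monotonicity step, your comparison does not close.
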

From this corollary we know that {\detmarina} has a better iteration complexity when $\mW = \diag^{-1}\left(\mL\right)$. 
And since the same sketch is used for {\marina} and {\detmarina}, the communication complexity is improved as well. 
However, in general there is no clear relation between the iteration complexity of $\mW = \mL^{-1}$ case and $\mW = \diag^{-1}\left(\mL\right)$ case. 
This is also confirmed by one of our experiments, see \Cref{fig:experiment-4} to see the comparison of {\detmarina} using optimal stepsizes in different cases.

\subsubsection{The identity case}
In this setting, $\mW$ is the $d$-dimensional identity matrix $\mI_d$. 
Then the stepsize of our algorithm reduces to a scalar $\gamma$, where $\gamma$ is determined through \Cref{ppst:optimal-D-var}. 
Notice that in this case we do not reduce to the standard {\marina} case because we are still using the matrix Lipschitz gradient assumption with $\mL \in \bbS^d_{++}$. 
\begin{corollary}
   \label{col:4}
   If we take $\mW = \mI_d$, the optimal stepsize is given by 
   \begin{equation}
      \label{eq:cd4:1}
      \mD^*_{\mI_d} = \frac{2}{1 + \sqrt{1 + 4\alpha\beta\frac{1}{\lambda_{\max}\left(\mL\right)}\cdot\omega}} \cdot \frac{\mI_d}{\lambda_{\max}\left(\mL\right)}.
   \end{equation}
   This stepsize results in a better iteration complexity of {\detmarina} compared to scalar {\marina}.
\end{corollary}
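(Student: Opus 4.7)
The first part of the corollary—the explicit formula \eqref{eq:cd4:1} for $\mD^*_{\mI_d}$—is a direct specialization of Corollary~\ref{ppst:optimal-D-var} to $\mW = \mI_d$. The plan is to evaluate the two quantities appearing on the right-hand side of the admissibility bound \eqref{eq:opt-cond-var}. First, $\lambda_{\mI_d} = \lambda_{\max}^{-1}(\mI_d^{1/2}\mL\mI_d^{1/2}) = 1/\lambda_{\max}(\mL)$. Second, $\Lambda_{\mI_d,\cS} = \lambda_{\max}(\Exp{\mS_i^k \mI_d \mS_i^k} - \mI_d) = \omega$, where I use that $\mS_i^k$ is symmetric (so $(\mS_i^k)^\top \mS_i^k = (\mS_i^k)^2$) and that subtracting $\mI_d$ simply shifts the spectrum by $-1$, which matches the definition $\omega = \lambda_{\max}(\Exp{(\mS_i^k)^\top\mS_i^k}) - 1$ from Remark~\ref{rem:scalar-marina}. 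Substituting these values into \eqref{eq:opt-cond-var}, selecting $\gamma$ at the upper boundary, and multiplying by $\mW = \mI_d$ immediately yields \eqref{eq:cd4:1}.

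For the iteration-complexity comparison with scalar {\marina}, my first observation is that since $\mD^*_{\mI_d}$ is a scalar multiple of $\mI_d$, the determinant-normalized matrix $\mD^*_{\mI_d}/\det(\mD^*_{\mI_d})^{1/d}$ equals $\mI_d$. Consequently the stationarity bound \eqref{eq:thm1_result} of Theorem~\ref{thm:1:detCGDVR1} reduces to the standard Euclidean bound $\Exp{\norm{\nabla f(\tilde{x}^K)}^2} \leq \varepsilon^2$, which coincides with the stationarity criterion used in the analysis of scalar {\marina}. The comparison between the two algorithms therefore reduces to comparing the admissible scalar stepsizes, since in both cases the iteration complexity is inversely proportional to $\gamma$.

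The main obstacle is to show $\gamma^{-1}_{\detmarina} \leq \gamma^{-1}_{\marina}$. My plan is to apply the elementary inequality $\sqrt{1+4y} \leq 1 + 2\sqrt{y}$ (valid for all $y \geq 0$) to the square root in \eqref{eq:cd4:1}, which upper bounds the inverse stepsize of {\detmarina} by $\lambda_{\max}(\mL) + \sqrt{\alpha\beta\omega\,\lambda_{\max}(\mL)}$. On the other hand, the inverse stepsize for scalar {\marina} takes the form $\lambda_{\max}(\mL) + \widehat{L}\sqrt{\alpha\omega}$ with $\widehat{L}^2 = \frac{1}{n}\sum_i \lambda_{\max}(\mL_i)^2$. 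The remaining task—and the delicate step—is to verify the quantitative estimate $\beta \lambda_{\max}(\mL) \leq \widehat{L}^2$, which I would address by substituting the definition $\beta = \frac{1}{n}\sum_i \lambda_{\max}(\mL_i)\lambda_{\max}(\mL^{-1}\mL_i)$ and invoking Fact~\ref{fact:4} together with the matrix-Lipschitz relation from Proposition~\ref{ppst:4} to dominate each summand. Since the same sketch is used by both algorithms, the improvement in iteration complexity automatically translates into an improvement in communication complexity, closing the argument.
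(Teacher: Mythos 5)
The derivation of the formula \eqref{eq:cd4:1} is correct and follows the paper's route exactly: plug $\lambda_{\mI_d}=1/\lambda_{\max}(\mL)$ and $\Lambda_{\mI_d,\cS}=\omega$ into \Cref{ppst:optimal-D-var}. Your observation that $\mD^*_{\mI_d}/\det(\mD^*_{\mI_d})^{1/d}=\mI_d$, so that the stationarity measures of the two algorithms coincide and the comparison reduces to comparing inverse stepsizes, is also sound and matches what the paper implicitly does.

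The comparison step, however, has a genuine gap: the ``delicate'' inequality $\beta\,\lambda_{\max}(\mL)\leq \widehat{L}^2$ that your plan hinges on is false in general --- in fact the \emph{reverse} inequality holds. By \Cref{ppst:4}, $\beta=\frac{1}{n}\sum_i\lambda_{\max}(\mL_i)\lambda_{\max}(\mL^{-1}\mL_i)=1/\lambda_{\max}(\mL^{-1})=\lambda_{\min}(\mL)$, so $\beta\,\lambda_{\max}(\mL)=\lambda_{\min}(\mL)\lambda_{\max}(\mL)$. On the other hand, since $\mL_i=\mL^{1/2}\bigl(\mL^{-1/2}\mL_i\mL^{-1/2}\bigr)\mL^{1/2}\preceq\lambda_{\max}(\mL_i\mL^{-1})\,\mL$, one has $\lambda_{\max}(\mL_i\mL^{-1})\geq\lambda_{\max}(\mL_i)/\lambda_{\max}(\mL)$, and substituting this into the identity of \Cref{ppst:4} gives $1\geq \widehat{L}^2/\bigl(\lambda_{\min}(\mL)\lambda_{\max}(\mL)\bigr)$, i.e.\ $\widehat{L}^2\leq\beta\,\lambda_{\max}(\mL)$, with equality only in degenerate cases. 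So your argument cannot close against the benchmark you chose (\marina{} with $\widehat{L}$ in the variance term). The corollary is stated relative to the \marina{} rate as recorded in \eqref{eq:ss-marina}/\eqref{eq:comp-marina}, whose variance term carries the coefficient $L=\lambda_{\max}(\mL)$ rather than $\widehat{L}$; against that benchmark the required inequality is only $\beta\,\lambda_{\max}(\mL)\leq\lambda_{\max}(\mL)^2$, i.e.\ $\lambda_{\min}(\mL)\leq\lambda_{\max}(\mL)$, which is trivially true --- and this is exactly the paper's argument. Replacing your target inequality by $\beta\leq\lambda_{\max}(\mL)$ (and the benchmark by the paper's) repairs the proof; as written, the key step would fail.
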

The result in this corollary tells us that using scalar stepsize with matrix Lipschitz gradient assumption alone can result in acceleration of {\marina}. 
However, the use of matrix stepsize allows us to also take into consideration the "structure" of the stepsize, thus allows more flexibility. 
When the structure of the stepsize is chosen properly, combining matrix gradient Lipschitzness and matrix stepsize can result in a faster rate, as it can also be observed from the experiments in \Cref{fig:experiment-4}. 
The choices of $\mW$ we consider here are in some sense inspired by the matrix stepsize {\gd}, where the optimal stepsize is $\mL^{-1}$. 
In general, how to identify the best structure for the matrix stepsize remains a open problem.

\subsection{Proofs of the corollaries}

\subsubsection{\texorpdfstring{Proof of \Cref{ppst:optimal-D-var}}{Proof of Corollary~\ref{ppst:optimal-D-var}}}

   We start with rewriting \eqref{eq:stepsize-cond-thm1} as 
   \begin{eqnarray*}
      \left(\frac{1-p}{np}\cdot R\left(\mD, \cS\right) + 1\right)\mD^\frac{1}{2}\mL\mD^\frac{1}{2} \preceq \mI_d.
   \end{eqnarray*}
   Plugging in the definition of ${R}(\mD, \cS)$ and $\mD = \gamma\mW$, we get 
   \begin{eqnarray*}
      \gamma\left(\frac{1-p}{np}\cdot\frac{1}{n}\sum_{i=1}^{n}\lambda_{\max}\left(\mL_i\right)\lambda_{\max}\left(\mL^{-1}\mL_i\right)\cdot\lambda_{\max}\left(\Exp{\mS_i^k\mW\mS_i^k} - \mW\right)\cdot\gamma + 1\right)\mW^\frac{1}{2}\mL\mW^\frac{1}{2} \preceq \mI_d.
   \end{eqnarray*}
   This generalized inequality is equivalent to the following inequality, 
   \begin{eqnarray*}
      \gamma\left(\frac{1-p}{np}\cdot\frac{1}{n}\sum_{i=1}^{n}\lambda_{\max}\left(\mL_i\right)\lambda_{\max}\left(\mL^{-1}\mL_i\right)\cdot\lambda_{\max}\left(\Exp{\mS_i^k\mW\mS_i^k} - \mW\right)\cdot\gamma + 1\right)\cdot\lambda_{\max}\left(\mW^\frac{1}{2}\mL\mW^\frac{1}{2}\right) \leq 1,
   \end{eqnarray*}
   which is a quadratic inequality on $\gamma$. 
   Notice that we have already defined
   \begin{eqnarray*}
      \alpha = \frac{1 - p}{np}; &\qquad& \beta = \frac{1}{n}\sum_{i=1}^{n}\lambda_{\max}\left(\mL_i\right)\cdot\lambda_{\max}\left(\mL^{-1}\mL_i\right); \notag \\ 
      \Lambda_{\mW, \cS} = \lambda_{\max}\left(\Exp{\mS_i^k\mW\mS_i^k} - \mW\right); &\qquad& \lambda_{\mW} = \lambda_{\max}^{-1}\left(\mW^\frac12\mL\mW^\frac{1}{2}\right).
   \end{eqnarray*}
   As a result, the above inequality can be written equivalently as 
   \begin{equation*}
      \alpha\beta\Lambda_{\mW, \cS} \cdot \gamma^2 + \gamma - \lambda_{\mW} \leq 0,
   \end{equation*}
   which yields the upper bound on $\gamma$
   \begin{eqnarray*}
      \gamma \leq \frac{\sqrt{1 + 4\alpha\beta\cdot\Lambda_{\mW, \cS}\lambda_{\mW}} - 1}{2\alpha\beta\cdot\Lambda_{\mW, \cS}}.
   \end{eqnarray*}
   Since $\sqrt{1 + 4\alpha\beta\cdot\Lambda_{\mW, \cS}\lambda_{\mW}} + 1 > 0$, we can simplify the result as  
   \begin{eqnarray*}
      \gamma \leq \frac{2\lambda_{\mW}}{1 + \sqrt{1 + 4\alpha\beta\cdot\Lambda_{\mW, \cS}\lambda_{\mW}}}. 
   \end{eqnarray*}

\subsubsection{\texorpdfstring{Proof of \Cref{col:iteration-comp-L-inv}}{Proof of Corollary~\ref{col:iteration-comp-L-inv}}}
   It is obvious that \eqref{eq:max-gamma-L-inv} directly follows from plugging $\mW = \mL^{-1}$ into \eqref{eq:opt-cond-var}. 
   The optimal stepsize is obtained as the product of $\gamma$ and $\mL^{-1}$. 
   The iteration complexity of {\marina}, according to \citet{gorbunov2021marina}, is
   \begin{equation}
      \label{eq:comp-marina}
      K \geq K_1 = \cO\left(\frac{\Delta_0 L}{\varepsilon^2}\left(1 + \sqrt{\frac{(1 - p)\omega}{pn}}\right)\right).
   \end{equation}
   On the other hand, 
   \begin{equation}
      \label{eq:bound-1}
      \det(\mL)^\frac{1}{d} \leq \lambda_{\max}\left(\mL\right) = L.
   \end{equation}
   In addition, using the inequality
   \begin{equation}\label{eq:loose-inequality}
      \sqrt{1 + 4t} \leq 1 + 2\sqrt{t},
   \end{equation}
   which holds for any $t \geq 0$, we have the following bound 
   \begin{eqnarray*}
      \frac{\left(1 + \sqrt{1 + 4\alpha\beta\cdot\Lambda_{\mL^{-1}, \cS}}\right)}{2} 
      \leq 1 + \sqrt{\alpha\beta\cdot\Lambda_{\mL^{-1}, \cS}}.
   \end{eqnarray*}
   Next we prove that 
   \begin{equation}
      \label{eq:bound-2}
      1 + \sqrt{\alpha\beta\cdot\Lambda_{\mL^{-1}, \cS}} \leq 1 + \sqrt{\frac{(1 - p)}{pn}\cdot \omega},
   \end{equation}
   which is equivalent to proving 
   \begin{equation*}
      \frac{1}{n}\sum_{i=1}^{n}\lambda_{\max}\left(\mL_i\right)\lambda_{\max}\left(\mL_i\mL^{-1}\right)\cdot\lambda_{\max}\left(\Exp{\mS_i^k\mL^{-1}\mS_i^k} - \mL^{-1}\right) \leq \omega.
   \end{equation*}
   The left hand side can be upper bounded by,
   \begin{align*}
      &\frac{1}{n}\sum_{i=1}^{n}\lambda_{\max}\left(\mL_i\right)\lambda_{\max}\left(\mL^{-1}\mL_i\right)\cdot\lambda_{\max}\left(\mL^{-1}\right)\cdot\frac{\lambda_{\max}\left(\Exp{\mS_i^k\mL^{-1}\mS_i^k} - \mL^{-1}\right)}{\lambda_{\max}\left(\mL^{-1}\right)} \\
      &\quad \leq \frac{\lambda_{\max}\left(\Exp{\mS_i^k\mL^{-1}\mS_i^k} - \mL^{-1}\right)}{\lambda_{\max}\left(\mL^{-1}\right)},
   \end{align*}
   where the inequality is a consequence of \Cref{ppst:4}. 
   We further bound the last term with 
   \begin{eqnarray*}
      \frac{\lambda_{\max}\left(\Exp{\mS_i^k\mL^{-1}\mS_i^k} - \mL^{-1}\right)}{\lambda_{\max}\left(\mL^{-1}\right)} &=& \lambda_{\max}\left(\Exp{\mS_i^k \cdot \frac{\mL^{-1}}{\lambda_{\max}(\mL^{-1})}\cdot \mS_i^k} - \frac{\mL^{-1}}{\lambda_{\max}\left(\mL^{-1}\right)}\right) \\
      &\leq& \lambda_{\max}\left(\Exp{\mS_i^k\mS_i^k} - \mI_d\right) =: \omega.
   \end{eqnarray*}
   Here, the last inequality is due to the monotonicity of the mapping $\mX \mapsto \lambda_{\max}\left(\Exp{\mS_i^k\mX\mS_i^k} - \mX\right)$ with $\mX \in \bbS^d_{++}$, which can be shown as follows, let us pick any $\mX_1, \mX_2 \in \bbS^d_{++}$ and $\mX_1 \preceq \mX_2$,
   \begin{eqnarray*}
      \left(\Exp{\mS_i^k\mX_2\mS_i^k} - \mX_2\right) - \left(\Exp{\mS_i^k\mX_1\mS_i^k} - \mX_1\right) = \Exp{\mS_i^k\left(\mX_2 - \mX_1\right)\mS_i^k} - \left(\mX_2 - \mX_1\right) \succeq \mO_d.
   \end{eqnarray*}
   The above inequality is due to the convexity of the mapping $\mS_i^k \mapsto \mS_i^k\mX\mS_i^k$. As a result, we have 
   \begin{equation*}
      \lambda_{\max}\left(\Exp{\mS_i^k\mX_2\mS_i^k} - \mX_2\right) \geq \lambda_{\max}\left(\Exp{\mS_i^k\mX_1\mS_i^k} - \mX_1\right),
   \end{equation*} 
   whenever $\mX_2 \succeq \mX_1$. Due to the fact that
   \begin{equation*}
      \frac{\mL^{-1}}{\lambda_{\max}\left(\mL^{-1}\right)} \preceq \mI_d,
   \end{equation*}
   we have 
   \begin{equation*}
      \lambda_{\max}\left(\Exp{\mS_i^k \cdot \frac{\mL^{-1}}{\lambda_{\max}(\mL^{-1})}\cdot \mS_i^k} - \frac{\mL^{-1}}{\lambda_{\max}\left(\mL^{-1}\right)}\right) \leq \lambda_{\max}\left(\Exp{\mS_i^k\cdot\mI_d\cdot\mS_i^k} - \mI_d\right) = \omega. 
   \end{equation*}
   Combining \eqref{eq:bound-1} and \eqref{eq:bound-2}, we know that the iteration complexity of {\detmarina} is always better than that of {\marina}.

\subsubsection{\texorpdfstring{Proof of \Cref{col:communication-comp}}{Proof of Corollary~\ref{col:communication-comp}}}
   The number of bits sent in expectation is 
   \begin{equation*}
      \cO(d + K(pd + (1-p)\zeta_{\cS})) = \cO((Kp+1)d + (1-p)K\zeta_{\cS}).
   \end{equation*}
   The special case where we choose $p = \zeta_{\cS} / d$ indicates that 
   \begin{equation*}
      \alpha = \frac{1-p}{np} = \frac{1}{n}\left(\frac{d}{\zeta_{\cS}} - 1\right).
   \end{equation*}
   In order to reach an error of $\varepsilon^2$, we need 
   \begin{eqnarray*}
      K = \cO\left(\frac{\Delta_0\cdot\det(\mL)^\frac{1}{d}}{\varepsilon^2}\cdot\left(1 + \sqrt{1 + \frac{4\beta}{n}\left(\frac{d}{\zeta_{\cS}} - 1\right)\cdot\Lambda_{\mL^{-1}, \cS}}\right)\right),
   \end{eqnarray*}
   which is the iteration complexity. 
   Applying once again \eqref{eq:loose-inequality} and using the fact that $p = \zeta_{\cS}/d$, the communication complexity in this case is given by 
   \begin{align*}
      & \cO\left(d + \frac{\Delta_0\cdot\det(\mL)^\frac{1}{d}}{\varepsilon^2}\cdot\left(1 + \sqrt{1 + \frac{4\beta}{n}\left(\frac{d}{\zeta_{\cS}} - 1\right)\cdot\Lambda_{\mL^{-1}, \cS}}\right)\cdot\left(pd + (1-p)\zeta_{\cS}\right)\right) \\
      &\quad \leq  \cO\left(d + \frac{2\Delta_0\cdot\det(\mL)^\frac{1}{d}}{\varepsilon^2}\cdot\left(1 + \sqrt{ \frac{\beta}{n}\left(\frac{d}{\zeta_{\cS}} - 1\right)\cdot\Lambda_{\mL^{-1}, \cS}}\right)\cdot\left(pd + (1-p)\zeta_{\cS}\right)\right) \\
      &\quad \leq \cO\left(d + \frac{4\Delta_0\cdot\det(\mL)^\frac{1}{d}}{\varepsilon^2}\cdot\left(\zeta_{\cS} + \sqrt{\frac{\beta\cdot\Lambda_{\mL^{-1}, \cS}}{n}\cdot\zeta_{\cS}(d - \zeta_{\cS})}\right)\right).
   \end{align*}
   Ignoring the coefficient we get 
   \begin{equation*}
      \cO\left(d + \frac{\Delta_0\cdot\det(\mL)^\frac{1}{d}}{\varepsilon^2}\cdot\left(\zeta_{\cS} + \sqrt{\frac{\beta\cdot\Lambda_{\mL^{-1}, \cS}}{n}\cdot\zeta_{\cS}(d - \zeta_{\cS})}\right)\right).
   \end{equation*}

\subsubsection{\texorpdfstring{Proof of \Cref{col:3}}{Proof of Corollary~\ref{col:3}}}
   Applying \Cref{ppst:optimal-D-var}, notice that in this case 
   \begin{equation*}
      \lambda_{\diag^{-1}\left(\mL\right)} = \lambda_{\max}^{-1}\left(\diag^{-\frac{1}{2}}\left(\mL\right)\mL\diag^{-\frac{1}{2}}\left(\mL\right)\right) = 1,
   \end{equation*}
   we obtain $\mD^*_{\diag^{-1}\left(\mL\right)}$. 
   The iteration complexity is given by 
   \begin{equation*}
      \cO\left(\frac{\det\left(\diag(\mL)\right)^\frac{1}{d}\cdot\Delta_0}{\varepsilon^2} \cdot \left(\frac{1 + \sqrt{1 + 4\alpha\beta\Lambda_{\diag^{-1}\left(\mL\right), \cS}}}{2}\right)\right).
   \end{equation*}
   We now compare it to the iteration complexity of {\marina}, which is given in \eqref{eq:comp-marina}. 
   We know that each diagonal element $\mL_{jj}$ satisfies $\mL_{jj} \leq \lambda_{\max}\left(\mL\right) = L$ for $j = 1,\ldots, d$. 
   As a result, 
   \begin{equation}
      \label{eq:cd3:1}
      \det\left(\diag(\mL)\right)^\frac{1}{d} \leq L.
   \end{equation}
   From \eqref{eq:loose-inequality}, we deduce
   \begin{align*}
      \frac{1 + \sqrt{1 + 4\alpha\beta\cdot\Lambda_{\diag^{-1}(\mL), \cS}}}{2} \leq 1 + \sqrt{\alpha\beta\cdot\Lambda_{\diag^{-1}(\mL), \cS}}.
   \end{align*}
   Now, let us prove the below inequality
   \begin{equation}
      \label{eq:cd3:2}
      1 + \sqrt{\alpha\beta\cdot\Lambda_{\diag^{-1}\left(\mL\right), \cS}} \leq 1 + \sqrt{\frac{(1-p)}{pn}\cdot\omega}. 
   \end{equation}
   The latter is equivalent to
   \begin{equation*}
      \beta\cdot\Lambda_{\diag^{-1}\left(\mL\right), \cS} \leq \omega.
   \end{equation*}
   Plugging in the definition of $\beta$, $\omega$ and $\Lambda_{\diag^{-1}\left(\mL\right), \cS}$ and using the relation given in \Cref{ppst:4}, we obtain,
   \begin{equation*}
      \lambda_{\max}\left(\Exp{\mS_i^k\frac{\diag^{-1}\left(\mL\right)}{\lambda_{\max}\left(\mL^{-1}\right)}\mS_i^k - \frac{\diag^{-1}\left(\mL\right)}{\lambda_{\max}\left(\mL^{-1}\right)}}\right) \leq \lambda_{\max}\left(\Exp{\mS_i^k\mI_d\mS_i^k} - \mI_d\right).
   \end{equation*}
   Thus, it is enough to prove that 
   \begin{equation*}
      \frac{\diag^{-1}\left(\mL\right)}{\lambda_{\max}\left(\mL^{-1}\right)} \preceq \mI_d.
   \end{equation*}
   We can further simplify the above inequality as 
   \begin{equation*}
      \lambda_{\min}\left(\mL\right) \leq \lambda_{\min}\left(\diag(\mL)\right),
   \end{equation*}
   which is always true for any $\mL \in \bbS^d_{++}$. 
   Combining \eqref{eq:cd3:1} and \eqref{eq:cd3:2} we conclude the proof.

\subsubsection{\texorpdfstring{Proof of \Cref{col:4}}{Proof of Corollary~\ref{col:4}}}
   Using the explicit formula for the optimal stepsize $\mD^*_{\mI_d}$,  
   we deduce the following iteration complexity for 
   \begin{equation}
      \label{eq:comm-Id}
      \cO\left(\frac{\lambda_{\max}\left(\mL\right)\Delta_0}{\varepsilon^2}\cdot\left(\frac{1 + \sqrt{1 + 4\alpha\beta\frac{\omega}{\lambda_{\max}\left(\mL\right)}}}{2}\right)\right).
   \end{equation}
   Recall that $\lambda_{\max}\left(\mL\right) = L$, we obtain using \eqref{eq:loose-inequality} that
   \begin{align*}
      \frac{1 + \sqrt{1 + 4\alpha\beta\frac{\omega}{\lambda_{\max}\left(\mL\right)}}}{2} &\leq 1 + \sqrt{\alpha\beta\frac{\omega}{\lambda_{\max}\left(\mL\right)}}. 
   \end{align*}
   The comparison of two iteration complexities, given in \eqref{eq:comm-Id} and \eqref{eq:comp-marina} reduces to
   \begin{equation*}
      1 + \sqrt{\alpha\beta\frac{\omega}{\lambda_{\max}\left(\mL\right)}} \leq 1 + \sqrt{\frac{1-p}{np}\omega}.
   \end{equation*}
   This is equivalent to
   \begin{equation*}
      \beta \cdot \frac{1}{\lambda_{\max}\left(\mL\right)} \leq 1.
   \end{equation*}
   Utilizing \Cref{ppst:4}, the above inequality can be rewritten as 
   \begin{equation*}
      \frac{1}{\lambda_{\max}\left(\mL^{-1}\right)\cdot\lambda_{\max}\left(\mL\right)} \leq 1,
   \end{equation*}
   which is exactly 
   \begin{equation*}
      \lambda_{\min}\left(\mL\right) \leq \lambda_{\max}\left(\mL\right).
   \end{equation*}

\section{\texorpdfstring{Analysis of {\detdasha}}{Analysis of det-DASHA}}

\subsection{\texorpdfstring{Proof of \Cref{dasha:thm:main}}{Proof of Theorem~\ref{dasha:thm:main}}}

   Using \Cref{lemma:1} and taking expectations, we are able to obtain
   \begin{align}
      \label{dasha:eq:bound-1}
      &\Exp{f(x^{k+1})} \notag\\
      &\quad\leq \Exp{f(x^k)} - \frac{1}{2}\Exp{\norm{\nabla f(x^k)}^2_{\mD}} - \frac{1}{2}\Exp{\norm{x^{k+1} - x^k}^2_{\mD^{-1} - \mL}} + \frac{1}{2}\Exp{\norm{g^k - \nabla f(x^k)}_{\mD}^2} \notag\\
      &\quad\leq \Exp{f(x^k)} - \frac{1}{2}\Exp{\norm{\nabla f(x^k)}^2_{\mD}} - \frac{1}{2}\Exp{\norm{x^{k+1} - x^k}^2_{\mD^{-1} - \mL}} \notag\\
      &\qquad + \Exp{\frac{1}{2}\norm{g^k - h^k + h^k - \nabla f(x^k)}^2_{\mD}} \notag\\
      &\quad \leq \Exp{f(x^k)} - \frac{1}{2}\Exp{\norm{\nabla f(x^k)}^2_{\mD}} - \frac{1}{2}\Exp{\norm{x^{k+1} - x^k}^2_{\mD^{-1} - \mL}} \notag\\
      &\qquad + \Exp{\norm{g^k - h^k}^2_{\mD} + \norm{h^k - \nabla f(x^k)}^2_{\mD}},
   \end{align}
   where the last step is due to the convexity of the norm. 
   Using \Cref{dasha:tech-lemma-recur-1}, we obtain 
   \begin{align}
      \label{dasha:eq:bound-2}
      \Exp{\norm{g^{k+1} - h^{k+1}}^2_{\mD}} &\leq \frac{2\omega_{\mD}\cdot\lambda_{\max}\left(\mD\right)}{n^2}\sum_{i=1}^{n}\lambda_{\max}\left(\mL_i\right)\Exp{\norm{h_i^{k+1} - h_i^k}^2_{\mL_i^{-1}}} \notag \\
      &\qquad + \frac{2a^2\omega_{\mD}}{n^2}\sum_{i=1}^{n}\Exp{\norm{g_i^k - h_i^k}^2_{\mD}} + (1 - a)^2\Exp{\norm{g^k - h^k}^2_{\mD}}.
   \end{align}
   Using \Cref{dasha:tech-lemma-recur-2}, we get 
   \begin{align}
      \label{dasha:eq:bound-3}
      \Exp{\norm{g_i^{k+1} - h_i^{k+1}}^2_{\mD}} 
      &\leq \left(2a^2\omega_{\mD} + (1-a)^2\right)\cdot\Exp{\norm{g_i^k - h_i^k}^2_{\mD}} \notag \\
      &\qquad + 2\omega_{\mD}\cdot\lambda_{\max}\left(\mD\right)\cdot\lambda_{\max}\left(\mL_i\right)\cdot\Exp{\norm{h_i^{k+1} - h_i^k}^2_{\mL_i^{-1}}}.
   \end{align}
   Now let us fix $\kappa \in [0, +\infty)$, $\eta \in [0, +\infty)$ which we will determine later, and construct the following Lyapunov function $\Phi_k$
   \begin{equation}
      \label{dasha:eq:def-Phi-k}
      \Phi_{k} = \Exp{f(x^k) - f^{\star}} + \kappa\cdot\Exp{\norm{g^{k} - h^{k}}_{\mD}^2} + \eta\cdot\Exp{\frac{1}{n}\sum_{i=1}^{n}\norm{g_i^{k} - h_i^{k}}^2_{\mD}}.
   \end{equation}
   Combining \eqref{dasha:eq:bound-1}, \eqref{dasha:eq:bound-2} and \eqref{dasha:eq:bound-3}, we get 
   \begin{align*}
      & \Phi_{k+1} \\
      &\quad \leq  \Exp{f(x^k) - f^{\star} - \frac{1}{2}\norm{\nabla f(x^k)}^2_{\mD}} \\ 
      &\qquad + \Exp{- \frac{1}{2}\norm{x^{k+1} - x^k}^2_{\mD^{-1} - \mL}  + \norm{g^k - h^k}^2_{\mD} + \norm{h^k - \nabla f(x^k)}^2_{\mD}} \\
      &\qquad + \kappa(1-a)^2\Exp{\norm{g^k - h^k}^2_{\mD}} + \frac{2\kappa\cdot\omega_{\mD}\lambda_{\max}\left(\mD\right)}{n}\cdot\frac{1}{n}\sum_{i=1}^{n}\lambda_{\max}\left(\mL_i\right)\Exp{\norm{h_i^{k+1} - h_i^k}^2_{\mL_i^{-1}}} \\
      &\qquad + \frac{2a^2\omega_{\mD}\cdot\kappa}{n}\cdot\frac{1}{n}\sum_{i=1}^{n}\Exp{\norm{g_i^k - h_i^k}^2_{\mD}} + \eta\left(2a^2\omega_{\mD} + (1 - a)^2\right)\cdot\frac{1}{n}\sum_{i=1}^{n}\Exp{\norm{g_i^k - h_i^k}^2_{\mD}}\\
      &\qquad + 2\eta\cdot\omega_{\mD}\cdot\lambda_{\max}\left(\mD\right)\cdot\frac{1}{n}\sum_{i=1}^{n}\lambda_{\max}\left(\mL_i\right)\cdot\Exp{\norm{h_i^{k+1} - h_i^k}^2_{\mL_i^{-1}}}.
   \end{align*}
   Rearranging terms, and notice that $\norm{h^k - \nabla f(x^k)}^2_{\mD} = 0$, 
   \begin{align*}
      &\Phi_{k+1} \\
      &\quad \leq \Exp{f(x^k) - f^{\star}} - \frac{1}{2}\Exp{\norm{\nabla f(x^k)}^2_{\mD}}  \\
      &\qquad - \frac{1}{2}\Exp{\norm{x^{k+1} - x^k}^2_{\mD^{-1} - \mL}} + \left(1 + \kappa(1-a)^2\right)\Exp{\norm{g^k - h^k}^2_{\mD}} \\ 
      &\qquad + \left(\frac{2a^2\omega_{\mD}\cdot\kappa}{n} + \eta\left(2a^2\omega_{\mD} + (1-a)^2\right)\right)\cdot\frac{1}{n}\sum_{i=1}^{n}\Exp{\norm{g_i^k - h_i^k}^2_{\mD}} \\
      &\qquad + \left(\frac{2\kappa\cdot\omega_{\mD}\lambda_{\max}\left(\mD\right)}{n} + 2\eta\cdot\omega_{\mD}\cdot\lambda_{\max}\left(\mD\right)\right)\cdot\frac{1}{n}\sum_{i=1}^{n}\lambda_{\max}\left(\mL_i\right)\cdot\Exp{\norm{h_i^{k+1} - h_i^k}^2_{\mL_i^{-1}}}.
   \end{align*}
   In order to proceed, we consider the choice of $\kappa$ and $\eta$, for $\kappa$, 
   \begin{equation}
      \label{dasha:eq:cond-kappa}
      1 + \kappa(1 - a)^2 \leq \kappa.
   \end{equation}
   It is then clear that the choice of $\kappa = \frac{1}{a}$ satisfies the condition. 
   On the other hand, we look at the terms involving $\Exp{\norm{g_i^k - h_i^k}^2_{\mD}}$, we can rewrite as
   \begin{align*}
      T_1 &:= \left(\frac{2a^2\omega_{\mD}\cdot\kappa}{n} + \eta\left(2a^2\omega_{\mD} + (1-a)^2\right)\right)\cdot\frac{1}{n}\sum_{i=1}^{n}\Exp{\norm{g_i^k - h_i^k}^2_{\mD}}.
   \end{align*} 
   Picking $\kappa = \frac{1}{a}$ and $a = \frac{1}{2\omega_{\mD} + 1}$, the $T_1$ can be simplified as 
   \begin{align*}
      T_1 = \left(\frac{2\omega_{\mD}}{n\cdot(2\omega_{\mD} + 1)} + \eta \cdot \frac{4\omega_{\mD}^2 + 2\omega_{\mD}}{\left(2\omega_{\mD} + 1\right)^2}\right) \cdot \frac{1}{n}\sum_{i=1}^{n}\Exp{\norm{g_i^k - h_i^k}^2_{\mD}}.
   \end{align*}
   We pick $\eta$ so that it satisfies
   \begin{align}
      \label{dasha:eq:cond-eta}
      \left(\frac{2\omega_{\mD}}{n\cdot(2\omega_{\mD} + 1)} + \eta \cdot \frac{4\omega_{\mD}^2 + 2\omega_{\mD}}{\left(2\omega_{\mD} + 1\right)^2}\right) \leq \eta.
   \end{align}
   Taking $\eta = \frac{2\omega_{\mD}}{n}$, which is the minimum value satisfying \eqref{dasha:eq:cond-eta}, we conclude that 
   \begin{align}
      \label{dasha:eq:bound-lya-2}
      T_1 \leq \eta \cdot \frac{1}{n}\sum_{i=1}^{n}\Exp{\norm{g_i^k - h_i^k}^2_{\mD}}.
   \end{align}
   Combining \eqref{dasha:eq:cond-kappa} and \eqref{dasha:eq:bound-lya-2}, we are able to conclude that 
   \begin{align*}
      &\Phi_{k+1} \\
      &\quad \leq \Exp{f(x^k) - f^{\star}} + \kappa\cdot\Exp{\norm{g^k - h^k}^2_{\mD}} + \eta\cdot\frac{1}{n}\sum_{i=1}^{n}\Exp{\norm{g_i^k - h_i^k}^2_{\mD}}\\
      &\qquad - \frac{1}{2}\Exp{\norm{\nabla f(x^k)}^2_{\mD}} - \frac{1}{2}\Exp{\norm{x^{k+1} - x^k}^2_{\mD^{-1} - \mL}} \\
      &\qquad + \left(\frac{2\kappa\cdot\omega_{\mD}\lambda_{\max}\left(\mD\right)}{n} + 2\eta\cdot\omega_{\mD}\cdot\lambda_{\max}\left(\mD\right)\right)\cdot\frac{1}{n}\sum_{i=1}^{n}\lambda_{\max}\left(\mL_i\right)\cdot\Exp{\norm{h_i^{k+1} - h_i^k}^2_{\mL_i^{-1}}}.
   \end{align*}
   Using the definition of $\Phi_k$ and \Cref{dasha:tech-lemma:2}, we obtain
   \begin{align*}
      \Phi_{k+1} &\leq \Phi_k - \frac{1}{2}\Exp{\norm{\nabla f(x^k)}^2_{\mD}} - \frac{1}{2}\Exp{\norm{x^{k+1} - x^k}^2_{\mD^{-1} - \mL}} \\
      &\qquad \left(\frac{2\kappa\cdot\omega_{\mD}\lambda_{\max}\left(\mD\right)}{n} + 2\eta\cdot\omega_{\mD}\cdot\lambda_{\max}\left(\mD\right)\right)\cdot\frac{1}{n}\sum_{i=1}^{n}\lambda_{\max}\left(\mL_i\right)\cdot\Exp{\norm{x^{k+1} - x^k}^2_{\mL_i}} \\
      &= \Phi_k - \frac{1}{2}\Exp{\norm{\nabla f(x^k)}^2_{\mD}} + \Exp{\norm{x^{k+1} - x^k}^2_{\mN}},
   \end{align*}
   where $\mN \in \bbS^d$ is defined as 
   \begin{align*}
      \mN &:= \left(\frac{2\kappa\cdot\omega_{\mD}\lambda_{\max}\left(\mD\right)}{n} + 2\eta\cdot\omega_{\mD}\cdot\lambda_{\max}\left(\mD\right)\right)\cdot\frac{1}{n}\sum_{i=1}^{n}\lambda_{\max}\left(\mL_i\right)\cdot\mL_i - \frac{1}{2}\mD^{-1} + \frac{1}{2}\mL.
   \end{align*}
   We require $\mN \preceq \mO_d$, which leads to the condition on $\mD$: 
   \begin{equation*}
      \mD^{-1} - \mL - \frac{4\lambda_{\max}\left(\mD\right)\cdot\omega_{\mD}\cdot\left(4\omega_{\mD} + 1\right)}{n}\cdot\frac{1}{n}\sum_{i=1}^{n}\lambda_{\max}\left(\mL_i\right)\cdot\mL_i \succeq \mO_d.
   \end{equation*}
   Given the above condition is satisfied, we have the recurrence 
   \begin{equation*}
      \frac{1}{2}\Exp{\norm{\nabla f(x^k)}^2_{\mD}} \leq \Phi_{k} - \Phi_{k+1}
   \end{equation*}
   Summing up for $k=0 \hdots K-1$, we obtain
   \begin{align}
      \label{dasha:eq:rec-main}
      \sum_{k=0}^{K-1}\Exp{\norm{\nabla f(x^k)}^2_{\mD}} \leq 2(\Phi_0 - \Phi_k).
   \end{align}
   Notice that we also have 
   \begin{align*}
      \Phi_0 &= f(x^0) - f^{\star} + \left(2\omega_{\mD}+1\right)\norm{g^0 - h^0}^2_{\mD} + frac{2\omega_{\mD}}{n}\cdot\frac{1}{n}\sum_{i=1}^{n} \norm{g_i^0 - h_i^0}^2 \\
      &= f(x^0) - f^{\star},
   \end{align*}
   We divide both sides of \eqref{dasha:eq:rec-main} by $K$, and perform determinant normalization, 
   \begin{equation*}
      \frac{1}{K}\sum_{k=0}^{K-1}\Exp{\norm{\nabla f(x^k)}^2_{\frac{\mD}{\det(\mD)^{1/d}}}} \leq \frac{2(f(x^0) - f^{\star})}{\det(\mD)^{1/d}\cdot K}.
   \end{equation*}
   This is to say 
   \begin{equation*}
      \Exp{\norm{\nabla f(\tilde{x}^K)}^2_{\frac{\mD}{\det(D)^{1/d}}}} \leq \frac{2(f(x^0) - f^{\star})}{\det(\mD)^{1/d}\cdot K},
   \end{equation*}
   where  $\tilde{x}^K$ is chosen uniformly randomly from the first K iterates of the algorithm.

\subsection{Proofs of the corollaries}

\subsubsection{\texorpdfstring{Proof of \Cref{dasha:col:scaling}}{Proof of Corollary~\ref{dasha:col:scaling}}}\label{sec:dasha:corollaries}

   Plug $\mD = \gamma_{\mW}\cdot\mW$ into the stepsize condition in \Cref{dasha:thm:main}, we obtain 
   \begin{align*}
      \frac{1}{\gamma_{\mW}}\cdot\mW^{-1} - \mL - \frac{4\gamma_{\mW}\cdot\lambda_{\max}\left(\mW\right)\cdot\omega_{\mW}\left(4\omega_{\mW} + 1\right)}{n}\cdot\frac{1}{n}\sum_{i=1}^{n}\lambda_{\max}\left(\mL_i\right)\cdot\mL_i \succeq \mO_d.
   \end{align*}
   We then simplify the above condition as 
   \begin{align*}
      &\frac{1}{\gamma_{\mW}}\cdot\mL^{-\frac{1}{2}}\mW^{-1}\mL^{-\frac{1}{2}}  \\
      &\quad \succeq \mI_d + \frac{4\gamma_{\mW}\cdot\lambda_{\max}\left(\mW\right)\cdot\omega_{\mW}\left(4\omega_{\mW} + 1\right)}{n}\cdot\mL^{-\frac{1}{2}}\left(\frac{1}{n}\sum_{i=1}^{n}\lambda_{\max}\left(\mL_i\right)\cdot\mL_i\right)\mL^{-\frac{1}{2}}.
   \end{align*}
   Using \Cref{dasha:lemma:global-smooth}, we have
   \begin{equation*}
      \frac{1}{\gamma_{\mW}}\cdot\mL^{-\frac{1}{2}}\mW^{-1}\mL^{-\frac{1}{2}} - \frac{4\gamma_{\mW}\cdot\lambda_{\max}\left(\mW\right)\cdot\omega_{\mW}\left(4\omega_{\mW} + 1\right)}{n}\cdot\lambda_{\min}\left(\mL\right)\cdot\mI_d \succeq \mI_d.
   \end{equation*}
   Taking the minimum eigenvalue of both sides, we obtain that, 
   \begin{align*}
      \frac{1}{\gamma_{\mW}} \cdot \lambda_{\min}\left(\mL^{-\frac{1}{2}}\mW^{-1}\mL^{-\frac{1}{2}}\right) - \frac{4\gamma_{\mW}\cdot\lambda_{\max}\left(\mW\right)\cdot\omega_{\mW}\left(4\omega_{\mW} + 1\right)}{n}\cdot\lambda_{\min}\left(\mL\right) \geq 1,
   \end{align*}
   If we denote $C_{\mW} := \frac{\lambda_{\max}\left(\mW\right)\cdot\omega_{\mW}\left(4\omega_{\mW} + 1\right)}{n} > 0$, and $\lambda_{\mW} := \lambda_{\max}^{-1}\left(\mL^\frac{1}{2}\mW\mL^\frac{1}{2}\right)$, we can write
   \begin{align*}
      4\cdot C_{\mW}\cdot\lambda_{\min}\left(\mL\right)\cdot\gamma_{\mW}^2 + \gamma_{\mW} - \lambda_{\mW} \leq 0.
   \end{align*}
   The solution is given by 
   \begin{align*}
      \gamma_{\mW} \leq \frac{2\lambda_{\mW}}{1 + \sqrt{1 + 16C_{\mW}\lambda_{\min}\left(\mL\right)\cdot\lambda_{\mW}}}.
   \end{align*}

\subsubsection{\texorpdfstring{Proof of \Cref{dasha:col:complexity}}{Proof of Corollary~\ref{dasha:col:complexity}}}

   The best scaling factor in this case is given as, according to \Cref{dasha:col:scaling}, 
   \begin{equation*}
      \gamma_{\mL^{-1}} = \frac{2}{1 + \sqrt{1 + 16C_{\mL^{-1}}\cdot\lambda_{\min}\left(\mL\right)}}.
   \end{equation*}
   In order to reach a $\varepsilon^2$ stationary point, we need 
   \begin{equation*}
      K \geq \frac{\det(\mL)^\frac{1}{d}\left(f(x^0) - f^{\star}\right)}{\varepsilon^2}\cdot\left(1 + \sqrt{1 + 16C_{\mL^{-1}}\cdot\lambda_{\min}\left(\mL\right)}\right).
   \end{equation*}

\subsubsection{\texorpdfstring{Proof of \Cref{col:dasha:1}}{Proof of Corollary~\ref{col:dasha:1}}}

   The iteration complexity of {\detdasha} is given by, according to, \Cref{dasha:col:complexity},
   \begin{equation*}
      \cO\left(\frac{f(x^0) - f^\star}{\epsilon^2}\cdot\left(1 + \sqrt{1 + 16C_{\mL^{-1}}\cdot\lambda_{\min}\left(\mL\right)}\right)\cdot\det(\mL)^\frac{1}{d}\right).
   \end{equation*}
   Using the inequality $\sqrt{1 + t} \leq 1 + \sqrt{t}$ for $t > 0$, and leaving out the coefficients, we obtain 
   \begin{equation*}
      \cO\left(\frac{f(x^0) - f^\star}{\epsilon^2}\cdot\left(1 + \sqrt{C_{\mL^{-1}}\cdot\lambda_{\min}\left(\mL\right)}\right)\cdot\det(\mL)^\frac{1}{d}\right).
   \end{equation*}
   Notice that
   \begin{equation*}
      C_{\mL^{-1}}\cdot\lambda_{\min}\left(\mL\right) = \lambda_{\max}\left(\mL^{-1}\right) \cdot \frac{\omega_{\mL^{-1}}\left(4\omega_{\mL^{-1}} + 1\right)}{n} \cdot \lambda_{\min}\left(\mL\right) = \frac{\omega_{\mL^{-1}}\left(4\omega_{\mL^{-1} + 1}\right)}{n}.
   \end{equation*}
   As a result, the iteration complexity can be further simplified as 
   \begin{equation*}
      \cO\left(\frac{f(x^0) - f^*}{\epsilon^2}\cdot\left(1 + \frac{\omega_{\mL^{-1}}}{\sqrt{n}}\right)\cdot \det(\mL)^\frac{1}{d}\right).
   \end{equation*} 
   The iteration complexity of {\dasha} is, according to \citet[Corollary 6.2]{tyurin2024dasha} 
   \begin{equation*}
      \cO\left(\frac{1}{\epsilon^2}\cdot \left(f(x^0) - f^{\star}\right)\left(L + \frac{\omega}{\sqrt{n}} \widehat{L}\right)\right),
   \end{equation*}
   where $\widehat{L} = \sqrt{\frac{1}{n}\sum_{i=1}^{n} L_i^2}$. Since $\det(\mL)^\frac{1}{d} \leq \lambda_{\max}\left(\mL\right) = L$, and $L \leq \widehat{L}$, it is easy to see that compared to {\dasha}, {\detdasha} has a better iteration complexity when the momentum is the same. Notice that those two algorithms use the same sketch, thus, it also indicates that the communication complexity of the two algorithms are the same.

\subsubsection{\texorpdfstring{Proof of \Cref{col:dasha:2}}{Proof of Corollary~\ref{col:dasha:2}}}

   The iteration complexity of {\detmarina} is given by 
   \begin{equation*}
      \cO\left(\frac{f(x^0) - f^{\star}}{\epsilon^2}\cdot\det(\mL)^\frac{1}{d}\cdot\left(1 + \sqrt{\alpha\beta\Lambda_{\mL^{-1}, \cS}}\right)\right),
   \end{equation*}
   after removing logarithmic factors. 
   Plugging in the definitions we obtain in the case of $\omega_{\mL^{-1}} + 1 = \frac{1}{p}$, we have 
   \begin{equation*}
      \cO\left(\frac{f(x^0) - f^{\star}}{\epsilon^2}\cdot\det(\mL)^\frac{1}{d}\cdot\left(1 + \frac{\omega_{\mL^{-1}}}{n}\right)\right).
   \end{equation*} 
   From the proof of \Cref{col:dasha:1}, we know that the iteration complexity of {\detdasha} is 
   \begin{equation*}
      \cO\left(\frac{1}{\epsilon^2}\cdot \left(f(x^0) - f^{\star}\right)\left(L + \frac{\omega}{\sqrt{n}} \widehat{L}\right)\right).
   \end{equation*}
   It is easy to see that in this case the two algorithms have the same iteration complexity asymptotically. Notice that the communication complexity is the product of bytes sent per iteration and the number of iterations. {\detdasha} clearly sends less bytes per iteration because it always sent the compressed gradient differences, which means that it has a better communication complexity than {\detmarina}.

\section{\texorpdfstring{Distributed {\detcgd}}{Distributed det-CGD}}\label{sec:dist-detcgd}

This section is a brief summary of the distributed {{\detcgd}} algorithm and its theoretical analysis. 
The details can be found in \citep{li2023det}.
The algorithm follows the standard FL paradigm.
See the pseudocode in \Cref{alg:dist-alg1}.
\begin{algorithm}[H]
   \caption{Distributed {\detcgd}}\label{alg:dist-alg1}
   \begin{algorithmic}[1]
      \STATE {\bfseries Input:} Starting point $x^0$, stepsize matrix $\mD$, number of iterations $K$
      \FOR {$k=0,1,2,\ldots,K-1$}
      \STATE \underline{The devices in parallel:}
      \STATE  sample $\mS_i^k \sim \cS$;
      \STATE  compute $\mS_i^k \nabla f_i(x^{k}) $;
      \STATE  broadcast $\mS_i^k \nabla f_i(x^{k}) $.
      \STATE \underline{The server:}
      \STATE  combines $g^k = \frac{1}{n} \sum_{i=1}^{n}\mS_i^k \nabla f_i(x^{k}) $;
      \STATE  computes $x^{k+1}=x^k- \mD g^k$;
      \STATE  broadcasts $x^{k+1}$.
      \ENDFOR
      \STATE {\bfseries Return:} $x^{K}$
   \end{algorithmic}
\end{algorithm}
  
Below is the main convergence result for the algorithm. 

\begin{theorem}
   \label{thm:dist-alg1}
   Suppose that $f$ is $\mL$-smooth. 
   Under the Assumptions \ref{assmp:1},\ref{assmp:4}, if the stepsize satisfies
      \begin{equation}
      \label{eq:dis-cond1-key}
      \mD\mL\mD \preceq \mD,
   \end{equation}
   then the following convergence bound  is true for the iteration of \Cref{alg:dist-alg1}:
   \begin{equation}\label{eq:thm-dist1}
      \min_{0\leq k\leq K-1}\Exp{\norm{\nabla f(x^k)}_{\frac{\mD}{\det(\mD)^{1/d}}}^2} \leq \frac{2(1 + \frac{\lambda_{\mD}}{n})^K\left(f(x^0) - f^{\star}\right)}{\det(\mD)^{1/d}\, K} + \frac{2\lambda_{\mD} \Delta^{\star}}{\det(\mD)^{1/d} \, n},
   \end{equation}
   where $\Delta^{\star} \eqdef f^{\star} - \frac{1}{n}\sum_{i=1}^{n}f_i^{\star}$ and
   \begin{align*}
      \lambda_{\mD} &\eqdef \max_i \left\{\lambda_{\max}\left(\Exp{\mL_i^\frac{1}{2}\left(\mS_i^k - \mI_d\right)\mD\mL\mD\left(\mS_i^k - \mI_d\right)\mL_i^\frac{1}{2}}\right)\right\}.
   \end{align*}
\end{theorem}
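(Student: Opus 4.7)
The plan is to derive a one-step inequality that controls $\E[\norm{\nabla f(x^k)}_{\mD}^2]$ in terms of the decrease of $f(x^k) - f^{\star}$, together with an $\cO(\lambda_\mD / n)$ perturbation driven by the heterogeneity term $\Delta^{\star}$, and then iterate it with a geometric weighting. Throughout I will use the tower property, the unbiasedness and across-client independence of the sketches $\mS_i^k$, and the matrix smoothness of the individual components.

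First, I would apply the $\mL$-smoothness descent inequality at $x^{k+1} = x^k - \mD g^k$, expand the quadratic term, and take expectation conditional on $x^k$. Since $\E[g^k \mid x^k] = \nabla f(x^k)$, the cross term yields $-\norm{\nabla f(x^k)}^2_{\mD}$, and \Cref{lemma:2:var-decomp} splits the quadratic as $\E[\norm{g^k - \nabla f(x^k)}^2_{\mD\mL\mD}] + \norm{\nabla f(x^k)}^2_{\mD\mL\mD}$. The stepsize hypothesis $\mD\mL\mD \preceq \mD$ then absorbs the deterministic part into $\norm{\nabla f(x^k)}^2_{\mD}$, so the surviving descent term is $\tfrac12 \norm{\nabla f(x^k)}^2_{\mD}$.

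For the variance, across-client independence of the $\mS_i^k$'s (together with $\E[\mS_i^k]=\mI_d$) collapses cross terms and gives
\[
\E[\norm{g^k - \nabla f(x^k)}^2_{\mD\mL\mD}\mid x^k] = \tfrac{1}{n^2}\sum_i \E[\nabla f_i(x^k)^{\top}(\mS_i^k-\mI_d)\mD\mL\mD(\mS_i^k-\mI_d)\nabla f_i(x^k)].
\]
A reweighting by $\mL_i^{1/2}\mL_i^{-1/2}$ inside the inner matrix converts each summand to the form $\lambda_{\max}\bigl(\E[\mL_i^{1/2}(\mS_i^k-\mI_d)\mD\mL\mD(\mS_i^k-\mI_d)\mL_i^{1/2}]\bigr)\cdot \norm{\nabla f_i(x^k)}^2_{\mL_i^{-1}}$, which is bounded by $\lambda_\mD \cdot \norm{\nabla f_i(x^k)}^2_{\mL_i^{-1}}$ by the very definition of $\lambda_\mD$. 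The matrix analogue of the classical inequality $\tfrac12\norm{\nabla f_i(x)}^2_{\mL_i^{-1}} \le f_i(x)-f_i^{\star}$ (obtained by minimizing the $\mL_i$-smooth upper model of $f_i$) then bounds the average in terms of $f(x^k) - f^{\star} + \Delta^{\star}$, explaining the appearance of $\Delta^{\star}$.

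Denoting $\rho \eqdef 1 + \lambda_\mD/n$ and $\Delta_k \eqdef \E[f(x^k)-f^{\star}]$, the previous steps assemble to the recurrence $\tfrac12\E[\norm{\nabla f(x^k)}^2_{\mD}] \le \rho\,\Delta_k - \Delta_{k+1} + \tfrac{\lambda_\mD}{n}\Delta^{\star}$. To prevent the $\rho$-factor from blowing up the gradient term, I would multiply both sides by $\rho^{K-k-1}$ and sum over $k=0,\ldots,K-1$: the right-hand side telescopes to $\rho^K\Delta_0 - \Delta_K$ plus $\Delta^{\star}(\rho^K-1)$ from the geometric series $\tfrac{\lambda_\mD}{n}\sum_k \rho^{K-k-1} = \Delta^*(\rho^K-1)/\Delta^*$. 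Using $\min_k \E[\norm{\nabla f(x^k)}^2_{\mD}] \cdot \tfrac{\rho^K-1}{\rho-1} \le \sum_k \rho^{K-k-1}\E[\norm{\nabla f(x^k)}^2_{\mD}]$ and the elementary bound $\rho^K - 1 = (\rho-1)\sum_{j=0}^{K-1}\rho^j \ge K(\rho-1)$ turn this into $\min_k \E[\norm{\nabla f(x^k)}^2_{\mD}] \le \tfrac{2\rho^K(f(x^0)-f^{\star})}{K} + \tfrac{2\lambda_\mD\Delta^{\star}}{n}$, and a final normalization by $\det(\mD)^{1/d}$ produces \eqref{eq:thm-dist1}. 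The main technical subtleties are the matrix reweighting that exposes $\lambda_\mD$ cleanly, and the geometric-weighting trick that prevents a spurious $\rho^K\Delta^{\star}$ contamination of the constant residual.
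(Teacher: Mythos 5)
Your proposal is correct and follows essentially the same route as the proof the paper defers to \citet{li2023det}: the $\mL$-smoothness descent step with the variance decomposition under $\mD\mL\mD\preceq\mD$, the per-client reweighting by $\mL_i^{\pm 1/2}$ that exposes $\lambda_{\mD}$, the bound $\tfrac12\norm{\nabla f_i(x)}^2_{\mL_i^{-1}}\le f_i(x)-f_i^{\star}$ introducing $\Delta^{\star}$, and the geometrically weighted telescoping with $\rho=1+\lambda_{\mD}/n$. The only blemish is the garbled intermediate identity for the geometric series (the sum $\tfrac{\lambda_{\mD}}{n}\sum_k\rho^{K-k-1}$ equals $\rho^K-1$, which multiplied by $\Delta^{\star}$ gives the claimed $\Delta^{\star}(\rho^K-1)$), which does not affect the conclusion.
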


\begin{remark}
   On the right hand side of \eqref{eq:thm-dist1} we observe that increasing $K$ will only reduce the first term, that corresponds to the convergence error. 
   Whereas, the second term, which does not depend on $K$, will remain constant, if the other parameters of the algorithm are fixed. 
   This testifies to the neighborhood phenomenon which we discussed in \Cref{sec:background}.
\end{remark}
\begin{remark}
   \label{cor:dist-cond-conv}
   If the stepsize satisfies the below conditions,
   \begin{equation}
      \label{eq:cond-dist-1}
      \mD\mL\mD \preceq \mD, \quad 
      \lambda_{\mD} \leq \min\brc{\frac{n}{K}, \frac{n\varepsilon^2}{4\Delta^{\star}}\det(\mD)^{1/d}}, \quad
      K \geq \frac{12(f(x^0) - f^{\star})}{\det(\mD)^{1/d}\, \varepsilon^2},
   \end{equation}
   then we obtain $\varepsilon$-stationary point.
\end{remark}
One can see that in the convergence guarantee of {\detcgd} in the distributed case, the result \eqref{eq:thm-dist1} is not variance-reduced. 
Because of this limitation, in order to reach a $\varepsilon$ stationary point, the stepsize condition in \eqref{eq:cond-dist-1} is restrictive.

\section{\texorpdfstring{Extension of {\detcgdtwo} in {\marina} form}{Extension of Det-CGD in MARINA form}}\label{app:detcgd2}
In this section we want to extend {\detcgdtwo} into its variance reduced counterpart in {\marina} form. 

\subsection{\texorpdfstring{Extension of {\detcgdtwo} to its variance reduced counterpart}{Extension of Det-CGD2 to its variance reduced counterpart}}
\begin{algorithm}
   \caption{det-CGD2-VR}
   \label{alg:detCGD2-MARINA}
   \begin{algorithmic}[1]
   \STATE {\bf Input:} starting point $x^0$, stepsize matrix $\mD$, probability $p \in (0, 1]$, number of iterations $K$
   \STATE Initialize $g^0 = \mD\cdot\nabla f(x^0)$
   \FOR {$k=0,1,\ldots,K-1$}
      \STATE Sample $c_k \sim \text{Be}(p)$
      \STATE Broadcast $g^k$ to all workers
      \FOR {$i=1,2,\ldots$ in parallel} 
         \STATE $x^{k+1} = x^k - g^k$
         \STATE Set $g_i^{k+1} = \begin{cases}
               \mD \cdot\nabla f_i(x^{k+1}) &\text{ if } c_k = 1\\
               g^k + \mT_i^k\mD\left(\nabla f_i(x^{k+1}) - \nabla f_i(x^k)\right) &\text{ if } c_k = 0\\
         \end{cases}$
      \ENDFOR
      \STATE $g^{k+1} = \frac{1}{n}\sum_{i=1}^{n} g_i^{k+1}$
   \ENDFOR
   \STATE {\bf Return:} $\tilde{x}^K$ chosen uniformly at random from $\{x^k\}_{k=0}^{K-1}$
   \end{algorithmic}
\end{algorithm}
We call {\detmarina} as the extension of {\detcgdone}, and \Cref{alg:detCGD2-MARINA} as the extension of {\detcgdtwo} due to the difference in the order of applying sketches and stepsize matrices. 
The key difference between {\detcgdone} and {\detcgdtwo} is that in {\detcgdone} the gradient is sketched first and then multiplied by the stepsize, while for {\detcgdtwo}, the gradient is multiplied by the stepsize first after which the product is sketched. The convergence for \Cref{alg:detCGD2-MARINA} can be proved in a similar manner as  \Cref{thm:1:detCGDVR1}.

\begin{theorem}
   \label{thm:det-MARINA2}
   Let Assumptions \ref{assmp:1} and \ref{assmp:4} hold, with the gradient of $f$ being $\mL$-Lipschitz. If the stepsize matrix $\mD \in \bbS^d_{++}$ satisfies
   \begin{equation*}
      \mD^{-1} \succeq \left(\frac{(1-p)\cdot R'(\mD, \cS)}{np} + 1\right)\mL,
   \end{equation*}
   where 
   \begin{equation*}
      R'(\mD, \cS) = \frac{1}{n}\sum_{i=1}^{n}\lambda_{\max}\left(\mD\Exp{\mT_i^k\mD^{-1}\mT_i^k}\mD\mL_i^{\frac{1}{2}} - \mL_i^{\frac{1}{2}}\mD\right)\cdot\lambda_{\max}\left(\mL_i\right)\cdot\lambda_{\max}\left(\mL^{-\frac{1}{2}}\mL_i\mL^{-\frac{1}{2}}\right).
   \end{equation*}
   Then after $K$ iterations of \Cref{alg:detCGD2-MARINA}, we have 
   \begin{equation*}
      \Exp{\norm{\nabla f(\tilde{x}^K)}^2_{\frac{\mD}{\det(\mD)^{1/d}}}} \leq \frac{2\left(f(x^0) - f^{\star}\right)}{\det(\mD)^{1/d}\cdot K}.
   \end{equation*}
   This is to say that in order to reach a $\varepsilon$-stationary point, we require
   \begin{equation*}
      K \geq \frac{2(f(x^0) - f^{\star})}{\det(\mD)^{1/d}\cdot \varepsilon^2}.
   \end{equation*}
\end{theorem}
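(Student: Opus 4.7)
The plan is to mirror the analysis of Theorem~\ref{thm:1:detCGDVR1} (\detmarina), adapting it to the fact that in \Cref{alg:detCGD2-MARINA} the sketch is applied \emph{after} the stepsize, so $g^k$ is an estimator of $\mD\,\nabla f(x^k)$ rather than of $\nabla f(x^k)$. Since the iteration rewrites as $x^{k+1} = x^k - \mD\, \tilde g^k$ with $\tilde g^k := \mD^{-1} g^k$, applying \Cref{lemma:1} with $\tilde g^k$ in place of $g^k$ and using the identity $\norm{\mD^{-1}u}^2_\mD = \norm{u}^2_{\mD^{-1}}$ yields
\begin{equation*}
    \Exp{f(x^{k+1})} \leq \Exp{f(x^k)} - \tfrac{1}{2}\Exp{\norm{\nabla f(x^k)}^2_\mD} + \tfrac{1}{2}\Exp{\norm{g^k - \mD\nabla f(x^k)}^2_{\mD^{-1}}} - \tfrac{1}{2}\Exp{\norm{x^{k+1} - x^k}^2_{\mD^{-1} - \mL}}.
\end{equation*}
This reduces the task to bounding $\Exp{\norm{g^k - \mD\nabla f(x^k)}^2_{\mD^{-1}}}$ along the iterates.

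Second, I would derive the key one-step variance recursion. Conditioning on the Bernoulli flip $c_k$ and using \Cref{lemma:2:var-decomp} together with the across-client independence and unbiasedness of the sketches $\{\mT_i^k\}_i$, I obtain
\begin{equation*}
    \Exp{\norm{g^{k+1} - \mD\nabla f(x^{k+1})}^2_{\mD^{-1}} \mid x^{k+1}, x^k} \leq (1-p)\norm{g^k - \mD\nabla f(x^k)}^2_{\mD^{-1}} + \tfrac{1-p}{n^2}\sum_{i=1}^n \Exp{\norm{\mT_i^k \mD\delta_i - \mD\delta_i}^2_{\mD^{-1}} \mid x^{k+1}, x^k},
\end{equation*}
with $\delta_i := \nabla f_i(x^{k+1}) - \nabla f_i(x^k)$. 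A direct computation (using $\Exp{\mT_i^k} = \mI_d$) gives $\Exp{\norm{\mT_i^k \mD\delta_i - \mD\delta_i}^2_{\mD^{-1}}} = \delta_i^\top\bigl(\mD\Exp{\mT_i^k\mD^{-1}\mT_i^k}\mD - \mD\bigr)\delta_i$. I would upper-bound this by $\lambda_{\max}\bigl(\mL_i^{1/2}(\mD\Exp{\mT_i^k\mD^{-1}\mT_i^k}\mD - \mD)\mL_i^{1/2}\bigr)\cdot\norm{\delta_i}^2_{\mL_i^{-1}}$, invoke \Cref{assmp:4} to pass to $\norm{x^{k+1}-x^k}^2_{\mL_i}$, and then re-weight the $\mL_i$-norm into the $\mL$-norm exactly as in the proof of \Cref{thm:1:detCGDVR1}. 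Applying \Cref{fact:4} to split the resulting expression into a product of $\lambda_{\max}$ factors produces precisely the quantity $R'(\mD,\cS)$ of the theorem.

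Finally, introducing the Lyapunov sequence $\Phi_k := f(x^k) - f^\star + \tfrac{1}{2p}\norm{g^k - \mD\nabla f(x^k)}^2_{\mD^{-1}}$ and combining the two inequalities above yields
\begin{equation*}
    \Exp{\Phi_{k+1}} \leq \Exp{\Phi_k} - \tfrac{1}{2}\Exp{\norm{\nabla f(x^k)}^2_\mD} + \tfrac{1}{2}\Exp{(x^{k+1}-x^k)^\top \Bigl[\tfrac{(1-p)R'(\mD,\cS)}{np}\mL + \mL - \mD^{-1}\Bigr](x^{k+1}-x^k)}.
\end{equation*}
The stepsize hypothesis is exactly what forces the bracketed matrix to be negative semi-definite, so the last term drops. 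Telescoping, using the initialization $g^0 = \mD\nabla f(x^0)$ (so that $\Phi_0 = f(x^0) - f^\star$), dividing by $K$, interpreting the resulting average as the expectation at a uniformly chosen $\tilde x^K$, and normalizing by $\det(\mD)^{1/d}$ then delivers the claim. The main obstacle is the variance bound in step two: matching the symmetric form $\lambda_{\max}(\mL_i^{1/2}(\mD\Exp{\mT_i^k\mD^{-1}\mT_i^k}\mD - \mD)\mL_i^{1/2})$ produced by the natural computation to the asymmetric expression appearing in the statement of $R'(\mD,\cS)$ — this reconciliation relies on $\lambda_{\max}(AB)=\lambda_{\max}(BA)$ and careful cyclic manipulation — and then ensuring that the resulting upper bound is tight enough for the step-size condition to kill the cross term $\mL - \mD^{-1}$ in the descent inequality.
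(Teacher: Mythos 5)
Your proposal is correct and follows essentially the same route as the paper's proof: the same variance recursion in the $\mD^{-1}$-norm, the same Lyapunov function $\Phi_k = f(x^k)-f^{\star}+\frac{1}{2p}\norm{g^k-\mD\nabla f(x^k)}^2_{\mD^{-1}}$, and the same use of the sketch-variance bound, the re-weighting trick and \Cref{fact:4}; your only deviation is deriving the descent inequality by substituting $\tilde g^k=\mD^{-1}g^k$ into \Cref{lemma:1}, which is a valid shortcut for the paper's separately proved \Cref{lemma:4:reduced-b2}. The ``asymmetric'' form of $R'(\mD,\cS)$ that worries you in your last paragraph is simply a typo in the theorem statement---the quantity the proof actually produces and uses is $\lambda_{\max}\bigl(\mD\,\Exp{\mT_i^k\mD^{-1}\mT_i^k}\,\mD-\mD\bigr)$, exactly what your computation yields---so no cyclic-permutation reconciliation is needed.
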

If we look at the scalar case where $\mD = \gamma\cdot\mI_d$, $\mL_i = L_i \cdot \mI_d$ and $\mL = L \cdot I_d$, then the condition in \Cref{thm:det-MARINA2} reduces to
\begin{equation}
    \label{eq:rdcdcond}
    \frac{(1 - p)\omega L^2}{np} + L - \frac{1}{\gamma} \leq 0.
\end{equation}
Notice that here $\omega = \lambda_{\max}\brr{\Exp{\left(\mT_i^k\right)^2}} - 1$, and we have $L^2 = \frac{1}{n}\sum_{i=1}^{n}L_i^2$, which is due to the relation given in \Cref{ppst:1}. 
This condition coincides with the condition for convergence of $\marina$. 
One may also check that, the update rule in \Cref{alg:detCGD2-MARINA}, is the same as $\marina$ in the scalar case. 
However, the condition given in \Cref{thm:det-MARINA2} is not simpler than \Cref{thm:1:detCGDVR1}, contrary to the single-node case. 
We emphasize that \Cref{alg:detCGD2-MARINA} is not suitable for the federated learning setting where the clients have limited resources. 
In order to perform the update, each client is required to store the stepsize matrix $\mD$ which is of size $d \times d$. 
In the over-parameterized regime, the dataset size is $m \times d$ where $m$ is the number of data samples, and we have $d > m$. 
This means that the stepsize matrix each client needs to store is even larger than the dataset itself, which is unacceptable given the limited resources each client has.

\subsection{\texorpdfstring{Analysis of \Cref{alg:detCGD2-MARINA}}{Analysis of Algorithm~\ref{alg:detCGD2-MARINA}}}
We first present two lemmas which are necessary for the proofs of \Cref{thm:det-MARINA2}. 
\begin{lemma}
   \label{lemma:4:reduced-b2}
   Assume that function $f$ is $\mL$-smooth, and $x^{k+1} = x^k - g^k$, and matrix $\mD \in \bbS^d_{++}$. Then we will have
   \begin{equation}
      \label{eq:lemma:4}
      f(x^{k+1}) \leq f(x^k) - \frac{1}{2}\norm{\nabla f(x^k)}^2_{\mD} + \frac{1}{2}\norm{\mD\cdot\nabla f(x^k) - g^k}^2_{\mD^{-1}} - \frac{1}{2}\norm{x^{k+1} - x^k}^2_{\mD^{-1}-\mL}.
   \end{equation} 
\end{lemma}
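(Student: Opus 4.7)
The plan is to prove this descent lemma by combining the matrix smoothness inequality with a single algebraic identity that allows us to split the inner product $\langle \nabla f(x^k), g^k \rangle$ into a ``weighted distance'' piece and controllable quadratic terms. The structure mirrors the proof of the earlier \Cref{lemma:1}, except that the update now reads $x^{k+1} = x^k - g^k$ rather than $x^k - \mD g^k$, which is precisely why the cross term between $\nabla f(x^k)$ and $g^k$ must be expressed in the weighted norm $\norm{\cdot}_{\mD^{-1}}$.

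First, I would apply $\mL$-smoothness of $f$ (equivalent to \Cref{assmp:3} via \Cref{ppst:3}) to the pair $(x^k, x^{k+1})$ and substitute $x^{k+1}-x^k = -g^k$ to obtain
\begin{equation*}
    f(x^{k+1}) \leq f(x^k) - \inner{\nabla f(x^k)}{g^k} + \tfrac{1}{2}\norm{g^k}_{\mL}^2.
\end{equation*}
Second, I would record the expansion
\begin{equation*}
    \norm{\mD\nabla f(x^k) - g^k}_{\mD^{-1}}^2 = \norm{\nabla f(x^k)}_{\mD}^2 - 2\inner{\nabla f(x^k)}{g^k} + \norm{g^k}_{\mD^{-1}}^2,
\end{equation*}
which is the only computation of substance and which follows by writing the $\mD^{-1}$-norm in inner-product form and cancelling the mixed $\mD\mD^{-1}=\mI_d$ factor.

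Third, I would solve this identity for $-\inner{\nabla f(x^k)}{g^k}$ and substitute it into the descent inequality above. The two terms involving $g^k$ then combine into $\tfrac{1}{2}\norm{g^k}_{\mL - \mD^{-1}}^2 = -\tfrac{1}{2}\norm{g^k}_{\mD^{-1}-\mL}^2$, and since $g^k = x^k - x^{k+1}$ the weighted norm of $g^k$ equals that of $x^{k+1}-x^k$ (the quadratic form is symmetric in sign). Collecting terms yields precisely \eqref{eq:lemma:4}.

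I do not anticipate a real obstacle: the entire argument is a one-line smoothness bound followed by a completion-of-squares identity. The only subtle point worth flagging in the write-up is that we do \emph{not} need $\mD^{-1}-\mL$ to be positive semidefinite for the statement to make sense, since we are only asserting an inequality with $-\tfrac{1}{2}\norm{x^{k+1}-x^k}_{\mD^{-1}-\mL}^2$ as an algebraic remainder; its usefulness in downstream analysis relies on a stepsize condition imposed separately.
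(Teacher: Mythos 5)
Your proof is correct, and the one computation that matters checks out: expanding $\norm{\mD\nabla f(x^k)-g^k}^2_{\mD^{-1}}$ gives exactly $\norm{\nabla f(x^k)}^2_{\mD} - 2\inner{\nabla f(x^k)}{g^k} + \norm{g^k}^2_{\mD^{-1}}$, so solving for the cross term and inserting it into the smoothness bound $f(x^{k+1})\le f(x^k)-\inner{\nabla f(x^k)}{g^k}+\tfrac12\norm{g^k}^2_{\mL}$ yields \eqref{eq:lemma:4} after noting $g^k=-(x^{k+1}-x^k)$. The paper reaches the same identity by a longer route: it introduces the auxiliary point $\bar{x}^{k+1}=x^k-\mD\nabla f(x^k)$, adds and subtracts $\inner{\nabla f(x^k)-\mD^{-1}g^k}{\mD\nabla f(x^k)}$, and then invokes the three-point decomposition $\inner{\mD^{-1}(x^{k+1}-\bar{x}^{k+1})}{x^k-\bar{x}^{k+1}}=\tfrac12\bigl(\norm{x^{k+1}-\bar{x}^{k+1}}^2_{\mD^{-1}}+\norm{x^k-\bar{x}^{k+1}}^2_{\mD^{-1}}-\norm{x^{k+1}-x^k}^2_{\mD^{-1}}\bigr)$ before translating everything back into the stated norms. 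Both arguments are completions of the square in the $\mD^{-1}$ geometry; yours dispenses with the auxiliary iterate and is shorter, while the paper's version makes the parallel with \Cref{lemma:1} (where the update is $x^{k+1}=x^k-\mD g^k$) more visible. Your closing remark is also right and worth keeping: $\mD^{-1}-\mL$ is used only as a symmetric quadratic form here, so no positive semidefiniteness is needed for the lemma itself — that is imposed later through the stepsize condition.
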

This lemma is formulated in a different way from \Cref{lemma:1} on purpose. 
\begin{lemma}
    \label{lemma5:cplx}
    For any sketch matrix $\mT \in \bbS_{+}^d$, vector $t \in \R^d$, matrix $\mD \in \bbS^d_{++}$ and matrix $\mL \in \bbS^d_{++}$, we have
    \begin{equation}
        \label{eq:lemma:5}
        \Exp{\norm{\mT\mD t - \mD t}^2_{\mD^{-1}}} \leq \lambda_{\max}\left(\mL^\frac{1}{2}\mD\Exp{\mT\mD^{-1}\mT}\mD\mL^\frac{1}{2} - \mL^\frac{1}{2}\mD\mL^\frac{1}{2}\right)\norm{t}^2_{\mL^{-1}}.
    \end{equation}
\end{lemma}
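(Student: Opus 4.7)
The plan is to mirror the argument used for the analogous \Cref{lemma:3}, but adapted to the ``stepsize-first, sketch-after'' structure that distinguishes \Cref{alg:detCGD2-MARINA}. The key observation is that $\norm{\mT\mD t - \mD t}^2_{\mD^{-1}} = \norm{(\mT-\mI_d)\mD t}^2_{\mD^{-1}}$, so I can treat $u \eqdef \mD t$ as the argument of a standard sketch-compression variance bound.

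First I would expand the expectation. Since $\mT$ is symmetric and (as in the companion lemma, implicitly) unbiased with $\E[\mT] = \mI_d$, I write
\begin{align*}
\E\!\left[\norm{\mT\mD t - \mD t}^2_{\mD^{-1}}\right]
&= \E\!\left[t^{\top}\mD(\mT - \mI_d)\mD^{-1}(\mT - \mI_d)\mD t\right] \\
&= t^{\top}\mD\,\E\!\left[\mT\mD^{-1}\mT - \mT\mD^{-1} - \mD^{-1}\mT + \mD^{-1}\right]\mD t \\
&= t^{\top}\!\left(\mD\,\E[\mT\mD^{-1}\mT]\,\mD - \mD\right)t,
\end{align*}
where the cross terms collapse through $\E[\mT]=\mI_d$ and $\mD\mD^{-1}\mD=\mD$. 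Next, I would change variables to $y \eqdef \mL^{-1/2} t$, so that $\norm{t}^2_{\mL^{-1}} = \|y\|^2$ and
\begin{equation*}
t^{\top}\!\left(\mD\,\E[\mT\mD^{-1}\mT]\,\mD - \mD\right)t
= y^{\top}\!\left(\mL^{1/2}\mD\,\E[\mT\mD^{-1}\mT]\,\mD\mL^{1/2} - \mL^{1/2}\mD\mL^{1/2}\right)y.
\end{equation*}
The matrix in parentheses is symmetric, so the standard Rayleigh quotient bound $y^{\top}\mM y \leq \lambda_{\max}(\mM)\|y\|^2$ immediately yields the claim.

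The only mildly delicate step is justifying that the bound is meaningful (i.e., nonnegative). For this I would verify that $\mD\,\E[\mT\mD^{-1}\mT]\,\mD \succeq \mD$, which follows from the matrix Jensen-type inequality $\E[\mT\mD^{-1}\mT] \succeq \E[\mT]\mD^{-1}\E[\mT] = \mD^{-1}$ for the operator-convex map $\mT \mapsto \mT\mD^{-1}\mT$ with $\mD^{-1}\succeq 0$. Conjugation by $\mL^{1/2}$ preserves the PSD inequality, so $\lambda_{\max}$ of the RHS bracket is $\geq 0$.

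I anticipate no serious obstacle: the entire argument is a direct transcription of the proof of \Cref{lemma:3} with $(\mS,\mD,\mL)\leftrightarrow(\mT,\mD^{-1},\mL)$ and with the target vector shifted from $t$ to $\mD t$. The main thing to be careful about is the bookkeeping of the $\mD$-factors on the outside versus the $\mD^{-1}$ inside the expectation, which is exactly what makes the bound match the form required later in the analysis of \Cref{thm:det-MARINA2}.
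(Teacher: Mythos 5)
Your proposal is correct and follows essentially the same route as the paper's proof: rewrite the quantity as a quadratic form in $t$, use unbiasedness of $\mT$ to reduce it to $t^{\top}\bigl(\mD\,\Exp{\mT\mD^{-1}\mT}\,\mD - \mD\bigr)t$, conjugate by $\mL^{1/2}$, and apply the Rayleigh-quotient bound. The additional remark on nonnegativity of the leading eigenvalue is a harmless (and correct) bonus not present in the paper.
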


\subsection{\texorpdfstring{Proof of \Cref{thm:det-MARINA2}}{Proof of Theorem~\ref{thm:det-MARINA2}}}
We start with \Cref{lemma:4:reduced-b2},
\begin{align}
   \label{eq:thm3-beg}
   \Exp{f(x^{k+1})} &\leq \Exp{f(x^k)} - \Exp{\frac{1}{2}\norm{\nabla f(x^k)}^2_{\mD}} \notag \\ 
   & \quad + \Exp{\frac{1}{2}\norm{\mD\cdot\nabla f(x^k) - g^k}^2_{\mD^{-1}}} - \Exp{\frac{1}{2}\norm{x^{k+1} - x^k}^2_{\mD^{-1}-\mL}}.
\end{align}
Now we do the same as \Cref{thm:1:detCGDVR1} and look at the term $\Exp{\norm{\mD\cdot\nabla f(x^{k+1}) - g^{k+1}}^2_{\mD^{-1}}}$. Recall that $g^k$ here is given by 
\begin{equation*}
   g^{k+1} = \begin{cases}
      \mD\cdot\nabla f(x^{k+1}) & \text{ with probability $p$ } \\
      g^k + \frac{1}{n}\sum_{i=1}^{n}\mT_i^k\mD\left(\nabla f_i(x^{k+1}) - \nabla f_i(x^k)\right) & \text{ with probability $1-p$ }.
   \end{cases}
\end{equation*}
As a result, we have 
\begin{align*}
   & \Exp{\norm{g^{k+1} - \mD\nabla f(x^{k+1})}^2_{\mD^{-1}} \mid x^{k+1}, x^k} \notag \\ 
   & = \Exp{\Exp{\norm{g^{k+1} - \mD\nabla f(x^{k+1})}^2_{\mD^{-1}} \mid x^{k+1}, x^k,c_k}} \notag \\ 
   & = p \cdot\norm{\mD\nabla f(x^{k+1}) - \mD\nabla f(x^{k+1})}^2_{\mD^{-1}} \\
   &\quad + (1-p)\cdot\Exp{\norm{g^k + \frac{1}{n}\sum_{i=1}^{n}\mT_i^k\mD\left(\nabla f_i(x^{k+1}) - \nabla f_i(x^k)\right) - \mD\nabla f(x^{k+1})}^2_{\mD^{-1}} \mid x^{k+1}, x^k} \\
   & = (1-p)\cdot\Exp{\norm{g^k + \frac{1}{n}\sum_{i=1}^{n}\mT_i^k\mD\left(\nabla f_i(x^{k+1}) - \nabla f_i(x^k)\right) - \mD\nabla f(x^{k+1})}^2_{\mD^{-1}} \mid x^{k+1}, x^k}.
\end{align*}
For the sake of presentation, we use $\ExpSub{{k}}{\cdot}$ to denote the conditional expectation $\ExpCond{\cdot}{x_k, x_{k+1}}$ on $x_k, x_{k+1}$.
Using \Cref{lemma:2:var-decomp} with $x = \frac{1}{n}\sum_{i=1}^{n}\mT_i^k\mD\left(\nabla f_i(x^{k+1}) - \nabla f_i(x^k)\right)$, $c = \mD\nabla f(x^{k+1}) - g^k$, we are able to obtain that,
\begin{align*}
   & (1-p)\ExpSub{{k}}{\norm{g^k + \frac{1}{n}\sum_{i=1}^{n}\mT_i^k\mD\left(\nabla f_i(x^{k+1}) - \nabla f_i(x^k)\right) - \mD\nabla f(x^{k+1})}^2_{\mD^{-1}}} \\
   & = (1-p)\ExpSub{{k}}{\norm{\frac{1}{n}\sum_{i=1}^{n}\mT_i^k\mD\left(\nabla f_i(x^{k+1}) - \nabla f_i(x^k)\right) - \mD\left(\nabla f(x^{k+1}) - \nabla f(x^k)\right)}^2_{\mD^{-1}}} \\
   &\quad + (1-p)\norm{g^k - \nabla f(x^k)}^2_{\mD^{-1}} \\
   & = (1-p) \ExpSub{{k}}{\norm{\frac{1}{n}\sum_{i=1}^{n}\left[\mT_i^k\mD\left(\nabla f_i(x^{k+1}) - \nabla f_i(x^k)\right) - \mD\left(\nabla f_i(x^{k+1}) - \nabla f_i(x^k)\right)\right]}^2_{\mD^{-1}}} \\
   &\quad + (1-p)\norm{g^k - \nabla f(x^k)}^2_{\mD^{-1}}.
\end{align*} 
It is not hard to notice that for the sketch matrices we pick, the following identity holds due to the unbiasedness,
\begin{equation*}
   \ExpSub{{k}}{\mT_i^k\mD(\nabla f_i(x^{k+1}) - \nabla f_i(x^k))} = \mD(\nabla f_i(x^{k+1}) - \nabla f_i(x^k)),
\end{equation*}
and any two random vectors in the set $\left\{\mT_i^k\mD(\nabla f_i(x^{k+1}) - \nabla f_i(x^k))\right\}_{i=1}^n$ are independent if $x^{k+1}, x^k$ are fixed. As a result
\begin{align}
   \label{eq:thm3upperbound}
   & \ExpSub{k}{\norm{g^{k+1} - \mD\nabla f(x^{k+1})}^2_{\mD^{-1}}} \notag \\
   &= \frac{1-p}{n^2}\sum_{i=1}^{n}\ExpSub{k}{\norm{\mT_i^k\left(\mD\nabla f_i(x^{k+1}) - \mD\nabla f_i(x^k)\right) - \left(\mD\nabla f_i(x^{k+1}) - \mD\nabla f_i(x^k)\right)}^2_{\mD^{-1}}} \notag \\
   &\quad + (1-p)\cdot\norm{g^k - \mD \nabla f(x^k)}^2_{\mD^{-1}}.
\end{align}
For each term within the summation, we can further upper bound it 
using \Cref{lemma5:cplx}
\begin{align*}
   & \ExpSub{k}{\norm{\mT_i^k\left(\mD\nabla f_i(x^{k+1}) - \mD\nabla f_i(x^k)\right) - \left(\mD\nabla f_i(x^{k+1}) - \mD\nabla f_i(x^k)\right)}^2_{\mD^{-1}}} \\
   &\quad \leq \lambda_{\max}\left( \mL_i^{\frac{1}{2}}\mD\Exp{\mT_i^k\mD^{-1}\mT_i^k}\mD\mL_i^{\frac{1}{2}} - \mL_i^{\frac{1}{2}}\mD\mL_i^{\frac{1}{2}}\right)\norm{\nabla f_i(x^{k+1}) - \nabla f_i(x^k)}^2_{\mL_i^{-1}}\\
   &\quad \leq \lambda_{\max}\left( \mL_i^{\frac{1}{2}}\mD\Exp{\mT_i^k\mD^{-1}\mT_i^k}\mD\mL_i^{\frac{1}{2}} - \mL_i^{\frac{1}{2}}\mD\mL_i^{\frac{1}{2}}\right)\norm{x^{k+1} - x^k}_{\mL_i}^2.
\end{align*}
Where the last inequality is due to \Cref{assmp:4}.
Plugging back into \eqref{eq:thm3upperbound}, we get 
\begin{align*}
   & \ExpSub{k}{\norm{g^{k+1} - \mD\nabla f(x^{k+1})}^2_{\mD^{-1}}} \\
   &\quad \leq \frac{1-p}{n^2}\sum_{i=1}^{n} \lambda_{\max}\left( \mL_i^{\frac{1}{2}}\mD\Exp{\mT_i^k\mD^{-1}\mT_i^k}\mD\mL_i^{\frac{1}{2}} - \mL_i^{\frac{1}{2}}\mD\mL_i^{\frac{1}{2}}\right)\norm{x^{k+1} - x^k}_{\mL_i}^2  \\
   & \qquad + (1 - p)\cdot\norm{g^k - \mD\nabla f(x^k)}^2_{\mD^{-1}}. 
\end{align*}
Applying the replacement trick form the proof of \Cref{thm:1:detCGDVR1}, we obtain
\begin{align*}
   & \ExpSub{k}{\norm{g^{k+1} - \mD\nabla f(x^{k+1})}^2_{\mD^{-1}}} \\
   & \leq \frac{1-p}{n^2}\sum_{i=1}^{n} \lambda_{\max}\left( \mL_i^{\frac{1}{2}}\mD\Exp{\mT_i^k\mD^{-1}\mT_i^k}\mD\mL_i^{\frac{1}{2}} - \mL_i^{\frac{1}{2}}\mD\mL_i^{\frac{1}{2}}\right)  \\ 
   & \quad \times \inner{\mL^\frac{1}{2}\left(x^{k+1} - x^k\right)}{\left(\mL^{-\frac{1}{2}}\mL_i\mL^{-\frac{1}{2}}\right)\cdot\mL^\frac{1}{2}\left(x^{k+1} - x^k\right)}  + (1 - p)\cdot\norm{g^k - \mD\nabla f(x^k)}^2_{\mD^{-1}} \\
   &\leq \frac{1-p}{n^2}\sum_{i=1}^{n} \lambda_{\max}\left( \mL_i^{\frac{1}{2}}\left(\mD\Exp{\mT_i^k\mD^{-1}\mT_i^k}\mD - \mD\right)\mL_i^{\frac{1}{2}}\right)\cdot\lambda_{\max}\left(\mL^{-\frac{1}{2}}\mL_i\mL^{-\frac{1}{2}}\right)\norm{x^{k+1} - x^k}_{\mL}^2 \notag \\
   & \quad + (1 - p)\cdot\norm{g^k - \mD\nabla f(x^k)}^2_{\mD^{-1}}. 
\end{align*}
Applying \Cref{fact:4}, we obtain 
\begin{align*}
   & \ExpSub{k}{\norm{g^{k+1} - \mD\nabla f(x^{k+1})}^2_{\mD^{-1}}}  \\
   & \leq \frac{1-p}{n^2}\sum_{i=1}^{n} \lambda_{\max}\left( \mD\Exp{\mT_i^k\mD^{-1}\mT_i^k}\mD - \mD\right)\lambda_{\max}\left(\mL_i\right)\lambda_{\max}\left(\mL^{-\frac{1}{2}}\mL_i\mL^{-\frac{1}{2}}\right)\norm{x^{k+1} - x^k}_{\mL}^2  \\
   & \quad + (1 - p)\cdot\norm{g^k - \mD\nabla f(x^k)}^2_{\mD^{-1}}. 
\end{align*}
Recalling the definition of $R^{\prime}(\mD,\cS)$, we further simplify it to 
\begin{align*}
   & \ExpSub{k}{\norm{g^{k+1} - \mD\nabla f(x^{k+1})}^2_{\mD^{-1}}} \\
   &\quad\leq \frac{(1-p)\cdot R'(\mD, \cS)}{n}\norm{x^{k+1} - x^k}^2_{\mL} + (1 - p)\cdot\norm{g^k - \mD\nabla f(x^k)}^2_{\mD^{-1}}.
\end{align*}
Taking expectation again and using the tower property, we get 
\begin{align}
   \label{eq:thm:3:upp:6}
   & \Exp{\norm{g^{k+1} - \mD\nabla f(x^{k+1})}^2_{\mD^{-1}}} \\
   &\quad\leq \frac{(1-p)\cdot R'(\mD, \cS)}{n}\Exp{\norm{x^{k+1} - x^k}^2_{\mL}} + (1 - p)\cdot\Exp{\norm{g^k - \mD\nabla f(x^k)}^2_{\mD^{-1}}}.
\end{align}
Construct the Lyapunov function $\Phi_k$ as follows, 
\begin{equation*}
   \Phi_k = f(x^k) - f^{\star} + \frac{1}{2p}\norm{g^k - \mD\nabla f(x^k)}^2_{\mD^{-1}}.
\end{equation*}
Utilizing  \eqref{eq:thm3-beg} and \eqref{eq:thm:3:upp:6}, we are able to get 
\begin{align*}
   \Exp{\Phi_{k+1}} &\leq \Exp{f(x^k) - f^{\star}} - \frac{1}{2}\Exp{\norm{\nabla f(x^k)}^2_{\mD}} \\
   & \qquad + \frac{1}{2}\Exp{\norm{g^k - \mD\nabla f(x^k)}^2_{\mD^{-1}}} - \frac{1}{2}\Exp{\norm{x^{k+1} - x^k}_{\mD^{-1} - L}} \\
   & \qquad + \frac{1}{2p}\cdot\frac{(1-p)R'(\mD, \cS)}{n}\Exp{\norm{x^{k+1} - x^k}^2_{\mL}} + \frac{1-p}{2p}\Exp{\norm{g^k - \mD\nabla f(x^k)}^2_{\mD^{-1}}} \\
   &\quad = \Exp{\Phi_k} - \frac{1}{2}\Exp{\norm{\nabla f(x^k)}^2_{\mD}} \\
   &\qquad + \frac{1}{2}\left(\frac{(1-p)R'(\mD, \cS)}{np}\Exp{\norm{x^{k+1} - x^k}^2_{\mL}} - \Exp{\norm{x^{k+1} - x^k}^2_{\mD^{-1} - \mL}}\right).
\end{align*}
Now, notice that the last term in the above inequality is non-positive as guaranteed by the condition
\begin{equation*}
   \mD^{-1} \succeq \left(\frac{(1-p)R'(\mD, \cS)}{np} + 1\right)\mL.
\end{equation*}
This leads to the recurrence after ignoring the last term,
\begin{equation*}
   \Exp{\Phi_{k+1}} \leq \Exp{\Phi_k} - \frac{1}{2}\Exp{\norm{\nabla f(x^k)}^2_{\mD}}.
\end{equation*}
Unrolling this recurrence, we get 
\begin{equation*}
   \frac{1}{K}\sum_{k=0}^{K-1}\Exp{\norm{\nabla f(x^k)}^2_{\mD}} \leq \frac{2\left(\Exp{\Phi_0 } - \Exp{\Phi_K}\right)}{K}.
\end{equation*}
The left hand side can viewed as average over $\tilde{x}^{K}$, which is drawn uniformly at random from $\{x_k\}_{k=0}^{K-1}$, while the right hand side can be simplified as 
\begin{eqnarray*}
   \frac{2\left(\Exp{\Phi_0 } - \Exp{\Phi_K}\right)}{K} \leq \frac{2\Phi_0}{K} =  \frac{2\left(f(x^0) - f^{\star} + \frac{1}{2p}\norm{g^0 - \nabla f(x^0)}^2_{\mD}\right)}{K}.
\end{eqnarray*}
Recalling that $g^0  = \nabla f(x^0)$ and performing determinant normalization 
as \citet{li2023det}, we get 
\begin{equation*}
   \Exp{\norm{\nabla f(\tilde{x}^K)}^2_{\frac{\mD}{\det(\mD)^{1/d}}}} \leq \frac{2\left(f(x^0) - f^{\star}\right)}{\det(\mD)^{1/d}K}.
\end{equation*}

\section{Proofs of the technical lemmas}

\subsection{\texorpdfstring{Proof of \Cref{lemma:1}}{Proof of Lemma~\ref{lemma:1}}}
   Let $\bar{x}^{k+1} \eqdef x^k - \mD \cdot \nabla f(x^k)$. 
   Since $f$ has a matrix $\mL$-Lipschitz gradient, $f$ is also $\mL$-smooth. 
   From the $\mL$-smoothness of $f$, we have 
   \begin{eqnarray*}
      f(x^{k+1}) &\leq& f(x^k) + \inner{\nabla f(x^k)}{x^{k+1} - x^k} + \frac{1}{2}\inner{x^{k+1}-x^k}{\mL(x^{k+1} - x^k)} \\
      &=& f(x^k) + \inner{\nabla f(x^k) - g^k}{x^{k+1} - x^k} + \inner{g^k}{x^{k+1} - x^k} + \frac{1}{2}\inner{x^{k+1} - x^k}{\mL(x^{k+1} - x^k)}.
   \end{eqnarray*}
   We can merge the last two terms and obtain, 
   \begin{eqnarray*}
      f(x^{k+1}) &\leq& f(x^k) + \inner{\nabla f(x^k) - g^k}{-\mD \cdot g^k} - \inner{x^{k+1} - x^k}{\mD^{-1}(x^{k+1} - x^k)} \\
      && + \frac{1}{2}\inner{x^{k+1} - x^k}{\mL(x^{k+1} - x^k)} \\
      &=& f(x^k) + \inner{\nabla f(x^k) - g^k}{-\mD \cdot g^k} - \inner{x^{k+1}-x^k}{\left(\mD^{-1} - \frac{1}{2}\mL\right)(x^{k+1} - x^k)}.
   \end{eqnarray*}
   We add and subtract $\inner{\nabla f(x^k) - g^k}{\mD \cdot g^k}$,
   \begin{eqnarray*}
      f(x^{k+1}) &\leq& f(x^k) + \inner{\nabla f(x^k) - g^k}{\mD\left(\nabla f(x^k) - g^k\right)} - \inner{\nabla f(x^k) - g^k}{\mD\cdot\nabla f(x^k)} \\
      && - \inner{x^{k+1}-x^k}{\left(\mD^{-1} - \frac{1}{2}\mL\right)(x^{k+1} - x^k)} \\
      &=& f(x^k) + \norm{\nabla f(x^k) - g^k}^2_{\mD} - \inner{x^{k+1} - \bar{x}^{k+1}}{\mD^{-1}\left(x^k - \bar{x}^{k+1}\right)} \\
      && - \inner{x^{k+1}-x^k}{\left(\mD^{-1} - \frac{1}{2}\mL\right)(x^{k+1} - x^k)}.
   \end{eqnarray*}
   Decomposing the term $\inner{x^{k+1} - \bar{x}^{k+1}}{\mD^{-1}\left(x^k - \bar{x}^{k+1}\right)}$, we get  
   \begin{eqnarray*} 
      f(x^{k+1}) &\leq& f(x^k) + \norm{\nabla f(x^k) - g^k}^2_{\mD} - \inner{x^{k+1}-x^k}{\left(\mD^{-1} - \frac{1}{2}\mL\right)(x^{k+1} - x^k)} \\
      && - \frac{1}{2}\left(\norm{x^{k+1} - \bar{x}^{k+1}}^2_{\mD^{-1}} + \norm{x^k - \bar{x}^{k+1}}^2_{\mD^{-1}} - \norm{x^{k+1}-x^k}^2_{\mD^{-1}}\right).
   \end{eqnarray*}
   Plugging in the definition of $x^{k+1}, \bar{x}^{k+1}$, we get 
   \begin{eqnarray*} 
      f(x^{k+1}) &\leq& f(x^k) + \norm{\nabla f(x^k) - g^k}^2_{\mD} - \norm{x^{k+1} - x^k}^2_{\mD^{-1}-\frac{1}{2}\mL} \\
      && - \frac{1}{2}\left(\norm{\mD(\nabla f(x^k) - g^k)}^2_{\mD^{-1}} + \norm{\mD\cdot\nabla f(x^k)}^2_{\mD^{-1}} - \norm{x^{k+1}-x^k}^2_{\mD^{-1}}\right) \\
      &=& f(x^k) + \norm{\nabla f(x^k) - g^k}^2_{\mD} - \norm{x^{k+1} - x^k}^2_{\mD^{-1}-\frac{1}{2}\mL} \\
      && - \frac{1}{2}\left(\norm{\nabla f(x^k) - g^k}^2_{\mD} + \norm{\nabla f(x^k)}^2_{\mD} - \norm{x^{k+1}-x^k}^2_{\mD^{-1}}\right).
   \end{eqnarray*}
   Rearranging terms we get, 
   \begin{eqnarray*} 
      f(x^{k+1}) &\leq& f(x^k) - \frac{1}{2}\norm{\nabla f(x^k)}_{\mD}^2 + \frac{1}{2}\norm{g^k - \nabla f(x^k)}_{\mD}^2 - \norm{x^{k+1} - x^k}^2_{\mD^{-1} - \frac{1}{2}\mL} + \frac{1}{2}\norm{x^{k+1}-x^k}^2_{\mD^{-1}} \\
      &=& f(x^k) - \frac{1}{2}\norm{\nabla f(x^k)}_{\mD}^2 + \frac{1}{2}\norm{g^k - \nabla f(x^k)}_{\mD}^2 - \frac{1}{2}\norm{x^{k+1} - x^k}_{\mD^{-1} - \mL}.
   \end{eqnarray*}

\subsection{\texorpdfstring{Proof of \Cref{lemma:3}}{Proof of Lemma~\ref{lemma:3}}}
   The definition of the weighted norm yields
   \begin{eqnarray*}
      \Exp{\norm{\mS t - t}^2_{\mD}} &=& \Exp{\inner{t}{\left(\mS-\mI_d\right)\mD\left(\mS -\mI_d\right)t}} \\
      &=& \inner{t}{\Exp{(\mS-\mI_d)\mD(\mS-\mI_d)}t} \\
      &=& \inner{t}{\mL^{-\frac{1}{2}}\cdot\Exp{\mL^{\frac{1}{2}}(\mS-\mI_d)\mD(\mS-\mI_d)\mL^{\frac{1}{2}}}\cdot\mL^{-\frac{1}{2}}t} \\
      &=& \inner{\mL^{-\frac{1}{2}}t}{\Exp{\mL^{\frac{1}{2}}(\mS-\mI_d)\mD(\mS-\mI_d)\mL^{\frac{1}{2}}}\cdot\mL^{-\frac{1}{2}}t} \\
      &\leq&\lambda_{\max}\left(\Exp{\mL^{\frac{1}{2}}(\mS-\mI_d)\mD(\mS-\mI_d)\mL^{\frac{1}{2}}}\right)\norm{\mL^{-\frac{1}{2}}t}^2 \\
      &=& \lambda_{\max}\left(\mL^\frac{1}{2}\left(\Exp{\mS\mD\mS} - \mD\right)\mL^\frac{1}{2}\right) \cdot \norm{t}^2_{\mL^{-1}}.
   \end{eqnarray*}

\subsection{\texorpdfstring{Proof of \Cref{dasha:tech-lemma-recur-1}}{Proof of Lemma~\ref{dasha:tech-lemma-recur-1}}}

   Throughout the following proof, we denote $\ExpS{\cdot}$ as taking expectation with respect to the randomness contained within the sketch sampled from distribution $\cS$. We estimate the term $\ExpS{\norm{g^{k+1} - h^{k+1}}^2_{\mD}}$ in order to construct the Lyapunov function. For $\ExpS{\norm{g^{k+1} - h^{k+1}}^2_{\mD}}$, we have 
   \begin{align*}
      \ExpS{\norm{g^{k+1} - h^{k+1}}^2_{\mD}} &= \ExpS{\norm{g^k + \frac{1}{n}\sum_{i=1}^{n}m_i^{k+1} - h^{k+1}}^2_{\mD}} \\
      & = \ExpS{\norm{g^k + \frac{1}{n}\sum_{i=1}^{n}\mS_i^k\left(h_i^{k+1} - h_i^k - a(g_i^k - h_i^k)\right) - h^{k+1}}^2_{\mD}} \\
   \end{align*}
   Using \Cref{fact:2}, we obtain
   \begin{align*}
      & \ExpS{\norm{g^{k+1} - h^{k+1}}^2_{\mD}} \\
      & = \ExpS{\norm{\frac{1}{n}\sum_{i=1}^{n}\mS_i^k\left(h_i^{k+1} - h_i^k - a(g_i^k - h_i^k)\right) - \left(h^{k+1} - h^k - a(g^k - h^k)\right)}^2_{\mD}} \\
      &\quad + (1 - a)^2\norm{h^k - g^k}^2_{\mD} \\
      & = \ExpS{\norm{\frac{1}{n}\sum_{i=1}^{n}\mS_i^k\left(h_i^{k+1} - h_i^k - a(g_i^k - h_i^k)\right) - \frac{1}{n}\sum_{i=1}^{n}\left(h_i^{k+1} - h_i^k - a(g_i^k - h_i^k)\right)}^2_{\mD}} \\ 
      &\quad + (1 - a)^2\norm{h^k - g^k}^2_{\mD} \\
      & = \frac{1}{n^2}\sum_{i=1}^{n}\ExpS{\norm{\mS_i^k\left(h_i^{k+1} - h_i^k - a(g_i^k - h_i^k)\right) - \left(h_i^{k+1} - h_i^k - a(g_i^k - h_i^k)\right)}^2_{\mD}} \\
      &\quad + (1 - a)^2\norm{h^k - g^k}^2_{\mD}.
   \end{align*}
   Here, the last identity is obtained from the unbiasedness of the sketches:
   \begin{equation*}
      \ExpS{\mS_i^k\left(h_i^{k+1} - h_i^k - a(g_i^k - h_i^k)\right)} = h_i^{k+1} - h_i^k - a(g_i^k - h_i^k).
   \end{equation*}
   We can further use \Cref{lemma:3}, and obtain
   \begin{align*}
      &\ExpS{\norm{g^{k+1} - h^{k+1}}^2_{\mD}} \\
      &\quad \leq \frac{1}{n^2}\sum_{i=1}^{n}\lambda_{\max}\left(\mD^{-\frac{1}{2}}\left(\Exp{\mS_i^k\mD\mS_i^k} - \mD\right)\mD^{-\frac{1}{2}}\right)\norm{h_i^{k+1} - h_i - a(g_i^k - h_i^k)}^2_{\mD}  \\
      & \qquad + (1-a)^2\norm{g^k - h^k}^2_{\mD} \\
      &\quad \leq \frac{1}{n^2}\sum_{i=1}^{n}\lambda_{\max}\left(\mD^{-1}\right)\cdot\lambda_{\max}\left(\Exp{\mS_i^k\mD\mS_i^k} - \mD\right)\norm{h_i^{k+1} - h_i^k - a(g_i^k - h_i^k)}^2_{\mD} \\
      & \qquad + (1 - a)^2\norm{g^k - h^k}^2_{\mD}.
   \end{align*}
   We can rewrite the above bound, after applying Jensen's inequality as 
   \begin{align*}
      \ExpS{\norm{g^{k+1} - h^{k+1}}^2_{\mD}} &\leq \frac{2\Lambda_{\mD, \cS}\cdot\lambda_{\max}\left(\mD^{-1}\right)}{n^2}\sum_{i=1}^{n}\norm{h_i^{k+1} - h_i^k}^2_{\mD} \\
      &\qquad + \frac{2a^2\Lambda_{\mD, \cS}\cdot\lambda_{\max}\left(\mD^{-1}\right)}{n^2}\sum_{i=1}^{n}\norm{g_i^k - h_i^k}^2_{\mD} \\
      &\qquad + (1 - a)^2\norm{g^k - h^k}^2_{\mD}.
   \end{align*}
   Notice that we have 
   \begin{equation*}
      \norm{h_i^{k+1} - h_i^k}^2_{\mD} \leq \lambda_{\max}\left(\mD\right)\cdot\lambda_{\max}\left(\mL_i\right)\cdot \norm{h_i^{k+1} - h_i^k}^2_{\mL_i^{-1}}. 
   \end{equation*}
   Thus, it is not hard to see that 
   \begin{align*}
      \ExpS{\norm{g^{k+1} - h^{k+1}}^2_{\mD}} &\leq \frac{2\Lambda_{\mD, \cS}\cdot\lambda_{\max}\left(\mD^{-1}\right)\cdot\lambda_{\max}\left(\mD\right)}{n^2}\sum_{i=1}^{n}\lambda_{\max}\left(\mL_i\right)\norm{h_i^{k+1} - h_i^k}^2_{\mL_i^{-1}} \\
      &\qquad + \frac{2a^2\Lambda_{\mD, \cS}\cdot\lambda_{\max}\left(\mD^{-1}\right)}{n^2}\sum_{i=1}^{n}\norm{g_i^k - h_i^k}^2_{\mD} \\ 
      &\qquad + (1 - a)^2\norm{g^k - h^k}^2_{\mD}.
   \end{align*}
   We obtain the inequality in the lemma after taking expectation again and applying tower property.

\subsection{\texorpdfstring{Proof of \Cref{dasha:tech-lemma-recur-2}}{Proof of Lemma~\ref{dasha:tech-lemma-recur-2}}}

   Similarly, we then try to bound the terms $\ExpS{\norm{g_i^{k+1} - h_i^{k+1}}^2_{\mD}}$. We start with 
   \begin{align*}
      &\ExpS{\norm{g_i^{k+1} - h_i^{k+1}}^2_{\mD}} \notag \\
      & = \ExpS{\norm{g_i^k + \mS_i^k\left(h_i^{k+1} - h_i^k - a(g_i^k - h_i^k)\right) - h_i^{k+1}}^2_{\mD}} \notag \\
      & = \ExpS{\norm{\mS_i^k\left(h_i^{k+1} - h_i^k - a(g_i^k - h_i^k)\right) - \left(h_i^{k+1} - h_i^k - a(g_i^k - h_i^k)\right) + (1 - a)(h_i^k - g_i^k)}^2_{\mD}}.
   \end{align*}
   Using \Cref{fact:2},
   \begin{align*}
      &\ExpS{\norm{g_i^{k+1} - h_i^{k+1}}^2_{\mD}} \\
      &= \ExpS{\norm{\mS_i^k\left(h_i^{k+1} - h_i^k - a(g_i^k - h_i^k)\right) - \left(h_i^{k+1} - h_i^k - a(g_i^k - h_i^k)\right)}^2_{\mD}} \notag \\
      &\quad + (1 - a)^2\norm{h^k_i  - g_i^k}^2_{\mD}.
   \end{align*}
   Using \Cref{lemma:3}
   \begin{align*}
      &\ExpS{\norm{g_i^{k+1} - h_i^{k+1}}^2_{\mD}} \notag \\
      & \overset{\eqref{dasha:eq:tech-lemma-sketch-comp}}{\leq} \lambda_{\max}\left(\mD^{-\frac{1}{2}}\left(\Exp{\mS_i^k\mD\mS_i^k} - \mD\right)\mD^{-\frac{1}{2}}\right)\norm{h_i^{k+1} - h_i^k - a(g_i^k - h_i^k)}^2_{\mD} \\
      &\quad + (1-a)^2\norm{g_i^k - h_i^k}^2_{\mD} \notag \\
      & \leq \lambda_{\max}\left(\mD^{-1}\right)\cdot\Lambda_{\mD, \cS}\norm{h_i^{k+1} - h_i^k - a(g_i^k - h_i^k)}^2_{\mD} + (1-a)^2\norm{g_i^k - h_i^k}^2_{\mD} \notag \\
      & \leq 2\lambda_{\max}\left(\mD^{-1}\right)\cdot\Lambda_{\mD, \cS}\norm{h_i^{k+1} - h_i^k}^2_{\mD} + 2a^2\lambda_{\max}\left(\mD^{-1}\right)\cdot\Lambda_{\mD, \cS}\norm{g_i^k - h_i^k}^2_{\mD} \\
      &\quad + (1 - a)^2\norm{g_i^k - h_i^k}^2_{\mD} \notag \\
      & \leq 2\lambda_{\max}\left(\mD^{-1}\right)\cdot\lambda_{\max}\left(\mD\right)\cdot\Lambda_{\mD, \cS}\cdot\lambda_{\max}\left(\mL_i\right)\cdot\norm{h_i^{k+1} - h_i^k}^2_{\mL_i^{-1}} \notag \\
      &\quad + 2a^2\lambda_{\max}\left(\mD^{-1}\right)\cdot\Lambda_{\mD, \cS}\norm{g_i^k - h_i^k}^2_{\mD} + (1 - a)^2\norm{g_i^k - h_i^k}^2_{\mD} \notag \\
      & = \left(2a^2\lambda_{\max}\left(\mD^{-1}\right)\cdot\Lambda_{\mD, \cS} + (1-a)^2\right)\norm{g_i^k - h_i^k}^2_{\mD} \\
      &\quad + 2\lambda_{\max}\left(\mD^{-1}\right)\cdot\lambda_{\max}\left(\mD\right)\cdot\Lambda_{\mD, \cS}\cdot\lambda_{\max}\left(\mL_i\right)\cdot\norm{h_i^{k+1} - h_i^k}^2_{\mL_i^{-1}}.
   \end{align*}
   Taking expectation again, and using tower property, we are able to obtain,
   \begin{align*}
      &\Exp{\norm{g_i^{k+1} - h_i^{k+1}}^2_{\mD}} \\
      &\quad \leq \left(2a^2\lambda_{\max}\left(\mD^{-1}\right)\cdot\Lambda_{\mD, \cS} + (1-a)^2\right)\Exp{\norm{g_i^k - h_i^k}^2_{\mD}} \\
      &\qquad + 2\lambda_{\max}\left(\mD^{-1}\right)\cdot\lambda_{\max}\left(\mD\right)\cdot\Lambda_{\mD, \cS}\cdot\lambda_{\max}\left(\mL_i\right)\cdot\Exp{\norm{h_i^{k+1} - h_i^k}^2_{\mL_i^{-1}}}.
   \end{align*}

\subsection{\texorpdfstring{Proof of \Cref{lemma:4:reduced-b2}}{Proof of Lemma~\ref{lemma:4:reduced-b2}}}
   From \Cref{ppst:1}, we know that the objective is $\mL$-smooth. 
   Let $\bar{x}^{k+1} = x^k - \mD\cdot \nabla f(x^k)$, then  $\mL$-smoothness yields
   \begin{eqnarray*}
      f(x^{k+1}) &\leq& f(x^k) + \inner{\nabla f(x^k)}{x^{k+1} - x^k} + \frac{1}{2}\inner{x^{k+1} - x^k}{\mL(x^{k+1} - x^k)} \notag \\
      &=& f(x^k) + \inner{\nabla f(x^k) - \mD^{-1}\cdot g^k}{x^{k+1} - x^k} + \inner{\mD^{-1}\cdot g^k}{x^{k+1} - x^k} \notag \\
      && \qquad + \frac{1}{2}\inner{x^{k+1} - x^k}{\mL(x^{k+1} - x^k)} \notag \\
      &=& f(x^k) + \inner{\nabla f(x^k) - \mD^{-1}\cdot g^k}{-g^k} - \inner{x^{k+1} - x^k}{\mD^{-1}(x^{k+1} - x^k)} \notag \\
      && \qquad + \frac{1}{2}\inner{x^{k+1} - x^k}{\mL(x^{k+1} - x^k)}. 
   \end{eqnarray*}
   Simplifying the above inner-products we have, 
   \begin{eqnarray*}
      f(x^{k+1}) &\leq& f(x^k) + \inner{\nabla f(x^k) - \mD^{-1}\cdot g^k}{-g^k} - \inner{x^{k+1} - x^k}{\left(\mD^{-1} - \frac{1}{2}\mL\right)(x^{k+1} - x^k)}. 
   \end{eqnarray*}
   We then add and subtract $\inner{\nabla f(x^k) - \mD^{-1}\cdot g^k}{\mD\cdot\nabla f(x^k)}$, which 
   \begin{eqnarray*}
      f(x^{k+1}) &\leq& f(x^k) + \inner{\nabla f(x^k) - \mD^{-1}\cdot g^k}{\mD\cdot\nabla f(x^k) -g^k} - \inner{\nabla f(x^k) - \mD^{-1}\cdot g^k}{\mD\cdot\nabla f(x^k)} \notag \\
      && \qquad - \inner{x^{k+1} - x^k}{\left(\mD^{-1}-\frac{1}{2}\mL\right)(x^{k+1} - x^k)}  \\
      &=& f(x^k) + \norm{\nabla f(x^k) - \mD^{-1}\cdot g^k}^2_{\mD} - \inner{\mD^{-1}(x^{k+1} - \bar{x}^{k+1})}{x^k - \bar{x}^{k+1}} \\
      && \qquad - \inner{x^{k+1} - x^k}{\left(\mD^{-1} - \frac{1}{2}\mL\right)(x^{k+1} - x^k)}. 
   \end{eqnarray*}
   Decomposing the inner product term we deduce, 
   \begin{eqnarray*}
      f(x^{k+1}) &\leq& f(x^k) + \norm{\mD^{-1}\left(\mD\cdot\nabla f(x^k) - g^k\right)}^2_{\mD} - \inner{x^{k+1} - x^k}{\left(\mD^{-1} - \frac{1}{2}\mL\right)(x^{k+1} - x^k)} \notag \\
      && \qquad - \frac{1}{2}\left(\norm{x^{k+1} - \bar{x}^{k+1}}^2_{\mD^{-1}} + \norm{x^k - \bar{x}^{k+1}}^2_{\mD^{-1}} - \norm{x^{k+1} - x^k}^2_{\mD^{-1}}\right) \notag \\
      &=& f(x^k) + \norm{\mD\cdot\nabla f(x^k) - g^k}^2_{\mD^{-1}} - \norm{x^{k+1} - x^k}^2_{\mD^{-1} - \frac{1}{2}\mL}  \notag \\
      && \qquad -\frac{1}{2}\left(\norm{\mD\cdot\nabla f(x^k) - g^k}^2_{\mD^{-1}} + \norm{\mD\cdot\nabla f(x^k)}^2_{\mD^{-1}} - \norm{x^{k+1} - x^k}^2_{\mD^{-1}}\right). 
   \end{eqnarray*}
   Therefore, 
   \begin{eqnarray*}
      f(x^{k+1}) &\leq& f(x^k) + \frac{1}{2}\norm{\mD\nabla f(x^k) - g^k}^2_{\mD^{-1}} - \frac{1}{2}\norm{\nabla f(x^k)}^2_{\mD} - \frac{1}{2}\norm{x^{k+1} - x^k}^2_{\mD^{-1} - \mL}.
   \end{eqnarray*}

\subsection{\texorpdfstring{Proof of \Cref{lemma5:cplx}}{Proof of Lemma~\ref{lemma5:cplx}}}
   We start with 
   \begin{eqnarray*}
      \Exp{\norm{\mT\mD t - \mD t}^2_{\mD^{-1}}} &=& \Exp{\norm{(\mT - \mI_d)\mD t}^2_{\mD^{-1}}} \\
      &=& \inner{t}{\Exp{\mD(\mT-\mI_d)\mD^{-1}(\mT-\mI_d)\mD}\cdot t} \\
      &=& \inner{t}{\mD\left(\Exp{\mT\mD^{-1}\mT} - \mD^{-1}\right)\mD\cdot t} \\
      &=&\inner{\mL^{-\frac{1}{2}}t}{\mL^\frac{1}{2}\mD\left(\Exp{\mT\mD^{-1}\mT} - \mD^{-1}\right)\mD\mL^\frac{1}{2}\cdot\mL^{-\frac{1}{2}} t} \\
      &\leq& \lambda_{\max}\left(\mL^\frac{1}{2}\mD\Exp{\mT\mD^{-1}\mT}\mD\mL^\frac{1}{2} - \mL^\frac{1}{2}\mD\mL^\frac{1}{2}\right)\cdot\norm{\mL^{-\frac{1}{2}}t}^2 \\
      &=& \lambda_{\max}\left(\mL^\frac{1}{2}\mD\Exp{\mT\mD^{-1}\mT}\mD\mL^\frac{1}{2} - \mL^\frac{1}{2}\mD\mL^\frac{1}{2}\right)\cdot\norm{t}^2_{\mL^{-1}}
   \end{eqnarray*}
   This completes the proof.

\section{Experiments}
\label{sec:experiments}
In this section, we conduct numerical experiments to back up the theoretical results for {\detmarina} and {\detdasha}. 
The code for the experiments can be found in \url{https://anonymous.4open.science/r/detCGD-VR-Code-865B}. 
All the codes for the experiments are written in Python 3.11 with NumPy and SciPy package. 
The code was run on a machine with AMD Ryzen 9 5900HX Radeon Graphics @ 3.3 GHz and 8 cores 16 threads.
The datasets in LibSVM are typically non-IID real world datasets, and it is randomly distributed across all the clients.

\subsection{The setting}
We first state the experiment setting. 
We are interested in the following logistic regression problem with a non-convex regularizer. The objective is given as
\begin{equation*}
   f(x) = \frac{1}{n}\sum_{i=1}^{n}f_i(x); \qquad f_i(x) = \frac{1}{m_i}\sum_{j=1}^{m_i}\log\left(1 + e^{-b_{i, j}\cdot\inner{a_{i, j}}{x}}\right) + \lambda\cdot\sum_{t=1}^{d}\frac{x_t^2}{1 + x_t^2},
\end{equation*}
where $x\in\R^d$ is the model, $\left(a_{i, j}, b_{i, j}\right) \in \R^d \times \left\{-1, 1\right\}$ is one data point in the dataset of client $i$ whose size is $m_i$. 
The constant $\lambda > 0$ is the coefficient of the regularizer. 
Larger $\lambda$ means the model is more regular. 
For each function $f_i$, its Hessian can be upper bounded by  
\begin{eqnarray*}
   \mL_i = \frac{1}{m_i}\sum_{i=1}^{m_i}\frac{a_ia_i^{\top}}{4} + 2\lambda\cdot\mI_d;
\end{eqnarray*}
and, therefore, the Hessian of $f$ is bounded by
\begin{eqnarray*}
   \mL = \frac{1}{\sum_{i=1}^{n}m_i}\sum_{i=1}^{n}\sum_{j=1}^{m_i}\frac{a_ia_i^{\top}}{4} + 2\lambda\cdot\mI_d.
\end{eqnarray*}
Due to \Cref{ppst:bounded:Hessian}, it immediately follows that $f_i$ and $f$ satisfy \Cref{assmp:3} with $\mL_i \in \bbS^d_{++}$ and $\mL \in \bbS^d_{++}$, respectively. 

In the following subsections, we perform several numerical experiments comparing the performance of {\dcgd}, {\detcgd}, {\marina}, {\dasha}, {\detmarina} and {\detdasha}. 
The datasets we used are from the LibSVM repository \citep{chang2011libsvm}.  

\subsection{Comparison of all the methods}

In this section, we present several plots which compare all relevant methods to the {\detmarina} and {\detdasha}. 
The methods are the following: {\it (i)} {\dcgd} with scalar stepsize $\gamma_2$, \emph{(ii)} {\detcgd} with matrix stepsize $\mD_3^*$, \emph{(iii)} {\marina} with  scalar stepsize $\gamma_1$, \emph{(iv)} {\dasha} with scalar stepsize $\gamma_4$, \emph{(v)} {\detmarina} with $\mD^*_{\mL^{-1}}$, \emph{(vi)}{\detdasha} with $\mD^{**}_{\mL^{-1}}$. 
Throughout the experiment, $\varepsilon =0.01$, and $\lambda = 0.9$, we are using the same Rand-$\tau$ sketch for all the algorithms, and we run all the algorithms for a fixed number of iteration $K=10000$. 
\begin{figure}[t]
\centering
   \subfigure{
   \begin{minipage}[t]{0.98\textwidth}
      \includegraphics[width=0.48\textwidth]{./figure/Exp_dasha_7_client_150_rand_60_lam_0.9_prob_0.5a1a.txt.pdf}
      \includegraphics[width=0.48\textwidth]{./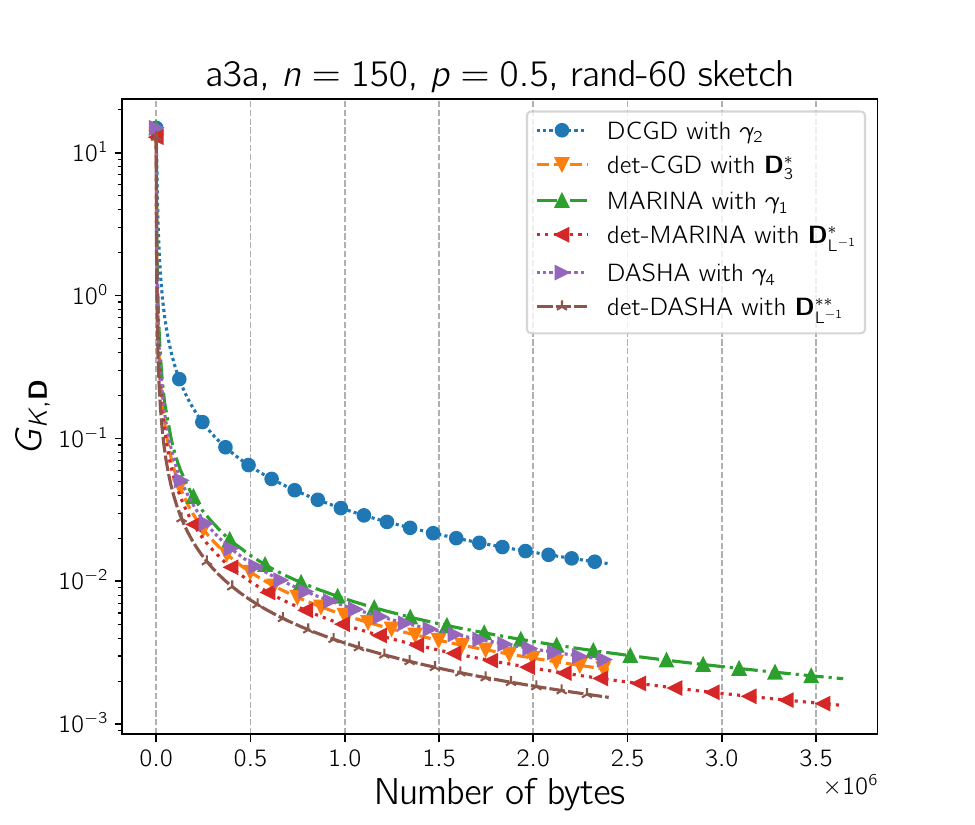}
   \end{minipage}
   }
   \subfigure{
   \begin{minipage}[t]{0.98\textwidth}
      \includegraphics[width=0.48\textwidth]{./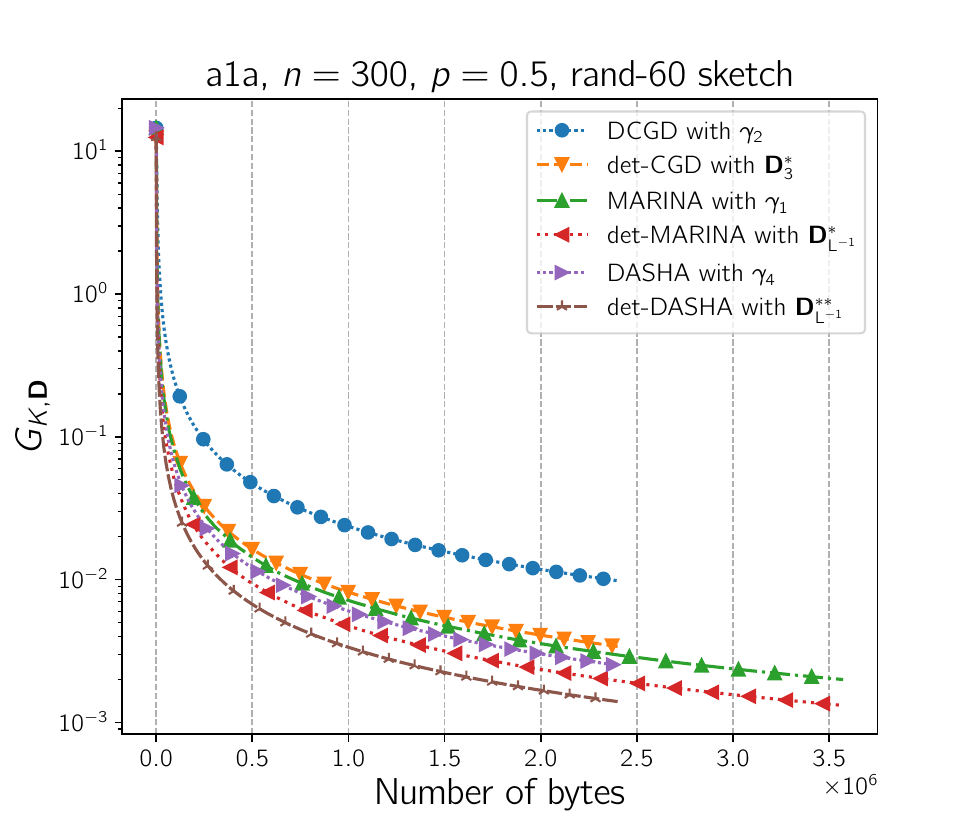}
      \includegraphics[width=0.48\textwidth]{./figure/Exp_dasha_7_client_300_rand_60_lam_0.9_prob_0.5a3a.txt.pdf}
   \end{minipage}
   }
   \caption{Comparison of {\dcgd} with optimal scalar stepsize, {\detcgd} with matrix stepsize $\mD^*_3$, {\marina} with optimal scalar stepsize, {\dasha} with optimal scalar stepsize, {\detmarina} with optimal stepsize $\mD^{*}_{\mL^{-1}}$ and {\detdasha} with optimal stepsize $\mD^{**}_{\mL^{-1}}$. Throughout the experiment, we are using Rand-$\tau$ sketch with $\tau = 60$, and each algorithm is run for a fixed number of iterations $K=10000$. The momentum of {\dasha} is set as $\nicefrac{1}{2\omega + 1}$ and {\detdasha} is $\nicefrac{1}{2\omega_{\mD} + 1}$. The notation $n$ in the title stands for the number of clients in each case, and $p$ stands for the probability used by {\marina} and {\detmarina}.}
   \label{fig:appendix-experiment-7-dasha}
\end{figure}

It can be seen in \Cref{fig:appendix-experiment-7-dasha}, the performance in terms of communication complexity of {\detdasha} and {\detmarina} is better than their scalar counterpart {\dasha} and {\marina} respectively. 
This validates the efficiency of using a matrix stepsize over a scalar stepsize. 
Furthermore, we notice that {\detdasha} and {\detmarina} have better communication complexity in this case, compared to {\detcgd}. 
In addition, we observe variance reduction. 

Notice that the optimal stepsizes of {\detcgd} and {\dcgd} require information of function value differences at 
$x^{\star}$. 
Furthermore, the stepsizes are also constrained by the number of iterations $K$ and the error $\varepsilon^2$. 
Meanwhile, for the variance reduced methods, we do not require such considerations, which is much more practical in general. 

\subsection{\texorpdfstring{Improvements over {\marina}}{Improvements over MARINA}}
\label{sec:F.1}
The purpose of this experiment is to compare the iteration complexity of {\marina}, with {\detmarina} using Rand-$\tau$ sketches, thus showing improvements of {\detmarina} upon {\marina}. 
Using Theorem C.1 from \citep{gorbunov2021marina}, we deduce the optimal stepsize for {\marina}, is 
\begin{equation}
   \label{eq:ss-marina}
   \gamma_1 = \frac{1}{L\left(1 + \sqrt{\frac{(1 - p)\omega}{pn}}\right)},
\end{equation}
where $\omega$ is the quantization coefficient. 
In particular,  for the Rand-$\tau$ compressor $\omega = \frac{d}{\tau} - 1$. 
For the full definition see Section 1.3 of \citep{gorbunov2021marina}.
 The stepsize for {\detmarina} is determined through \Cref{ppst:optimal-D-var}. 
 We use the notation $\mD_{\mW}^*$ to denote the optimal stepsize for each choice of $\mW$, here we list some of the optimal stepsizes for different $\mW$, which are used in the experiment section. 
 We have 
\begin{eqnarray}
   \label{eq:var-D-opt}
   \mD_{\mI_d}^* &=& \frac{2}{1 + \sqrt{1 + 4\alpha\beta\frac{1}{\lambda_{\max}\left(\mL\right)}\cdot\omega}}\cdot \frac{\mI_d}{\lambda_{\max}(\mL)}, \notag \\
   \mD^*_{\mL^{-1}} &=& \frac{2}{1 + \sqrt{1 + 4\alpha\beta\cdot\lambda_{\max}\left(\Exp{\mS_i^k\mL^{-1}\mS_i^k} - \mL^{-1}\right)}} \cdot \mL^{-1}, \notag \\
   \mD^*_{\diag^{-1}(\mL)} &=& \frac{2}{1 + \sqrt{1 + 4\alpha\beta\cdot\lambda_{\max}\left(\Exp{\mS_i^k\diag^{-1}\left(\mL\right)\mS_i^k} - \diag^{-1}\left(\mL\right)\right)}} \cdot \diag^{-1}\left(\mL\right).
\end{eqnarray}
In this experiment, we aim to compare {\detmarina} with stepsize $\mD_{\mL^{-1}}^*$ to the standard \marina\, with the optimal scalar stepsize. 
Rand-$\tau$ compressor is used in the comparison. 
Throughout the experiments, $\lambda$ is fixed at $0.3$. 
We set the $x$-axis to be the number of iterations, while $y$-axis to be the expectation of the corresponding matrix norm of the gradient of the function, which is defined as 
\begin{equation}
   \label{eq:def-G--KD}
   G_{K, \mD} = \Exp{\norm{\nabla f(\tilde{x}^K)}_{\mD/\det(\mD)^{1/d}}^2}.
\end{equation}
Notice that this criterion is comparable to the standard Euclidean norm \citet{li2023det}, and for a fixed $\mD$, we have 
\begin{align*}
   \lambda_{\min}\left(\frac{\mD}{\det(\mD)^{1/d}}\right)\cdot\norm{\nabla f(x)}^2 \leq \norm{\nabla f(x)}^2_{\frac{\mD}{\det(\mD)^{1/d}}} \leq \lambda_{\max}\left(\frac{\mD}{\det(\mD)^{1/d}}\right)\cdot\norm{\nabla f(x)}^2.
\end{align*}

\begin{figure}[h]
   \centering
   \subfigure{
      \begin{minipage}[t]{0.98\textwidth}
         \includegraphics[width=0.32\textwidth]{./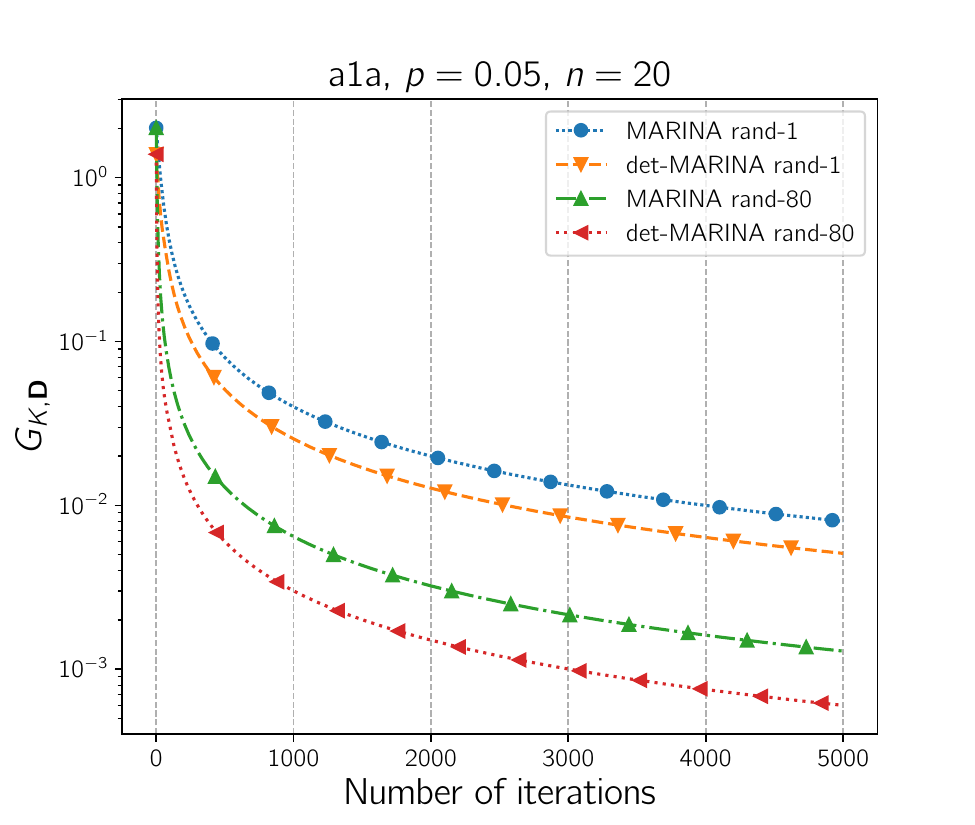} 
         \includegraphics[width=0.32\textwidth]{./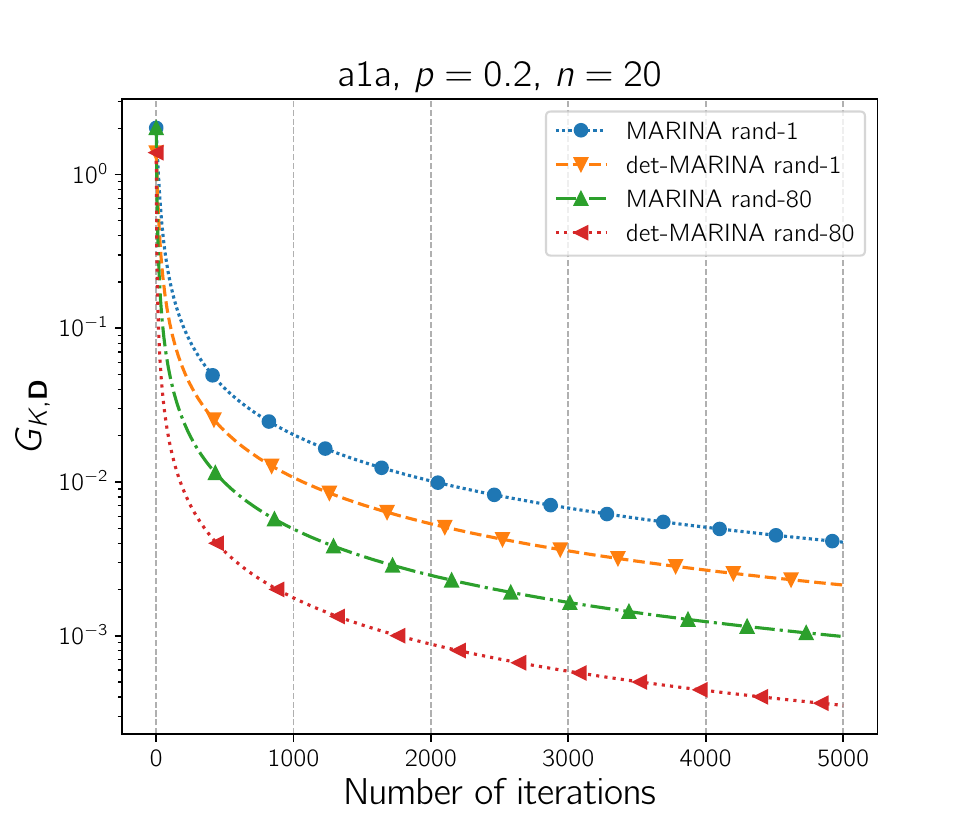}
         \includegraphics[width=0.32\textwidth]{./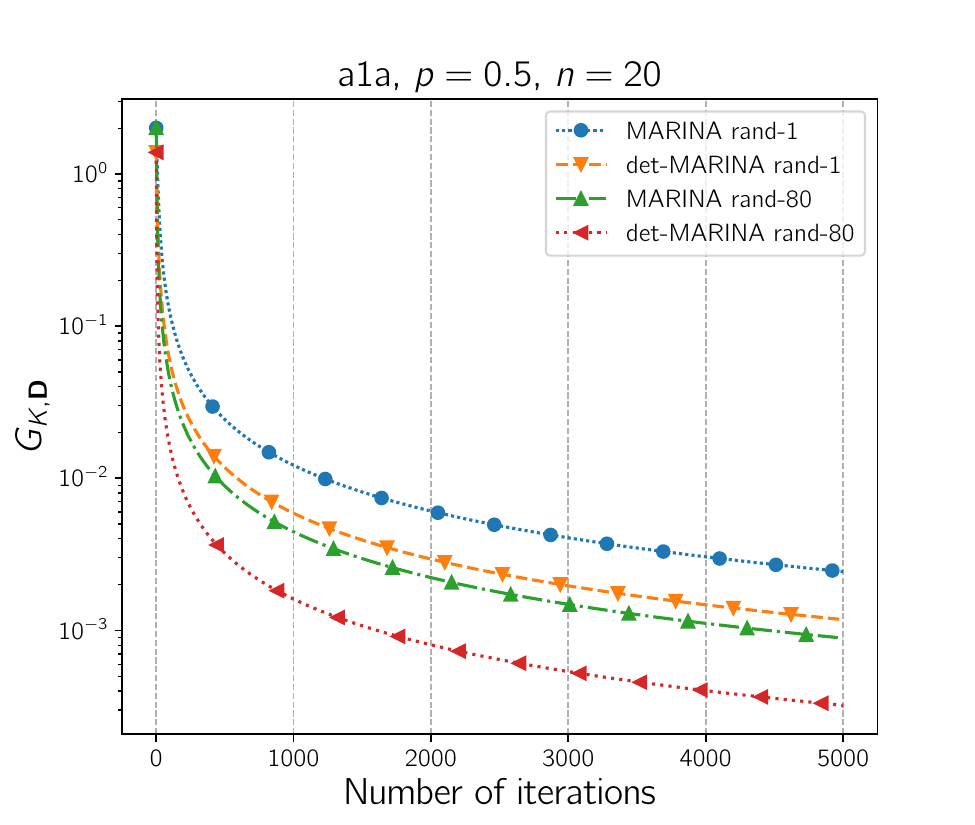}
      \end{minipage}
   }
   \subfigure{
      \begin{minipage}[t]{0.98\textwidth}
         \includegraphics[width=0.32\textwidth]{./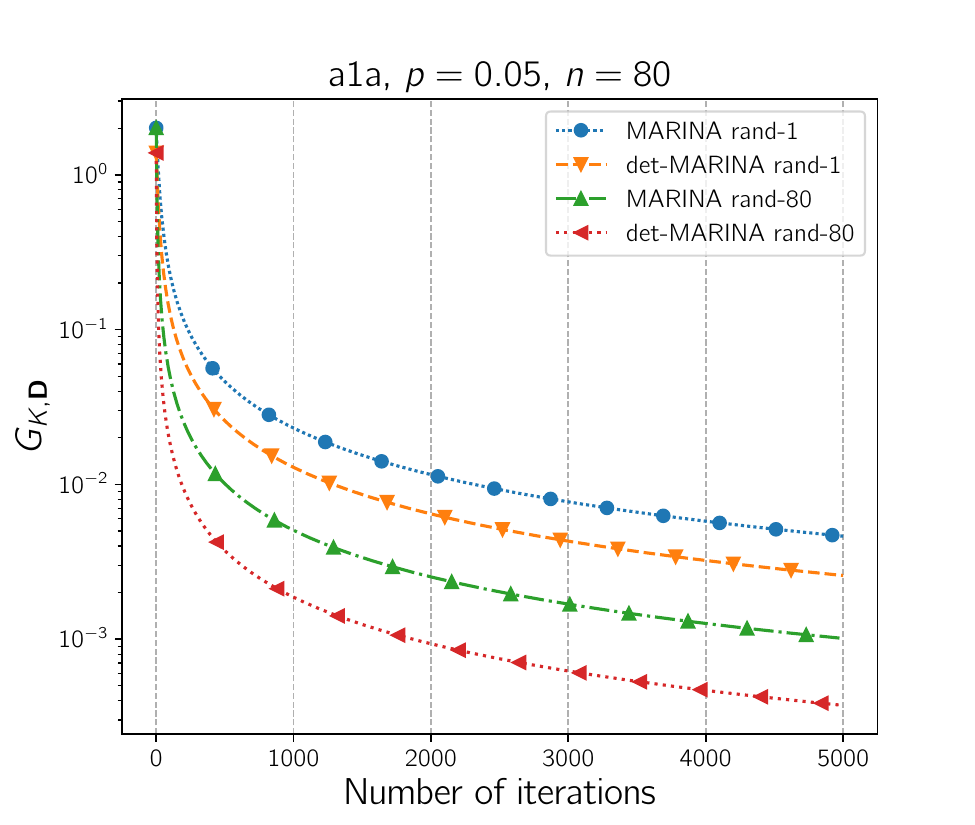} 
         \includegraphics[width=0.32\textwidth]{./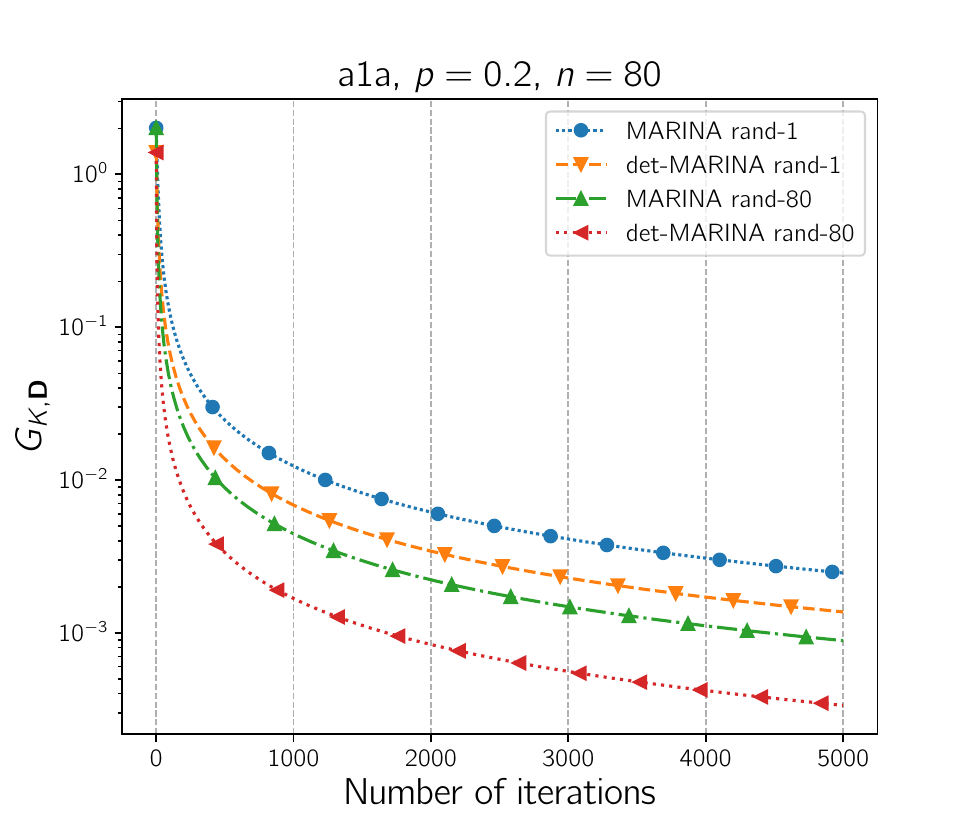}
         \includegraphics[width=0.32\textwidth]{./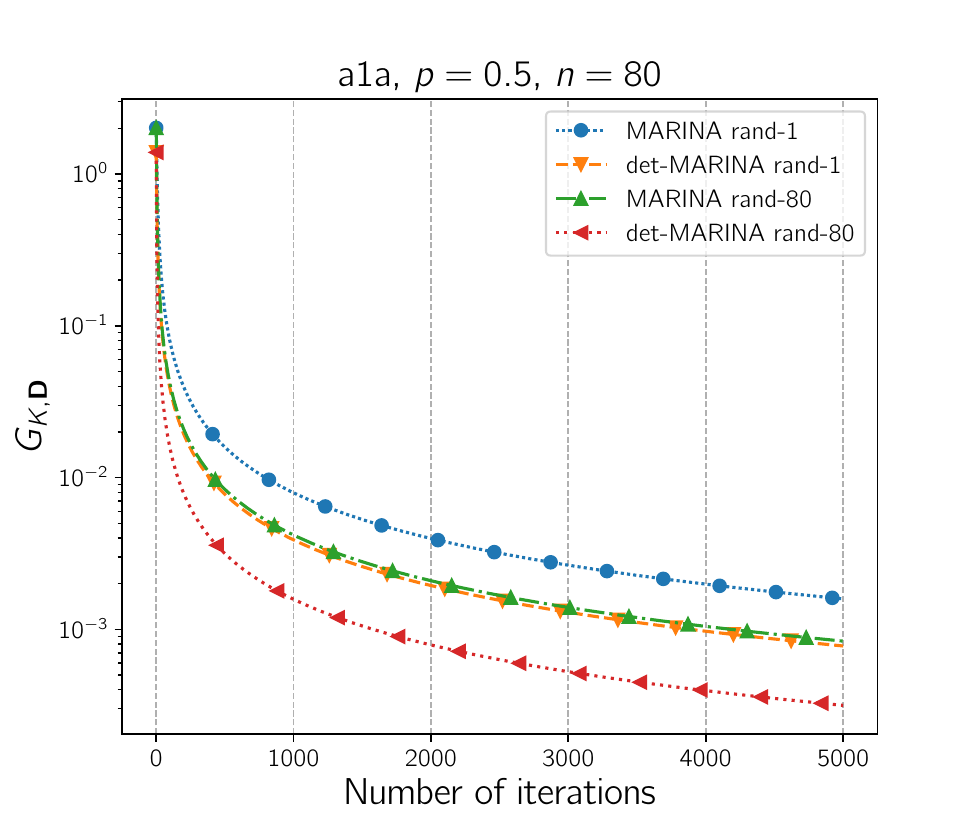}
      \end{minipage}
   }

   \subfigure{
      \begin{minipage}[t]{0.98\textwidth}
         \includegraphics[width=0.32\textwidth]{./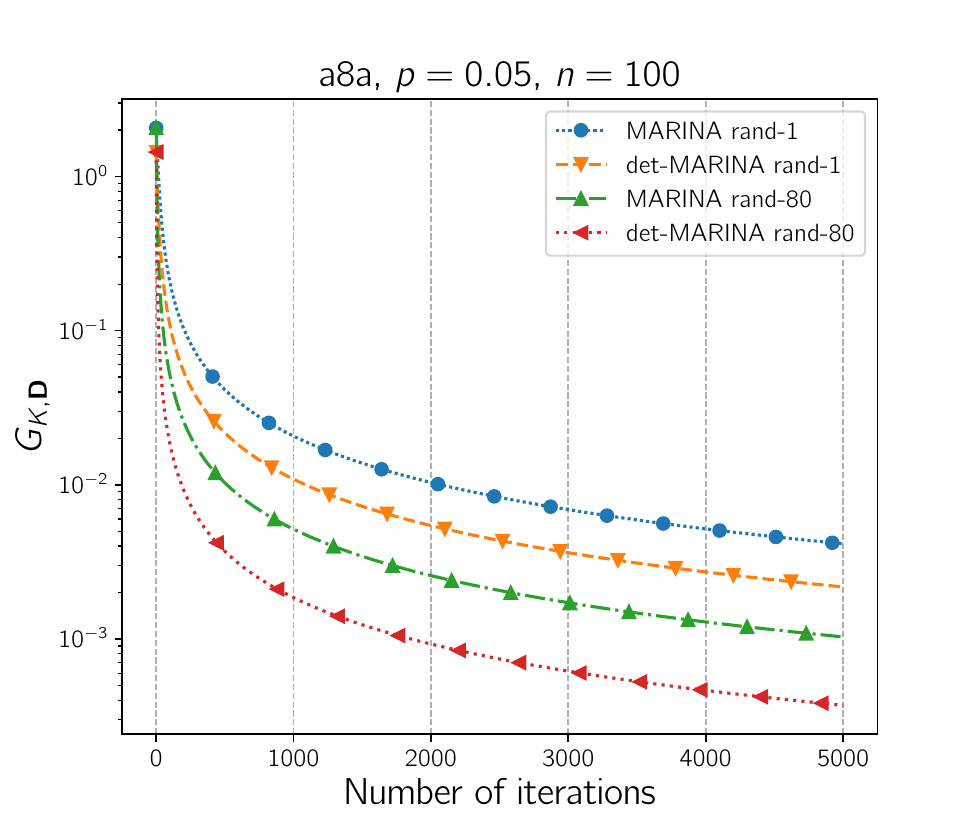} 
         \includegraphics[width=0.32\textwidth]{./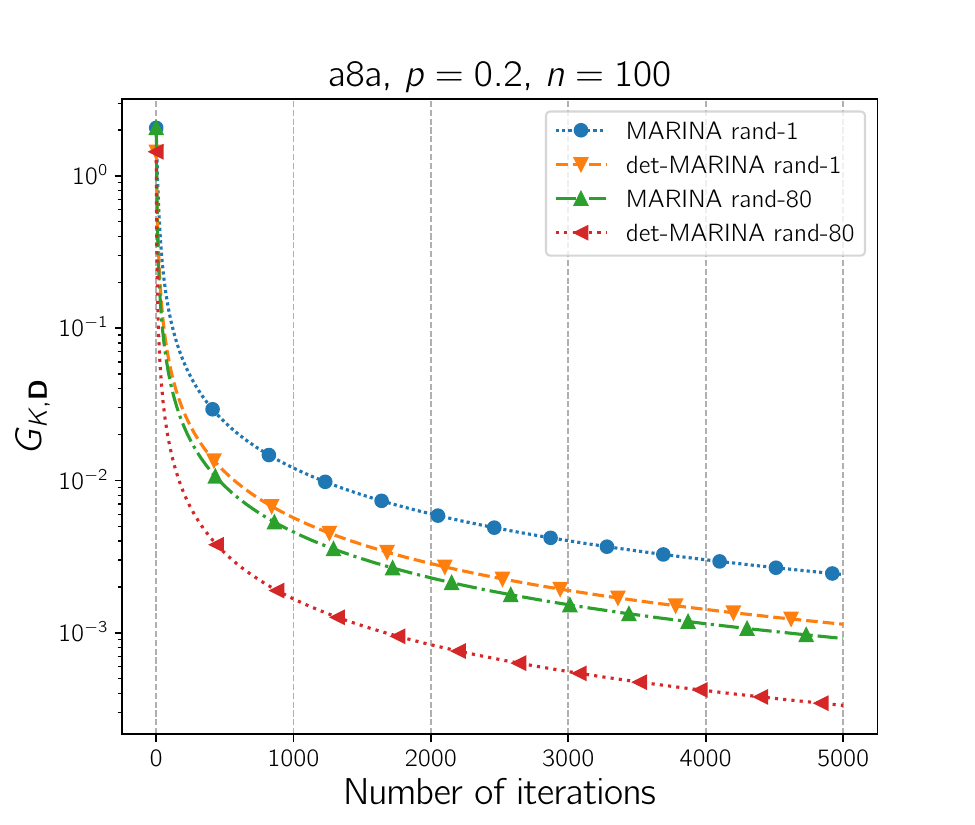}
         \includegraphics[width=0.32\textwidth]{./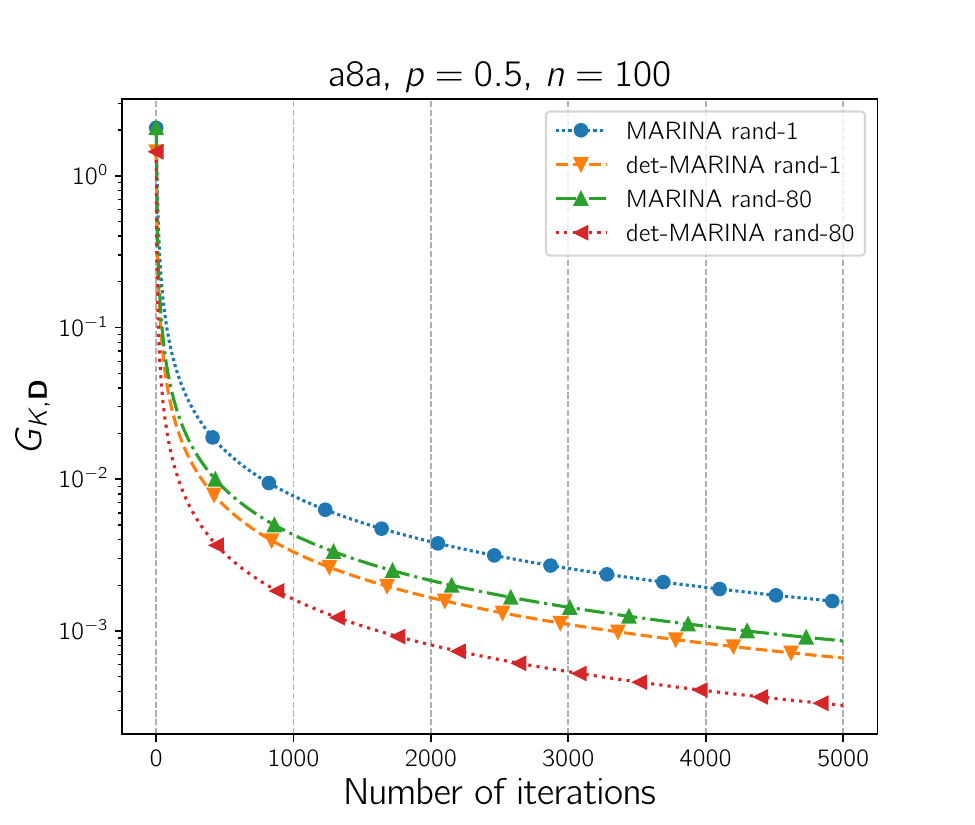}
      \end{minipage}
   }

   \subfigure{
      \begin{minipage}[t]{0.98\textwidth}
         \includegraphics[width=0.32\textwidth]{./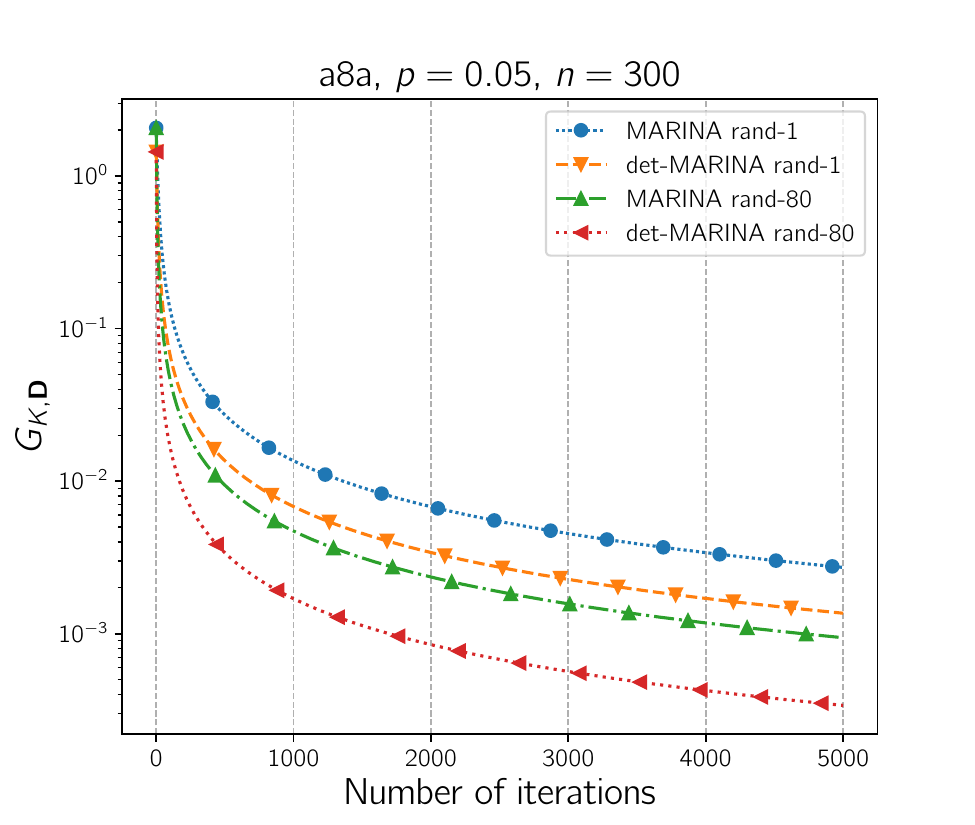} 
         \includegraphics[width=0.32\textwidth]{./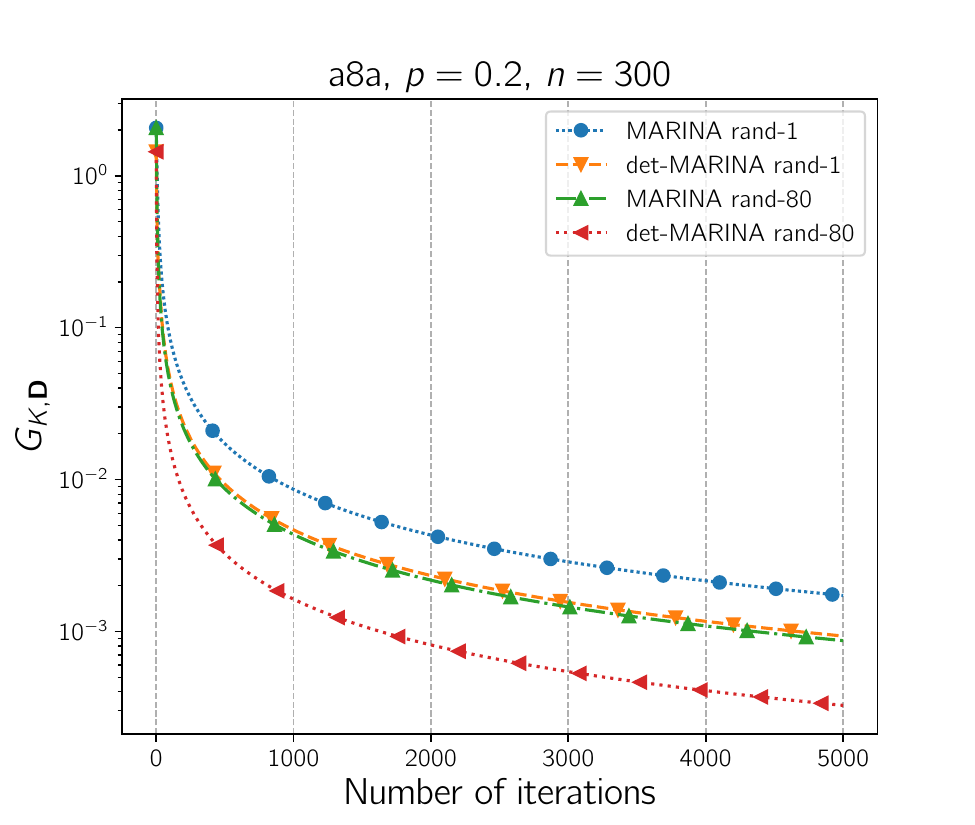}
         \includegraphics[width=0.32\textwidth]{./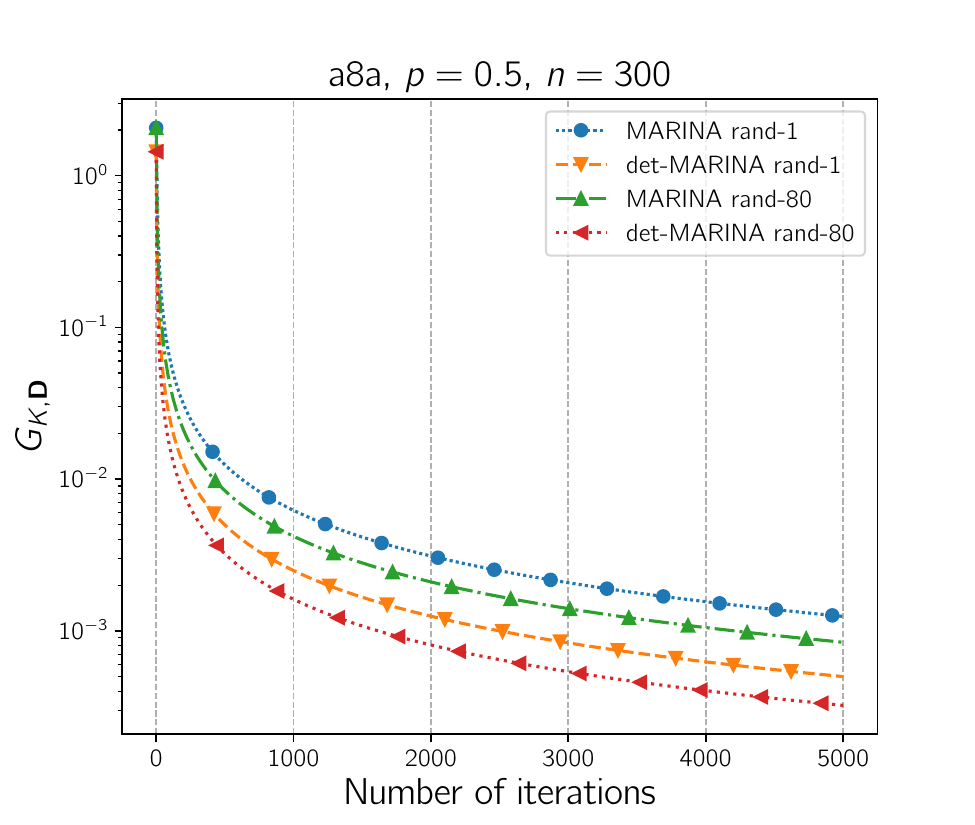}
      \end{minipage}
   }
   \caption{In this experiment, we aim to compare {\detmarina} with stepsize $\mD_{\mL^{-1}}^*$ to the standard \marina\, with the optimal scalar stepsize. 
   Rand-$\tau$ compressor is used in the comparison. 
   Throughout the experiments, $\lambda$ is fixed at $0.3$.  Optimal stepsize is calculated in each case with respect to the sketch used. The $x$-axis denotes the number of iterations while the notation $G_{K, \mD}$ for the $y$-axis is defined in \eqref{eq:def-G--KD}, which is the averaged matrix norm of the gradient. The notation $p$ in the title denotes the probability used in the two algorithms, $n$ denotes the number of clients in each setting.}
   \label{fig:experiment-1}
\end{figure}
As it is illustrated in \Cref{fig:experiment-1}, {\detmarina} always has a faster convergence rate compared to \marina\,if they use the same sketch, this justifies the result we have in \Cref{col:iteration-comp-L-inv}.
Notice that in some cases, {\detmarina} with Rand-$1$ sketch even outperforms standard \marina\,with Rand-$80$ sketch. 
This further demonstrates the superiority of matrix stepsizes and smoothness over the standard scalar setting.

\subsection{Improvements on non variance reduced methods}
In this section, we compare two non-variance reduced methods, distributed compressed gradient descent ({\dcgd}) and distributed {\detcgd}, with two variance reduced methods, {\marina}, and {\detmarina}.
Rand-$1$ sketch is used throughout this experiment for all the algorithms, for non variance reduced method $\varepsilon^2$ is fixed at $0.01$ in order to determine the optimal stepsize. 
The purpose of this experiment is to show the advantages of variance reduced methods over non variance reduced methods. 
{\dcgd} was initially proposed in \citep{khirirat2018distributed}.
Later on {\diana} was proposed in \citep{mishchenko2019distributed} and then combined with variance reduction technique. Recently \citet{shulgin2022shifted} proposed shifted {\dcgd}, which is a shifted version of {\dcgd} and proved its convergence in the (strongly) convex setting. A general analysis on {\sgd} type methods in the non-convex world is provided by \citet{khaled2022better}, including {\dcgd} and shifted {\dcgd}. In our case, in order to determine the optimal scalar stepsize for {\dcgd}, one can simply use Proposition 4 in \citep{khaled2022better}. One can check that in order to satisfy $\min_{0\leq k\leq K-1}\Exp{\norm{\nabla f(x^k)}^2} \leq \varepsilon^2$ the stepsize condition for {\dcgd} in the non-convex case reduces to 
\begin{equation*}
   \gamma_2 \leq \min\left\{\frac{1}{L}, \sqrt{\frac{n}{\omega LL_{\max} K}}, \frac{n\varepsilon^2}{4LL_{\max}\omega\cdot \Delta^{\star}}\right\}, 
\end{equation*}
where $L$ is the smoothness constant for $f$, $L_i$ is the smoothness constant for $f_i$, $L_{\max} = \max_i L_i$, $K$ is the total number of iterations, $\Delta^{\star} = f(x^{\star}) - \frac{1}{n}\sum_{i=1}^{n}f_i(x^{\star})$. The constant $\omega$ is associated with the compressor used in the algorithm, for Rand-$\tau$ sketch, it is $\frac{d}{\tau} - 1$. 
For  distributed {\detcgd}\, according to \citet{li2023det}, the stepsize condition in order to satisfy $\min_{0\leq k \leq K-1} \Exp{\norm{\nabla f(x)}^2_{\mD/\det(\mD)^{1/d}}} \leq \varepsilon^2$ is 
\begin{equation}
   \label{eq:detcgd-stepsize}
   \mD\mL\mD \preceq \mD, \qquad \lambda_{\mD} \leq \min\left\{\frac{n}{K}, \frac{n\varepsilon^2}{4\Delta^{\star}}\det(\mD)^{1/d}\right\},
\end{equation}
where $\lambda_{\mD}$ is defined as 
\begin{equation}
   \label{eq:def-lambda-D}
   \lambda_{\mD} = \max_{i}\left\{\lambda_{\max}\left(\Exp{\mL_i^\frac{1}{2}\left(\mS_i^k - \mI_d\right)\mD\mL\mD\left(\mS_i^k - \mI_d\right)\mL_i^\frac{1}{2}}\right)\right\}. 
\end{equation}
In general cases, there is no easy way to find a optimal stepsize matrix $\mD$ satisfying \eqref{eq:detcgd-stepsize}, alternatively, we choose the optimal diagonal stepsize $\mD^*_3$ similarly to \citep{li2023det}. The stepsize condition for \marina\,has already been described by \eqref{eq:ss-marina}. Note that we only consider {\marina}, but not {\diana} or shifted {\dcgd}, because {\diana} and shifted {\dcgd} offer suboptimal rates compared to {\marina} in the non-convex setting. For {\detmarina}, we fix $\mW = \mL^{-1}$, and use $\mD^*_{\mL^{-1}}$ as the stepsize matrix. In theory, {\detmarina} in this case should always out perform \marina\, in terms of iteration complexity.

\begin{figure}[t]
   \centering
   \subfigure{
      \begin{minipage}[t]{0.98\textwidth}
         \includegraphics[width=0.32\textwidth]{./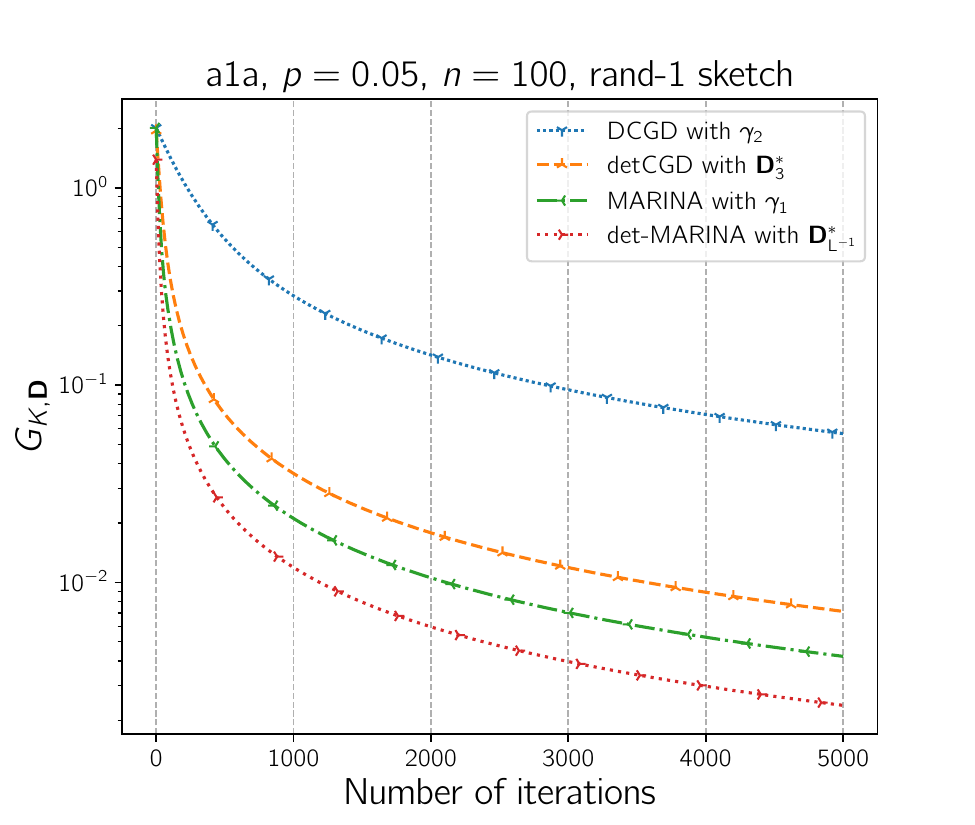} 
         \includegraphics[width=0.32\textwidth]{./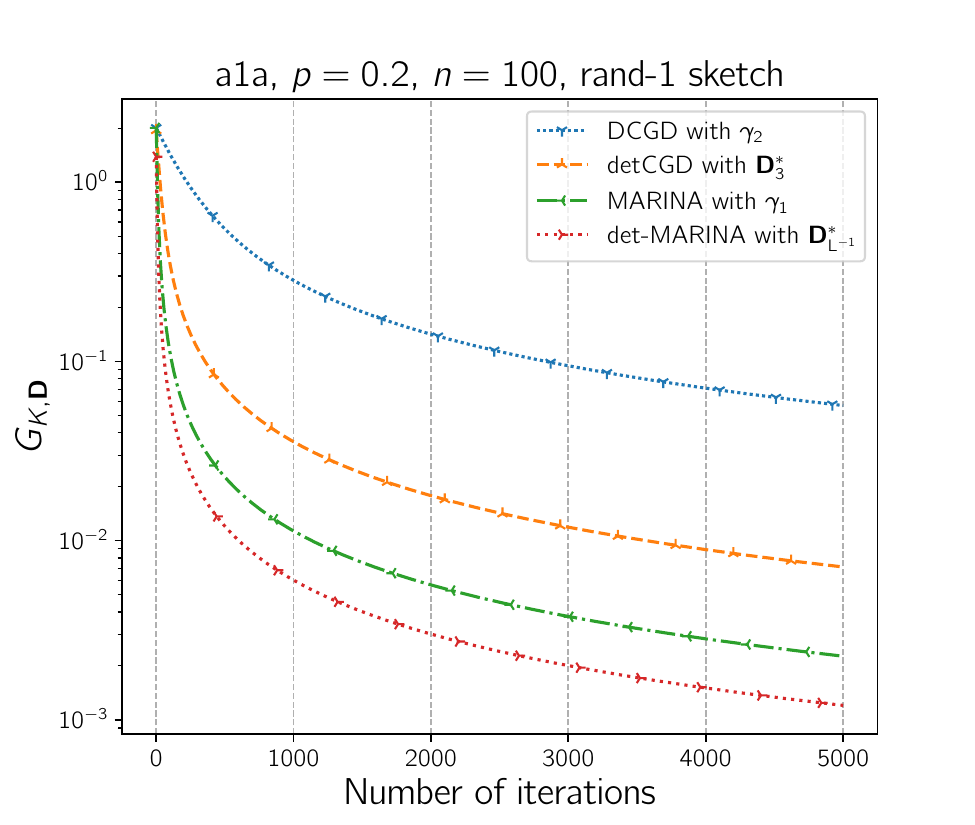}
         \includegraphics[width=0.32\textwidth]{./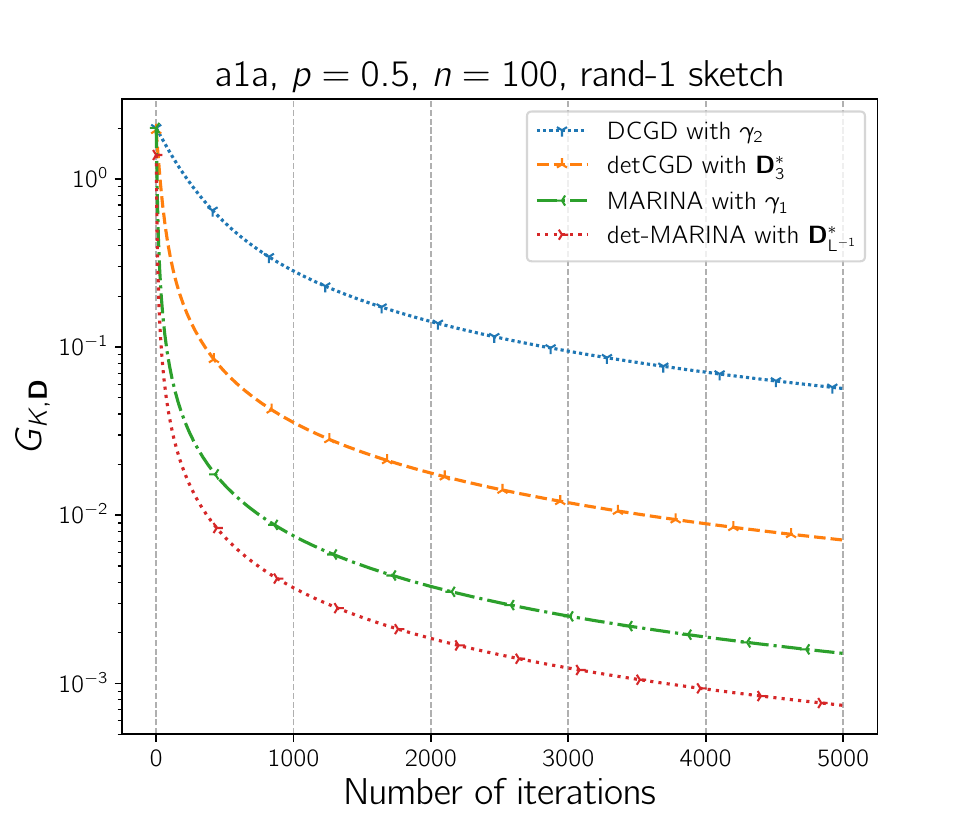}
      \end{minipage}
   }

   \subfigure{
      \begin{minipage}[t]{0.98\textwidth}
         \includegraphics[width=0.32\textwidth]{./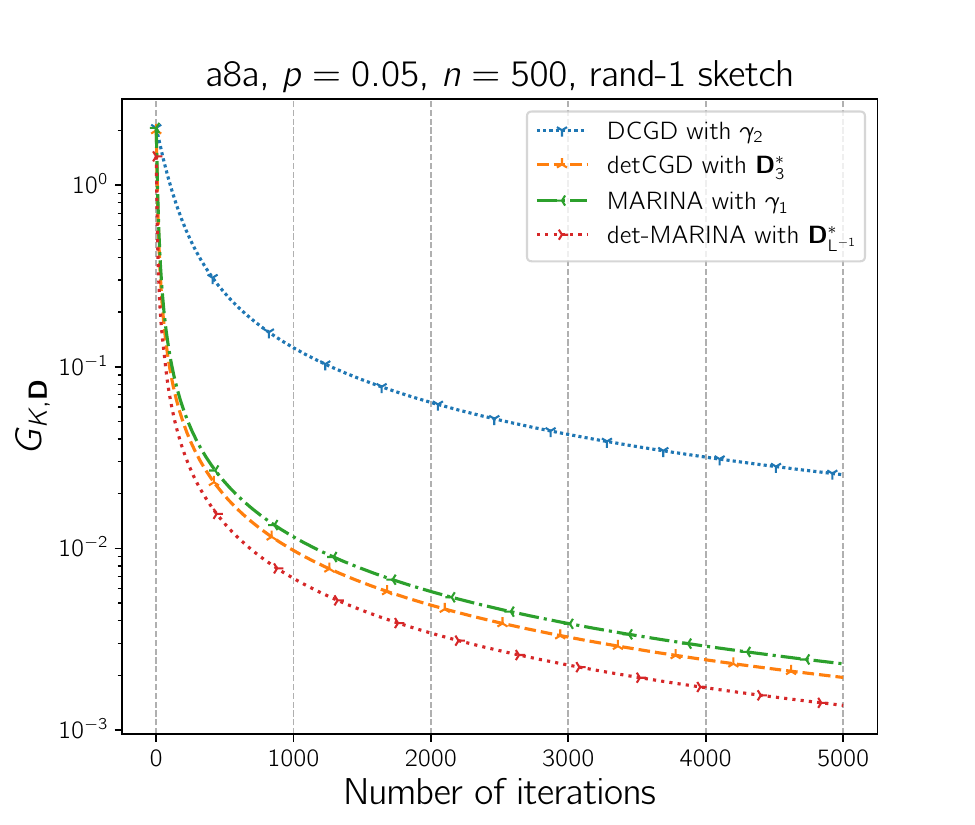} 
         \includegraphics[width=0.32\textwidth]{./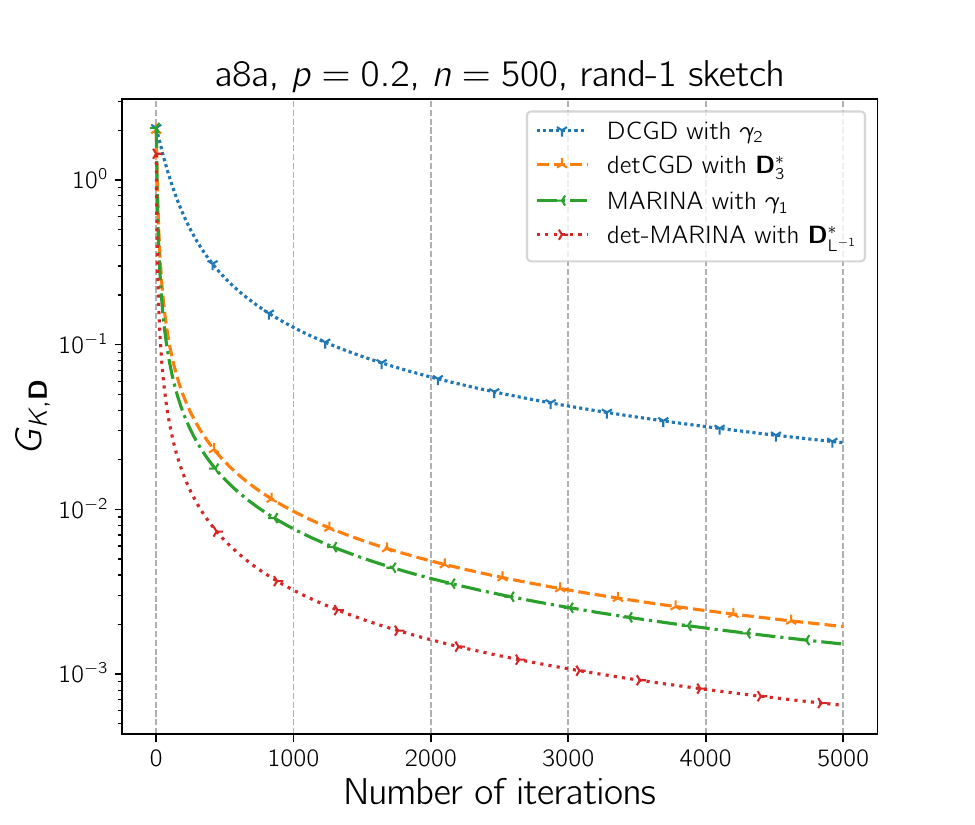}
         \includegraphics[width=0.32\textwidth]{./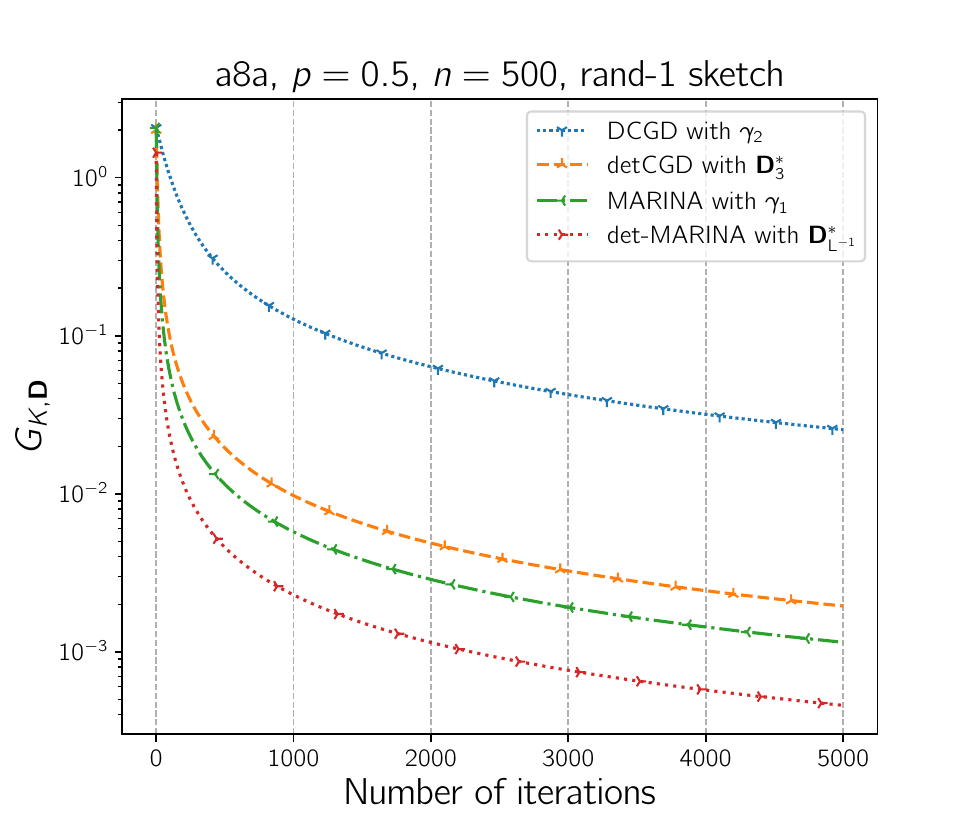}
      \end{minipage}
   }
   \caption{Comparison of {\dcgd} with optimal scalar stepsize $\gamma_2$, {\detcgd} with optimal diagonal stepsize $\mD_3^*$, {\marina} with optimal scalar stepsize $\gamma_1$, and {\detmarina} with optimal stepsize $\mD_{\mL^{-1}}^*$ with respect to $\mW = \mL^{-1}$. In each case, probability $p$ is chosen the set $\{0.05, 0.2, 0.5\}$ for {\marina} and \detmarina. $\lambda=0.3$ is fixed throughout the experiment. The notation $n$ in the title indicates the number of clients in each case.}
   \label{fig:experiment-2}
\end{figure}

In \Cref{fig:experiment-2}, in each plot, we observe that {\detmarina} outperforms {\marina} and the rest of the non-variance reduced methods. 
This is expected, since our theory confirms that {\detmarina} indeed has a better rate compared to {\marina}, and the stepsizes of the non-variance reduced methods are negatively affected by the neighborhood. 
When $p$ is reasonably large, the variance reduced methods considered here outperform the non-variance reduced methods. 
In this experiment we consider only the comparison involving {{\detcgd}}.

\subsection{\texorpdfstring{Improvements over {\detcgd}}{Improvements over det-CGD}}
In this section, we compare {\detcgd} in the distributed case with {\detmarina}, which are both algorithms using matrix stepsizes and matrix smoothness. 
The purpose of this experiment is to show that {\detmarina} improves on the current state of the art matrix stepsize compressed gradient method when the objective function is non-convex. 
Throughout the experiment, $\lambda = 0.3$ is fixed, and for {\detcgd}, $\varepsilon^2 = 0.01$ is fixed in order to determine its stepsize. 
For a thorough comparison, we select the stepsize for {\detcgd} in the following way. 
Let us denote the stepsize as $\mD = \gamma_{\mW}\cdot\mW$, where $\gamma_{\mW}\in\R_{++}, \mW\in\bbS^d_{++}$. 
We first fix a matrix $\mW$, in this case, we pick $\mW$ from the set $\{\mL^{-1}, \diag^{-1}(\mL), \mI_d\}$, and then we determine the optimal scaling $\gamma_{\mW}$ for each case using the condition given in \citep{li2023det} (see \eqref{eq:detcgd-stepsize} and \eqref{eq:def-lambda-D}). 
Then, we denote the matrix stepsizes for {\detcgd} 
\begin{equation}
   \label{eq:mat-stepsize-detcgd1}
   \mD_1 = \gamma_{\mI_d}\cdot\mI_d, \qquad \mD_2 = \gamma_{\diag^{-1}(\mL)}\cdot\diag^{-1}\left(\mL\right), \qquad \mD_3 = \gamma_{\mL^{-1}}\cdot\mL^{-1}.
\end{equation}
For {\detmarina}, we use the stepsize $\mD^*_{\mL^{-1}}$, which is described in \eqref{eq:var-D-opt}. In this experiment, we compare {\detcgd} using three stepsizes $\mD_1, \mD_2, \mD_3$ with {\detmarina} using stepsize $\mD_{\mL^{-1}}^*$.

\begin{figure}[t]
   \centering
   \subfigure{
      \begin{minipage}[t]{0.98\textwidth}
         \includegraphics[width=0.32\textwidth]{./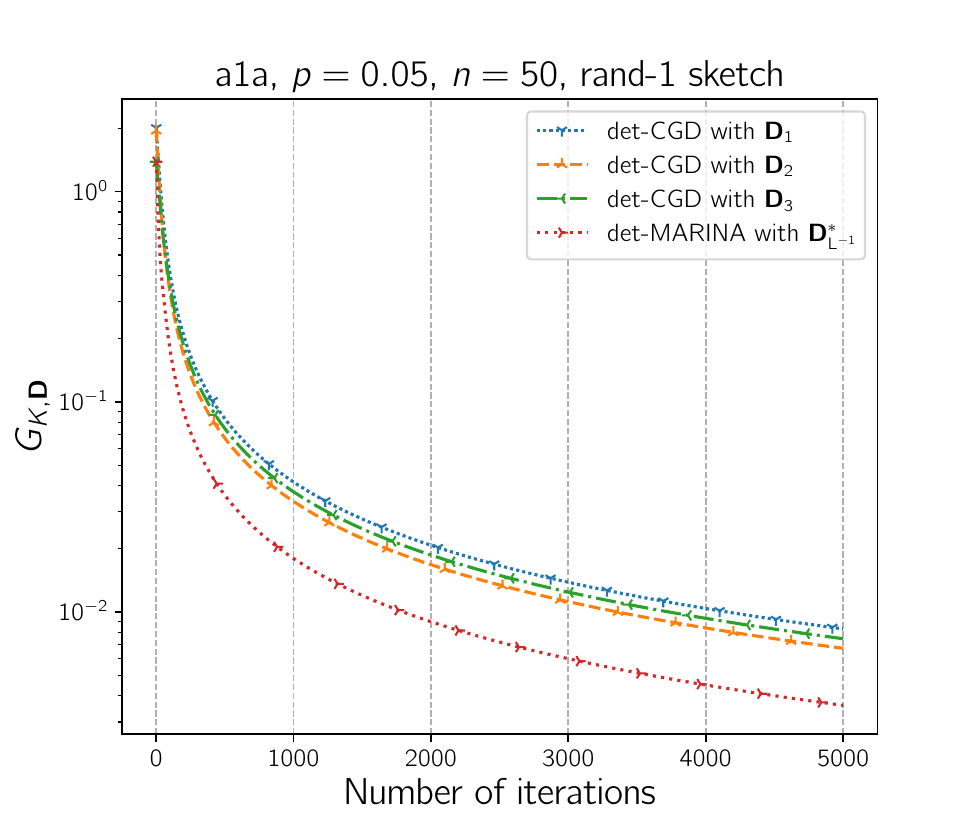} 
         \includegraphics[width=0.32\textwidth]{./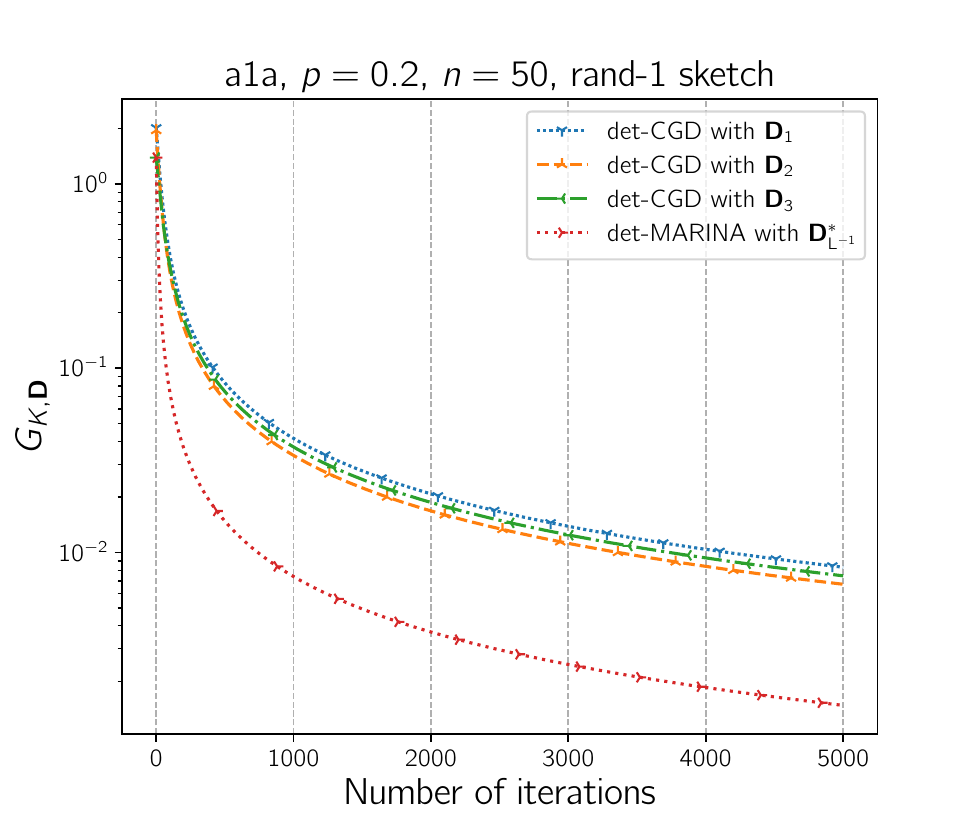}
         \includegraphics[width=0.32\textwidth]{./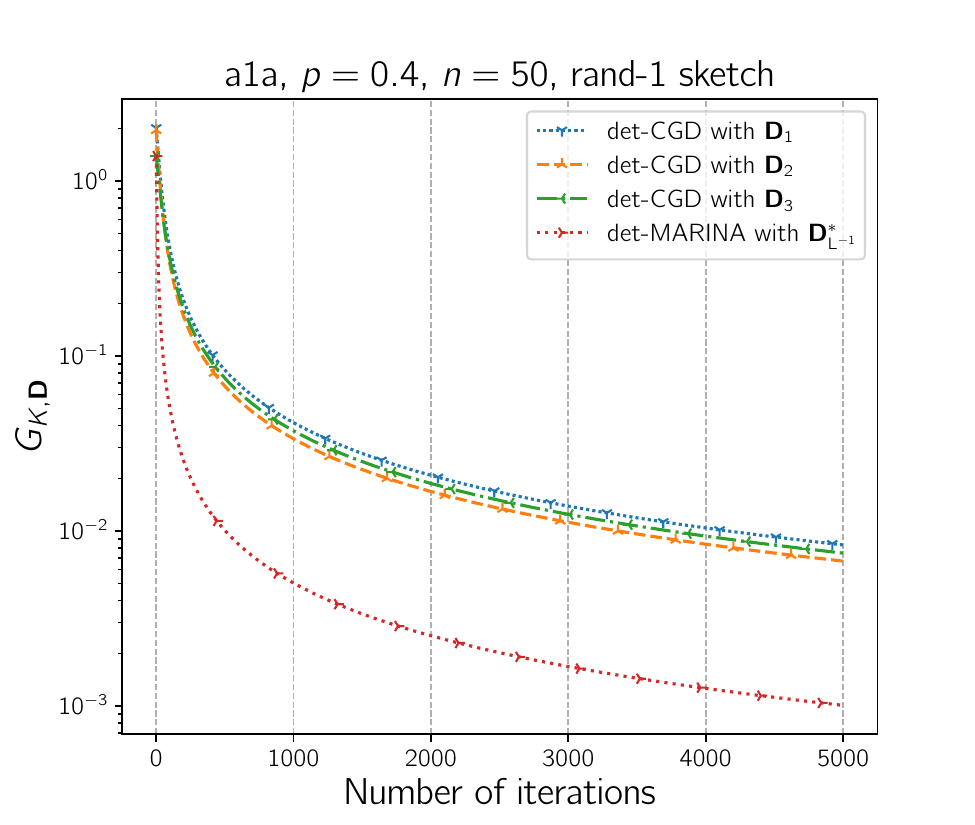}
      \end{minipage}
   }

   \subfigure{
      \begin{minipage}[t]{0.98\textwidth}
         \includegraphics[width=0.32\textwidth]{./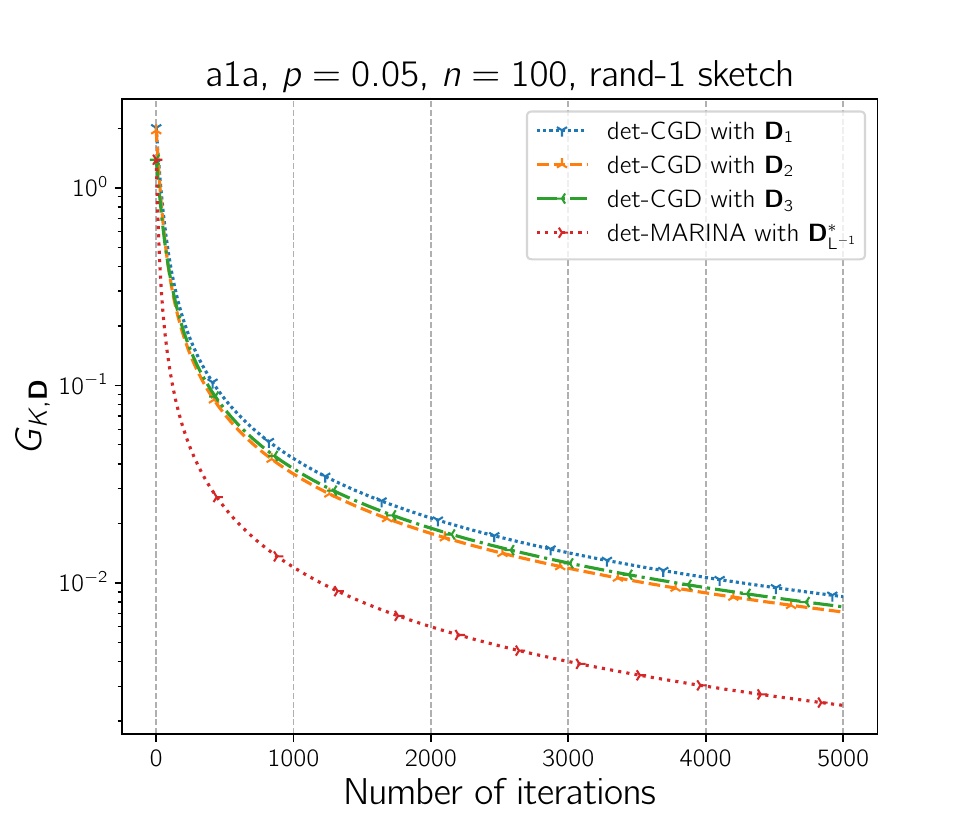} 
         \includegraphics[width=0.32\textwidth]{./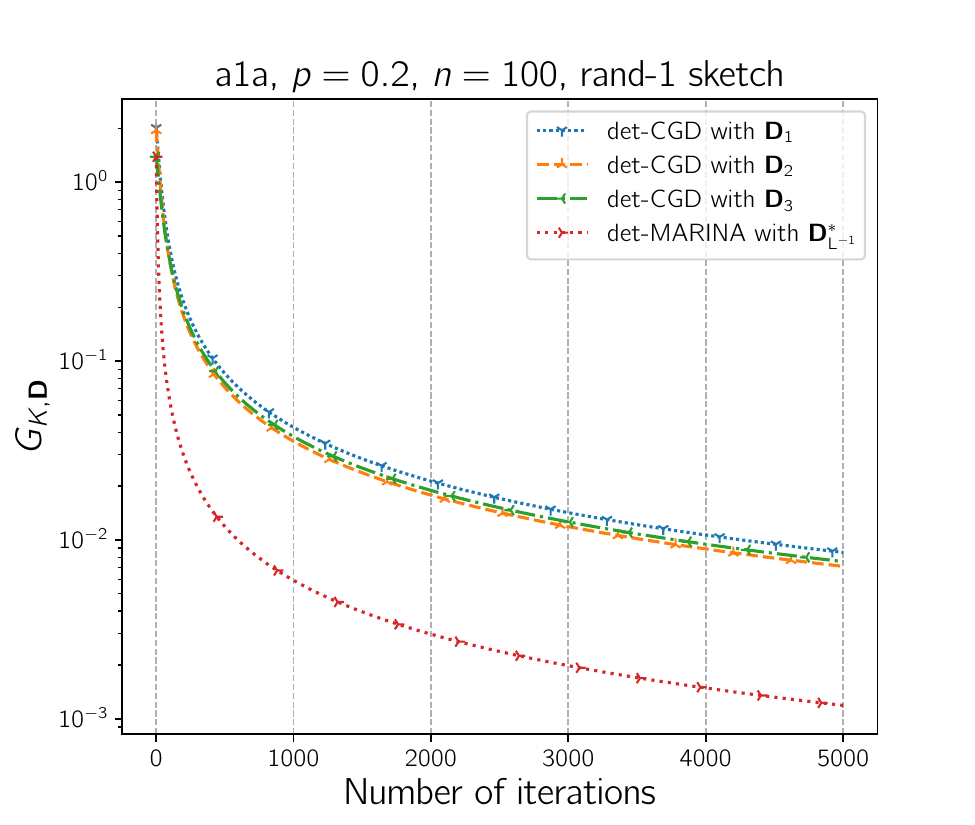}
         \includegraphics[width=0.32\textwidth]{./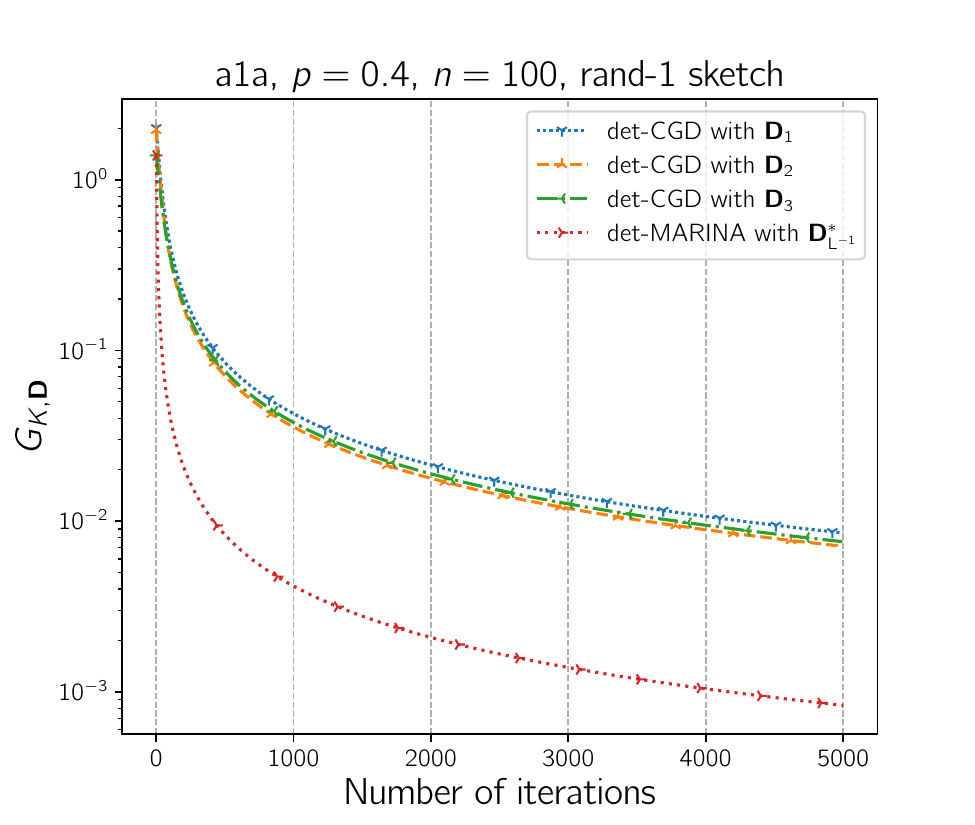}
      \end{minipage}
   }

   \subfigure{
      \begin{minipage}[t]{0.98\textwidth}
         \includegraphics[width=0.32\textwidth]{./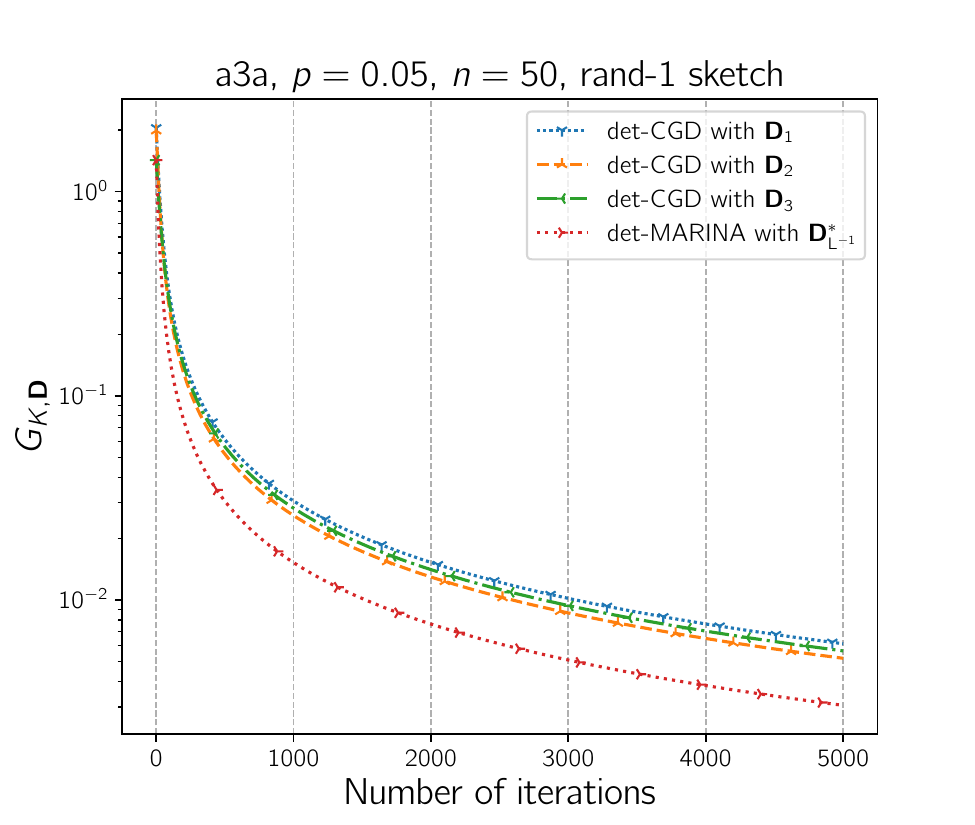} 
         \includegraphics[width=0.32\textwidth]{./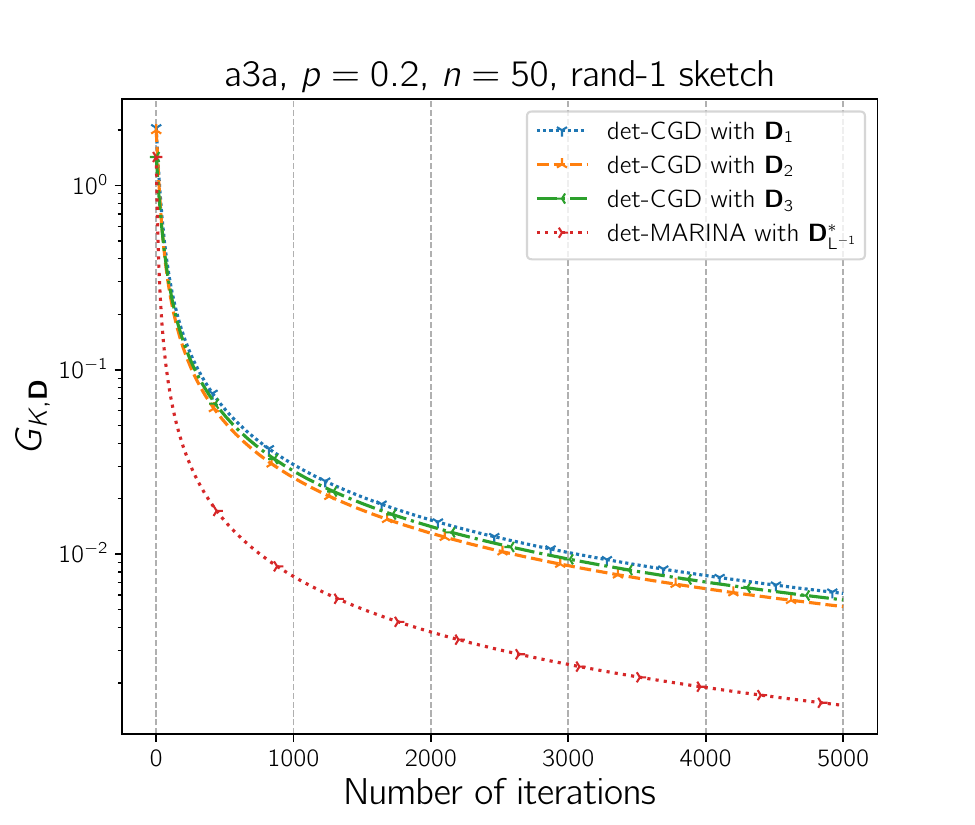}
         \includegraphics[width=0.32\textwidth]{./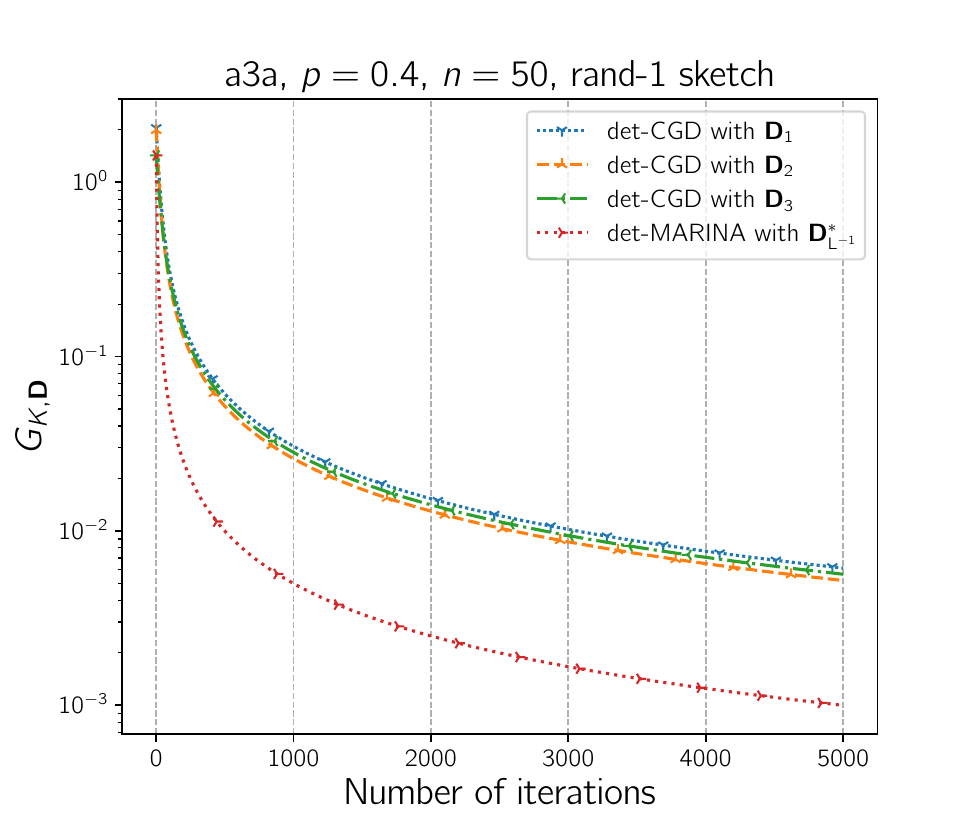}
      \end{minipage}
   }

   \subfigure{
      \begin{minipage}[t]{0.98\textwidth}
         \includegraphics[width=0.32\textwidth]{./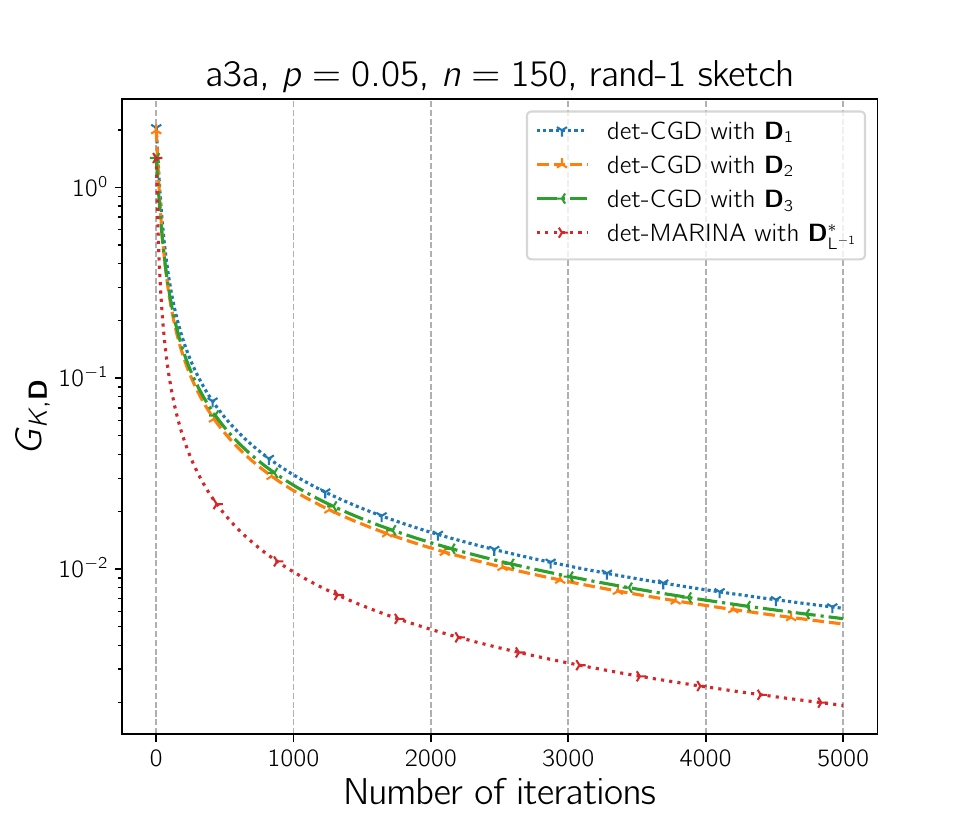} 
         \includegraphics[width=0.32\textwidth]{./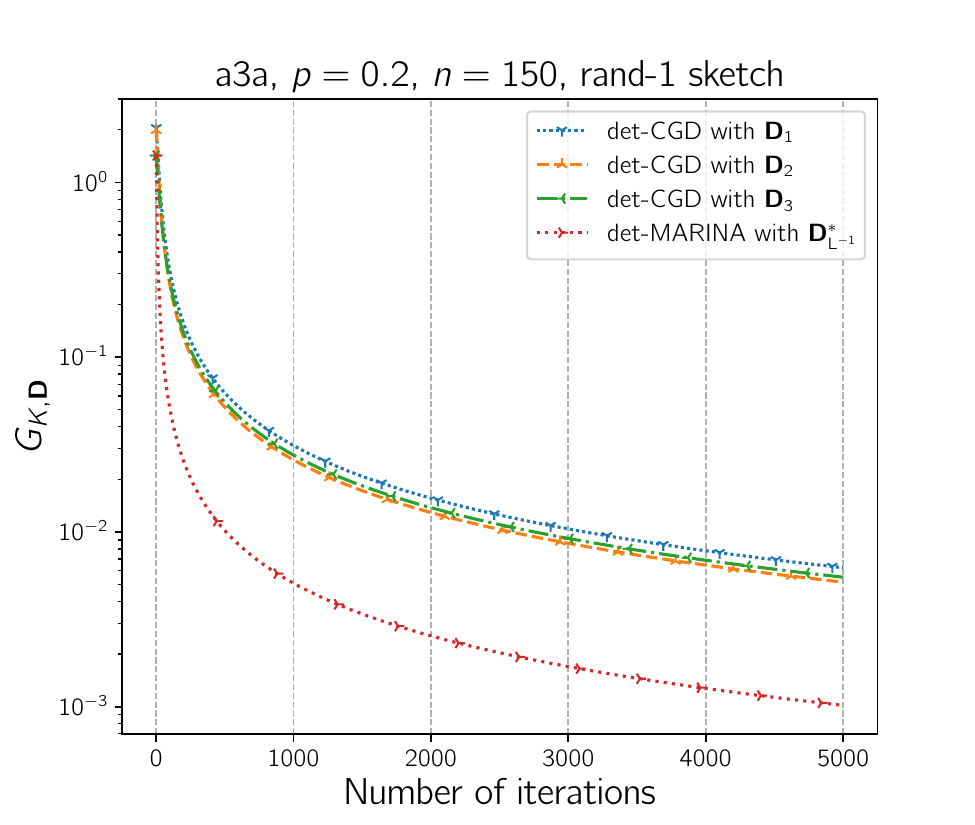}
         \includegraphics[width=0.32\textwidth]{./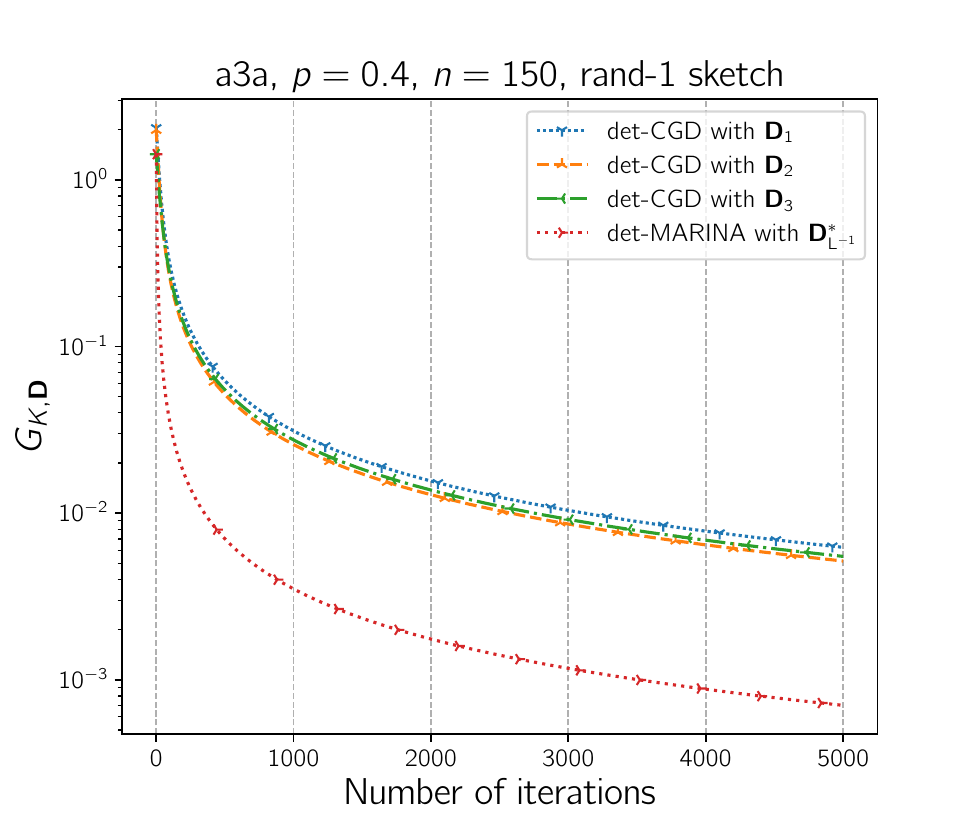}
      \end{minipage}
   }
   \caption{Comparison of {\detcgd} with matrix stepsize $\mD_1$, $\mD_2$ and $\mD_3$ and {\detmarina} with optimal matrix stepsize with respect to $\mW = \mL^{-1}$. The stepsizes $\{\mD_i\}_{i=1}^3$ are described in \eqref{eq:mat-stepsize-detcgd1}. Throughout the experiment $\varepsilon^2$ is fixed at $0.01$, the notation $p$ in the title refers to the probability for {\detmarina}, $n$ denotes the number of clients considered, Rand-$1$ sketch is used in all cases for all the algorithms.}
   \label{fig:experiment-3}
\end{figure}

From \Cref{fig:experiment-3}, it is clear that {\detmarina} outperforms {\detcgd} with all matrix optimal stepsizes with respect to a fixed $\mW$ considered here. 
This is expected, since the convergence rate of non-variance reduced methods are affected by its neighborhood. 
This experiment demonstrates the advantages of {\detmarina} over {\detcgd}, and is also supported by our theory. 
Notice that though different $\mW$ are considered for {\detcgd}, their convergence rates are similar, which is also mentioned by \citet{li2023det}.

\subsection{Comparing different stepsize choices}
This experiment is designed to see the how {\detmarina} works under different stepsize choices. 
As it is mentioned in \Cref{sec:F.1}, for each choice of $\mW \in \bbS^d_{++}$, an optimal stepsize $\mD^*_{\mW}$ can be determined. 
Here we compare {\detmarina} using three different stepsize choices $\mD^*_{\mL^{-1}}, \mD^*_{\diag^{-1}(\mL)}$ and $\mD^*_{\mI_d}$. 
There stepsizes are explicitly defined in \eqref{eq:var-D-opt}.
Throughout the experiment, we fix $\lambda = 0.3$, Rand-$1$ sketch is used in all cases.
\begin{figure}[t]
   \centering
   \subfigure{
      \begin{minipage}[t]{0.98\textwidth}
         \includegraphics[width=0.32\textwidth]{./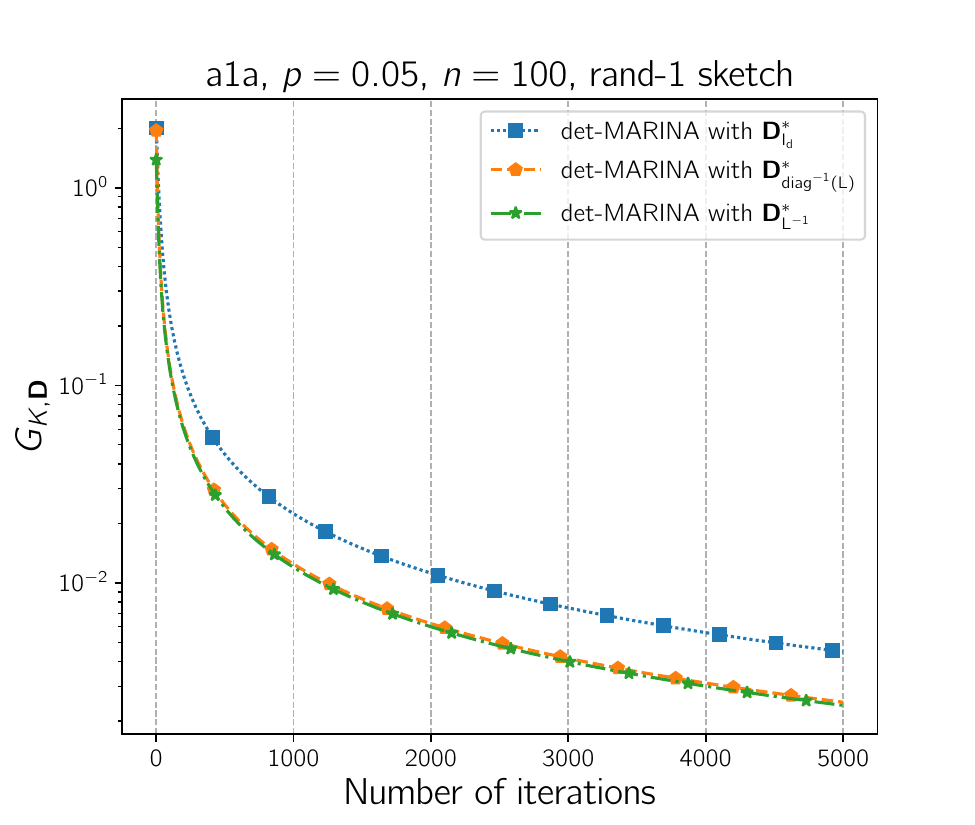} 
         \includegraphics[width=0.32\textwidth]{./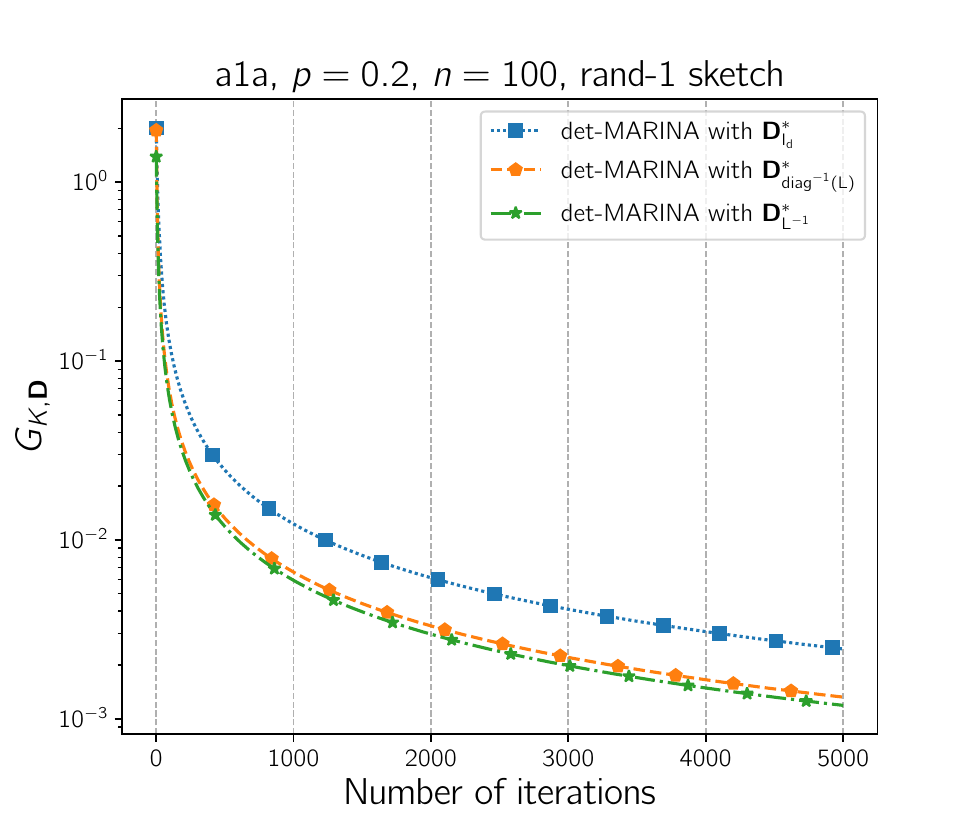}
         \includegraphics[width=0.32\textwidth]{./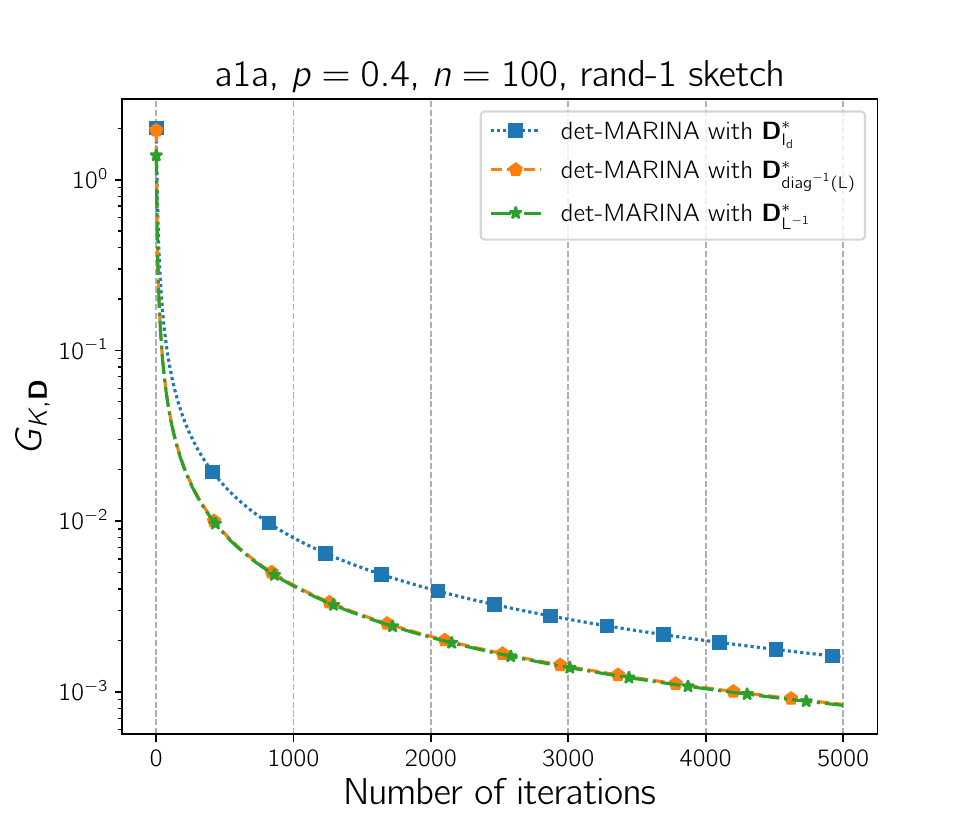}
      \end{minipage}
   }

   \subfigure{
      \begin{minipage}[t]{0.98\textwidth}
         \includegraphics[width=0.32\textwidth]{./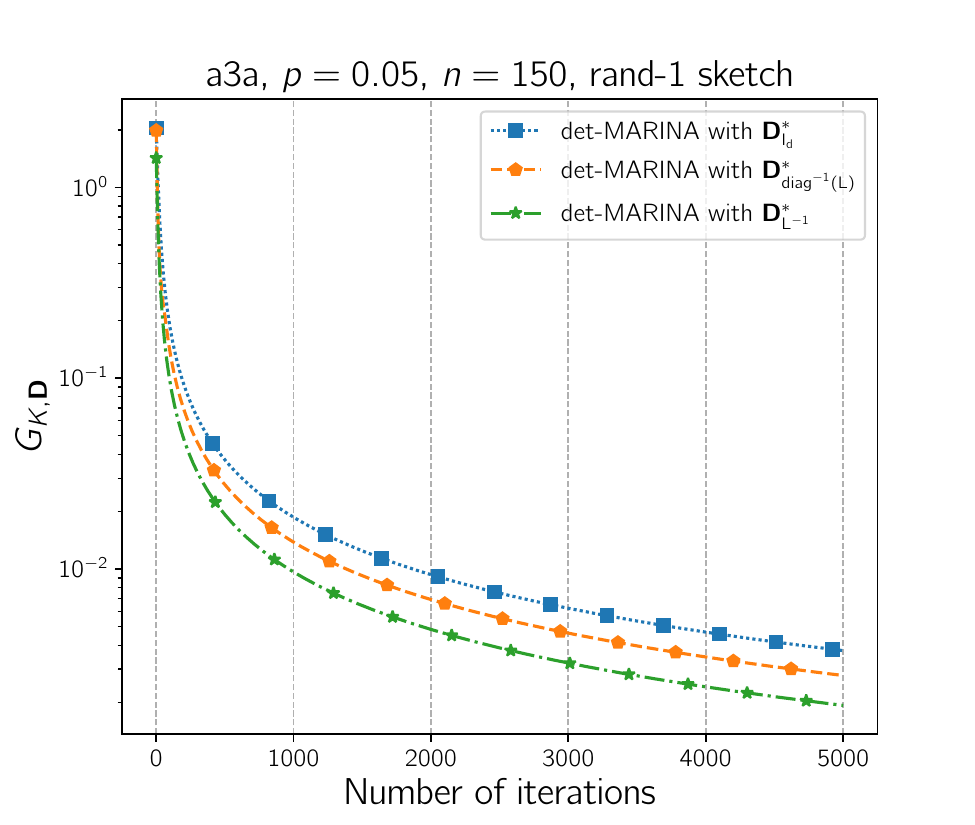} 
         \includegraphics[width=0.32\textwidth]{./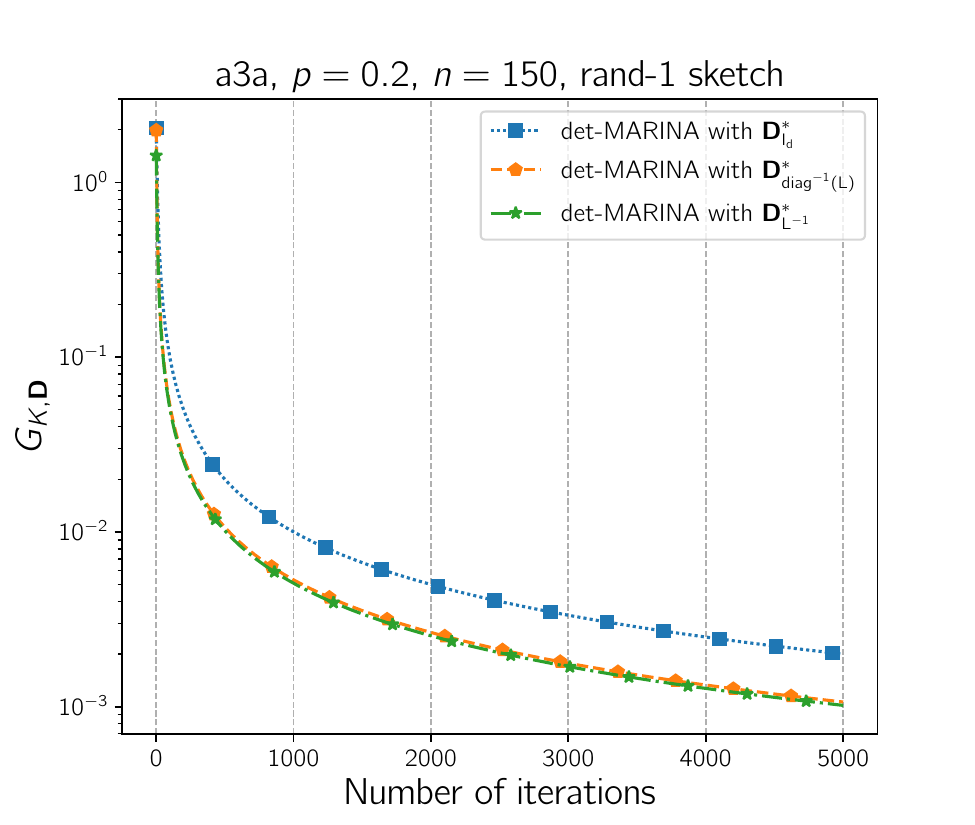}
         \includegraphics[width=0.32\textwidth]{./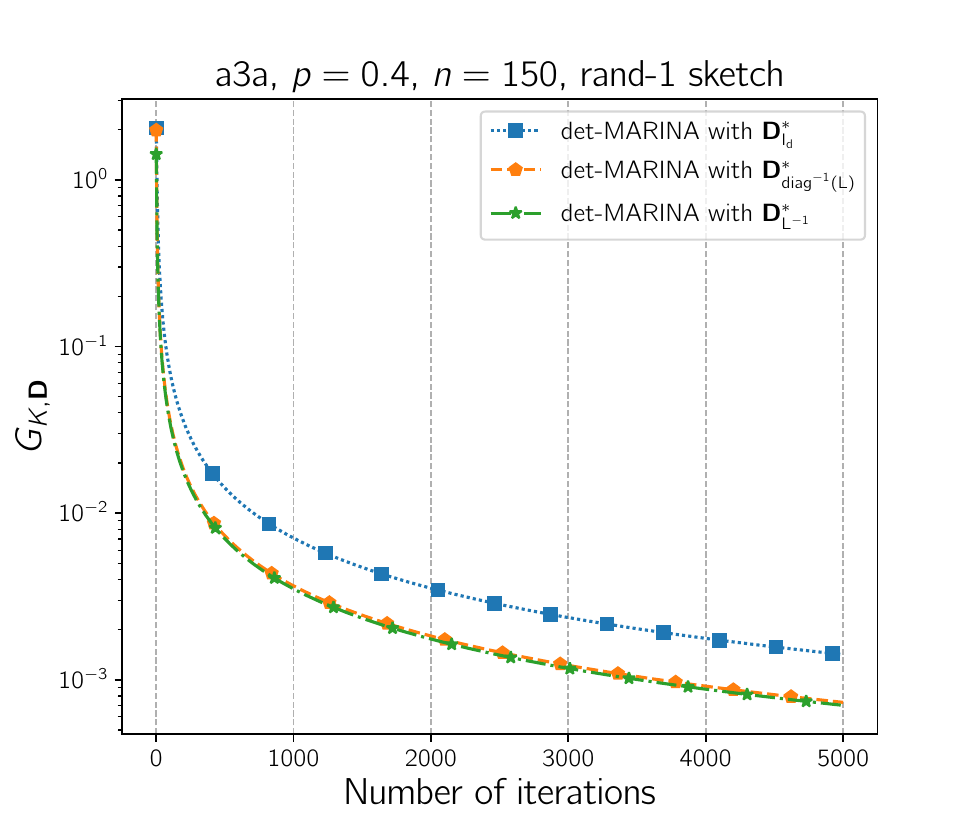}
      \end{minipage}
   }

   \subfigure{
      \begin{minipage}[t]{0.98\textwidth}
         \includegraphics[width=0.32\textwidth]{./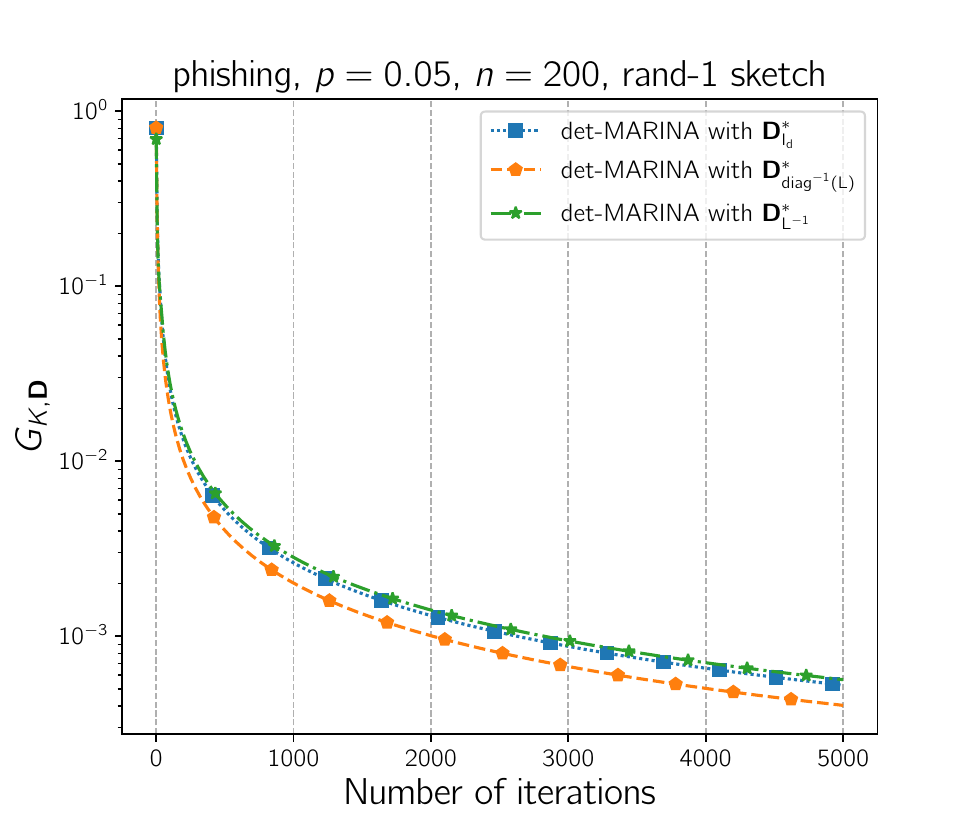} 
         \includegraphics[width=0.32\textwidth]{./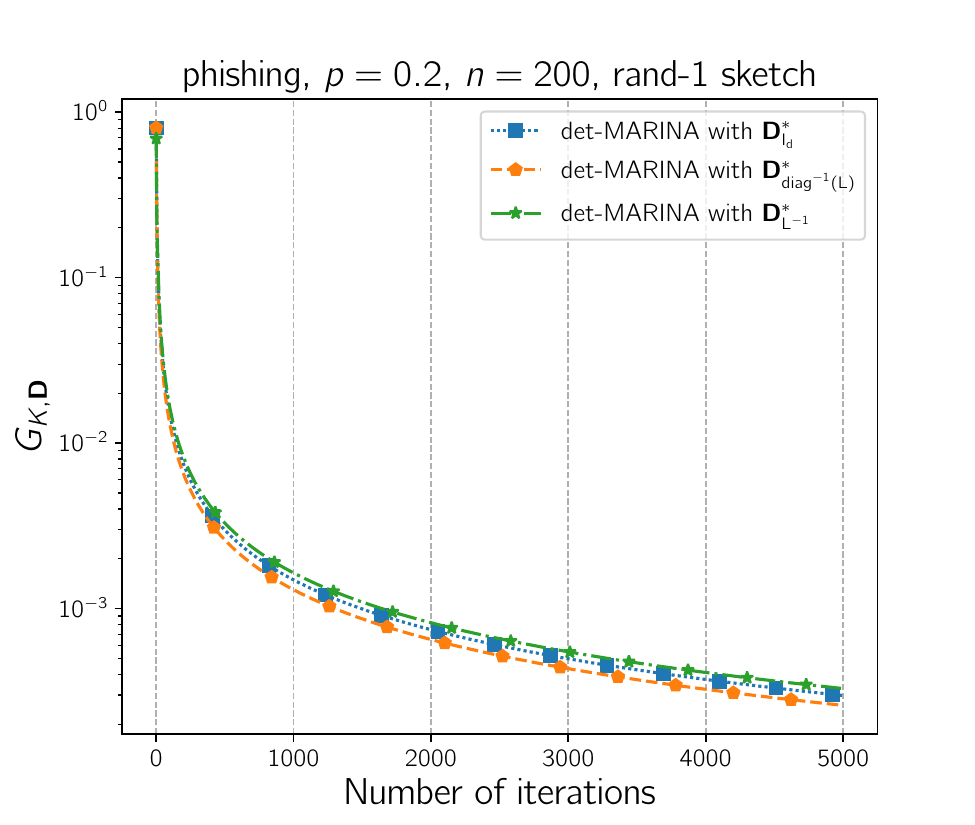}
         \includegraphics[width=0.32\textwidth]{./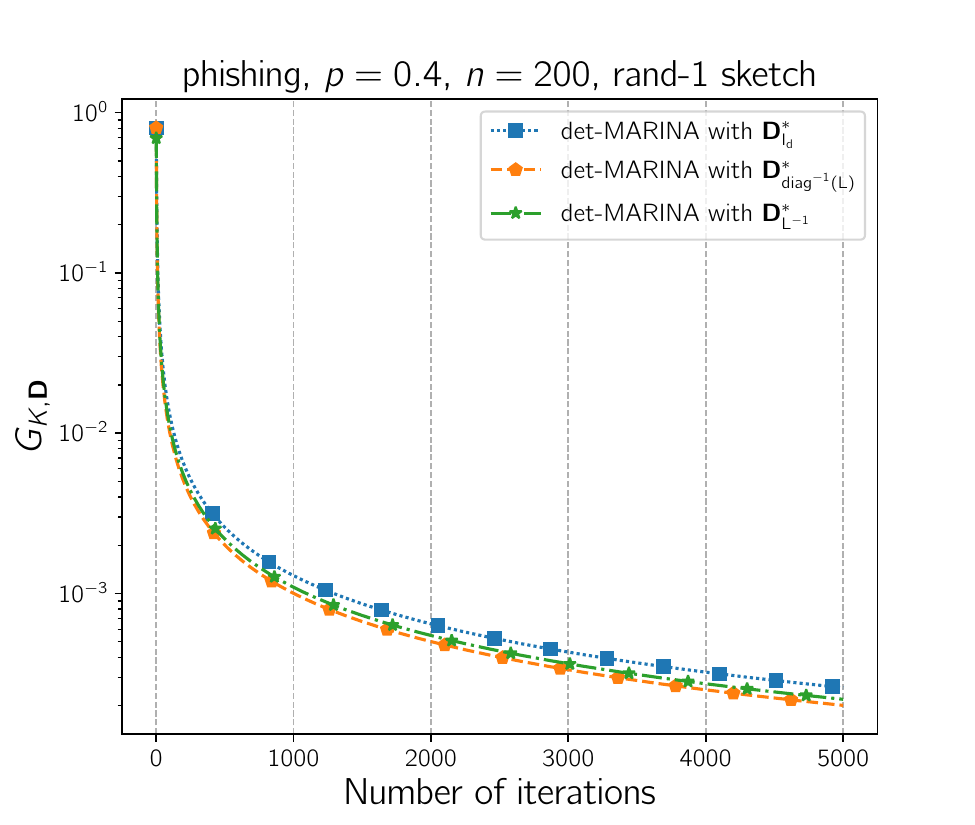}
      \end{minipage}
   }

   \caption{Comparison of {\detmarina} with matrix stepsize $\mD^*_{\mI_d}$, $\mD^*_{\diag^{-1}(\mL)}$ and $\mD^*_{\mL^{-1}}$. The stepsizes are defined in \eqref{eq:var-D-opt}. Throughout the experiment, $\lambda = 0.3$ is fixed, Rand-$1$ sketch is used in all cases. The notation $p$ in the title indicates the probability of sending the true gradient for {\detmarina}, $n$ denotes the number of clients considered.}
   \label{fig:experiment-4}
\end{figure}

We can observe from \Cref{fig:experiment-4} that, in almost all cases {\detmarina} with stepsize $\mD^*_{\diag^{-1}(\mL)}$ and $\mD^*_{\mL^{-1}}$ outperforms {\detmarina} with $\mD^*_{\mI_d}$. 
As {\detmarina} with $\mD^*_{\mI_d}$ can be viewed as {\marina} using scalar stepsize but under matrix Lipschitz gradient assumption, this demonstrates the effectiveness of using a matrix stepsize over the scalar stepsize. 
However, in \Cref{fig:experiment-4}, there are cases where {\detmarina} with $\mD^*_{\diag^{-1}(\mL)}$ outperforms $\mD^*_{\mL^{-1}}$. 
This tells us the two stepsizes are perhaps incomparable in general cases. 
This is similar to {\detcgd}, where optimal stepsizes with respect to a subspace associated with a fixed $\mW^{-1}$ are incomparable.

\subsection{Comparing communication complexity}

In this section, we perform an experiment on how different probabilities $p$ will affect the overall communication complexity of {\detmarina}. 
We use $\mD_{\mL^{-1}}^*$ as the stepsize, which is determined with respect to the sketch used. Rand-$\tau$ sketches are used in these experiments, and we vary the minibatch size $\tau$ to provide a more comprehensive comparison. 
For Rand-$\tau$ sketch $\mS$ and any $\mA \in \bbS^d_{++}$, one can show that 
\begin{equation}
   \label{eq:Rand-tau-exp}
   \Exp{\mS\mA\mS} = \frac{d}{\tau}\left(\frac{d - \tau}{\d - 1}\diag(\mA) + \frac{\tau - 1}{d - 1}\mA\right).
\end{equation}
Combining \eqref{eq:Rand-tau-exp} and \eqref{eq:var-D-opt}, we can find out the corresponding matrix stepsize easily. In the experiment, a fixed number of iterations ($K = 5000$) is performed for each {\detmarina} with the corresponding stepsize.

\begin{figure}[t]
   \centering
   \subfigure{
      \begin{minipage}[t]{0.98\textwidth}
         \includegraphics[width=0.48\textwidth]{./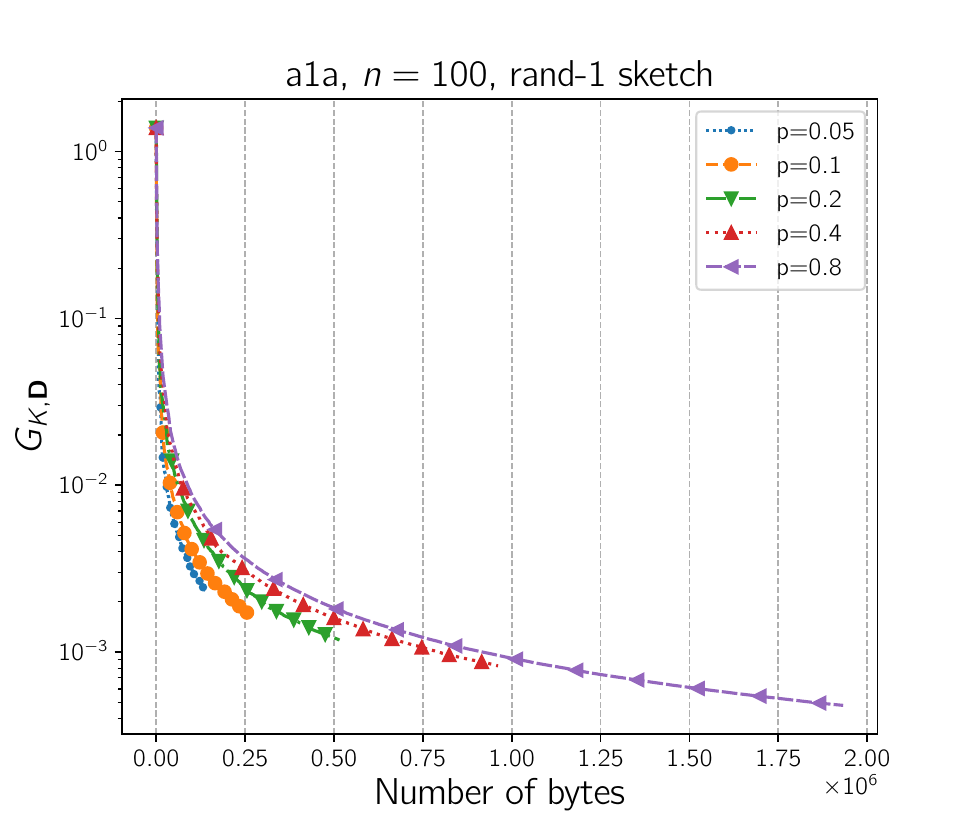} 
         \includegraphics[width=0.48\textwidth]{./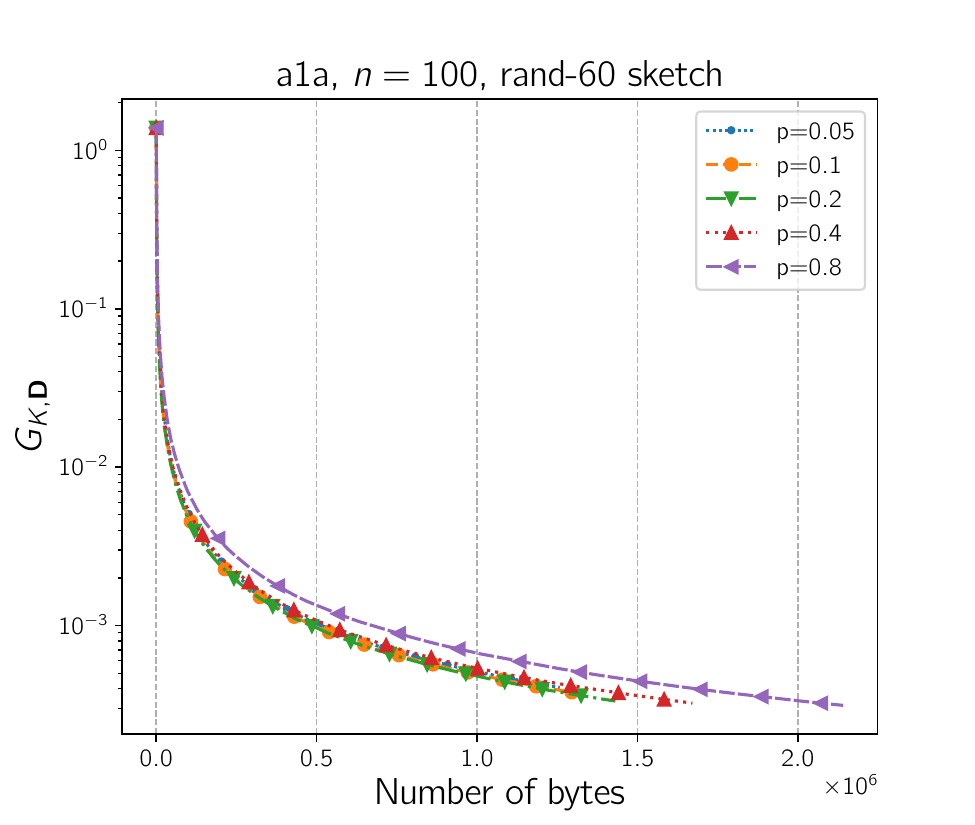}
      \end{minipage}
   }

   \subfigure{
      \begin{minipage}[t]{0.98\textwidth}
         \includegraphics[width=0.48\textwidth]{./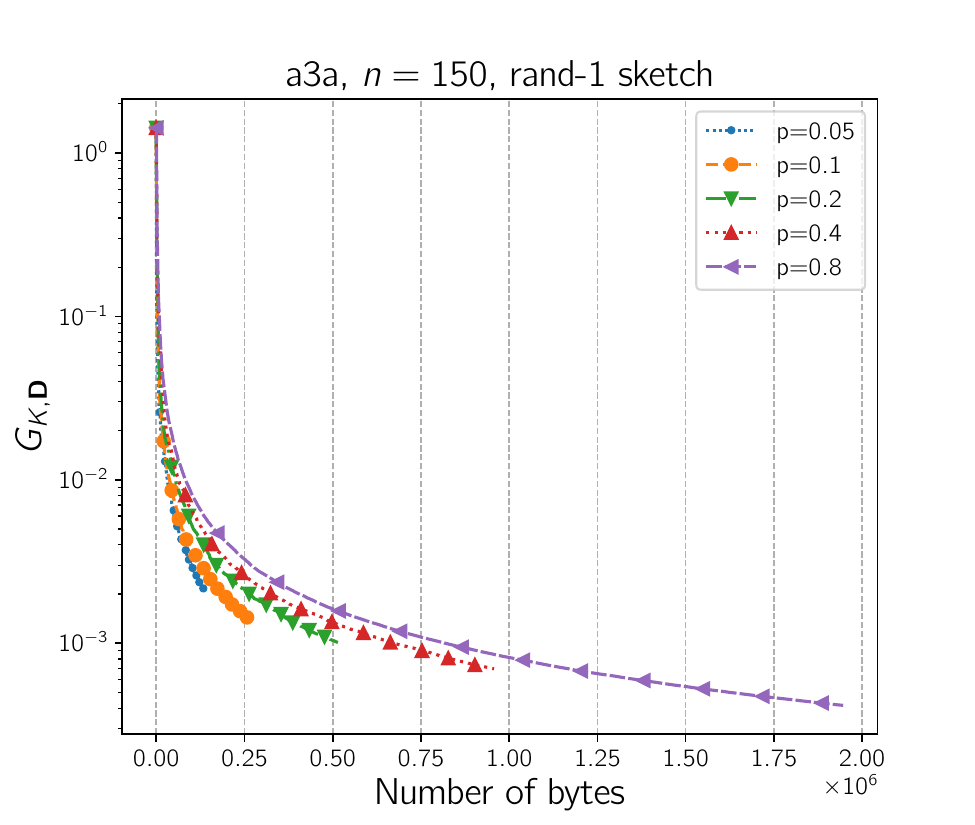} 
         \includegraphics[width=0.48\textwidth]{./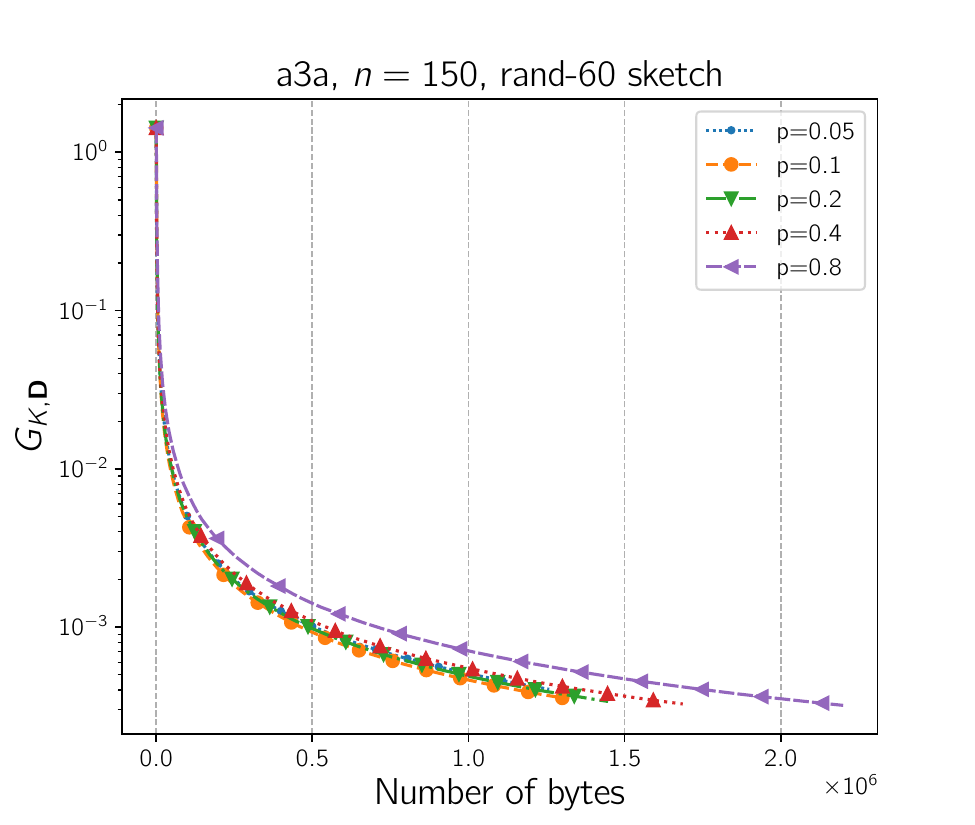}
      \end{minipage}
   }

   \subfigure{
      \begin{minipage}[t]{0.98\textwidth}
         \includegraphics[width=0.48\textwidth]{./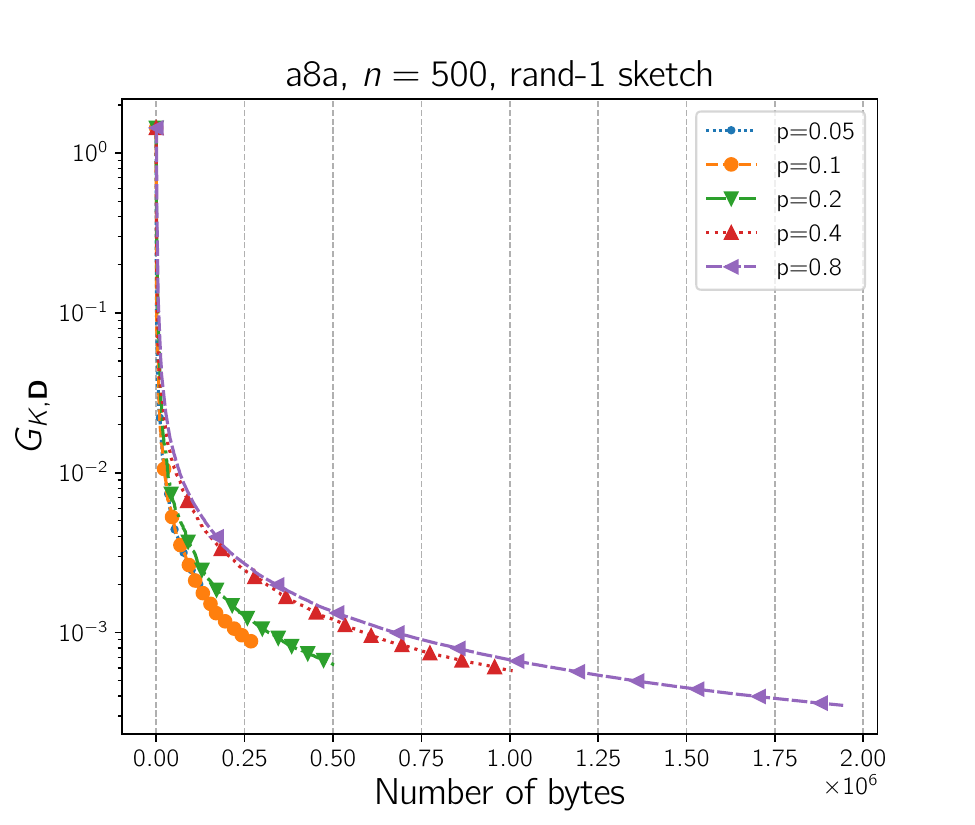} 
         \includegraphics[width=0.48\textwidth]{./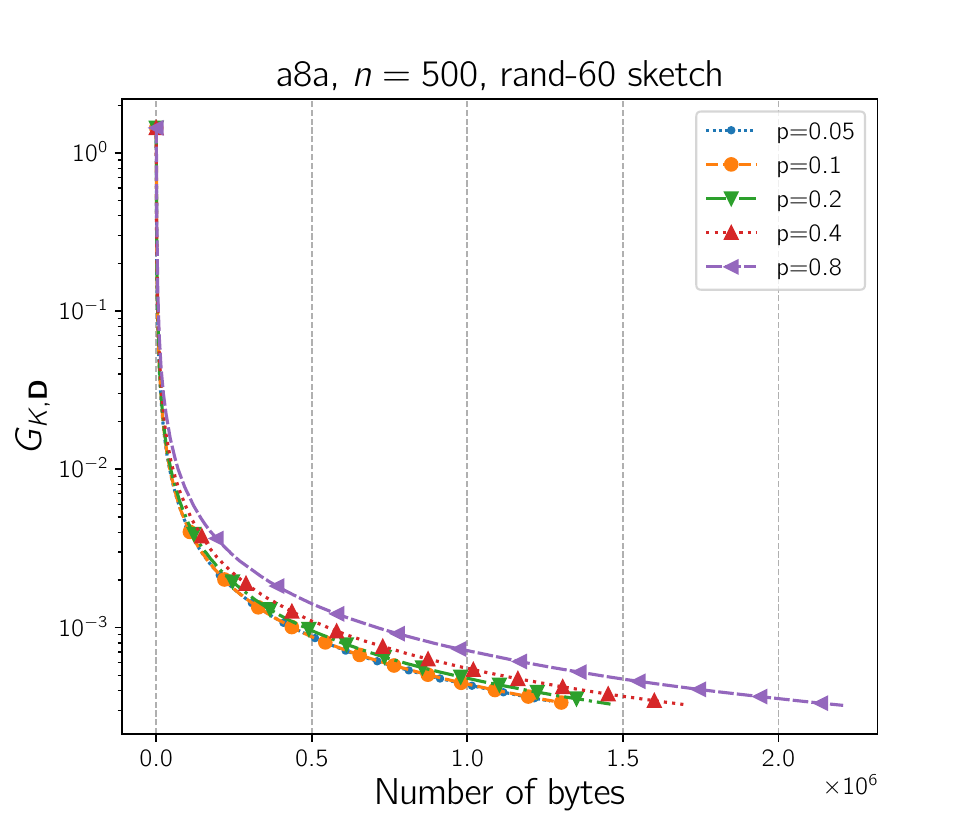}
      \end{minipage}
   }

   \caption{Comparison of {\detmarina} with stepsize $\mD^*_{\mL^{-1}}$ using different probability $p$. The probability $p$ here is chosen from the set $\{0.05, 0.1, 0.2, 0.4, 0.8\}$. The notation $n$ in the title denote the number of clients considered. The $x$-axis is now the number of bytes sent from a single node to the server. In each case, {\detmarina} is run for a fixed number of iterations $K = 5000$.}
   \label{fig:experiment-5}
\end{figure}

As it can be observed from \Cref{fig:experiment-5}, in each dataset, the communication complexity tends to increase with the increase of probability $p$. 
However, when the number of iteration is fixed, a larger $p$ often means a faster rate of convergence. 
This difference in communication complexity is more obvious when we are using the Rand-$1$ sketch. 
In real federated learning settings, there is often constraints on network bandwidth from clients to the server. Thus, trading off between communication complexity and iteration complexity, i.e. selecting the compression mechanism carefully to guarantee a acceptable speed that satisfies the bandwidth constraints, becomes important.

\subsection{\texorpdfstring{Comparison of {\dasha} and {\detdasha}}{Comparison of DASHA and det-DASHA}}
\label{dasha:subsec:exp:1}
In this experiment we plan to compare the performance of original {\dasha} with {\detdasha}. Throughout the experiments, $\lambda$ is fixed at $0.3$. The same Rand-$\tau$ sketch is used in the two algorithms. The stepsize condition on {\dasha} when the momentum is set as $a = \frac{1}{2\omega + 1}$ is given as 
\begin{equation*}
  \gamma_4 \leq \left(L + \sqrt{\frac{16\omega(2\omega + 1)}{n}}\widehat{L}\right)^{-1},
\end{equation*}
according to Theorem 6.1 of \citet{tyurin2024dasha}. 
Here the $L$ is the smoothness constant of the function $f$, while $\widehat{L}$ satisfies $\widehat{L}^2 = \frac{1}{n}\sum_{i=1}^{n}L_i^2$ where $L_i$ is the smoothness constant of local objective $f_i$. 
In theory we can pick $\widehat{L} = L$. 
Similarly, according to \Cref{dasha:col:scaling}, the optimal stepsize matrix $\mD^{**}_{\mL^{-1}}$ is given as 
\begin{equation}
  \label{dasha:eq:exp:ss-cond-detdasha}
  \mD^{**}_{\mL^{-1}} = \frac{2}{1 + \sqrt{1 + 16C_{\mL^{-1}}\cdot\lambda_{\min}\left(\mL\right)}} \cdot \mL^{-1},
\end{equation}
when the momentum is given as $a = \frac{1}{2\omega_{\mD} + 1}$. We compare the performance of {\dasha} with $\omega$ and {\detdasha} with $\mD^{**}_{\mL^{-1}}$ using the same sketch where the total number of clients are different.

\begin{figure}[t]
   \centering
   \subfigure{
   \begin{minipage}[t]{0.98\textwidth}
         \includegraphics[width=0.32\textwidth]{./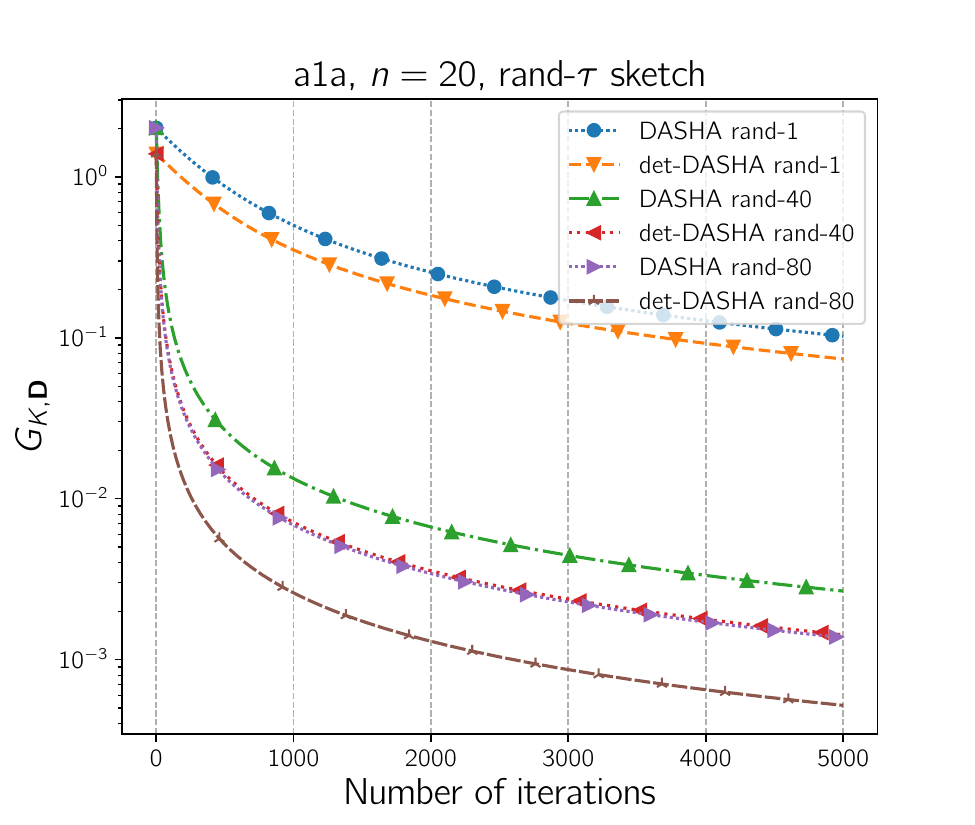} 
         \includegraphics[width=0.32\textwidth]{./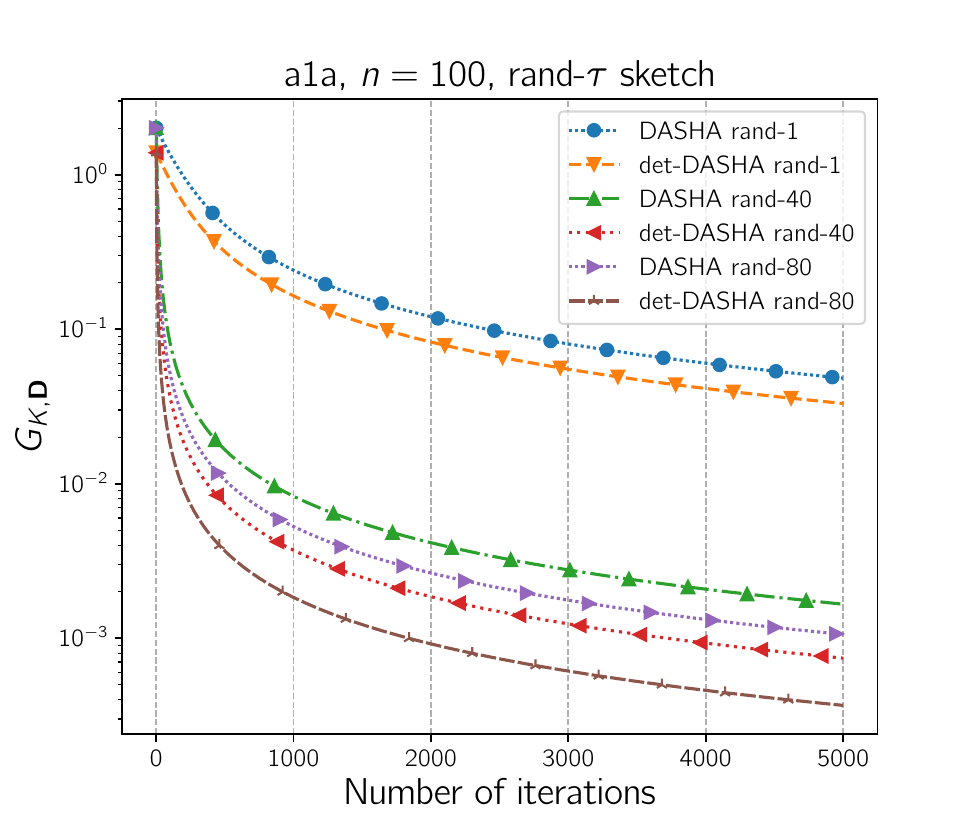}
         \includegraphics[width=0.32\textwidth]{./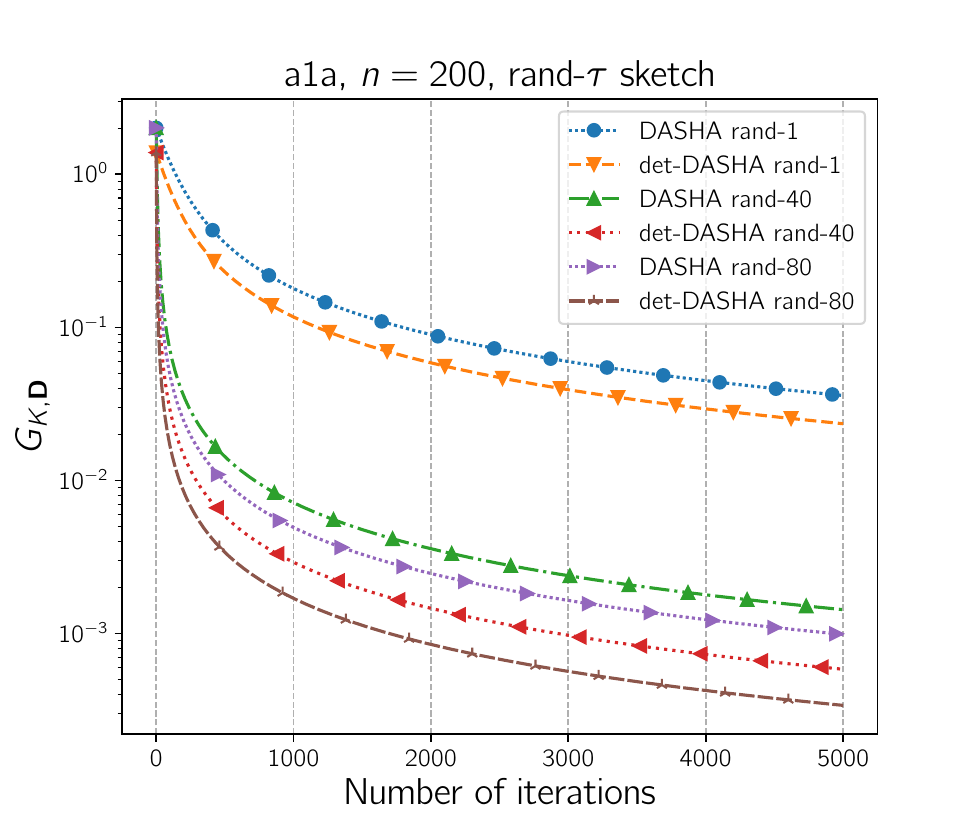}
      \end{minipage}
   }

   \subfigure{
   \begin{minipage}[t]{0.98\textwidth}
      \includegraphics[width=0.32\textwidth]{./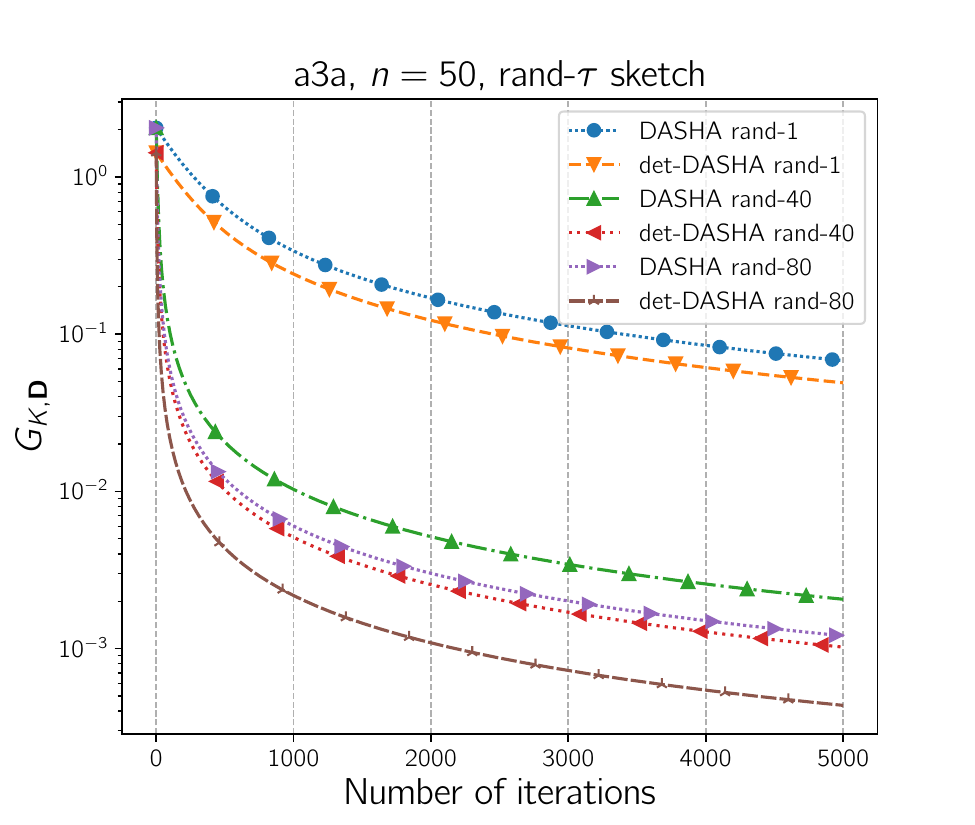} 
      \includegraphics[width=0.32\textwidth]{./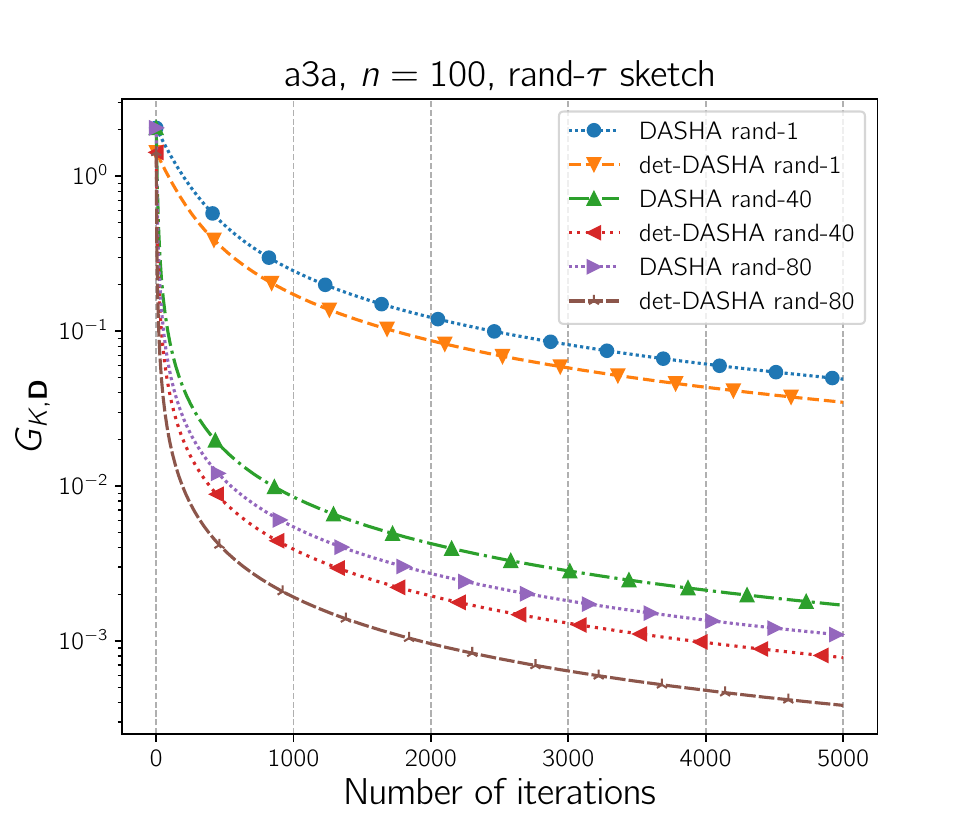}
      \includegraphics[width=0.32\textwidth]{./figure/Exp_dasha_1_client_200_lam_0.3_a1a.txt.pdf} 
      \end{minipage}
   }

   \subfigure{
   \begin{minipage}[t]{0.98\textwidth}
      \includegraphics[width=0.32\textwidth]{./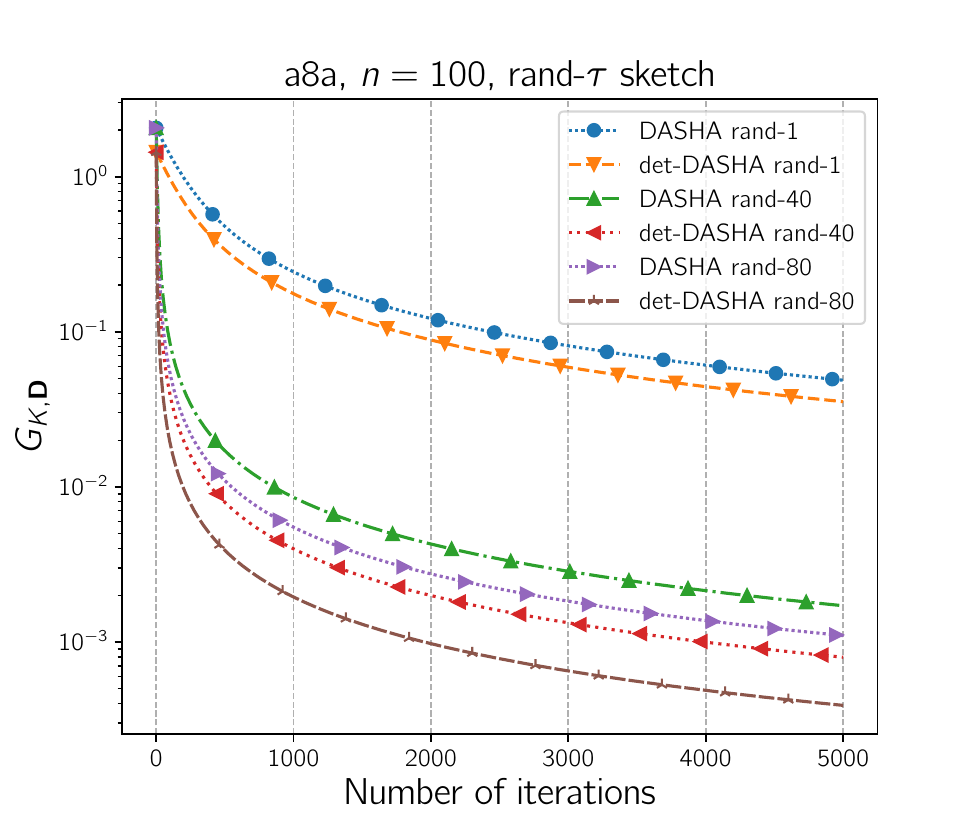} 
      \includegraphics[width=0.32\textwidth]{./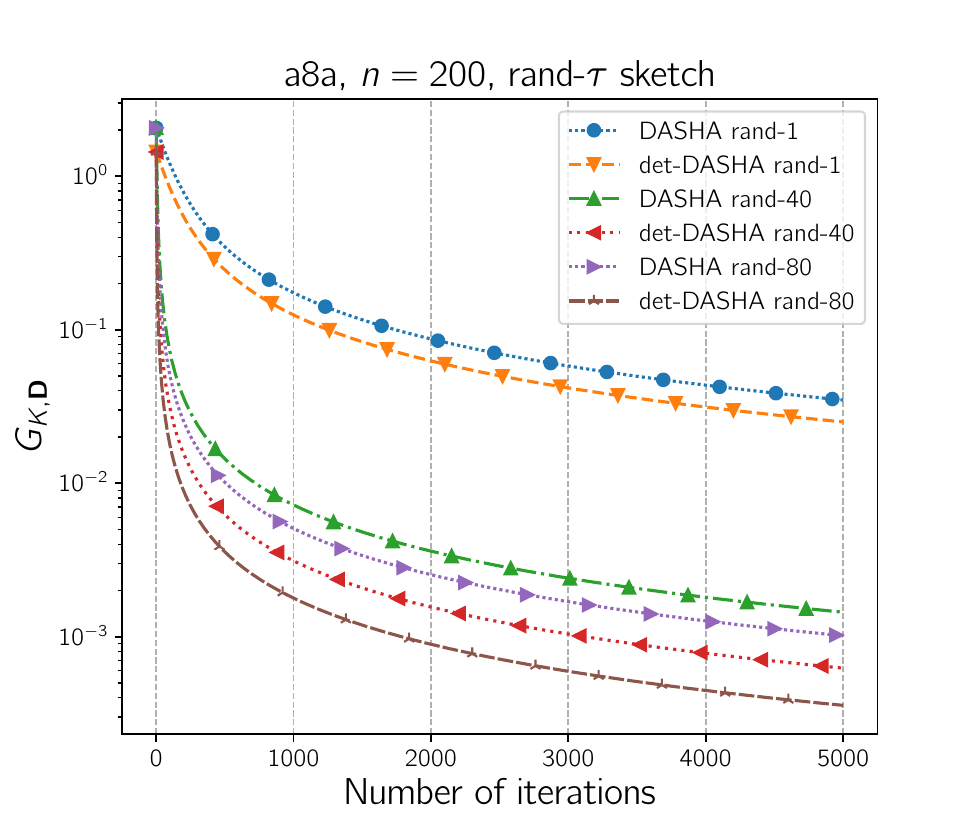}
      \includegraphics[width=0.32\textwidth]{./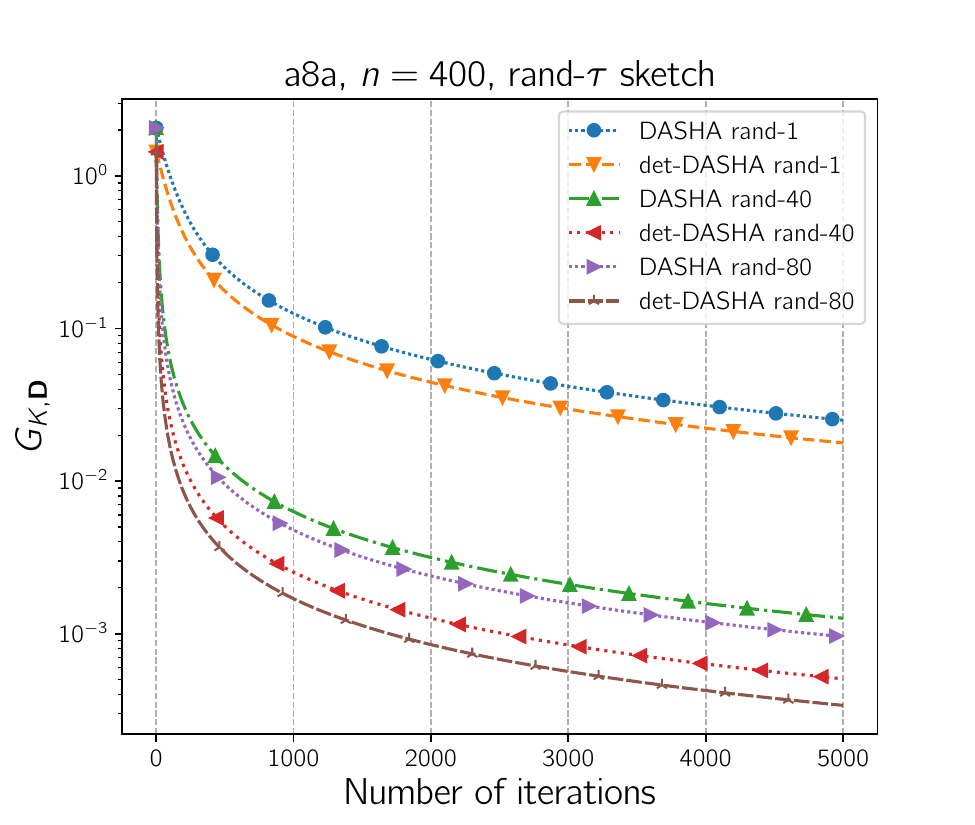} 
      \end{minipage}
   }

   \caption{Comparison of {\detdasha} with matrix stepsize $\mD^{**}_{\mL^{-1}}$ and {\dasha} with optimal scalar stepsize $\gamma$ using different Rand-$\tau$ sketches. $\lambda = 0.3$ is fixed throughout the experiments. Optimal stepsize is calculated in each case with respect to the sketch used. The $x$-axis denotes the number of iterations while the notation $G_{K, \mD}$ for the $y$-axis denotes the averaged matrix norm of the gradient. The notation $n$ denotes the number of clients in each setting.}
   \label{fig:experiment-1-dasha}
\end{figure}

As it can be observed in \Cref{fig:experiment-1-dasha}, {\detdasha} with matrix stepsize $\mD^{**}_{\mL^{-1}}$ outperforms {\dasha} with optimal scalar stepsize using the same sketch in every setting we considered. Note that since the same sketch is used in the two algorithm, the number of bits transferred in each iteration is also the same for the two algorithms. This essentially indicates that {\detdasha} has better iteration complexity as well as communication complexity than {\dasha} given that the same sketch is used for the two algorithm.

\subsection{\texorpdfstring{Comparison of {\dcgd}, {\detcgd}, {\dasha} and {\detdasha}}{Comparison of DCGD, det-CGD, DASHA and det-DASHA}}
In this experiment, we consider the comparison between the two non variance reduced methods {\dcgd}, {\detcgd} and the two variance reduced method {\dasha}, {\detdasha}. The stepsize choices for {\dcgd} and {\detcgd} have already been discussed in the previous sections, (for {\dcgd} we use $\gamma_2$ and for {\detcgd} we use $\mD^*_{3}$) ,for {\dasha} and {\detdasha}, we use the stepsize choices of \Cref{dasha:subsec:exp:1}. Note that $\varepsilon^2$ is set as $0.01$, and $\lambda$ is fixed at $0.9$ here. Throughout this experiment, we consider the case where Rand-$\tau$ sketch is used in the four algorithms.

\begin{figure}[t]
   \centering
   \subfigure{
   \begin{minipage}[t]{0.98\textwidth}
         \includegraphics[width=0.32\textwidth]{./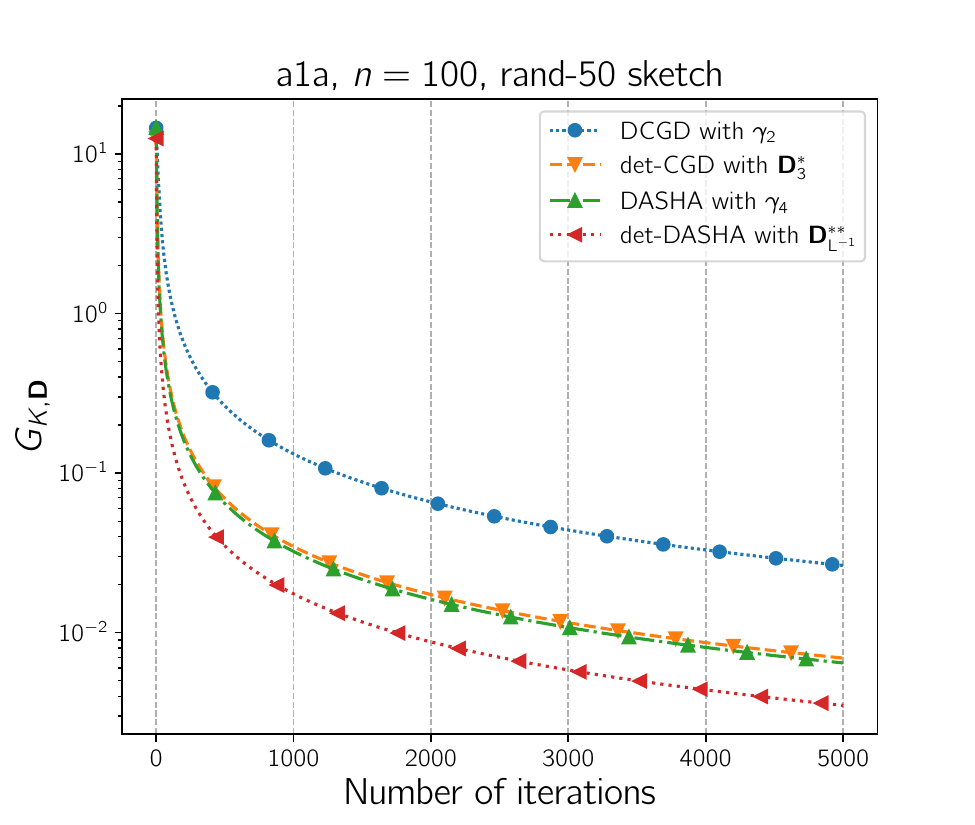} 
         \includegraphics[width=0.32\textwidth]{./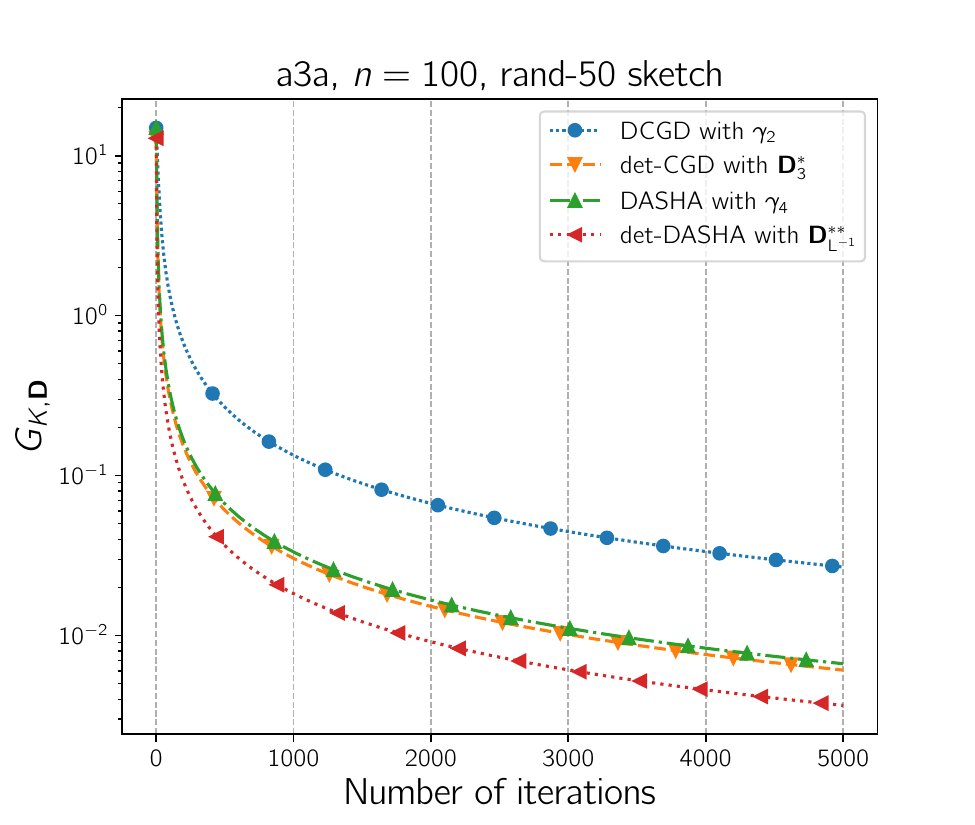}
         \includegraphics[width=0.32\textwidth]{./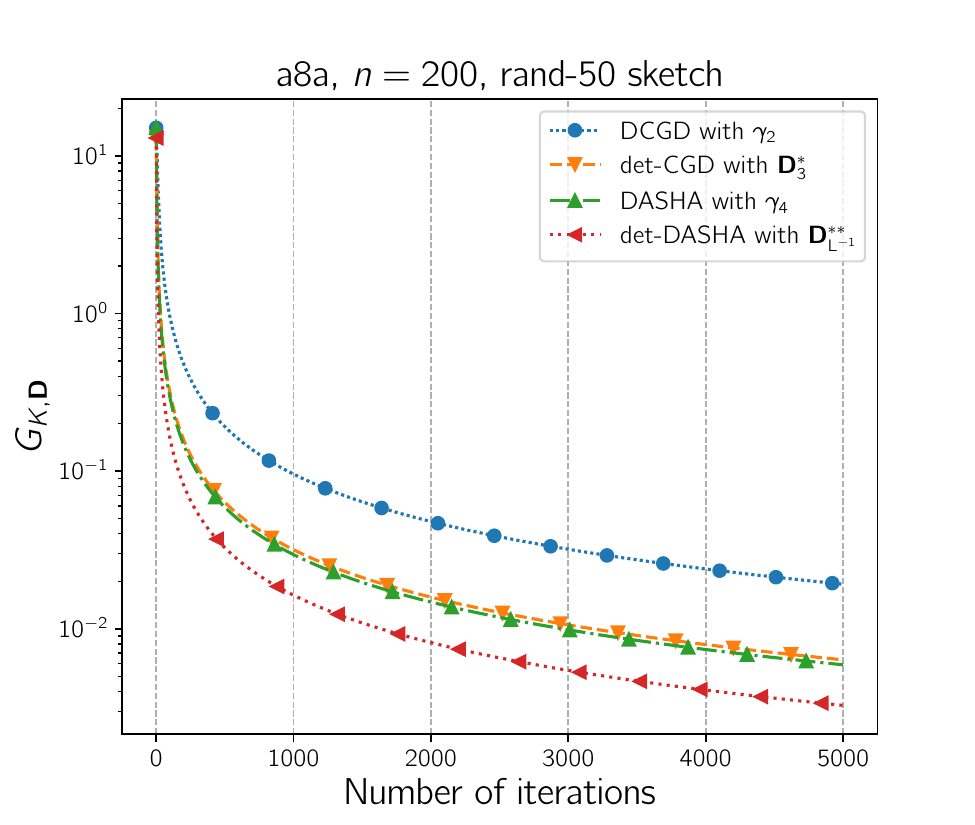}
      \end{minipage}
   }
   \subfigure{
   \begin{minipage}[t]{0.98\textwidth}
         \includegraphics[width=0.32\textwidth]{./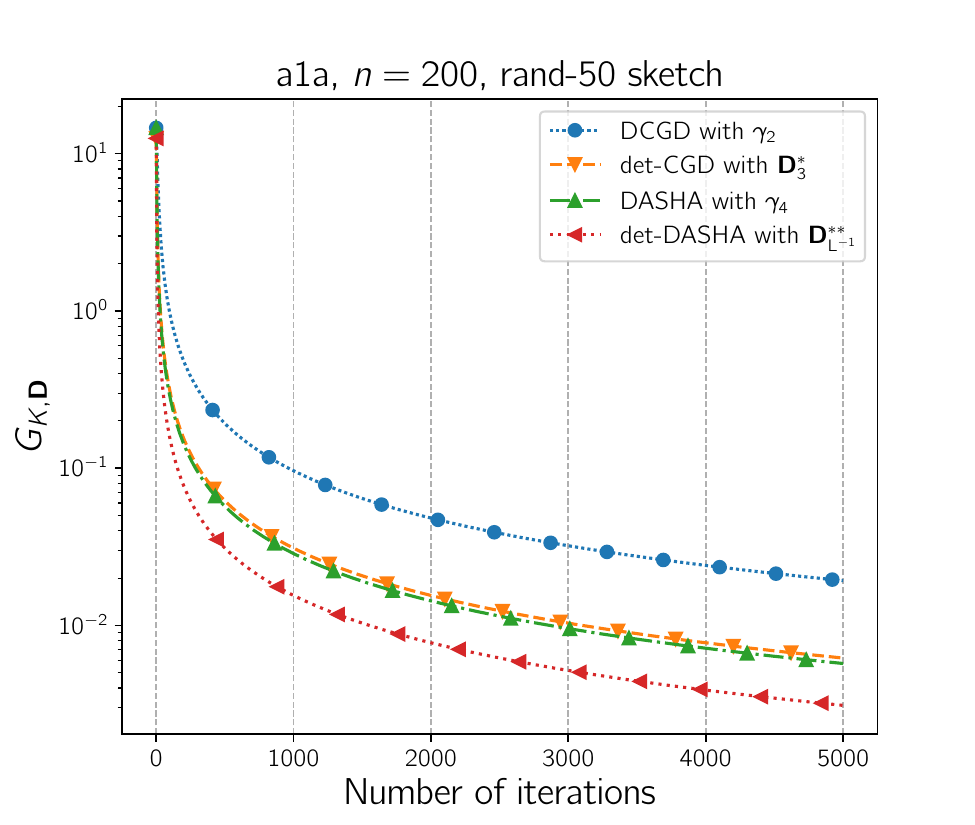} 
         \includegraphics[width=0.32\textwidth]{./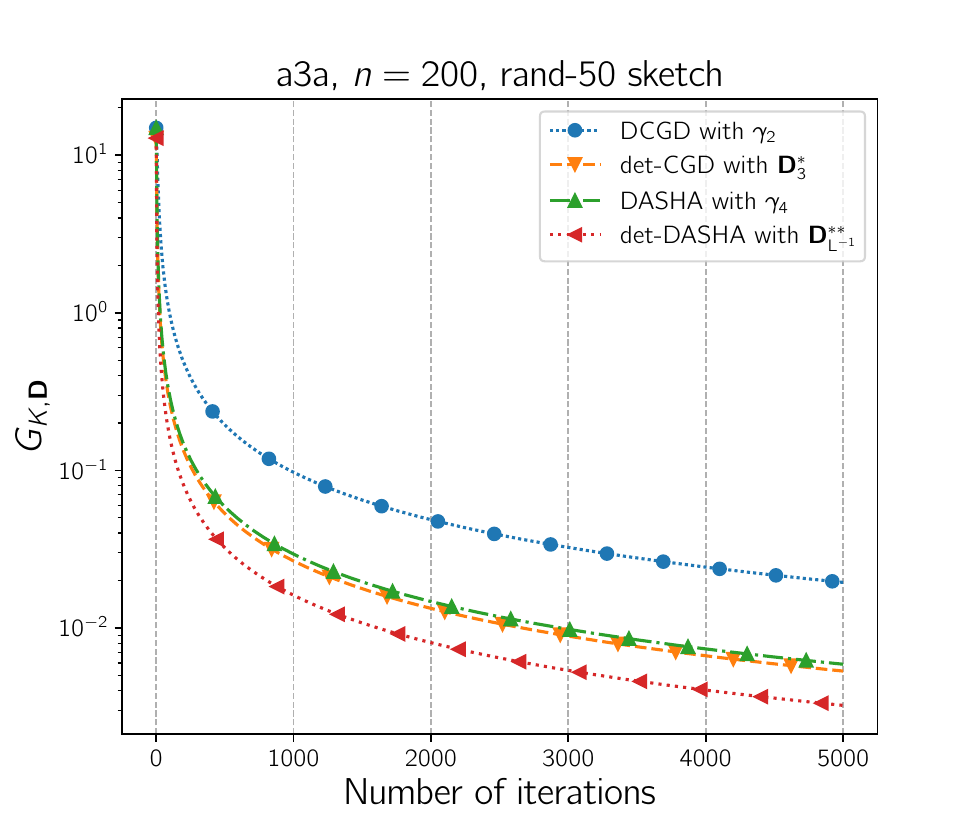}
         \includegraphics[width=0.32\textwidth]{./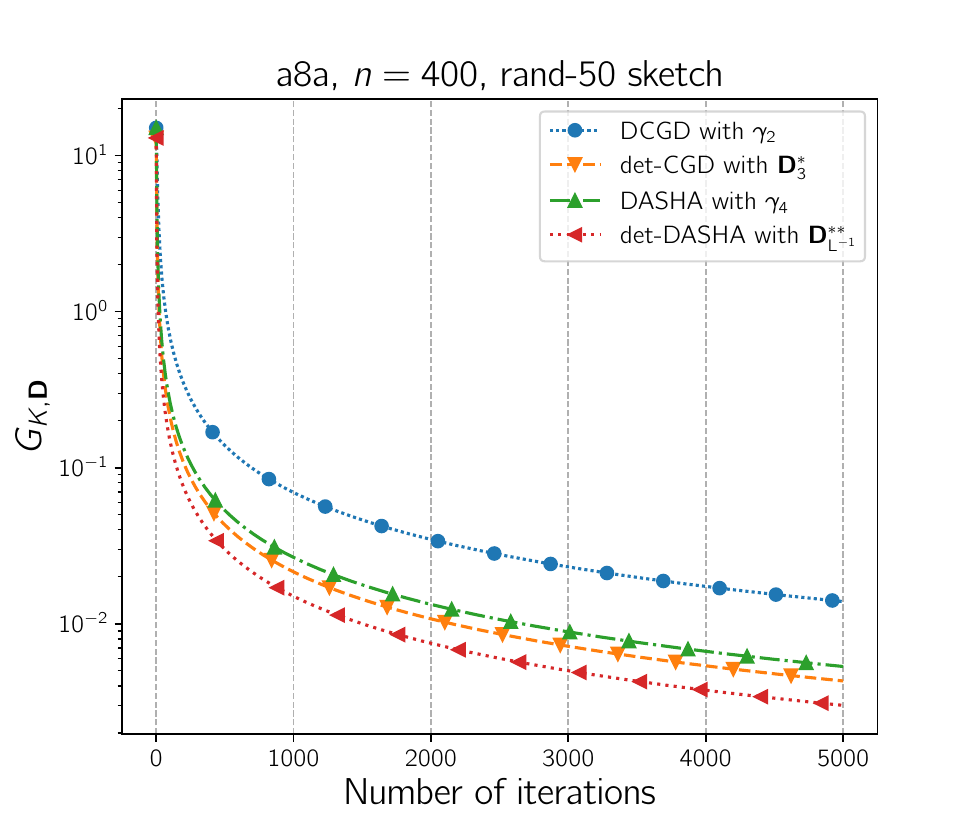}
      \end{minipage}
   }
   \caption{Comparison of {\dcgd} with optimal scalar stepsize $\gamma_2$, {\detcgd} with optimal diagonal stepsize $\mD_3^*$, {\dasha} with optimal scalar stepsize $\gamma_1$, and {\detdasha} with optimal stepsize $\mD^{**}_{\mL^{-1}}$ with respect to $\mW = \mL^{-1}$. $\lambda=0.9$ is fixed throughout the experiment. The notation $n$ in the title indicates the number of clients in each case. Rand-$\tau$ sketch with $\tau = 50$ are used in all four algorithms.}
   \label{fig:experiment-2-dasha}
\end{figure}

It is easy to observe that in each case of \Cref{fig:experiment-2-dasha}, {\detdasha} outperforms the rest of the algorithms. It is expected that {\detdasha} outperforms {\dasha}, as it is also illustrated by \Cref{fig:experiment-1-dasha}, which is a consequence of using matrix stepsize instead of a scalar stepsize. We also see that {\detdasha} and {\dasha} outperform {\detcgd} and {\dcgd} respectively, which demonstrate the advantages of the variance reduction technique. Note that in this case, all four algorithms are using the same sketch, which means that the number of bits transferred in each iteration is the same for the four algorithms, as a result, compared to the other algorithms, {\detdasha} is better in terms of both iteration complexity and communication complexity.

\subsection{\texorpdfstring{Comparison of {\detdasha} and {\detcgd} with different stepsizes}{Comparison of det-DASHA and det-CGD with different stepsizes}}
In this experiment, we try to compare {\detdasha} and {\detcgd} with different matrix stepsizes. Throughout this experiment, we will fix $\varepsilon^2=0.01$ and $\lambda=0.9$. The same Rand-$\tau$ sketch is used for the two algorithms. For {\detcgd}, we use the stepsize $\mD_1 = \gamma_{\mI_d}\cdot\mI_d, \mD_2 = \gamma_{\diag^{-1}\left(\mL\right)}\cdot\diag^{-1}\left(\mL\right)$ and $\mD_3 = \gamma_{\mL^{-1}} \cdot \mL^{-1}$, for {\detdasha} we use the stepsize $\mD^{**}_{\mL^{-1}}$.

\begin{figure}[t]
   \centering
   \subfigure{
   \begin{minipage}[t]{0.98\textwidth}
         \includegraphics[width=0.32\textwidth]{./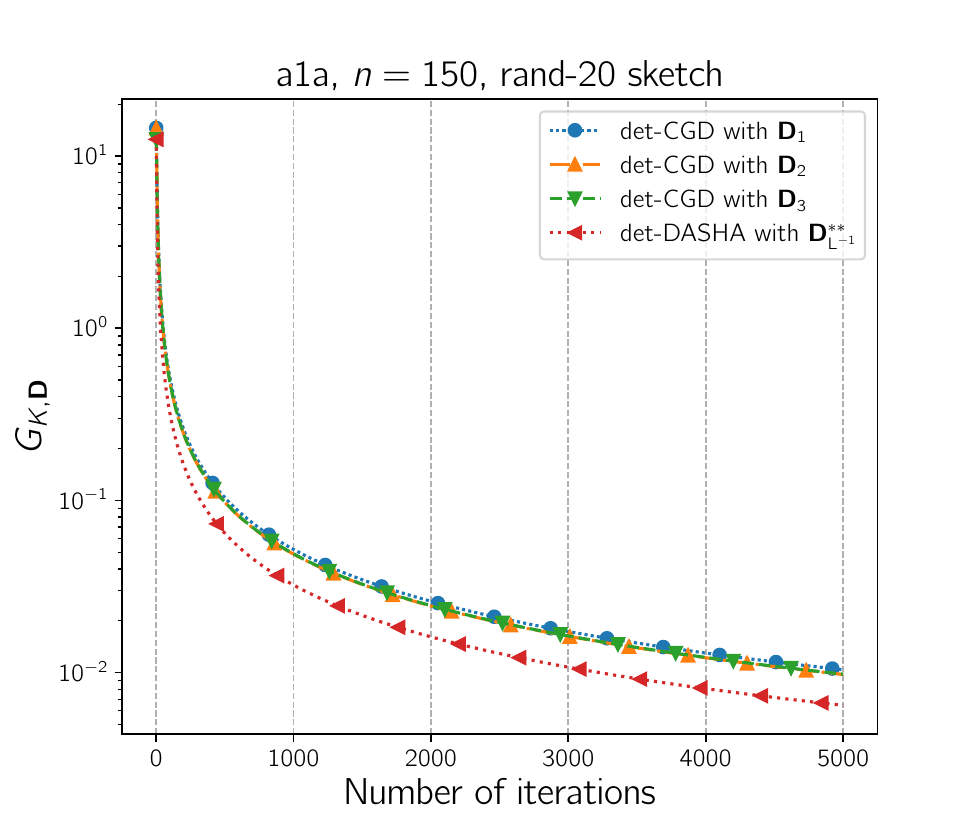}
         \includegraphics[width=0.32\textwidth]{./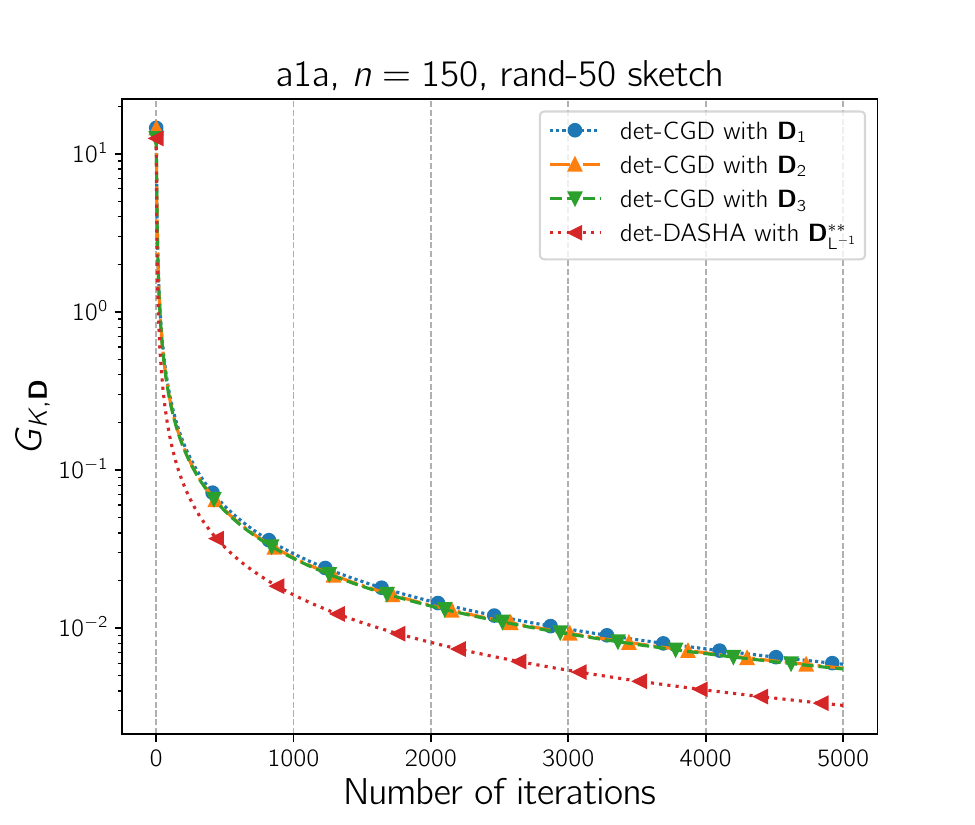}
         \includegraphics[width=0.32\textwidth]{./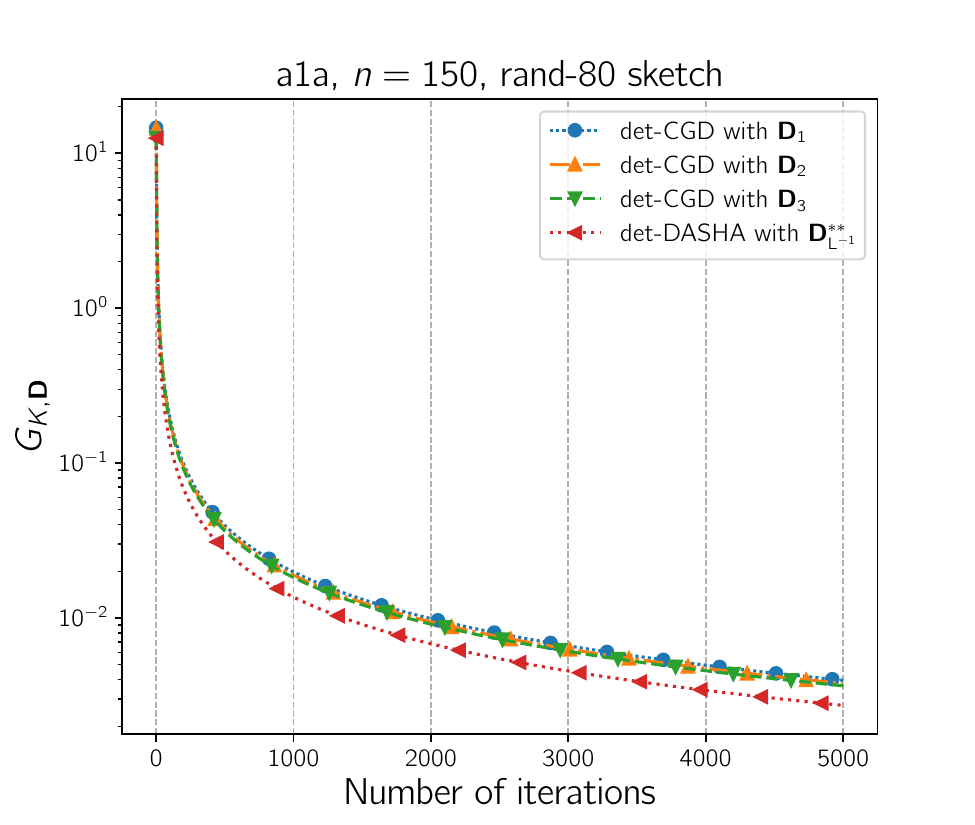} 
   \end{minipage}
   }
   \subfigure{
   \begin{minipage}[t]{0.98\textwidth}
         \includegraphics[width=0.32\textwidth]{./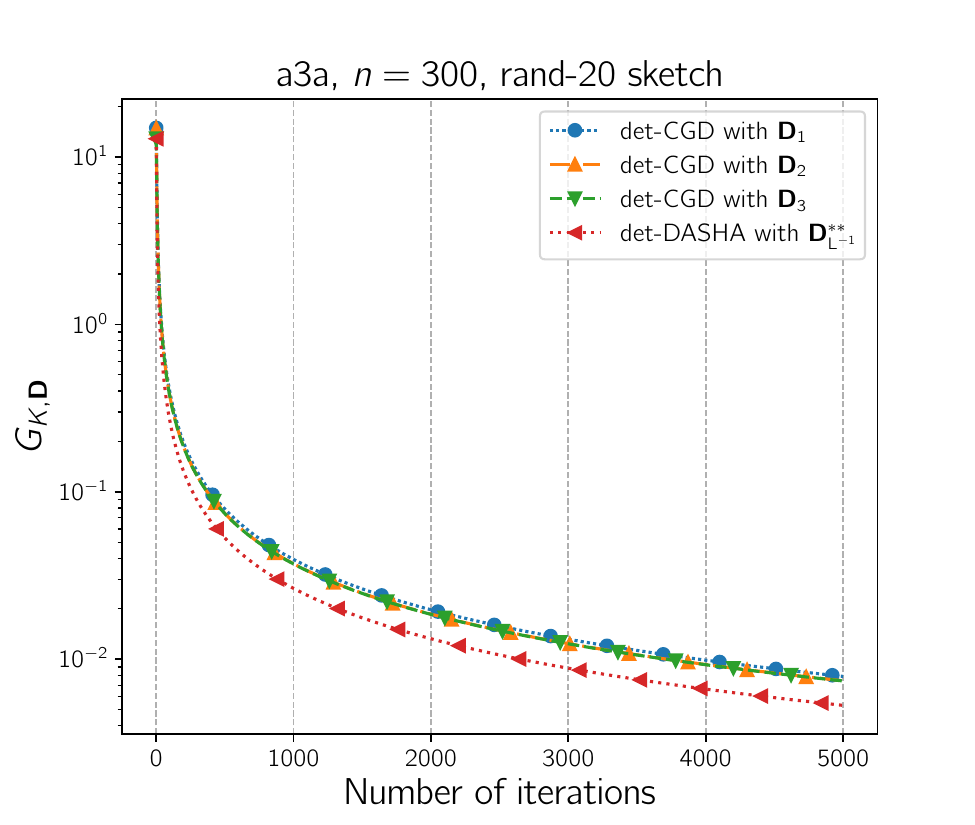} 
         \includegraphics[width=0.32\textwidth]{./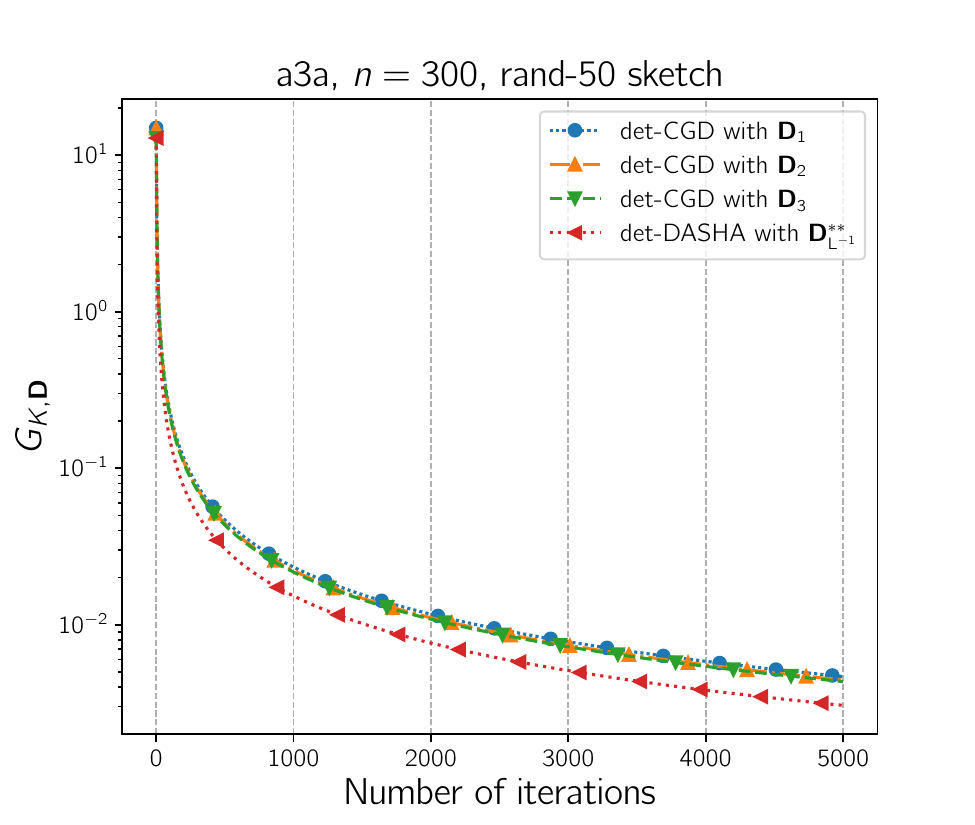}
         \includegraphics[width=0.32\textwidth]{./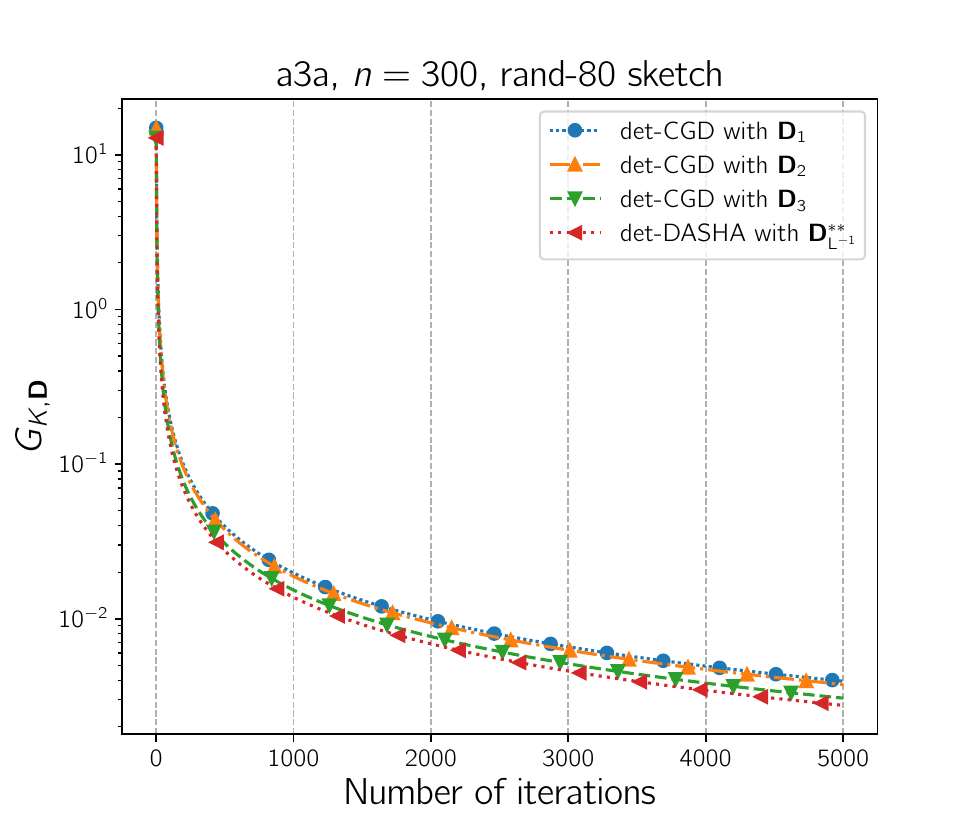}
   \end{minipage}
   }
   \caption{Comparison of {\detdasha} with stepsize $\mD^{**}_{\mL^{-1}}$ and {\detcgd} with three different stepsizes $\mD_1$, $\mD_2$ and $\mD_3$. Throughout the experiment, $\lambda$ is fixed at $0.9$, $\varepsilon^2$ is fixed at $0.01$, Rand-$\tau$ sketch is used for all the algorithms with $\tau$ selected from $\{20, 50, 80\}$. The notation $n$ denotes the number of clients in each setting.}
   \label{fig:experiment-3-dasha}
\end{figure}

It can be observed that in all cases of \Cref{fig:experiment-3-dasha}, {\detdasha} outperforms {\detcgd} with different stepsizes. This further corroborates our theory that {\detdasha} is variance reduced and thus is better in terms of both iteration complexity, and communication complexity (because in this case the same number of bits are transmitted in each iteration due to the fact that the sketch used is the same).

\subsection{\texorpdfstring{Comparison of different stepsizes of {\detdasha}}{Comparison of different stepsizes of det-DASHA}}
In this experiment, we try to compare {\detdasha} with different matrix stepsizes. Specifically, we fix matrix $\mW$ to be three different matrices, $\mI_d$, $\diag^{-1}\left(\mL\right)$ and $\mL^{-1}$. We denote the optimal stepsizes as $\mD^{**}_{\mI_d}$, $\mD^{**}_{\diag^{-1}\left(\mL\right)}$ and $\mD^{**}_{\mL^{-1}}$, respectively. For $\mD^{**}_{\mL^{-1}}$, it is already given in \eqref{dasha:eq:exp:ss-cond-detdasha}, for $\mD^{**}_{\mI_d}$ and $\mD^{**}_{\diag^{-1}\left(\mL\right)}$, we use \Cref{dasha:col:scaling} to compute them. As a result, 
\begin{align}
   \label{dasha:eq:ss-Id}
   \mD^{**}_{\mI_d} = \frac{2}{1 + \sqrt{1 + 16\cdot \frac{\omega_{\mI_{d}}\left(4\omega_{\mI_d} + 1\right)}{n}}\cdot\frac{\lambda_{\min}\left(\mL\right)}{\lambda_{\max}\left(\mL\right)}} \cdot \frac{\mI_d}{\lambda_{\max}\left(\mL\right)},
\end{align}
and
\begin{align}
   \label{dasha:eq:ss-diagL}
   \mD^{**}_{\diag^{-1}\left(\mL\right)} = \frac{2}{1 + \sqrt{1 + 16C_{\diag^{-1}\left(\mL\right)}\cdot\lambda_{\min}\left(\mL\right)}} \cdot \diag^{-1}\left(\mL\right).
\end{align}
Throughout the experiment, $\lambda$ is fixed at $0.9$, Rand-$\tau$ sketch is used for all the algorithms.

\begin{figure}[t]
   \centering
   \subfigure{
   \begin{minipage}[t]{0.98\textwidth}
      \includegraphics[width=0.32\textwidth]{./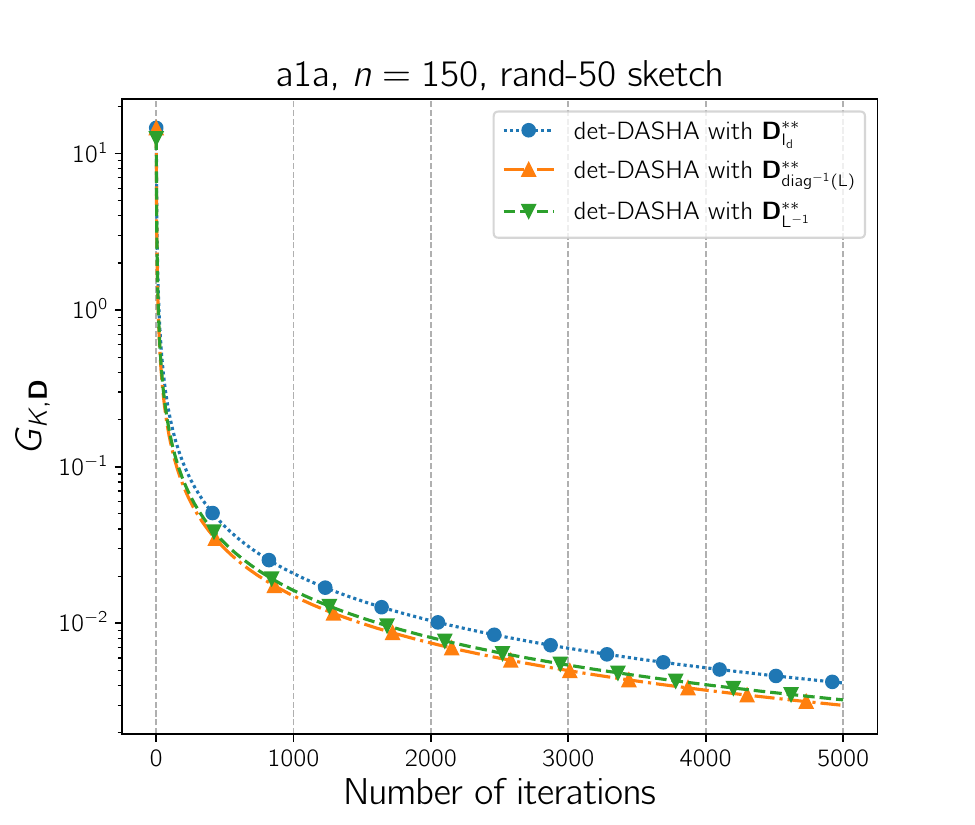}
      \includegraphics[width=0.32\textwidth]{./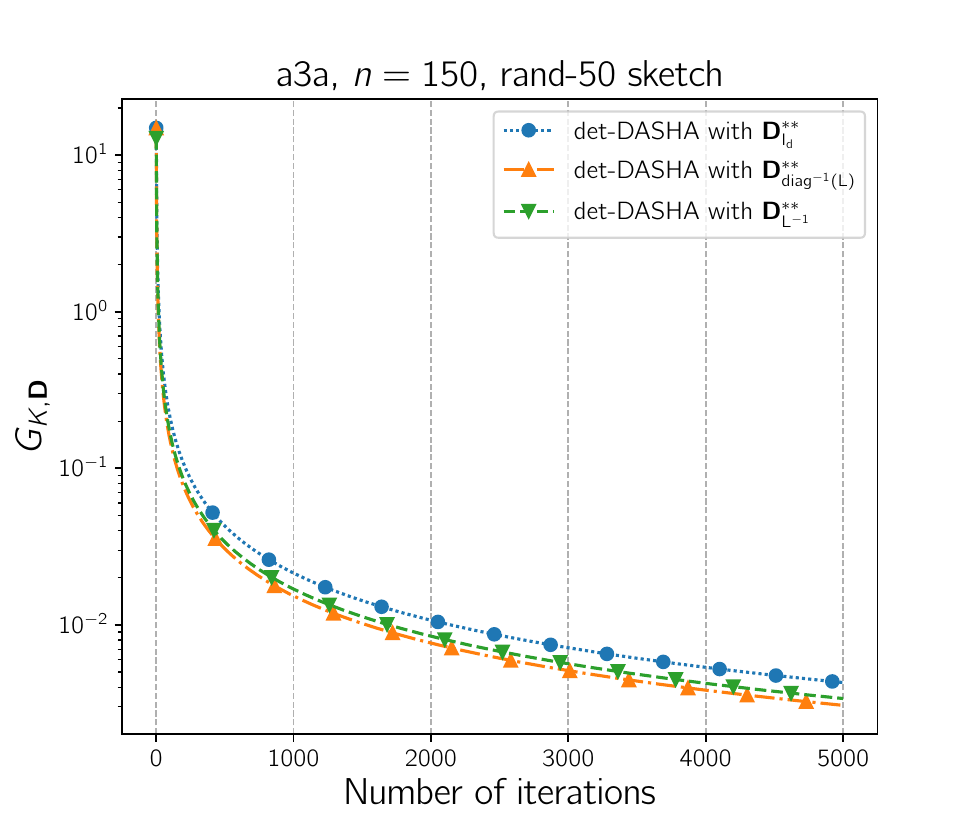}
      \includegraphics[width=0.32\textwidth]{./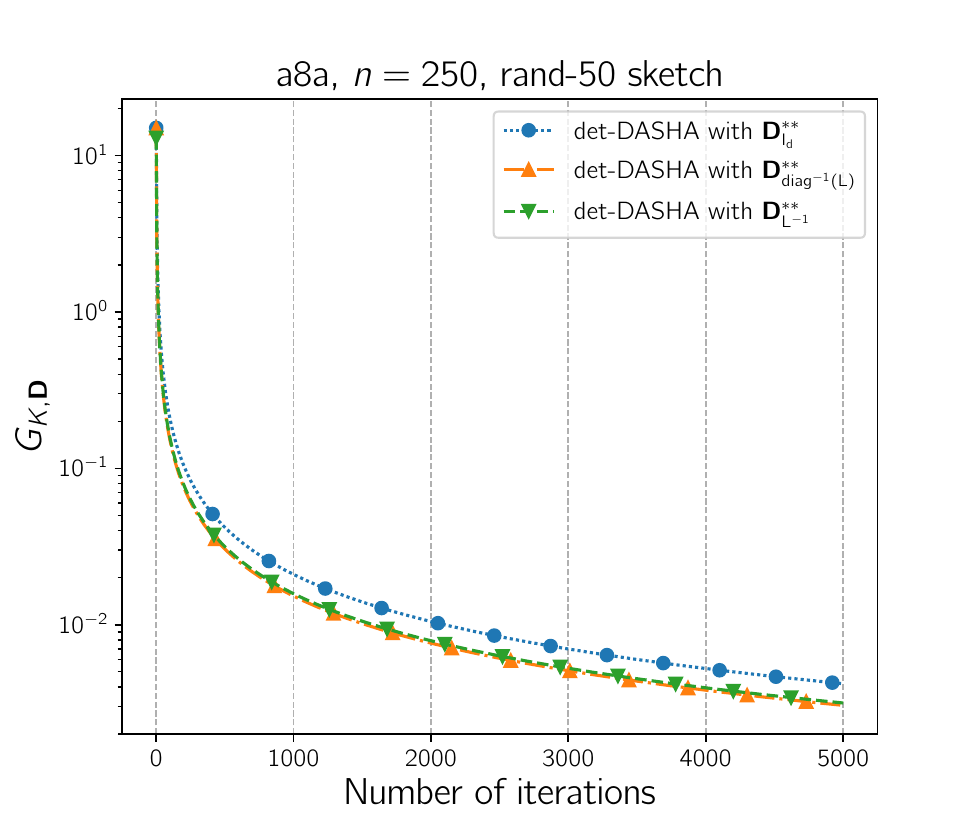} 
   \end{minipage}
   }
   \subfigure{
   \begin{minipage}[t]{0.98\textwidth}
      \includegraphics[width=0.32\textwidth]{./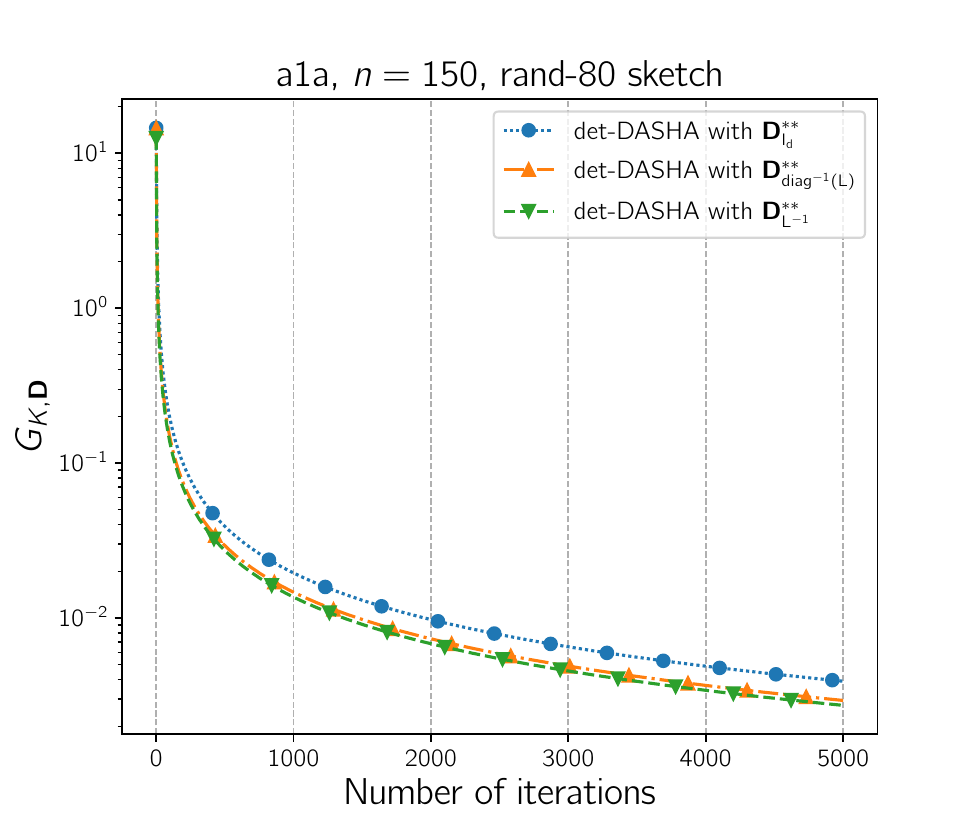}
      \includegraphics[width=0.32\textwidth]{./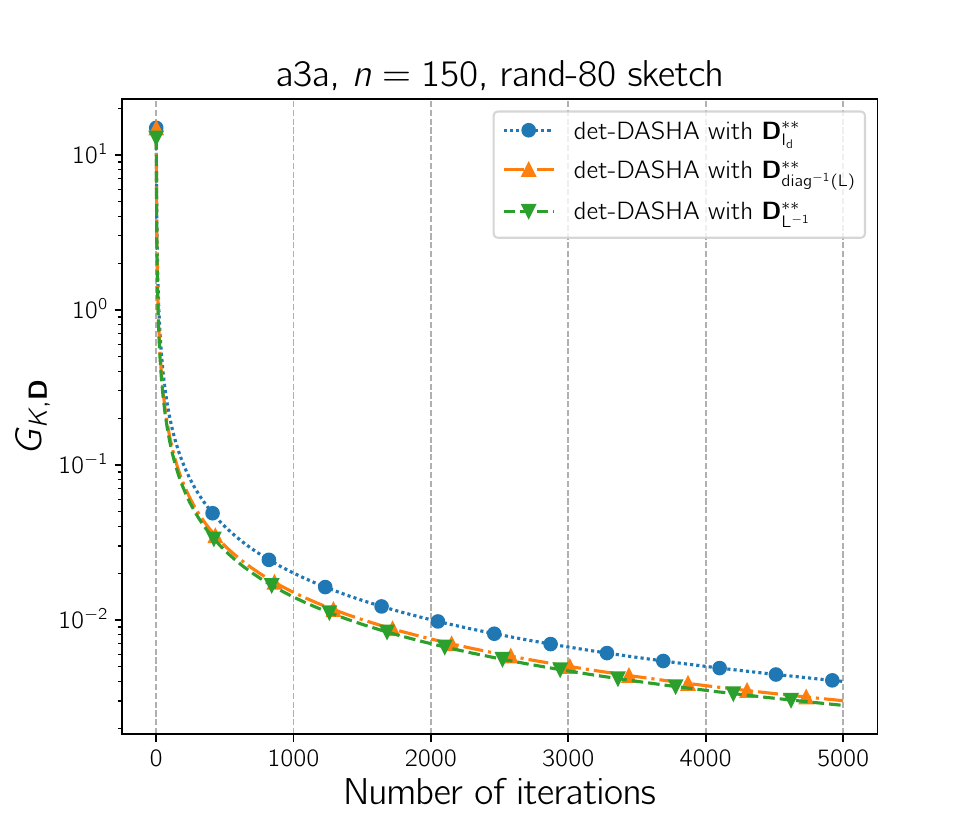}
      \includegraphics[width=0.32\textwidth]{./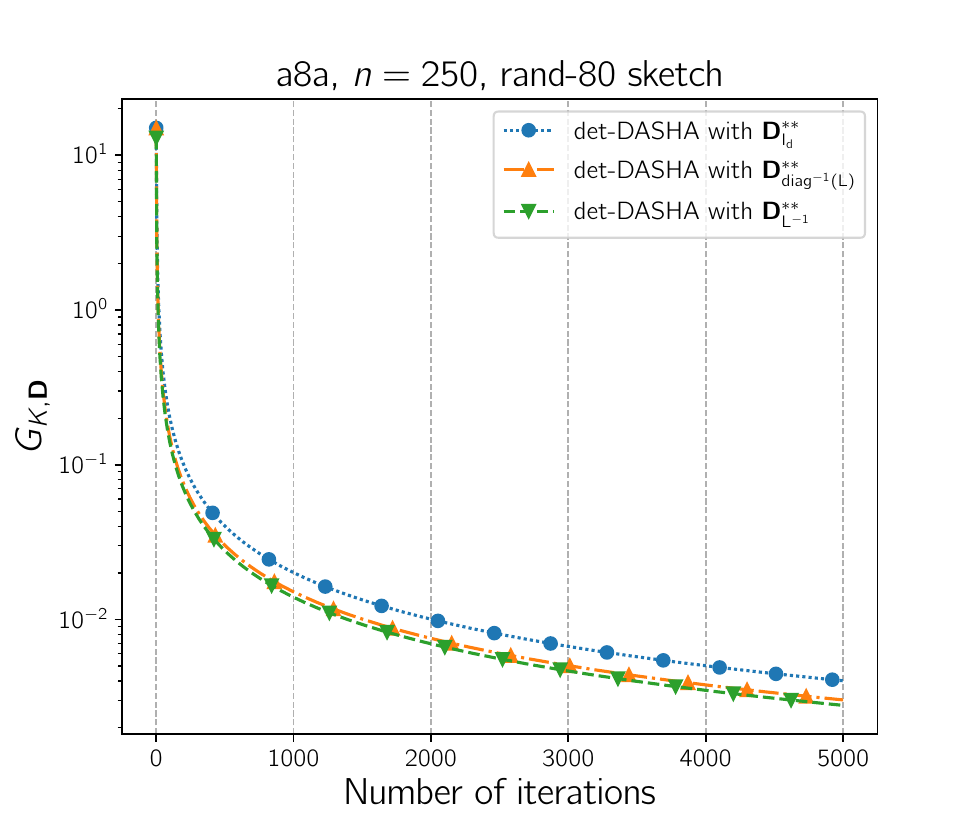}
   \end{minipage}
   }
   \caption{Comparison of {\detdasha} three different stepsizes $\mD^{**}_{\mL^{-1}}$, $\mD^{**}_{\diag^{-1}\left(\mL\right)}$ and $\mD^{**}_{\mI_d}$. The definition for those matrix stepsize notation are given in \eqref{dasha:eq:exp:ss-cond-detdasha}, \eqref{dasha:eq:ss-diagL} and \eqref{dasha:eq:ss-Id} respectively. Throughout the experiment, $\lambda$ is fixed at $0.9$, Rand-$\tau$ sketch is used for all the algorithms. The notation $n$ denotes the number of clients in each setting.}
   \label{fig:experiment-4-dasha}
\end{figure}

We can observe from \Cref{fig:experiment-4-dasha}, {\detdasha} with $\mD^{**}_{\mL^{-1}}$ and $\mD^{**}_{\diag^{-1}\left(\mL\right)}$ both outperform {\detdasha} with $\mD^{**}_{\mI_d}$, which demonstrate the effectiveness of using a matrix  stepsize instead of a scalar stepsize. However, depending on the parameters of the problem, it is hard to reach a general conclusion whether $\mD^{**}_{\mL^{-1}}$ is better than $\mD^{**}_{\diag^{-1}\left(\mL\right)}$ or not.

\subsection{\texorpdfstring{Comparison of {\detmarina} and {\detdasha}}{Comparison of det-MARINA and det-DASHA}}
In this section, we aim to provide a comparison of {\detdasha} and {\detmarina}. They are similar as they are both variance reduced version of {\detcgd}. However, the variance reduction techniques that are utilized are different. For {\detmarina}, it is based on {\marina}, and it requires synchronization from time to time depending on a probability parameter $p$, while for {\detdasha} it utilizes the momentum variance reduction technique which was also presented in {\dasha}, it does not need any synchronization at all. Notice that for a fair comparison, we implement the two algorithms so that they use the same sketch. We mainly focus on the communication complexity, i.e. the convergence with respect to the number of bits transferred. Throughout the experiment, $\lambda = 0.9$ is fixed. For {\detdasha} we pick $3$ different kinds of stepsizes $\mD^{**}_{\mI_d}$, $\mD^{**}_{\mL^{-1}}$ and $\mD^{**}_{\diag^{-1}\left(\mL\right)}$. For {\detmarina}, we also pick three different kinds of stepsizes correspondingly $\mD^{*}_{\mI_d}$, $\mD^{*}_{\mL^{-1}}$ and $\mD^{*}_{\diag^{-1}\left(\mL\right)}$. We use the same sketch for all of the algorithms we are trying to compare.
\begin{figure}[t]
   \centering
   \subfigure{
   \begin{minipage}[t]{0.98\textwidth}
      \includegraphics[width=0.32\textwidth]{./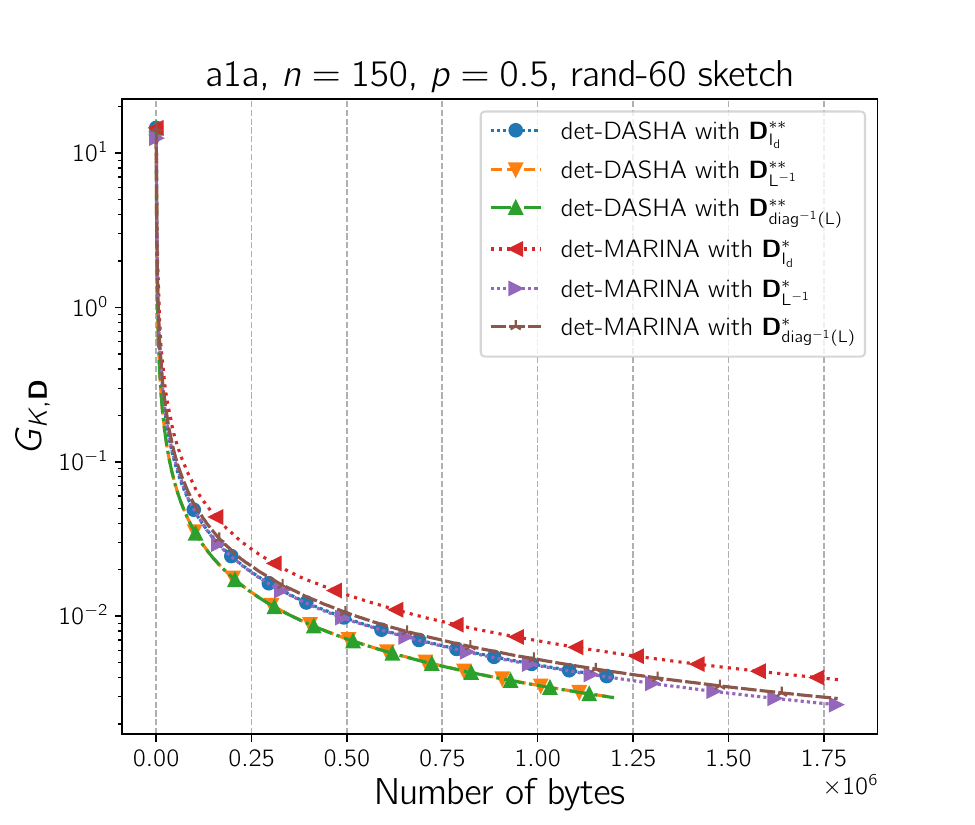}
      \includegraphics[width=0.32\textwidth]{./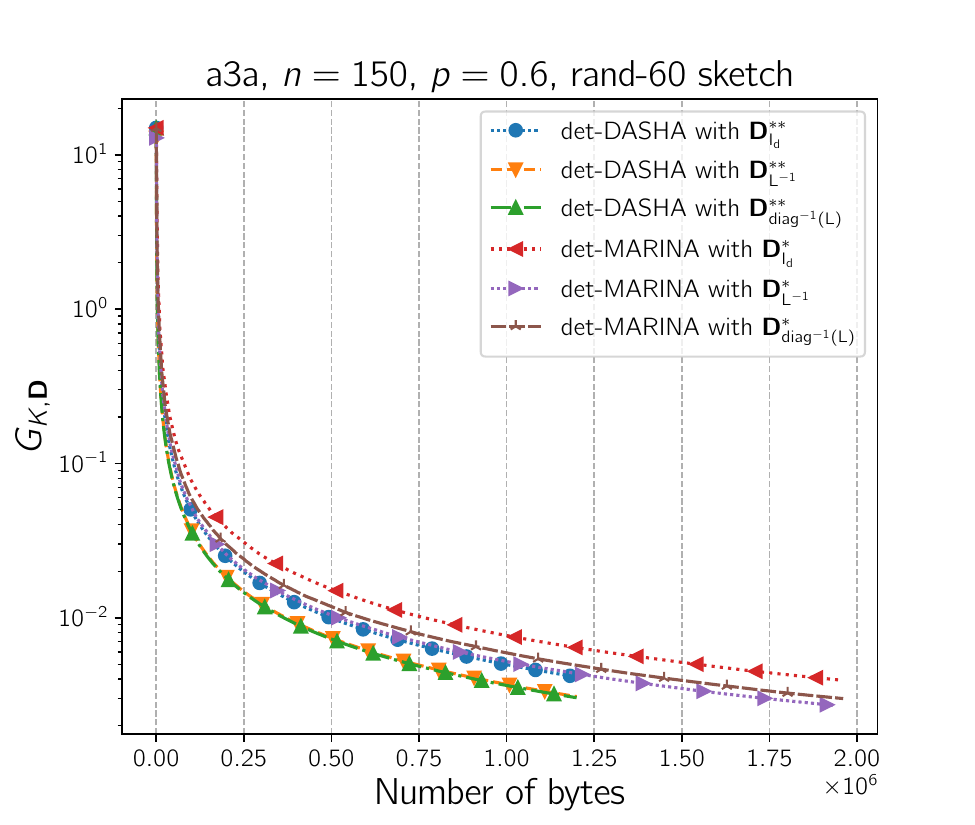}
      \includegraphics[width=0.32\textwidth]{./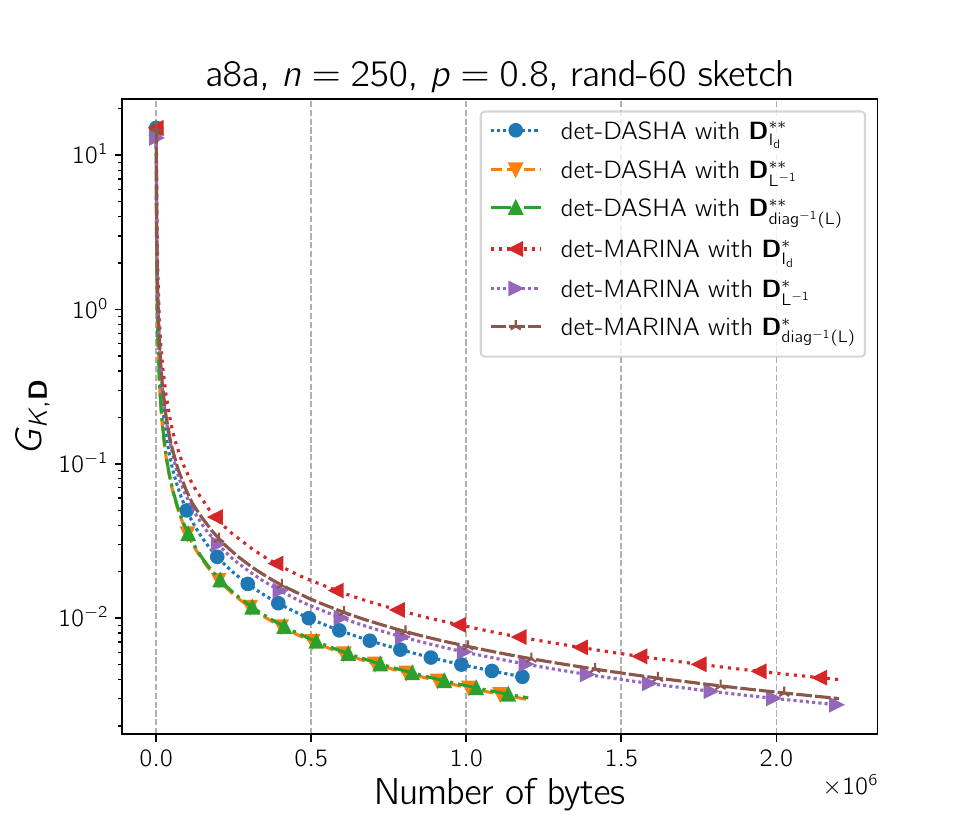} 
   \end{minipage}
   }
   \subfigure{
   \begin{minipage}[t]{0.98\textwidth}
      \includegraphics[width=0.32\textwidth]{./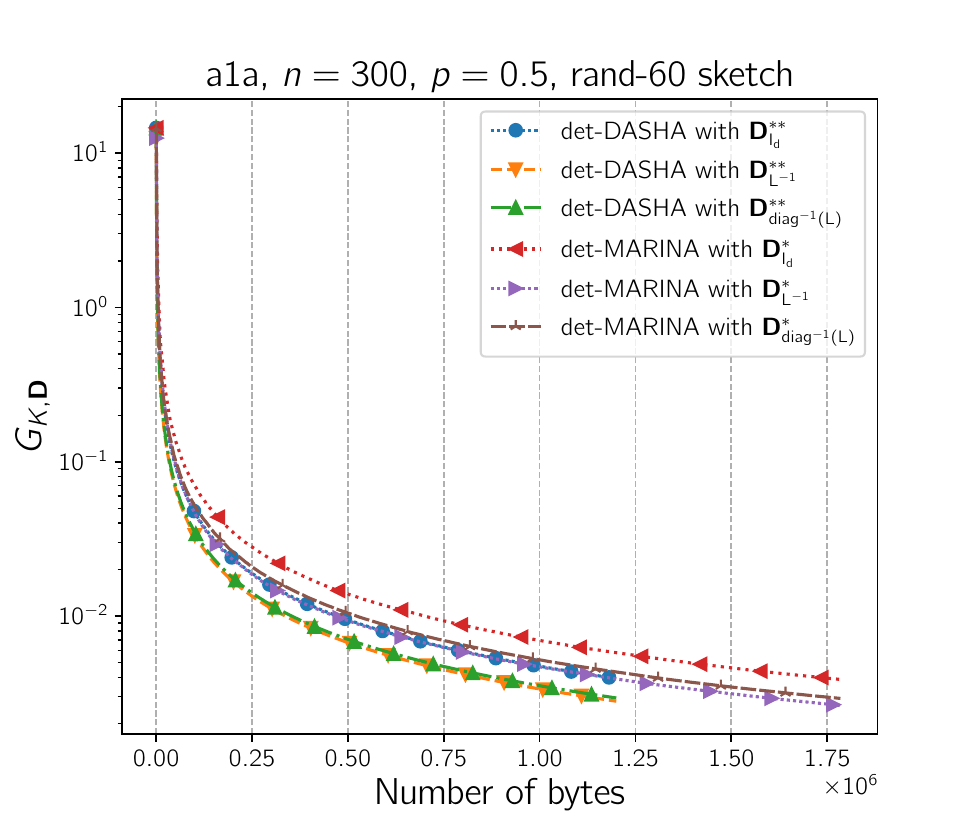}
      \includegraphics[width=0.32\textwidth]{./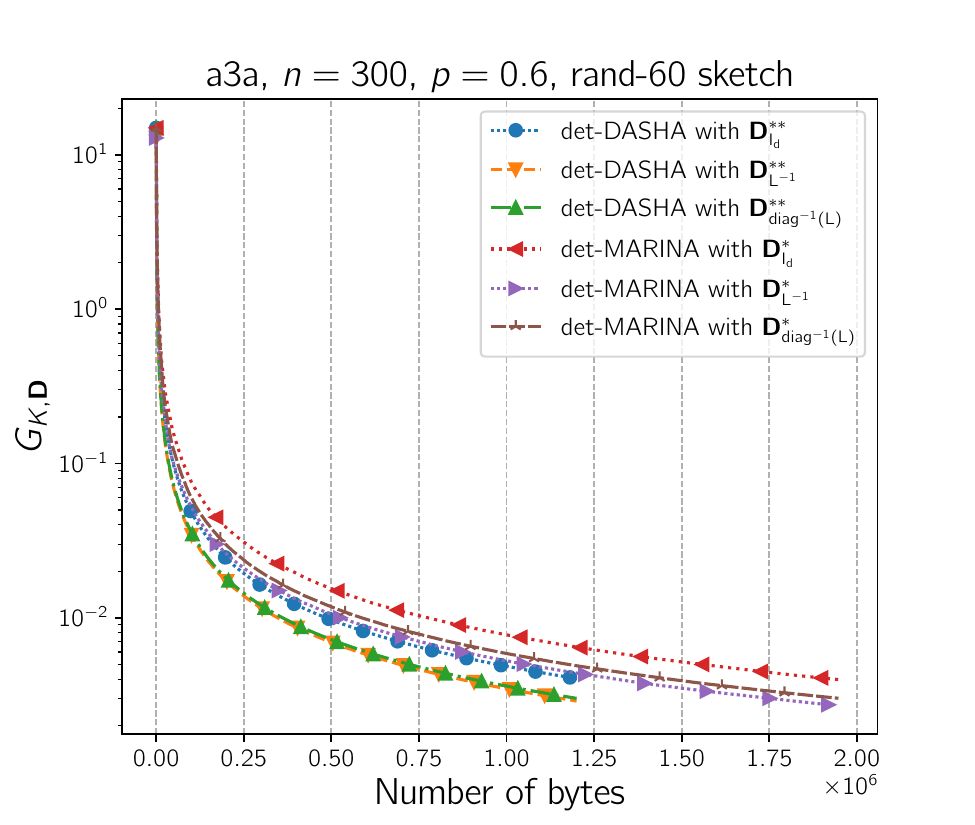}
      \includegraphics[width=0.32\textwidth]{./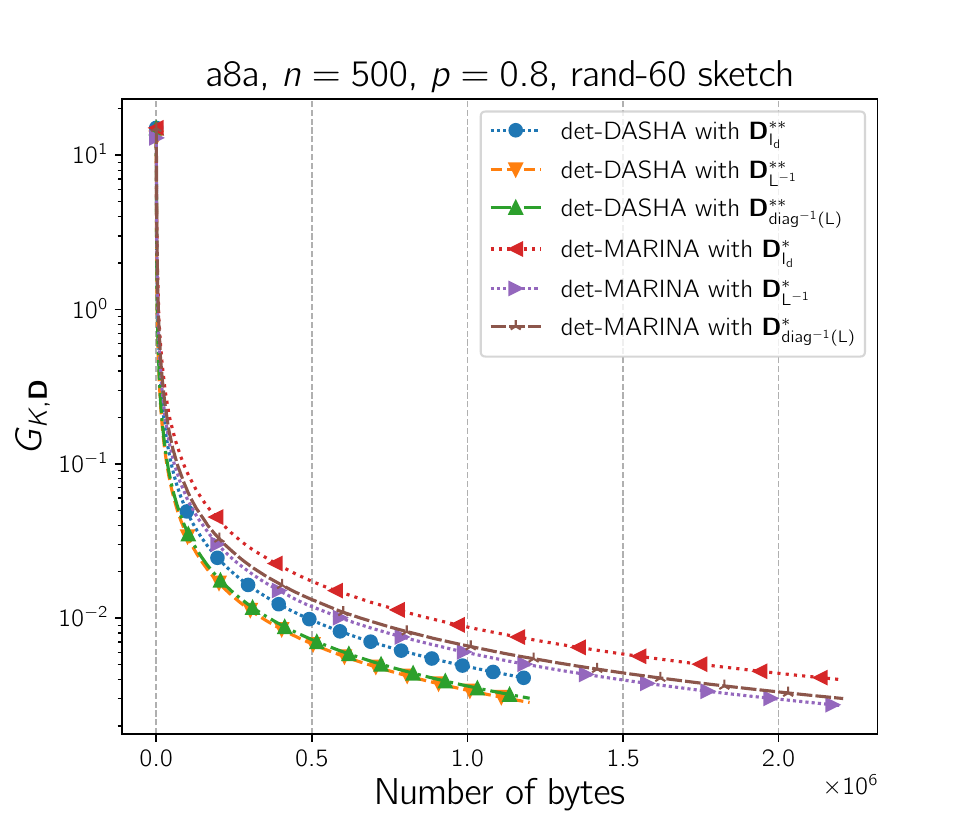} 
   \end{minipage}
   }
   \caption{Comparison of {\detdasha} with three different stepsizes $\mD^{**}_{\mI_d}$, $\mD^{**}_{\mL^{-1}}$ and $\mD^{**}_{\diag^{-1}\left(\mL\right)}$, and {\detmarina} with $\mD^{*}_{\mI_d}$, $\mD^{*}_{\mL^{-1}}$ and $\mD^{*}_{\diag^{-1}\left(\mL\right)}$ in terms of communication complexity. Throughout the experiment, $\lambda$ is fixed at $0.9$, the same Rand-$\tau$ sketch is used for all the algorithms. The notation $n$ denotes the number of clients in each setting. Each algorithm is run for a fixed number of iteration $K = 5000$.}
   \label{fig:experiment-5-dasha}
\end{figure}

It is obvious from \Cref{fig:experiment-5-dasha} that {\detdasha} always has a better communication complexity comparing to the {\detmarina} counterpart. Notice that here since each algorithm is run for a fixed number of iterations, so $x$-axis actually records the total number of bytes transferred for each algorithm. For {\detdasha}, $\mD^{**}_{\mL^{-1}}$ perform similarly to $\mD^{**}_{\diag^{-1}\left(\mL\right)}$, and both are better than $\mD^{**}_{\mI_d}$. This is expected since the same sketch is used, and the number of bytes transferred in each iteration is the same for each variant of {\detdasha}. The same relation also holds for {\detmarina}. 

\subsection{Comparison in terms of function values}

In this section, we compare {\detmarina} and {\detdasha} in terms of function values. The starting points of the two algorithms are set to be the same, and we run the two algorithms for multiple times and we average the function values we obtained in each iteration. For the two algorithms, we use the same sketch, and since we are interested in the performance in terms of communication complexity, we use the number of bytes transferred in the training process as the $x$-axis. We run each of the algorithm for $20$ times, and fix $\lambda=0.9$. The starting point is fixed throughout the experiment. We pick $\mD^{**}_{\mL^{-1}}$ as the stepsize of {\detdasha}, while $\mD^{*}_{\mL^{-1}}$ as the stepsize of {\detmarina}.

\begin{figure}[t]
   \centering
   \subfigure{
   \begin{minipage}[t]{0.98\textwidth}
      \includegraphics[width=0.32\textwidth]{./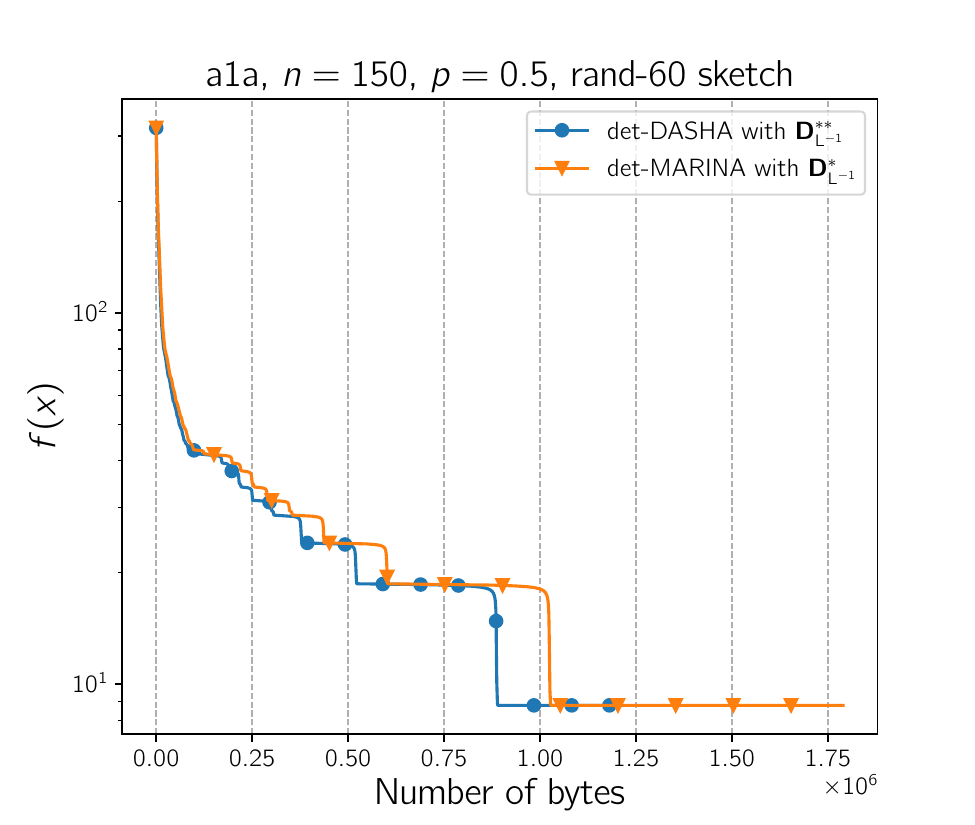}
      \includegraphics[width=0.32\textwidth]{./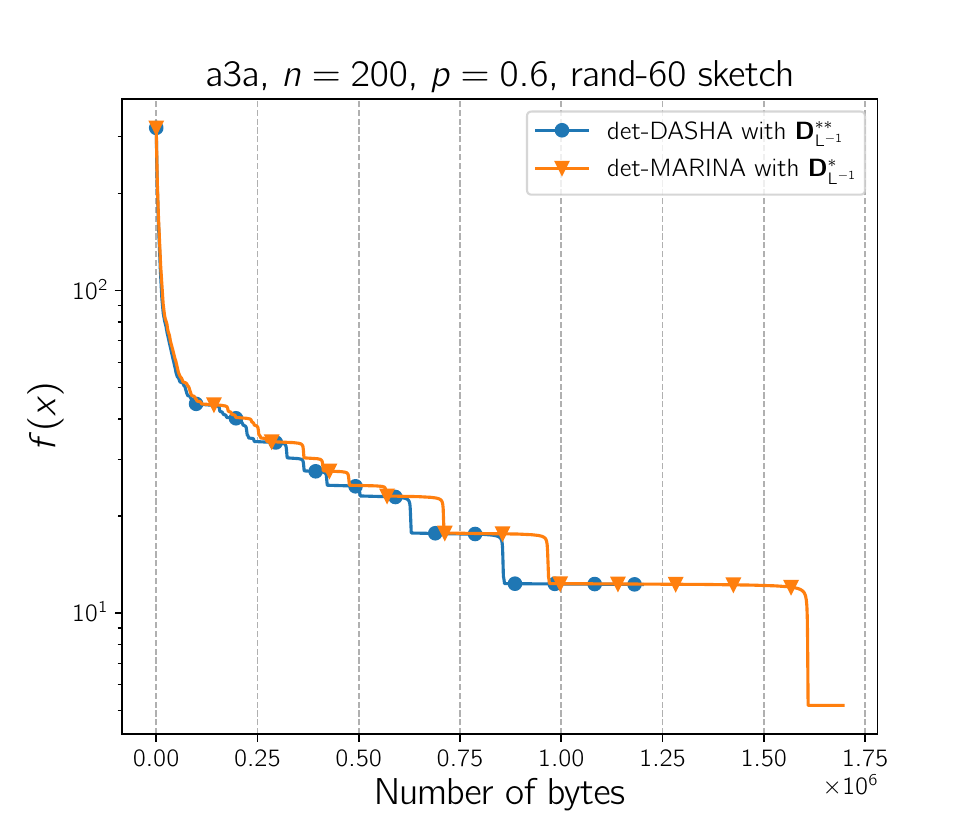}
      \includegraphics[width=0.32\textwidth]{./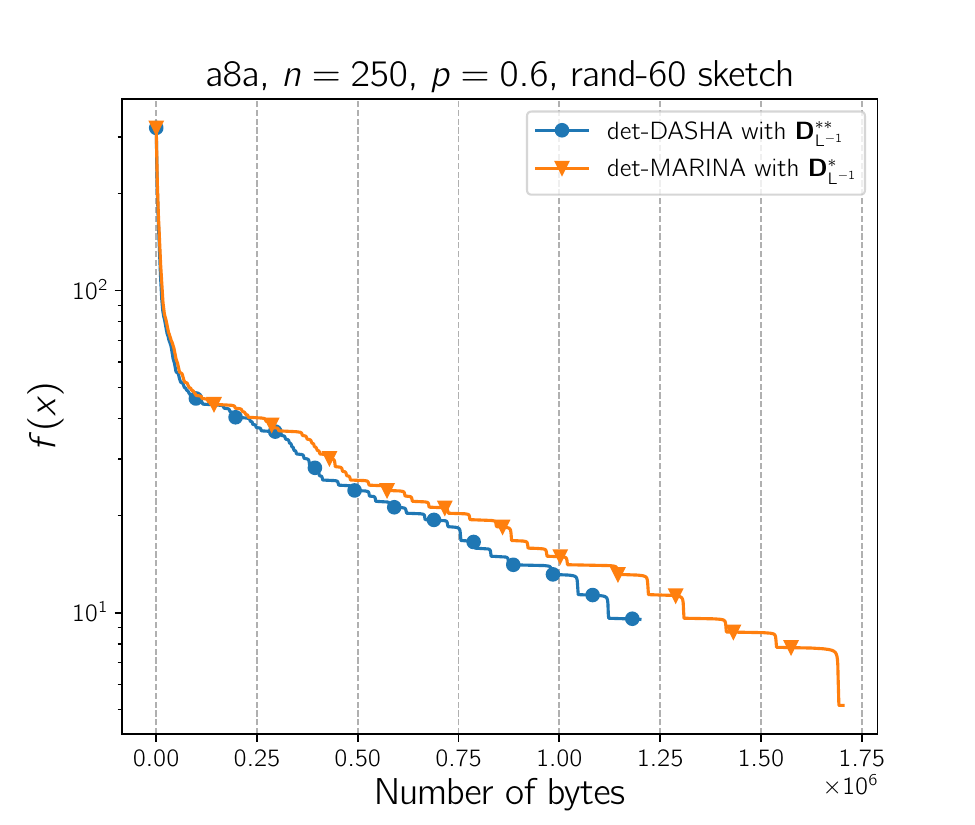} 
   \end{minipage}
   }
   \caption{Comparing the performance of {\detdasha} with $\mD^{**}{\mL^{-1}}$ and {\detmarina} with $\mD^{*}{\mL^{-1}}$ in terms of the decreasing function values. The function values for each algorithm represent an average of $20$ runs using different random seeds. Here, $\lambda=0.9$ is fixed throughout the experiment, and the starting point for the two algorithms in different runs is the same. The notation $n$ stands for the number of clients, and $p$ represents the probability used in {\detmarina}. The same Rand-$\tau$ sketch is employed for both algorithms. }
   \label{fig:experiment-6-dasha}
\end{figure}
Observing \Cref{fig:experiment-6-dasha}, we can see that the function values continuously decrease as the algorithms progress through more iterations. 
However, the stability observed here differs from the case of the average (matrix) norm of gradients. 
Our theoretical framework, as presented in this paper, primarily addresses the average norm of gradients in the non-convex case. 
Despite this, the experiment reinforces the effectiveness of our algorithms, showcasing consistent decreases in function values.

\end{document}